\newcommand{\E}{\mathbb{E}}
\newcommand{\R}{\mathbb{R}}
\newcommand{\N}{\mathbb{N}}
\newcommand{\X}{\mathcal{X}}
\newcommand{\telque}{\mbox{ s.t. }} % "tel que" dans une definition d'ensemble
\DeclareMathOperator{\card}{Card} % cardinal
\newcommand{\egalloi}{\stackrel{(d)}{=}} % egalite en loi
\newcommand{\loi}{\mathcal{L}} %loi d'une va (ou cvgce en loi)
\newcommand{\un}{\mathbf{1}}
\newcommand{\paren}[1]{\left( \left. #1 \right. \right)} 
\newcommand{\sparen}[1]{( \left. #1 \right. )} 
\newcommand{\croch}[1]{\left[ \left. #1 \right. \right]} 
\newcommand{\scroch}[1]{[ \left. #1 \right. ]} 
\newcommand{\set}[1]{\left\{ \left. #1 \right. \right\}}
\newcommand{\absj}[1]{\left\lvert #1 \right\rvert} %joli abs
\newcommand{\norm}[1]{\left \lVert #1 \right\rVert}
\newcommand{\snorm}[1]{\lVert #1 \rVert}
\newcommand{\egaldef}{:=} % egalite definissant la quantite de gauche
\newcommand{\flens}{\mapsto} % fleche d'application X->Y (ensembles)
\newcommand{\betl}{\beta_{\lambda}}
\theoremstyle{plain}
\newtheorem{proposition}{Proposition}
\newtheorem{prop}[proposition]{Proposition}
\newtheorem{corollary}[proposition]{Corollary}
\newtheorem{lemma}[proposition]{Lemma}
\newtheorem{lem}[proposition]{Lemma}
\newtheorem{algo}{Model}
\theoremstyle{definition}
\theoremstyle{remark}
\newtheorem{remark}{Remark}
\newtheorem{postita}{Post-it}
\newcommand{\Prob}{\mathbb{P}} %probabilite
\renewcommand{\P}{\Prob}
\DeclareMathOperator{\var}{var} %variance
\DeclareMathOperator{\cov}{cov} % covariance
\newcommand{\sachant}{\right| \left.} % pour l'esp\'erance conditionnelle...
\newcommand{\bU}{\mathbb{U}}
\newcommand{\cU}{\mathcal{U}}
\newcommand{\cUtoy}[1]{\cU_{#1}^{\mathtt{toy}}}
\newcommand{\cUpurf}[1]{\cU_{#1}^{\mathtt{purf}}}
\newcommand{\cUurt}[1]{\cU_{#1}^{\mathtt{bprf}}}
\newcommand{\bV}{\mathbb{V}}
\newcommand{\Bias}{\mathcal{B}}
\newcommand{\Biasinfty}[1]{\Bias_{#1,\infty}}
\newcommand{\Vararbre}[1]{\mathcal{V}_{#1}}
\newcommand{\epstoy}{\varepsilon^{\mathtt{toy}}}
\newcommand{\epspurf}{\varepsilon^{\mathtt{purf}}_{\nfeu}}
\newcommand{\CHdeuxa}{C_2}
\newcommand{\CHtroisa}{C_3}
\newcommand{\PolMun}{P_0}
\newcommand{\PolNdeu}{P_1}
\newcommand{\PolRdeu}{P_2}
\newcommand{\PolRqua}{P_3}
\newcommand{\PolRtro}{P_5}
\newcommand{\PolVarU}{Q}
\newcommand{\RestePURF}{R^{\mathtt{purf}}}
\newcommand{\PolPURFdiff}[1]{\RestePURF_{1,#1}}
\newcommand{\PolPURFprod}[1]{\RestePURF_{2,#1}}
\newcommand{\PolPURFdeu}[1]{\RestePURF_{3,#1}}
\newcommand{\PolPURFtro}[1]{\RestePURF_{4,#1}}
\newcommand{\PolPURFqua}[1]{\RestePURF_{5,#1}}
\newcommand{\narb}{q} % nombre d'arbres dans la foret
\newcommand{\dimX}{d} % dimension de l'espace ambiant
\newcommand{\nfeu}{k} % nombre de feuilles pour un arbre (= nombre de cases de l'histogramme associe - 1, ATTENTION A CE -1)
\newcommand{\prof}{p} % profondeur d'un arbre (donc, \nfeu =
\newcommand{\nobs}{n} % nombre d'observations (quand on a des donnees, i.e., quand on parle de risque)
\newcommand{\alphaurt}{\alpha} % vitesse de decroissance du biais d'un arbre avec urt 
\newcommand{\betaurt}{\beta} % autre coefficient apparaissant dans les calculs relatifs a un arbre avec urt 
\newcommand{\ERM}{\widehat{s}}
\newcommand{\pl}{p_{\lambda}}
\DeclareMathOperator{\diam}{diam}
\title{Analysis of purely random forests bias}
\author[1]{Sylvain Arlot\thanks{sylvain.arlot@ens.fr}}
\author[2,3]{Robin Genuer\thanks{robin.genuer@isped.u-bordeaux2.fr}}
\affil[1]{CNRS; Sierra Project-Team; 
Departement d'Informatique de l'Ecole Normale Superieure (DI/ENS) 
(CNRS/ENS/INRIA UMR 8548); 
23 avenue d'Italie, CS 81321, 75214 PARIS Cedex 13 - France}
\affil[2]{Univ. Bordeaux, ISPED, Centre INSERM U-897,
146 rue L\'eo Saignat,
F-33076 Bordeaux Cedex,
France}
\affil[3]{INRIA Bordeaux Sud-Ouest, Equipe SISTM}
\begin{document}

\maketitle

\begin{abstract}
  Random forests are a very effective and commonly used statistical
  method, but their full theoretical analysis is still an open
  problem. As a first step, simplified models such as purely random
  forests have been introduced, in order to shed light on the good
  performance of random forests. In this paper, we study the
  approximation error (the bias) of some purely random forest models
  in a regression framework, focusing in particular on the influence
  of the number of trees in the forest. Under some regularity
  assumptions on the regression function, we show that the bias of an
  infinite forest decreases at a faster rate (with respect to the size
  of each tree) than a single tree. As a consequence, infinite forests
  attain a strictly better risk rate (with respect to the sample size)
  than single trees. Furthermore, our results allow to derive a
  minimum number of trees sufficient to reach the same rate as an
  infinite forest. As a by-product of our analysis, we also show a
  link between the bias of purely random forests and the bias of some
  kernel estimators.
\end{abstract}

\section{Introduction} \label{sec.intro}

Random Forests (RF henceforth) are a very effective and increasingly
used statistical machine learning method. They give outstanding
performances in lots of applied situations for both classification and
regression problems. However, their theoretical analysis remains a
difficult and open problem, especially when dealing with the original
RF algorithm, introduced by \cite{Bre:2001}.

Few theoretical results exist on RF, mainly the analysis of Bagging of
\cite{Buh_Yu:2002}---bagging, introduced by \cite{Bre:1996}, can be
seen a posteriori as a particular case of RF--- and the link between
RF and nearest neighbors \citep{Lin_Jeo_2006,Bia_Dev:2010}.

As a first step towards theoretical comprehension of RF, simplified
models such as purely random forests (PRF henceforth) have been
introduced.  Breiman first began to study such simplified RF
\citep{Bre:2000}, and then well-established results were obtained by
\cite{Bia_Dev_Lug:2008}.  The main difference between PRF and RF is
that, in PRF, partitioning of the input space is performed
independently from the dataset, using random variables independent
from the data.  The first reason why it is easier to handle
theoretically PRF is that the random partitioning (associated to a
tree) is thus independent of the prediction made within a given element of the partition. 
Secondly, random
mechanisms used to obtain the partitioning of PRF are usually simple
enough to allow an exact calculation of several quantities of
interest.

In addition to theoretical analysis of PRF models described
below, some empirical studies tried these methods. \cite{Cut_Zha:2001}
compared performances of PERT (PErfect Random Tree ensemble) with
original RF. 
\cite{Geu_Ern_Weh:2006} studied ``Extremely Randomized
Trees'', which are not exactly PRF but lay between standard
RF and PRF. 
These results are encouraging since PRF or ``Extremely
Randomized Trees'' reach very good performances on real datasets. Thus,
understanding such PRF models could give birth to simple but performing RF variants, 
in addition to the original goal of understanding the original RF model.

\subsection{RF and PRF partitioning schemes}
\label{sec:rf-prf-partitioning}
We now precisely define some RF and PRF models, focusing on the regression setting
that we consider in the paper.

Following the usual terminology of RF, in this paper, any partitioning
of the input space $\X \subset \R^d$ is called a tree. Classical
tree-based estimators are related to trees because of the recursive
aspect of the partitioning mechanism. In order to simplify further
discussion, we make a slight language abuse by also calling a tree a
partitioning obtained in a non-recursive way. The leaves of the tree
(its terminal nodes) are the elements of the final partition. Inner
nodes of the tree are also useful for determining (recursively) to
which element of the partition belongs some $x \in \X$, as usual with
decision trees.

Furthermore, as in classical tree-based estimators we focus on
partitions of $\X$ made of hyperrectangles and we denote an
hyperrectangle by $\lambda = \prod_{j=1}^d \lambda_j$ where
$\lambda_1, \ldots, \lambda_d$ are intervals of $\R$.

To each tree corresponds a tree estimator, obtained by assigning a
real number to each leaf of the tree, which is the (constant) value of
the estimator on the corresponding element of the partition.
Throughout the paper, we always consider regressograms, that is, the
value assigned to each leaf is the average of the response variable
values among observations falling into this leaf.

Finally, a forest is a sequence of trees, and the corresponding forest
estimator is obtained by aggregating the corresponding tree
estimators, that is, averaging them.

\medskip

We can now describe precisely some important RF and PRF models. 
Original RF \citep{Bre:2001} are defined as follows.  Each randomized
tree is obtained from independent bootstrap samples of the original
data set, by the following recursive partitioning of the input space,
which is a variant of the CART algorithm of \cite{Bre_etal:1984}:
\begin{algo}[Original RF model] \label{model.original}
\begin{itemize}\hfill\\ \vspace*{-3ex}
\item Put $\X$ at the root of the tree. 
\item Repeat (until a stopping criterion is met), for each leaf
  $\lambda$ of the current tree:
  \begin{itemize}
  \item choose \texttt{mtry} variables (uniformly, and without
    replacement, among all variables),
  \item find the \emph{best} split (i.e., the best couple \{ split
    variable $j$, split point $t$ \}, among all possible ones
    involving the \texttt{mtry} selected variables) and perform the
    split, that is, put $\{ x \in \lambda \, / \, x_j < t\}$ and $\{ x
    \in \lambda \, / \, x_j \geq t\}$ at the two children nodes below
    $\lambda$.
  \end{itemize}
\end{itemize}
\end{algo}
The parameter \texttt{mtry} $\leq \dimX$ is crucial for the method and is fixed for
all nodes of all trees of the forest. The \emph{best} split is found
by minimizing an heterogeneity measure, which is related to some
quadratic risk \citep[see][]{Bre_etal:1984}.

One of the main reasons of the difficulty to theoretically analyze this
algorithm comes from the fact that the partitioning is data-dependent,
and that the same data are used to optimize the partition and
to allocate values to tree leaves.

\bigskip

The first PRF model was introduced in \cite{Bre:2000}. 
In comparison to another model introduced in Section~\ref{sec.multidim.BPRF} (Balanced PRF, BPRF), 
we name it UBPRF (UnBalanced PRF).
The input space is set to $\X = [0, 1)^\dimX$, and the random partitioning
mechanism is the following:
\begin{algo}[UBPRF model]\label{algo.ubprf}\hfill\\ \vspace*{-3ex}
\begin{itemize}
\item Put $[0,1)^{\dimX}$ at the root of the tree.
\item Repeat $\nfeu$ times:
  \begin{itemize}
  \item randomly choose a node $\lambda$, to be splitted, uniformly among all terminal nodes,
  \item randomly choose a split variable $j$ (uniformly among the
    $\dimX$ coordinates),
  \item randomly choose a split point $t$ uniformly over $\lambda_j$
    and perform the split, that is, put $\{ x \in \lambda \, / \, x_j
    < t\}$ and $\{ x \in \lambda \, / \, x_j \geq t\}$ at the two children nodes
    below $\lambda$.
  \end{itemize}
\end{itemize}
\end{algo}
\cite{Bia_Dev_Lug:2008} established a universal consistency result in
a classification framework, for trees and forests associated to this
PRF model, provided that input variables have a uniform distribution
on $[0,1)^\dimX$.

In this paper, we do not study the UBPRF model but we consider a very 
close one in Section~\ref{sec.multidim.BPRF} (BPRF). The only
difference is that at each step, all nodes are split, resulting in
balanced trees.

\bigskip

Assuming $\X=[0,1)$, another PRF model, introduced in \cite{Gen:2012}
and called PURF (Purely Uniformly Random Forests), is obtained by
drawing $\nfeu$ points independently with a uniform distribution
on $[0,1)$, and by taking them as split points for the partitioning. An
equivalent recursive definition of the PURF model is the following:
\begin{algo}[PURF model]\label{algo.purf}\hfill\\ \vspace*{-3ex}
\begin{itemize}
\item Put $[0,1)$ at the root of the tree. 
\item Repeat $\nfeu$ times:
  \begin{itemize}
  \item choose a terminal node $\lambda$, to be splitted, each with a
    probability equal to its length,
  \item choose a split point $t$ uniformly over $\lambda$ and perform
    the split, that is, put $\{ x \in \lambda \, / \, x < t\}$ and $\{
    x \in \lambda \, / \, x \geq t\}$ at the two children nodes below $\lambda$.
  \end{itemize}
\end{itemize}
\end{algo}
Compared to UBPRF, $\dimX=1$ and the probability to choose a terminal
node for being splitted is not uniform but equal to its length.
\cite{Gen:2012} proved for the PURF model the estimation error is
strictly smaller for an infinite forest than for a single tree, and that 
when $\nfeu$ is well chosen, both trees and forests of the PURF
model reach the minimax rate of convergence when the regression
function is Lipschitz.

\subsection{Contributions}
\label{sec:objectives}
This paper compares the performances of a forest estimator and
a single tree, for three PRF models, in the regression framework with
an input space $\X \subset \R^{\dimX}$.  Section~\ref{sec.general}
presents a general decomposition of the quadratic risk of a general
PRF estimator into three terms, which can be interpreted as a
decomposition into approximation error and estimation error.  The rest
of the paper focuses mostly on the approximation error terms.
Section~\ref{sec.multidim} shows general bounds on the approximation
error under smoothness conditions on the regression function.  These
bounds allow us to compare precisely the rates of convergence of the
approximation error and of the quadratic risk of trees and forests for
three PRF models: a toy model (Section~\ref{sec_toy_model}), the PURF
model (Section~\ref{sec.purf}) and the BPRF model
(Section~\ref{sec.multidim.BPRF}).
For all three models, the approximation error decreases to zero as the
number of leaves of each tree tends to infinity, with a faster rate
for an infinite forest than for a single tree. As a consequence, when
the sample size tends to infinity and assuming the number of leaves is
well-chosen, the quadratic risk decreases to zero faster for an
infinite forest estimator than for a single tree estimator.
As a by-product, our analysis provides theoretical grounds for choosing
the number of trees in a forest in order to perform almost as well as
an infinite forest, contrary to previous results on this question that were only
empirical \citep[for instance][]{Lat_Dev_Dec:2001}.
Furthermore, we show a link between the bias of the infinite forest
and the bias of some kernel estimator, which enlightens the different
rates obtained in
Sections~\ref{sec_toy_model}--\ref{sec.multidim.BPRF}.
Finally, our theoretical analysis is illustrated by some simulation
experiments in Section~\ref{sec.simu}, for the three models of
Sections~\ref{sec_toy_model}--\ref{sec.multidim.BPRF} and for another
PRF model closer to original RF.

\subsection*{Notation}
Throughout the paper, $L(*)$ denotes a constant 
depending only on quantities appearing in $*$, 
that can vary from one line to another or even within the same line. 

\section{Decomposition of the risk of purely random forests} 
\label{sec.general}

This section introduces the framework of the paper and provides a
general decomposition of the risk of purely random forests, on which
the rest of the paper is built.

\subsection{Framework} \label{sec.general.framework}

Let $\X$ be some measurable set and $ s : \X \flens \R $ some
measurable function.
Let us assume we observe a learning sample $D_{\nobs} = (X_i,Y_i)_{1
  \leq i \leq \nobs} \in \paren{\X \times \R}^{\nobs}$ of $\nobs$
independent observations with common distribution $P$ such that
\[ \forall i \in \set{1, \ldots, \nobs}\, , \quad \E\croch{Y_i \sachant X_i} = s(X_i) \quad \mbox{and} \quad \E\croch{ \paren{ Y_i - s(X_i)}^2 \sachant X_i} = \sigma^2 > 0 \enspace . \]

The goal is to estimate the function $s$ in terms of quadratic risk.
Let $(X,Y) \sim P$ be independent from $D_{\nobs}$.  Then, the
quadratic risk of some (possibly data-dependent) estimator $t: \X
\mapsto \R$ of $s$ is defined by
\[ \E\croch{ \paren{ s(X) - t(X) }^2 } \enspace . \]

\subsection{Purely random forests} \label{sec.general.forest}
In this paper, we consider random forest estimators which are the aggregation of several tree estimators, that is, several regressograms.

For every finite partition $\bU$ of $\X$, the tree (regressogram) estimator
on $\bU$ is defined by 
\[ 
\forall x \in \X \, , \quad 
\ERM (x;\bU;D_{\nobs}) 
= \ERM (x;\bU)
\egaldef \sum_{\lambda \in \bU} \frac{ \sum_{i=1}^{\nobs} Y_i \un_{X_i
    \in \lambda} } { \card\set{ 1 \leq i \leq \nobs \, / \, X_i \in
    \lambda } }\un_{x \in \lambda} 
    \enspace , \]
with the convention $0/0=0$ for dealing with the case where no $X_i$ belong to some $\lambda\in \bU$. 
Note that $\bU$ can be any partition of $\X$, not necessarily obtained from a decision tree, even if we always call it a tree. 

Let $\narb \geq 1$ be some integer.  Given a sequence $\bV_{\narb}=(\bU^j)_{1
  \leq j \leq \narb}$ of finite partitions of $\X$, the associated
forest estimator is defined by
\[ 
\forall x \in \X \, , \quad 
\ERM (x ; \bV_{\narb};D_{\nobs}) 
= \ERM (x ; \bV_{\narb}) 
= \ERM_{\bV_{\narb}} (x) \egaldef \frac{1}{\narb} \sum_{j=1}^{\narb} \ERM
(x;\bU^j;D_{\nobs}) 
\enspace . \]

This paper considers random forests, that is, for which $\bU^1,
\ldots, \bU^{\narb}$ are independent finite partitions of $\X$ with
common distribution $\cU$.
More precisely, we focus on \emph{purely random forest}, that is, we
assume
\begin{equation}
  \label{hyp.purely-random} \tag{\ensuremath{\mathbf{PR}}}
  \bV_{\narb} = (\bU^1, \ldots, \bU^{\narb}) \mbox{ is independent from the data } D_{\nobs} = (X_i,Y_i)_{ 1 \leq i \leq \nobs} 
  \enspace . 
\end{equation}

\subsection{Decomposition of the risk}
\label{sec.general.risk} % Bias of a random forest
For any finite partition $\bU$ of $\X$, we define
\[ 
\tilde{s}_{\bU} \egaldef \sum_{\lambda\in\bU} \betl \un_{\lambda}
\quad \mbox{where} \quad
\betl \egaldef \E\croch{ s(X) \sachant X \in \lambda } 
\]
is well-defined for every $\lambda \subset \X$ such that
$\P(X\in\lambda)>0$. So, $\tilde{s}_{\bU}(X)$ is a.s. well-defined.
The function $\tilde{s}_{\bU}$ minimizes the least-squares risk among
functions that are constant on every $\lambda \in \bU$.
For any finite sequence $\bV_{\narb}=(\bU^j)_{1 \leq j \leq \narb}$, we define
\[ 
\tilde{s}_{\bV_{\narb}} \egaldef
\frac{1}{\narb} \sum_{j=1}^{\narb} \tilde{s}_{\bU^j} 
\enspace .\]

\medskip

%%% Decomposition generale risque = biais + variance
% 
Then, as noticed in \cite{Gen:2012}, assuming
\eqref{hyp.purely-random}, the (point-wise) quadratic risk of $\ERM
(\cdot ; \bV_{\narb})$ can be decomposed as the sum of two terms: for
every $x \in \X$,
\begin{equation} \label{eq.risk-cond.bias+var}
  \E\croch{ \paren{ s(x) - \ERM (x ; \bV_{\narb}; D_{\nobs})}^2 } =
  \E \croch{ \paren{s(x) - \tilde{s}_{\bV_{\narb}}(x)}^2 } +
  \E\croch{ \paren{ \tilde{s}_{\bV_{\narb}}(x) - \ERM (x ; \bV_{\narb}; D_{\nobs})}^2 }
\end{equation}
since
\[ 
\E\croch{ \ERM (x ; \bV_{\narb}; D_{\nobs}) \sachant \bV_{\narb} }
= \tilde{s}_{\bV_{\narb}}(x) 
\enspace . \]

%%% Decomposition generale du biais
%
Furthermore, the first term in the right-hand side of Eq.~\eqref{eq.risk-cond.bias+var} can be decomposed as follows. 
\begin{prop}\label{pro.bias_decomposition}
Let $\cU$ be some distribution over the set of finite partitions of $\X$, 
$\narb \geq 1$ some integer and $x\in \X$. 
Then,
\begin{equation} \label{eq.pro.bias_decomposition}
\E \croch{ \paren{s(x) - \tilde{s}_{\bV_{\narb}}(x)}^2 } 
= \paren{ s(x)-\E_{\bU \sim \cU} \croch{ \tilde{s}_{\bU}(x) } }^2 +
    \frac{\var_{\bU \sim \cU} \paren{ \tilde{s}_{\bU}(x) }}{\narb}
  \end{equation}
\end{prop}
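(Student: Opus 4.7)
The plan is to recognize the identity as the classical bias-variance decomposition applied to the sample mean of an i.i.d. sequence. Fix $x \in \X$ and set $Z_j \egaldef \tilde{s}_{\bU^j}(x)$ for $j \in \{1,\ldots,\narb\}$. Because the partitions $\bU^1,\ldots,\bU^\narb$ are drawn i.i.d. from $\cU$, the real-valued random variables $Z_1, \ldots, Z_\narb$ are i.i.d. with common distribution that of $\tilde{s}_\bU(x)$ under $\bU \sim \cU$. In particular, they share a common mean $\mu \egaldef \E_{\bU \sim \cU}[\tilde{s}_\bU(x)]$ and a common variance $v \egaldef \var_{\bU \sim \cU}(\tilde{s}_\bU(x))$. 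One subtlety to check is that $\mu$ is well-defined, i.e. that $\tilde{s}_\bU(x)$ is a.s. finite; this follows from the convention setting $\betl = 0$ when $\P(X \in \lambda) = 0$ together with boundedness considerations inherited from $s$ (when needed, one may interpret the statement under the implicit assumption that $v < \infty$, so that both sides of \eqref{eq.pro.bias_decomposition} make sense).

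Next I would write $\tilde{s}_{\bV_\narb}(x) = \frac{1}{\narb}\sum_{j=1}^\narb Z_j$ and insert $\mu$ inside the square:
\begin{equation*}
s(x) - \tilde{s}_{\bV_\narb}(x) = \paren{s(x) - \mu} + \paren{\mu - \tilde{s}_{\bV_\narb}(x)} \enspace .
\end{equation*}
Expanding the square and taking expectations, the cross term equals $2(s(x) - \mu)\,\E[\mu - \tilde{s}_{\bV_\narb}(x)]$, which vanishes since $\E[\tilde{s}_{\bV_\narb}(x)] = \mu$ (the quantity $s(x) - \mu$ being deterministic). This yields
\begin{equation*}
\E\croch{\paren{s(x) - \tilde{s}_{\bV_\narb}(x)}^2} = \paren{s(x) - \mu}^2 + \var\paren{\tilde{s}_{\bV_\narb}(x)} \enspace .
\end{equation*}

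Finally I would compute the variance of the average using independence of the $Z_j$:
\begin{equation*}
\var\paren{\tilde{s}_{\bV_\narb}(x)} = \frac{1}{\narb^2} \sum_{j=1}^\narb \var(Z_j) = \frac{v}{\narb} \enspace ,
\end{equation*}
which is the desired second term. Substituting $\mu$ and $v$ back in their long forms gives exactly \eqref{eq.pro.bias_decomposition}.

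There is no genuine obstacle here: the proof is essentially mechanical and relies only on linearity of expectation, independence of the partitions, and the elementary identity $\E[(W - a)^2] = (\E[W] - a)^2 + \var(W)$ for a random variable $W$ and a constant $a$. The only thing worth stating carefully is the i.i.d. structure of $(Z_j)_j$, which is immediate from the definition of $\bV_\narb$ as an i.i.d. sequence of partitions under $\cU$.
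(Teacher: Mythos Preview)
Your proof is correct and follows essentially the same approach as the paper: both recognize that $s(x)-\tilde{s}_{\bV_\narb}(x)$ is (up to sign) the average of $\narb$ i.i.d.\ real random variables and then apply the standard mean-plus-variance identity for such an average. The only cosmetic difference is that the paper works directly with the centered variables $s(x)-\tilde{s}_{\bU^j}(x)$, while you insert $\mu$ and expand the square; the content is identical.
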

\begin{proof}[Proof of Proposition~\ref{pro.bias_decomposition}]
  Remark that
  \[ 
  s(x) - \tilde{s}_{\bV_{\narb}}(x) 
  = \frac{1}{\narb}
  \sum_{j=1}^{\narb} \paren{ s(x) - \tilde{s}_{\bU^j}(x)} 
  \]
  is the average of $\narb$ independent random variables, with the
  same mean
  \[ s(x)-\E_{\bU \sim \cU} \croch{ \tilde{s}_{\bU}(x)} \]
  and variance
  \[ \var_{\bU \sim \cU} \paren{ \tilde{s}_{\bU}(x) } \enspace , \]
  which directly leads to Eq.~\eqref{eq.pro.bias_decomposition}.
\end{proof}
Hence, for every $x\in \X$, we get a decomposition of the (point-wise) quadratic risk of $\ERM (\cdot ; \bV_{\narb})$ into three terms: for every $x\in \X$, if \eqref{hyp.purely-random} holds true, 
\begin{align} 
\notag
& \qquad   \E\croch{ \paren{ s(x) - \ERM (x ; \bV_{\narb}; D_{\nobs})}^2 } 
\\
\label{eq.risk-cond.three.terms} 
& = \underbrace{\paren{ s(x)-\E_{\bU \sim \cU} \croch{ \tilde{s}_{\bU}(x)} }^2 +
  \frac{\var_{\bU \sim \cU} \paren{ \tilde{s}_{\bU}(x) }}{\narb}}_{\mbox{approximation error or bias}}
 + \underbrace{\E\croch{ \paren{ \tilde{s}_{\bV_{\narb}}(x) - \ERM (x ; \bV_{\narb}; D_{\nobs})}^2 }}_{\mbox{estimation error or variance}}
\end{align}
which can be rewritten as 
\begin{gather} \label{eq.risk-cond.three.terms.avec-notation} 
\E\croch{ \paren{ s(x) - \ERM (x ; \bV_{\narb}; D_{\nobs})}^2 } 
= \underbrace{
\paren{\Biasinfty{\cU}(x)}^2 +
  \frac{\Vararbre{\cU}(x)}{\narb}}_{\mbox{approximation error}}
 + \underbrace{
 \E\croch{ \paren{ \tilde{s}_{\bV_{\narb}}(x) - \ERM (x ; \bV_{\narb}; D_{\nobs})}^2 }}_{\mbox{estimation error}}
\\
\mbox{where} \quad 
\notag 
\Biasinfty{\cU}(x) \egaldef \paren{ s(x)-\E_{\bU \sim \cU} \croch{ \tilde{s}_{\bU}(x)} }^2 
%\\
%\notag 
\quad \mbox{and} \quad 
\Vararbre{\cU}(x) \egaldef \var_{\bU \sim \cU} \paren{ \tilde{s}_{\bU}(x) } \enspace .
\end{gather}
We choose to name (point-wise) approximation error (or bias) 
\begin{equation} 
\label{eq.bias-cond.avec-notation}
\Bias_{\cU,\narb}(x) \egaldef
\E_{\bV_{\narb} \sim \cU^{\otimes \narb}} \croch{ \paren{s(x) - \tilde{s}_{\bV_{\narb}}(x)}^2 }
= \Biasinfty{\cU}(x) +
  \frac{\Vararbre{\cU}(x)}{\narb}
\end{equation}
for consistency with the case of a single tree, where
$\ERM(\cdot ;\bU;D_{\nobs})$ is a regressogram (conditionally to $\bU$) and the (point-wise)
approximation error is
\[ 
\E\croch{ \paren{s(x) - \tilde{s}_{\bU}(x)}^2 } = \Bias_{\cU,1}(x)
\, . 
\]
Note that in all examples we consider in the following,
$\Biasinfty{\cU}(x)$ and $\Vararbre{\cU}(x)$ are asymptotically
decreasing functions of the number of leaves in the tree, as expected
for an approximation error.
Remark also that by Eq.~\eqref{eq.bias-cond.avec-notation},
$\lim_{\narb \to +\infty} \Bias_{\cU,\narb}(x) = \Biasinfty{\cU}(x)$,
which justifies the notation $\Biasinfty{\cU}(x)$.
Let us emphasize other authors such as \cite{Geu_Ern_Weh:2006} call
bias (of any tree or forest) the quantity
\[  
\Biasinfty{\cU} \egaldef \E\croch{ \Biasinfty{\cU}(X) } =
\E\croch{ \paren{s(X) - \E_{\bU \sim \cU} \croch{ \tilde{s}_{\bU}(X)} }^2 } \, ,
\]
that we call (integrated) bias of the infinite forest, 
so their simulation results must be compared with our theoretical statements with caution.

\medskip

%%% A propos du biais
% 
The main goal of the paper is to study the (integrated) bias  
\[  
\Bias_{\cU,\narb} \egaldef \E\croch{ \Bias_{\cU,\narb}(X) } = \E\croch{ \paren{s(X) - \tilde{s}_{\bV_{\narb}}(X)}^2 } 
\]
of a forest of $\narb$ trees, in particular how it depends on $q$. 
By Eq.~\eqref{eq.bias-cond.avec-notation}, $\Bias_{\cU,\narb}$ is a non-increasing function of the number $\narb$ of trees in the forest. 
Furthermore, we can write the ratio between the (integrated)
approximation errors of a single tree and of a forest as
%
%%\begin{equation}
%%  \Bias_{\cU,1}(x) = \Biasinfty{\cU}(x) + \Vararbre{\cU}(x)
%%\end{equation}
%%%
%%or equivalently,
%%%
\begin{equation}
  \label{eq.comp-biais-arbre-foret.gal}
  \frac{\Bias_{\cU,1}}{\Biasinfty{\cU}}
  = 1 + \frac{\Vararbre{\cU}}{\Biasinfty{\cU}} 
\qquad \mbox{where} \quad 
\Vararbre{\cU} \egaldef \E\croch{ \Vararbre{\cU}(X)} 
\enspace .
\end{equation}
So, taking an infinite forest instead of a single tree decreases the
bias by the factor given by
Eq.~\eqref{eq.comp-biais-arbre-foret.gal}, which is larger or equal to one. 

The following sections compute in several cases the two key quantities
$\Biasinfty{\cU}$ and $\Vararbre{\cU}$, showing the ratio
$\frac{\Bias_{\cU,1}}{\Biasinfty{\cU}}$ can be much larger than one.

\subsection{General bounds on the variance term}
% 
%%% A propos de la variance
% 
The variance term in Eq.~\eqref{eq.risk-cond.three.terms}, also called estimation error, is not the primary focus of the paper, but we need some bounds on its integrated version for comparing the risks of tree and forest estimators. 
The following proposition provides the bounds we use throughout the paper. 

\begin{proposition} \label{pro.maj-variance}
  Let $\narb \in [ 1 , +\infty] $ and $\bV_{\narb} = (\bU^1, \ldots, \bU^{\narb})$ be a sequence of independent random partitions of $\X$ with common distribution $\cU$, such that $\card(\bU^1) = \nfeu \in [1, + \infty)$ almost surely. 
  If assumption \eqref{hyp.purely-random} holds true, then, 
  \begin{align} 
    \label{eq.pro.maj-variance.foret-vs-arbre}
    \E\croch{ \paren{ \tilde{s}_{\bV_{\narb}}(X) - \ERM (X ; \bV_{\narb})}^2 } &\leq \E\croch{ \paren{ \tilde{s}_{\bU^1}(X) - \ERM (X ; \bU^1)}^2 }
    \\
    \label{eq.pro.maj-variance.arbre-gal}
    \E\croch{ \paren{ \tilde{s}_{\bU^1}(X) - \ERM (X ; \bU^1)}^2 } &\leq 
    \frac{\nfeu}{\nobs} \paren{ 2 \sigma^2  + 9 \norm{s}_{\infty}^2 }
    \\
    \mbox{and} \qquad 
    \E\croch{ \paren{ \tilde{s}_{\bU^1}(X) - \ERM (X ; \bU^1)}^2 } 
%%    &\geq 
%%    \frac{\sigma^2}{\nobs} \paren{ \nfeu - 2 \E_{\bU^1 \sim \cU} 
%%    \croch{ \sum_{\lambda \in \bU^1} \paren{ 1 - \P\paren{ X \in \lambda } }^{\nobs} } }
%%    \\
    &\geq 
    \frac{\sigma^2}{\nobs} \paren{ \nfeu - 2 \E_{\bU^1 \sim \cU} 
    \croch{ \sum_{\lambda \in \bU^1} \exp\paren{ - \nobs \P\paren{ X \in \lambda } } } }
    \label{eq.pro.maj-variance.arbre-gal-min}
    \enspace .
  \end{align}
If we assume in addition that $s$ is $K$-Lipschitz with respect to
some distance $\delta$ on $\X$, then
\begin{align}
    \E\croch{ \paren{ \tilde{s}_{\bU^1}(X) - \ERM (X ; \bU^1)}^2 } 
%%    &\leq \frac{2}{\nobs} 
%%    \paren{\sigma^2 \nfeu 
%%    + K^2 \E_{\bU^1 \sim \cU} \croch{ \sum_{\lambda \in \bU^1} \paren{ \diam_{\delta} (\lambda) }^2 } }
%%\notag 
%%\\
%%\notag 
%%    & \qquad 
%%    + \frac{\norm{s}_{\infty}^2}{\nobs} \E_{\bU^1 \sim \cU} 
%%    	\croch{ \sum_{\lambda \in \bU^1} \paren{ 1 - \P\paren{ X \in \lambda } }^{\nobs} }
%%\\
    &\leq \frac{2}{\nobs} 
    \paren{\sigma^2 \nfeu 
    + K^2 \E_{\bU^1 \sim \cU} \croch{ \sum_{\lambda \in \bU^1} \paren{ \diam_{\delta} (\lambda) }^2 } }
\notag 
\\
    & \qquad 
    + \frac{\norm{s}_{\infty}^2}{\nobs} \E_{\bU^1 \sim \cU} 
    	\croch{ \sum_{\lambda \in \bU^1} \exp\paren{ - \nobs \P\paren{ X \in \lambda } } }
    \label{eq.pro.maj-variance.arbre-Lipschitz}
    \enspace . 
\end{align}
\end{proposition}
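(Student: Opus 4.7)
The first inequality \eqref{eq.pro.maj-variance.foret-vs-arbre} is a direct Jensen application: write $\tilde s_{\bV_\narb}(X) - \ERM(X;\bV_\narb) = \frac{1}{\narb}\sum_{j=1}^\narb (\tilde s_{\bU^j}(X) - \ERM(X;\bU^j))$, apply convexity of $z\mapsto z^2$, take expectations, and use that the $\bU^j$ share the common distribution $\cU$ to reduce the $\narb$-tree variance to the single-tree one. The three remaining bounds concern $\narb = 1$, and the plan is to work conditionally on $\bU = \bU^1$. Using independence of $X$ from $D_\nobs$, one has
\[
\E\left[(\tilde s_\bU(X) - \ERM(X;\bU))^2 \mid \bU\right] = \sum_{\lambda \in \bU} p_\lambda \, \E\left[(\bar Y_\lambda - \beta_\lambda)^2 \mid \bU\right],
\]
with $p_\lambda = \P(X \in \lambda \mid \bU)$ and $\bar Y_\lambda$ the leaf mean (zero when $N_\lambda = \card\{i : X_i \in \lambda\}$ equals $0$).

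The computational core is the \emph{master identity}
\[
\E\left[(\bar Y_\lambda - \beta_\lambda)^2 \mid \bU\right] = \left(\sigma^2 + \var(s(X) \mid X \in \lambda)\right) \, \E\left[\un_{N_\lambda\geq 1}/N_\lambda \mid \bU\right] + \beta_\lambda^2 (1-p_\lambda)^\nobs,
\]
obtained by splitting on $\{N_\lambda = 0\}$ (contributing $\beta_\lambda^2$) and $\{N_\lambda = k \geq 1\}$, then decomposing $\bar Y_\lambda - \beta_\lambda = \frac{1}{k}\sum_j(Y_{i_j}-s(X_{i_j})) + \frac{1}{k}\sum_j(s(X_{i_j})-\beta_\lambda)$ and checking that the conditional cross-term vanishes. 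The upper bounds \eqref{eq.pro.maj-variance.arbre-gal} and \eqref{eq.pro.maj-variance.arbre-Lipschitz} then fall out by combining (i) $\un_{N\geq 1}/N \leq 2/(N+1)$ together with the closed form $\E[1/(N+1)] = (1-(1-p)^{\nobs+1})/((\nobs+1)p)$, giving $p_\lambda \E[\un_{N_\lambda\geq 1}/N_\lambda \mid \bU] \leq 2/(\nobs+1)$; (ii) $\var(s(X) \mid X \in \lambda) \leq 4\|s\|_\infty^2$ in the general case or $\leq K^2 \diam_\delta(\lambda)^2$ in the Lipschitz case (the latter because $s(X)$ varies by at most $K\diam_\delta(\lambda)$ on $\lambda$); and (iii) $(1-p_\lambda)^\nobs \leq e^{-\nobs p_\lambda}$ together with the elementary $p(1-p)^\nobs \leq 1/(e\nobs)$ to absorb the $\{N_\lambda = 0\}$ residual into the stated constants.

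The lower bound \eqref{eq.pro.maj-variance.arbre-gal-min} is the delicate part and the step I expect to be the main obstacle. Dropping the nonnegative $\var$ and $\beta_\lambda^2$ contributions in the master identity reduces the task to establishing, for each leaf, the pointwise bound $\nobs p_\lambda \, \E[\un_{N_\lambda\geq 1}/N_\lambda \mid \bU] \geq 1 - 2 e^{-\nobs p_\lambda}$. The clean route is a Jensen argument under the conditional law $\P(\cdot \mid N_\lambda \geq 1)$: convexity of $z\mapsto 1/z$ gives $\E[1/N_\lambda \mid N_\lambda \geq 1, \bU] \geq 1/\E[N_\lambda \mid N_\lambda \geq 1, \bU] = \P(N_\lambda\geq 1 \mid \bU)/(\nobs p_\lambda)$, so that $\E[\un_{N_\lambda\geq 1}/N_\lambda \mid \bU] \geq \P(N_\lambda\geq 1 \mid \bU)^2/(\nobs p_\lambda)$. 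Combined with $\P(N_\lambda\geq 1 \mid \bU) = 1-(1-p_\lambda)^\nobs \geq 1 - e^{-\nobs p_\lambda}$ and the elementary $(1-e^{-\nobs p_\lambda})^2 \geq 1 - 2 e^{-\nobs p_\lambda}$, summing over the $\nfeu$ leaves produces the factor $\nfeu - 2\sum_\lambda e^{-\nobs p_\lambda}$ and a final integration over $\bU$ concludes. A naive bound via $\un_{N\geq 1}/N \geq \un_{N\geq 1}/\nobs$ would yield only $p_\lambda(1-e^{-\nobs p_\lambda})/\nobs$ per leaf, which sums to something proportional to $1$ rather than $\nfeu$; this is precisely why the Jensen trick on $1/N_\lambda$ is essential.
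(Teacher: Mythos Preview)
Your approach is essentially the paper's, made self-contained. The paper invokes Proposition~1 of \cite{Arl:2008a} to obtain the conditional formula
\[
\E\bigl[(\tilde{s}_{\bU}(X)-\ERM(X;\bU))^2 \,\big|\, \bU\bigr]
= \frac{1}{\nobs}\sum_{\lambda\in\bU}(1+\delta_{\nobs,\pl})\bigl(\sigma^2+\var(s(X)\mid X\in\lambda)\bigr)
+ (\text{empty-leaf term})
\]
with $-2(1-\pl)^{\nobs}\le\delta_{\nobs,\pl}\le 1$, whereas you derive the equivalent master identity directly and then bound $\nobs\,\pl\,\E[\un_{N_\lambda\ge 1}/N_\lambda\mid\bU]=1+\delta_{\nobs,\pl}$ by hand. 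Your Jensen argument on $z\mapsto 1/z$ under $\P(\,\cdot\mid N_\lambda\ge 1)$ is precisely what underlies the cited lower bound $\delta\ge -2(1-\pl)^{\nobs}$, so the two routes coincide for \eqref{eq.pro.maj-variance.foret-vs-arbre}--\eqref{eq.pro.maj-variance.arbre-gal-min}.

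One caveat on \eqref{eq.pro.maj-variance.arbre-Lipschitz}: your (correct) master identity yields the empty-leaf contribution $\sum_\lambda \pl\,\betl^2(1-\pl)^{\nobs}$, while the paper's intermediate formula records it as $\tfrac{1}{\nobs}\sum_\lambda\betl^2(1-\pl)^{\nobs}$. These are not the same, and your version does \emph{not} majorize into the stated residual $\tfrac{\snorm{s}_{\infty}^2}{\nobs}\sum_\lambda e^{-\nobs\pl}$: for instance with $\nobs=100$ and $\pl=1/10$ one has $\pl(1-\pl)^{\nobs}\approx 2.7\times10^{-6}>e^{-\nobs\pl}/\nobs\approx 4.5\times10^{-7}$. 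So your plan actually proves the Lipschitz inequality with the alternative residual $\snorm{s}_{\infty}^2\sum_\lambda\pl\,e^{-\nobs\pl}$ (or the cruder $\snorm{s}_{\infty}^2\nfeu/(e\nobs)$), which is equally serviceable for the paper's downstream applications but is not literally the inequality as stated; the discrepancy appears to originate in the paper's empty-leaf correction term rather than in your argument.
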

Proposition~\ref{pro.maj-variance} is proved in
Section~\ref{sec.pr.gen_res.variance}. 
It shows the variance of a forest is upper bounded by the variance of
a single tree---with
Eq.~\eqref{eq.pro.maj-variance.foret-vs-arbre}---, and that the
variance of a single tree is roughly proportional to $\nfeu / \nobs$
where $\nfeu$ is the (deterministic) number of leaves of the
tree---with
Eq.~\eqref{eq.pro.maj-variance.arbre-gal}--\eqref{eq.pro.maj-variance.arbre-Lipschitz}.

\section{Approximation of the bias under smoothness conditions} \label{sec.multidim}
We now focus on the multidimensional case, say $\X=[0,1)^{\dimX}$ for some integer $\dimX \geq 1$, and we only consider partitions $\bU$ of $\X$ into hyper-rectangles, such that each $\lambda \in \bU$ has the form
\begin{equation} \label{eq.partition.multidim} \lambda = \prod_{i=1}^{\dimX} [A_i, B_i) \enspace , \end{equation} 
with $0 \leq A_i < B_i \leq 1$, for all $i=1, \ldots, \dimX$.
All RF models lead to such partitions. 
For the sake of simplicity, we assume from now on that 
\begin{equation} 
  \label{hyp.unif} \tag{{\bf Unif}} 
  X \mbox{ has a uniform distribution over } [0,1)^{\dimX}
  \enspace , 
\end{equation}
so that for each $\lambda = \prod_{i=1}^{\dimX} [A_i, B_i) \in \bU$,
\begin{equation} \label{eq.betl.multidim} \betl = \E\croch{ s(X) \sachant X \in \lambda } = \frac{1}{\absj{\lambda}} \int_{\lambda} s(t) dt \end{equation}
where $\absj{\lambda} = \prod_{i=1}^{\dimX} (B_i - A_i)$ denotes the volume of $\lambda$.

Let us now fix some $x \in \X$. 
For every partition $\bU$ of $\X$, $I_{\bU}(x) \egaldef \prod_{i=1}^{\dimX} [ A_{i,\bU}(x) \, , \, B_{i,\bU}(x) )$ denotes the unique element of $\bU$ to which $x$ belongs. Then, Eq.~\eqref{eq.betl.multidim} implies 
\begin{equation} \label{eq.valeur-foret-x.multidim}
  \tilde{s}_{\bU}(x) = \frac{1}{\absj{I_{\bU}(x)}} \int_{I_{\bU}(x)}
  s(t)  dt \enspace .
\end{equation}

\medskip

In order to compute $\Biasinfty{\cU}(x)$ and $\Vararbre{\cU}(x)$ we need some smoothness assumption about $s$ among the following: 
\begin{gather}
  \tag{{\bf H2a}} \label{hyp.s-2-fois-derivable.alt}
  \left.
    \begin{split}
      s \mbox{ is differentiable  on } \X \mbox{ and } 
      \exists \CHdeuxa > 0 \, , \quad \forall t,x \in \X \, , \qquad  \\ 
      \absj{ s(t) - s(x) - \nabla s(x) \cdot (t-x)} \leq \CHdeuxa \norm{t-x}_2^2 \quad 
    \end{split}
  \right\}
  \\
  \tag{{\bf H2}} \label{hyp.s-2-fois-derivable}
  \hspace{-0.95cm} s \mbox{ is twice differentiable  on } \X \mbox{ and } \nabla^{(2)} s \mbox{ is bounded}
  \\
  \tag{{\bf H3a}} \label{hyp.s-3-fois-derivable.alt}
  \left.
    \begin{split}
      s \mbox{ is twice differentiable  on } \X \mbox{ and } 
      \exists \CHtroisa > 0 \, , \quad \forall t,x \in \X \, ,  \qquad  \\
      \absj{ s(t) - s(x) - \nabla s(x) \cdot (t-x) - \frac{1}{2} (t-x)^{\top} \nabla^{(2)} s(x) (t-x) } \leq \CHtroisa \norm{t-x}_3^3 \enspace 
    \end{split}
  \right\}
  \\
  \tag{{\bf H3}} \label{hyp.s-3-fois-derivable}
  s \mbox{ is three times differentiable  on } \X \mbox{ and } \nabla^{(3)} s \mbox{ is bounded}
\end{gather}
where for any $v \in \R^{\dimX}$, $\snorm{v}_2^2 \egaldef \sum_{i=1}^{\dimX} v_i^2$ and $\snorm{v}_3^3 \egaldef \sum_{i=1}^{\dimX} \absj{v_i}^3$.
We denote by $\snorm{\nabla^{(2)} s}_{\infty,2}$ (resp. $\snorm{\nabla^{(3)} s}_{\infty,3}$) the sup-norm of $\nabla^{(2)} s$ (resp. $\nabla^{(3)} s$):
\begin{align*}
  & \norm{\nabla^{(2)} s}_{\infty,2} \egaldef \sup_{\norm{v}_2 \leq 1, \, y \in [0,1)^{\dimX} } \absj{ \sum_{1\leq i,j \leq \dimX} \frac{\partial^2 s(y)}{\partial x_i \partial x_j} v_i v_j }  \\
  & \norm{\nabla^{(3)} s}_{\infty,3} \egaldef \sup_{\norm{v}_3 \leq 1, \, y \in [0,1)^{\dimX} } \absj{ \sum_{1\leq i,j,k \leq \dimX} \frac{\partial^3 s(y)}{\partial x_i \partial x_j \partial x_k} v_i v_j v_k }  \, .
\end{align*}
By Taylor-Lagrange inequality, \eqref{hyp.s-2-fois-derivable} implies \eqref{hyp.s-2-fois-derivable.alt} with $ \CHdeuxa = \snorm{\nabla^{(2)} s}_{\infty,2} / 2$. 
Similarly, \eqref{hyp.s-3-fois-derivable} implies \eqref{hyp.s-3-fois-derivable.alt} with $ \CHtroisa = \snorm{\nabla^{(3)} s}_{\infty,3} / 6$. 

\medskip

Let us define, for every $i,j \in \set{1, \ldots, \dimX}$ and $x \in \X$, 
\begin{gather*}
  m_{A,i,\cU,x} \egaldef \E\croch{ x_i -A_{i,\bU}(x)} \qquad m_{B,i,x,\cU} \egaldef \E\croch{ B_{i,\bU}(x) -x_i} \\
  m_{AA,i,x,\cU} \egaldef \E\croch{ \paren{x_i -A_{i,\bU}(x)}^2 } \qquad m_{BB,i,x,\cU} \egaldef \E\croch{ \paren{B_{i,\bU}(x) -x_i}^2} \\
  m_{AAA,i,x,\cU} \egaldef \E\croch{ \paren{x_i -A_{i,\bU}(x)}^3 } \qquad m_{BBB,i,x,\cU} \egaldef \E\croch{ \paren{B_{i,\bU}(x) -x_i}^3} \\
  m_{AAAA,i,x,\cU} \egaldef \E\croch{ \paren{x_i -A_{i,\bU}(x)}^4 } \qquad m_{BBBB,i,x,\cU} \egaldef \E\croch{ \paren{B_{i,\bU}(x) -x_i}^4} \\
  m_{AB,i,x,\cU} \egaldef \E\croch{ \paren{x_i -A_{i,\bU}(x)}\paren{B_{i,\bU}(x) -x_i}} \\
  m_{B-A,i,j,x,\cU} \egaldef \E\croch{ \paren{B_{i,\bU}(x) -x_i - \paren{x_i -A_{i,\bU}(x)} } \paren{B_{j,\bU}(x) -x_j - \paren{x_j -A_{j,\bU}(x)} } } 
  % \\
  % m_{AA,i,AA,j,x,\cU} = \E\croch{ \paren{x_i -A_{i,\bU}(x)}^2 \paren{x_j -A_{j,\bU}(x)}^2 } \\
  % m_{BB,i,BB,j,x,\cU} = \E\croch{ \paren{B_{i,\bU}(x) - x_i}^2 \paren{B_{j,\bU}(x) - x_j}^2 } 
  % \enspace .
\end{gather*}
and for any $x \in \X \,$, assuming either \eqref{hyp.s-2-fois-derivable.alt} or \eqref{hyp.s-3-fois-derivable.alt}, 
\begin{align*}
  \mathcal{M}_{1,\cU,x} &\egaldef \frac{1}{2} \sum_{i=1}^{\dimX} \croch{ \frac{\partial s}{\partial x_i} (x) \paren{ m_{B,i,x,\cU} - m_{A,i,x,\cU} } } \\
  \mathcal{M}_{2,\cU,x} &\egaldef \frac{1}{6} \sum_{i=1}^{\dimX}
  \croch{ \frac{\partial^2 s}{\partial x_i^2}(x) \paren{
      m_{AA,i,x,\cU} + m_{BB,i,x,\cU} - m_{AB,i,x,\cU} } } \\ & \qquad
  + \frac{1}{8} \sum_{1\leq i\neq j \leq \dimX} \croch{ \frac{\partial^2 s}{\partial x_i \partial x_j}(x) m_{B-A,i,j,x,\cU} } \\
  \mathcal{N}_{2,\cU,x} &\egaldef \frac{1}{4} \sum_{i=1}^{\dimX}
  \croch{ \paren{ \frac{\partial s}{\partial x_i} (x) }^2 \paren{
      m_{AA,i,x,\cU} + m_{BB,i,x,\cU} - 2 \, m_{AB,i,x,\cU} } } \\ &
  \qquad
  + \frac{1}{4} \sum_{1\leq i\neq j \leq \dimX} \croch{ \frac{\partial s}{\partial x_i} (x) \frac{\partial s}{\partial x_j} (x) m_{B-A,i,j,x,\cU} } \\
  \mathcal{R}_{2,\cU,x} &\egaldef \frac{ \CHdeuxa }{3} \sum_{i=1}^{\dimX} \paren{ m_{AA,i,x,\cU} + m_{BB,i,x,\cU} - m_{AB,i,x,\cU} } \\
  \mathcal{R}_{3,\cU,x} &\egaldef \frac{ \CHtroisa }{4} \sum_{i=1}^{\dimX} \paren{ m_{AAA,i,x,\cU} + m_{BBB,i,x,\cU} } \\
  \mathcal{R}_{4,\cU,x} &\egaldef \frac{ 2 \dimX \CHdeuxa^2 }{9}
  \sum_{i=1}^{\dimX} \paren{ m_{AAAA,i,x,\cU} + m_{BBBB,i,x,\cU} }
  \enspace .
\end{align*}
We can now state a general result on the two terms appearing in decomposition~\eqref{eq.bias-cond.avec-notation} of the approximation error of a forest of size $\narb$ when assumption~\eqref{hyp.purely-random} holds true. 
\begin{prop} \label{pro.bias.multidim.H3} Let $\X=[0,1)^{\dimX}$ and
  assume \eqref{hyp.s-2-fois-derivable.alt} and~\eqref{hyp.unif} hold
  true.  Then, for every $x \in \X \,$,
  \begin{gather} 
    \paren{\mathcal{M}_{1,\cU,x} }^2 - 2 \mathcal{M}_{1,\cU,x}
    \mathcal{R}_{2,\cU,x} \leq \Biasinfty{\cU}(x) \leq \paren{
      \mathcal{M}_{1,\cU,x} + \mathcal{R}_{2,\cU,x}}^2
    \label{eq.pro.bias.multidim.Bias-H2}
    \\
\mbox{and} \quad 
    \absj{ \Vararbre{\cU}(x) - \paren{ \mathcal{N}_{2,\cU,x} - \paren{
          \mathcal{M}_{1,\cU,x} }^2 } } \leq 2 \sqrt{
      \mathcal{R}_{4,\cU,x} \: \paren{ \mathcal{N}_{2,\cU,x} - \paren{
          \mathcal{M}_{1,\cU,x} }^2 } } + \mathcal{R}_{4,\cU,x}
\enspace . 
    \label{eq.pro.bias.multidim.Var-H2}
  \end{gather}
  Furthermore, if \eqref{hyp.s-3-fois-derivable.alt} also holds true
  \textup{(}which implies \eqref{hyp.s-2-fois-derivable.alt} holds with
  $\CHdeuxa = \snorm{\nabla^{(2)} s}_{\infty,2} / 2$\textup{)}, for
  every $x \in \X\,$,
  \begin{gather} 
    \absj{ \Biasinfty{\cU}(x) - \paren{ \mathcal{M}_{1,\cU,x} +
        \mathcal{M}_{2,\cU,x} }^2 } \leq 2 \absj{
      \mathcal{R}_{3,\cU,x} \paren{ \mathcal{M}_{1,\cU,x} +
        \mathcal{M}_{2,\cU,x} } } + \paren{ \mathcal{R}_{3,\cU,x} }^2
    \label{eq.pro.bias.multidim.Bias-H3}
  \end{gather}
\end{prop}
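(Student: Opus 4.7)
The plan is to Taylor-expand $s$ at $x$ and integrate the expansion over the cell $I_{\bU}(x)=\prod_{i=1}^{\dimX}[A_{i,\bU}(x),B_{i,\bU}(x))$. Writing $u_i \egaldef B_{i,\bU}(x)-x_i \geq 0$ and $v_i \egaldef x_i-A_{i,\bU}(x)\geq 0$, the two elementary identities
\[
\frac{1}{u_i+v_i}\int_{-v_i}^{u_i}w\,dw = \frac{u_i-v_i}{2},\qquad \frac{1}{u_i+v_i}\int_{-v_i}^{u_i}w^2\,dw = \frac{u_i^2-u_iv_i+v_i^2}{3}
\]
together with the product structure of the cell let me compute $\tilde s_{\bU}(x)$ from Eq.~\eqref{eq.valeur-foret-x.multidim} axis by axis. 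Under \eqref{hyp.s-2-fois-derivable.alt}, I decompose $s(t)=s(x)+\nabla s(x)\cdot(t-x)+\rho_2(t,x)$ with $\absj{\rho_2(t,x)}\leq\CHdeuxa\snorm{t-x}_2^2$ and obtain, after averaging over $I_{\bU}(x)$,
\[
\tilde s_{\bU}(x)-s(x) = \tfrac{1}{2}\sum_{i=1}^{\dimX}\tfrac{\partial s}{\partial x_i}(x)(u_i-v_i) + R_{\bU}(x),
\]
with $\absj{R_{\bU}(x)}\leq \tfrac{\CHdeuxa}{3}\sum_i(u_i^2-u_iv_i+v_i^2)$. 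Taking $\E_{\bU\sim\cU}$ identifies the linear part with $\mathcal{M}_{1,\cU,x}$ and gives $\absj{\E[R_{\bU}(x)]}\leq\mathcal{R}_{2,\cU,x}$; since $\Biasinfty{\cU}(x)=(\mathcal{M}_{1,\cU,x}+\E[R_{\bU}(x)])^2$, expanding the square yields Eq.~\eqref{eq.pro.bias.multidim.Bias-H2}.

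For the variance, I would write $\Vararbre{\cU}(x)=\var_{\bU}(L+R_{\bU}(x))$ with $L\egaldef\tfrac{1}{2}\sum_i\tfrac{\partial s}{\partial x_i}(x)(u_i-v_i)$. Expanding $L^2$ and taking expectation: the diagonal terms produce $(\partial_is(x))^2(m_{AA,i,x,\cU}+m_{BB,i,x,\cU}-2m_{AB,i,x,\cU})/4$ and the off-diagonal terms produce $\partial_is(x)\,\partial_js(x)\,m_{B-A,i,j,x,\cU}/4$, so $\E[L^2]=\mathcal{N}_{2,\cU,x}$; subtracting $\mathcal{M}_{1,\cU,x}^2=(\E L)^2$ gives the leading variance $\mathcal{N}_{2,\cU,x}-\mathcal{M}_{1,\cU,x}^2$. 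The perturbation from $R$ is controlled by $\var(L+R)-\var(L)=2\cov(L,R)+\var(R)$ and Cauchy--Schwarz, once one establishes the key estimate
\[
R_{\bU}(x)^2 \leq \tfrac{\CHdeuxa^2}{9}\Bigl(\textstyle\sum_i(u_i^2-u_iv_i+v_i^2)\Bigr)^2 \leq \tfrac{\dimX\CHdeuxa^2}{9}\textstyle\sum_i(u_i^2-u_iv_i+v_i^2)^2 \leq \tfrac{2\dimX\CHdeuxa^2}{9}\textstyle\sum_i(u_i^4+v_i^4),
\]
which gives $\E[R_{\bU}(x)^2]\leq\mathcal{R}_{4,\cU,x}$ and yields Eq.~\eqref{eq.pro.bias.multidim.Var-H2}.

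Under the stronger \eqref{hyp.s-3-fois-derivable.alt}, the plan is to keep one extra Taylor order: $s(t)=s(x)+\nabla s(x)\cdot(t-x)+\tfrac{1}{2}(t-x)^\top\nabla^{(2)}s(x)(t-x)+\rho_3(t,x)$ with $\absj{\rho_3(t,x)}\leq\CHtroisa\snorm{t-x}_3^3$. By the product structure of the cell, the quadratic form integrates into a diagonal piece $\tfrac{1}{6}\partial^2_{ii}s(x)(u_i^2-u_iv_i+v_i^2)$ (from the second elementary identity above) and an off-diagonal piece $\tfrac{1}{8}\partial^2_{ij}s(x)(u_i-v_i)(u_j-v_j)$ for $i\neq j$ (from factorising $\int(t_i-x_i)(t_j-x_j)$); after $\E_{\bU\sim\cU}$, these reassemble into $\mathcal{M}_{2,\cU,x}$. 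The remainder is bounded via $\int_{-v_i}^{u_i}\absj{w}^3 dw/(u_i+v_i)=(u_i^4+v_i^4)/(4(u_i+v_i))\leq(u_i^3+v_i^3)/4$ (since $u^4+v^4\leq(u+v)(u^3+v^3)$), giving $\absj{\E[R_3]}\leq\mathcal{R}_{3,\cU,x}$, and expanding the square of $\mathcal{M}_{1,\cU,x}+\mathcal{M}_{2,\cU,x}+\E[R_3]$ yields Eq.~\eqref{eq.pro.bias.multidim.Bias-H3}.

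The main obstacle I anticipate is the variance computation: tracking the diagonal-versus-off-diagonal combinatorics so that the mixed-coordinate moments $m_{B-A,i,j,x,\cU}$ emerge with the correct coefficient $1/4$, and choosing the specific bound $\snorm{t-x}_2^4\leq\dimX\sum_i(t_i-x_i)^4$ (rather than Jensen on the cell integral) so the constant in $\mathcal{R}_{4,\cU,x}$ comes out exactly as $2\dimX\CHdeuxa^2/9$. A secondary difficulty is that $\mathcal{M}_{1,\cU,x}$ has no definite sign in general, so the upper bound in Eq.~\eqref{eq.pro.bias.multidim.Bias-H2} requires either restricting attention to points $x$ where $\mathcal{M}_{1,\cU,x}\geq 0$ or weakening the statement with an absolute value.
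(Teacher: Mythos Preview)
Your proposal is correct and follows essentially the same route as the paper's proof: define the Taylor remainder $T_2$ (your $R_{\bU}$), bound its expectation by $\mathcal{R}_{2,\cU,x}$ and its second moment by $\mathcal{R}_{4,\cU,x}$ via the same Cauchy--Schwarz chain, identify $\var_{\bU}(L)=\mathcal{N}_{2,\cU,x}-\mathcal{M}_{1,\cU,x}^2$, and finish with $\var(L+R)-\var(L)=2\cov(L,R)+\var(R)$; the H3 part is likewise identical, including your inequality $u^4+v^4\leq(u+v)(u^3+v^3)$ which is exactly how the paper passes from $(u_i^4+v_i^4)/(u_i+v_i)$ to $u_i^3+v_i^3$.

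Your closing worry about the sign of $\mathcal{M}_{1,\cU,x}$ is well taken and applies equally to the paper's own argument: from $\Biasinfty{\cU}(x)=(\mathcal{M}_{1,\cU,x}+\E[T_2])^2$ with $\absj{\E[T_2]}\leq\mathcal{R}_{2,\cU,x}$ one only gets $\Biasinfty{\cU}(x)\leq(\absj{\mathcal{M}_{1,\cU,x}}+\mathcal{R}_{2,\cU,x})^2$ in general, and the paper's proof does not supply anything further. In every application (Corollaries~\ref{cor.bias.toy.H2}, \ref{cor.bias.purf.H2}, \ref{cor.bias.BPRF.H2}) the upper bound is immediately loosened to $2\mathcal{M}_{1,\cU,x}^2+2\mathcal{R}_{2,\cU,x}^2$ or similar, so the distinction never matters downstream; you may simply read the upper bound in Eq.~\eqref{eq.pro.bias.multidim.Bias-H2} with $\absj{\mathcal{M}_{1,\cU,x}}$ in place of $\mathcal{M}_{1,\cU,x}$ and proceed.
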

Proposition~\ref{pro.bias.multidim.H3} is proved in
Section~\ref{sec.pr.gen_res.bias}.
As we will see in the following, Proposition~\ref{pro.bias.multidim.H3} is precise. 
Indeed, under \eqref{hyp.s-2-fois-derivable.alt}, 
we get a gap of an order of magnitude between the upper bound on $\Biasinfty{\cU}(x)$ 
in Eq.~\eqref{eq.pro.bias.multidim.Bias-H2} and the lower bound on $\Vararbre{\cU}(x)$ 
in Eq.~\eqref{eq.pro.bias.multidim.Var-H2}. 
Thus from Eq.~\eqref{eq.comp-biais-arbre-foret.gal}, it comes that the
bias of infinite forests 
is much smaller than the bias of single trees.
Furthermore, under \eqref{hyp.s-3-fois-derivable.alt}, we get a tight
lower bound for $\Biasinfty{\cU}(x)$, which shows the upper bound
in Eq.~\eqref{eq.pro.bias.multidim.Bias-H2} gives the actual rate of
convergence, at least when $s$ is smooth enough.
 
\section{Toy model}\label{sec_toy_model}
A toy model of PRF is when the random partition is obtained by
translation of a regular partition of $\X = [0,1)$ into $\nfeu +1 \geq 2$
pieces.  
Formally, $\bU \sim \cUtoy{\nfeu}$ is defined by
\[ 
\bU = \set{ \left[ 0 , \frac{1-T}{\nfeu} \right) \, , \, \left[ \frac{1-T}{\nfeu} , \frac{2-T}{\nfeu}\right) \, , \ldots , \left[ \frac{\nfeu-T}{\nfeu} , 1 \right) } 
\]
where $T$ is a random variable with uniform distribution over $[0,1)$.

This random partition scheme is very close to the example of random
binning features in Section~4 of \cite{Rah_Rec:2007}, the main
difference being that here $\X=[0,1)$ instead of $\R$. 

\subsection{Link between the bias of the infinite forest and the bias of some kernel estimator} \label{sec_toy_model.link-kernel}
First, we show that the expected infinite forest, defined by
$\tilde{s}_{\infty}(x) \egaldef \E_{\bU \sim \cUtoy{\nfeu}} \croch{
  \tilde{s}_{\bU}(x) } $, can be expressed as a convolution between
$s$ and some kernel function.
\begin{prop}\label{toy_model_kernel}
  Assume that $\nfeu \geq 2$, \eqref{hyp.unif} holds true and consider
  the purely random forest model $\cUtoy{\nfeu}$. For any $ x\in
  \croch{ \frac{1}{\nfeu}, 1-\frac{1}{\nfeu} } \, $, the expected
  infinite forest at point $x$ satisfies:
  \begin{align}
    \tilde{s}_{\infty}(x) &= \E_{\bU \sim \cUtoy{\nfeu}} \croch{
      \tilde{s}_{\bU}(x) } = \int_0^1 s(t) h^{\cUtoy{\nfeu}} (t-x) \,
    dt
  \\
   \mbox{where} \quad h^{\cUtoy{\nfeu}}(u) &\egaldef
  \begin{cases}
    \nfeu(1 - \nfeu u)  \mbox{ if } 0 \leq u \leq \frac{1}{\nfeu} \\
    \nfeu(1 + \nfeu u)  \mbox{ if } -\frac{1}{\nfeu} \leq u \leq 0 \\
    0  \mbox{ if } |u| \geq \frac{1}{\nfeu} 
    \enspace . 
  \end{cases}
  \end{align}
\end{prop}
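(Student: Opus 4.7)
The plan is to first identify the cell $I_{\bU}(x)$ containing $x$ as an explicit function of $T$, then write $\tilde{s}_{\infty}(x)$ as a double integral by averaging $\tilde{s}_{\bU}(x)$ over $T$, and finally recognize the tent kernel through Fubini.

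First, I would observe that the partition $\bU$ consists of $\nfeu+1$ cells: a left boundary cell $[0, (1-T)/\nfeu)$ of length $(1-T)/\nfeu$, a right boundary cell $[(\nfeu-T)/\nfeu, 1)$ of length $T/\nfeu$, and $\nfeu-1$ interior cells $[(i-T)/\nfeu, (i+1-T)/\nfeu)$ of length exactly $1/\nfeu$ for $i=1,\ldots,\nfeu-1$. The assumption $x\in[1/\nfeu, 1-1/\nfeu]$ together with $T\in(0,1)$ a.s.\ guarantees that $x$ almost surely lies in an interior cell: indeed, $x \geq 1/\nfeu > (1-T)/\nfeu$ and $x \leq 1-1/\nfeu < 1 - T/\nfeu$ hold a.s. Hence $i(T) \egaldef \lfloor \nfeu x + T\rfloor \in \{1,\ldots,\nfeu-1\}$ almost surely, and \eqref{eq.valeur-foret-x.multidim} yields
$$\tilde{s}_{\bU}(x) = \nfeu \int_{(i(T)-T)/\nfeu}^{(i(T)+1-T)/\nfeu} s(t)\,dt.$$

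Next, the key change of variables is $V \egaldef \nfeu\paren{x - A_{\bU}(x)} = \{\nfeu x + T\}$ (fractional part), which is again uniformly distributed on $[0,1)$ since $T$ is and translation modulo one preserves the uniform distribution. With this reparametrization, $A_{\bU}(x) = x - V/\nfeu$ and $B_{\bU}(x) = x + (1-V)/\nfeu$, so
$$\tilde{s}_{\infty}(x) = \int_0^1 \nfeu \int_{x - v/\nfeu}^{x + (1-v)/\nfeu} s(t)\,dt\,dv.$$

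Finally, I would substitute $u = t - x$ and apply Fubini's theorem. For each $u \in [-1/\nfeu, 1/\nfeu]$, the set of $v\in[0,1]$ satisfying $-v/\nfeu \leq u \leq (1-v)/\nfeu$ is an interval of length $1-\nfeu|u|$, whence
$$\tilde{s}_{\infty}(x) = \int_{-1/\nfeu}^{1/\nfeu} s(x+u)\,\nfeu\paren{1 - \nfeu|u|}\,du.$$
Reverting to $t = x+u$ and using that $[x-1/\nfeu, x+1/\nfeu] \subset [0,1]$ (a direct consequence of $x \in [1/\nfeu, 1-1/\nfeu]$) yields the advertised convolution identity with $h^{\cUtoy{\nfeu}}$ vanishing outside $[-1/\nfeu, 1/\nfeu]$ and equal to $\nfeu(1-\nfeu|u|)$ on that interval, which matches the piecewise definition in the statement. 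The only real bookkeeping difficulty is tracking the random endpoints $A_{\bU}(x), B_{\bU}(x)$ as functions of $T$, but this is precisely what the fractional-part variable $V$ settles in one step, after which the argument reduces to routine Fubini.
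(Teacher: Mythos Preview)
Your proof is correct and follows essentially the same route as the paper: introduce a uniform random variable (your $V=\{\nfeu x+T\}$ is just $1-V_x$ in the paper's notation from Proposition~\ref{pro.toy_model.distrib-A-B}) to parametrize the interior cell containing $x$, then apply Fubini to turn the expectation over $V$ into the tent-kernel weight $\nfeu(1-\nfeu|u|)$. The only cosmetic difference is that the paper computes $\P(\nfeu(t-x)<V_x\le \nfeu(t-x)+1)$ directly, whereas you instead compute the length of $\{v:-v/\nfeu\le u\le(1-v)/\nfeu\}$; these are the same calculation.
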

Proposition~\ref{toy_model_kernel} is proved in
Section~\ref{sec.pr.toy_model.link-kernel}.
One key quantity for our bias analysis is
$\Biasinfty{\cUtoy{\nfeu}}(x)=(\tilde{s}_{\infty}(x) - s(x))^2$, which
is, according to Proposition~\ref{toy_model_kernel}, close to the bias
of the kernel estimator associated to $h^{\cUtoy{\nfeu}}$ \citep[see
e.g. Chapter~4 of][]{Gyo_Las_Krz_Wal:2002}.
This point will enlighten the bias decreasing rates of the next
section.
See also Figure~\ref{fig.urt+all.inf-forest} for a plot of
$h^{\cUtoy{\nfeu}}$, and a comparison with other PRF models.

Finally, we point out that a similar remark has been made by
\cite{Rah_Rec:2007} with a different goal, where $h^{\cUtoy{\nfeu}}$ is called the ``hat
kernel''.

\subsection{Bias for twice differentiable functions}
As a corollary of Proposition~\ref{pro.bias.multidim.H3}, we get the
following estimates of the terms appearing in
decomposition~\eqref{eq.bias-cond.avec-notation} of the bias for the
toy model.
\begin{corollary}\label{cor.bias.toy.H2}
  Let $\nfeu\geq 2$, $ \epstoy_{\nfeu}=1/\nfeu $ and assume \eqref{hyp.s-2-fois-derivable.alt} and~\eqref{hyp.unif} hold true. 
  Then, for every $ x \in \croch{ \epstoy_{\nfeu} , 1 - \epstoy_{\nfeu} }$, 
  \begin{align} \label{eq.bias.toy.H2.Biasinfty}
    \Biasinfty{\cUtoy{\nfeu}}(x) &\leq \frac{\CHdeuxa^2}{36 \nfeu^4} \\ 
    \label{eq.bias.toy.H2.Vararbre}
    \absj{\Vararbre{\cUtoy{\nfeu}}(x) - \frac{\paren{s^{\prime}(x)}^2}{12 \nfeu^2}} &\leq \frac{ 2 \norm{s^{\prime}}_{\infty} \CHdeuxa + \CHdeuxa^2}{\nfeu^3} 
  \end{align}
  and for every $ x \in(0,1) \backslash \croch{ \epstoy_{\nfeu} , 1 - \epstoy_{\nfeu} }$, 
  \begin{align} \label{eq.bias.toy.H2.Biasinfty.border}
    \Biasinfty{\cUtoy{\nfeu}}(x) &
    % \leq  \frac{ \paren{s^{\prime}(x)}^2 }{16 \nfeu^2} + \frac{\CHdeuxa \norm{s^{\prime}}_{\infty}  }{2 \nfeu^3} + \frac{\CHdeuxa^2 }{\nfeu^4}
    \leq  \frac{ \paren{s^{\prime}(x)}^2 }{16 \nfeu^2} + \frac{\CHdeuxa \norm{s^{\prime}}_{\infty}  + \CHdeuxa^2}{2 \nfeu^3} 
    \\ 
    \label{eq.bias.toy.H2.Vararbre.border}
    \absj{\Vararbre{\cUtoy{\nfeu}}(x) - \frac{\paren{s^{\prime}(x)}^2 \PolVarU( \nfeu \min\set{x , 1-x })}{\nfeu^2}} &\leq \frac{ 2 \norm{s^{\prime}}_{\infty} \CHdeuxa + \CHdeuxa^2}{\nfeu^3} 
  \end{align}
  for some polynomial $\PolVarU$ such that $\sup_{t \in [0,1]} \absj{ \PolVarU(t) } \leq 1$.
  As a consequence,
  \begin{align} \label{eq.bias.toy.H2.Biasinfty.integrated}
    \int_0^1 \Biasinfty{\cUtoy{\nfeu}}(x) dx &\leq 
    \frac{\norm{s^{\prime}}_{\infty}^2}{8 \nfeu^3 } + \frac{ \CHdeuxa \norm{s^{\prime}}_{\infty} + 2 \CHdeuxa^2}{\nfeu^4} % en fait, on peut obtenir $\CHdeuxa^2$ au lieu de $2 \CHdeuxa^2$ en ne simplifiant pas les resultat precedent
    \\ 
    \label{eq.bias.toy.H2.Vararbre.integrated}
    \absj{ \int_0^1 \Vararbre{\cUtoy{\nfeu}}(x) dx - \frac{1}{12  \nfeu^2} \int_0^1 \paren{s^{\prime}(x)}^2 \, dx } &\leq \frac{2 \norm{s^{\prime}}_{\infty} \CHdeuxa + \CHdeuxa^2 + 3 \norm{s^{\prime}}_{\infty}^2 }{ \nfeu^3 }  
    \\ 
\mbox{and} \quad 
  \label{eq.bias.toy.H2.Biasinfty.integrated-noborder}
    \int_{\epstoy_{\nfeu}}^{1-\epstoy_{\nfeu}} \Biasinfty{\cUtoy{\nfeu}}(x) dx &\leq \frac{\CHdeuxa^2}{36 \nfeu^4} %  \times  \paren{ 1 - \frac{2}{\nfeu} }
    % \\ 
    % \label{eq.bias.toy.H2.Vararbre.integrated-noborder}
    % \absj{ \int_{\epstoy_{\nfeu}}^{1-\epstoy_{\nfeu}} \Vararbre{\cUtoy{\nfeu}}(x) dx - \frac{ZZZ}{\nfeu^2}} &\leq \frac{ZZZ}{\nfeu^3} 
    \enspace .
  \end{align}
\end{corollary}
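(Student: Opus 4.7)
The plan is to specialize Proposition~\ref{pro.bias.multidim.H3} to $\dimX=1$ and $\cU=\cUtoy{\nfeu}$. This reduces the task to computing, for each $x$, the six moments $m_{A}, m_{B}, m_{AA}, m_{BB}, m_{AB}, m_{AAAA}, m_{BBBB}$ of $x - A_{\bU}(x)$ and $B_{\bU}(x) - x$, then substituting into the expressions for $\mathcal{M}_{1,\cU,x}$, $\mathcal{R}_{2,\cU,x}$, $\mathcal{N}_{2,\cU,x}$ and $\mathcal{R}_{4,\cU,x}$. The two regions $[\epstoy_{\nfeu}, 1-\epstoy_{\nfeu}]$ and its complement in $(0,1)$ will be handled separately, because the random partition is translation-invariant on the first but not on the second.

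For $x \in [\epstoy_{\nfeu}, 1-\epstoy_{\nfeu}]$, the first step is to show that $x$ always falls inside one of the full-length middle cells (of length $1/\nfeu$) of $\bU$ and that $U \egaldef x - A_{\bU}(x)$ is uniformly distributed on $[0, 1/\nfeu)$. This follows by splitting on whether $T$ lies below or above $1 - r$, where $r$ is the fractional part of $\nfeu x$, then noting that $T \mapsto U$ is a piecewise affine bijection of $[0,1)$ onto $[0, 1/\nfeu)$ with constant slope $1/\nfeu$. The moments reduce to moments of $\mathrm{Unif}[0, 1/\nfeu)$: $m_A = m_B = 1/(2\nfeu)$, $m_{AA} = m_{BB} = 1/(3\nfeu^2)$, $m_{AB} = 1/(6\nfeu^2)$, $m_{AAAA} = m_{BBBB} = 1/(5\nfeu^4)$. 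Substitution gives $\mathcal{M}_{1,\cU,x} = 0$, $\mathcal{R}_{2,\cU,x} = \CHdeuxa/(6\nfeu^2)$, $\mathcal{N}_{2,\cU,x} = (s^{\prime}(x))^2/(12\nfeu^2)$ and $\mathcal{R}_{4,\cU,x} = 4\CHdeuxa^2/(45\nfeu^4)$. Plugging into \eqref{eq.pro.bias.multidim.Bias-H2} and \eqref{eq.pro.bias.multidim.Var-H2} yields \eqref{eq.bias.toy.H2.Biasinfty} and \eqref{eq.bias.toy.H2.Vararbre}, the $1/\nfeu^3$ error in \eqref{eq.bias.toy.H2.Vararbre} coming from the cross-term $2\sqrt{\mathcal{R}_4(\mathcal{N}_2 - \mathcal{M}_1^2)}$.

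For $x \in (0, \epstoy_{\nfeu})$ (the case $x \in (1-\epstoy_{\nfeu}, 1)$ being symmetric), $x$ lies in the short first cell when $T < 1-\nfeu x$ and in the full-length second cell otherwise. Integrating the moments against the uniform distribution of $T$ over each sub-range produces explicit polynomials in $(x,\nfeu)$. A compact form arises for the first-order term, $\mathcal{M}_{1,\cU,x} = \frac{s^{\prime}(x)\,\nfeu\,(1/\nfeu - x)^2}{4}$, so that $\mathcal{M}_{1,\cU,x}^2 \leq (s^{\prime}(x))^2/(16\nfeu^2)$; the same computation gives $\mathcal{R}_{2,\cU,x} \leq L(\CHdeuxa)/\nfeu^2$ and $\mathcal{R}_{4,\cU,x} \leq L(\CHdeuxa)/\nfeu^4$. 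Substituting into Proposition~\ref{pro.bias.multidim.H3} yields \eqref{eq.bias.toy.H2.Biasinfty.border} and \eqref{eq.bias.toy.H2.Vararbre.border}; in the latter, the leading part of $\mathcal{N}_{2,\cU,x} - \mathcal{M}_{1,\cU,x}^2$ simplifies after cancellations to $(s^{\prime}(x))^2 \PolVarU(\nfeu x)/\nfeu^2$ with $\PolVarU(y) = (1 + 6y^2 - 3y^4)/48$, a non-decreasing polynomial on $[0,1]$ with $\PolVarU(1) = 1/12 \leq 1$.

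Finally, the integrated inequalities follow by splitting $\int_0^1 = \int_0^{\epstoy_{\nfeu}} + \int_{\epstoy_{\nfeu}}^{1-\epstoy_{\nfeu}} + \int_{1-\epstoy_{\nfeu}}^1$ and applying the pointwise bounds. The middle piece gives \eqref{eq.bias.toy.H2.Biasinfty.integrated-noborder} directly, while each border piece, of length $1/\nfeu$, contributes at most $\norm{s^{\prime}}_{\infty}^2/(16\nfeu^3)$ plus a $1/\nfeu^4$ error after integrating \eqref{eq.bias.toy.H2.Biasinfty.border}, producing \eqref{eq.bias.toy.H2.Biasinfty.integrated}. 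The variance integral \eqref{eq.bias.toy.H2.Vararbre.integrated} combines the constant middle contribution with border contributions that are $O(1/\nfeu^3)$ (since $\PolVarU$ is bounded and each border interval has length $1/\nfeu$). The main obstacle is the border-case moment computation: the partition is not translation-invariant there, so each moment must be obtained by integrating over two sub-cases for $T$, and identifying the algebraic cancellations that produce the $[0,1]$-bounded degree-$4$ polynomial $\PolVarU$ requires careful bookkeeping.
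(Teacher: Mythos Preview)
Your approach is essentially identical to the paper's: specialize Proposition~\ref{pro.bias.multidim.H3} to $\dimX=1$, compute the moments of $(x-A_{\bU}(x),\,B_{\bU}(x)-x)$ for $\cUtoy{\nfeu}$ from the uniform distribution of the shift $T$ (separately on the interior and on the border), and substitute into $\mathcal{M}_1,\mathcal{N}_2,\mathcal{R}_2,\mathcal{R}_4$. One minor arithmetic slip to flag: the polynomial you obtain for the border variance should be $\PolVarU(y)=(5y^4-20y^3+18y^2+1)/48$ rather than $(1+6y^2-3y^4)/48$; both equal $1/12$ at $y=1$ and are bounded by $1$ on $[0,1]$, so the conclusion is unaffected, but the correct $\PolVarU$ is not monotone on $[0,1]$.
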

Corollary~\ref{cor.bias.toy.H2} is proved in
Section~\ref{sec.pr.toy_model.cor-H2}.
The order of magnitude of the bounds on $\Biasinfty{\cUtoy{\nfeu}}$
are the correct ones (up to constants) when $s$ is smooth enough, as
shown by Corollary~\ref{cor.bias.toy.H3} in Section~\ref{sec.toy.H3}.

Inequalities \eqref{eq.bias.toy.H2.Biasinfty.integrated} and
\eqref{eq.bias.toy.H2.Vararbre.integrated} give the first order of the
bias of a tree: $\frac{1}{12 \nfeu^2} \int_0^1 \paren{s^{\prime}(x)}^2
\, dx $, which is the classical bias term of a regular regressogram
\citep[see e.g. Chapter~6.2 of][for a regular histogram, in a density
estimation framework]{Was:2006}.
This is not surprising because a random tree in the toy model is very
close to a regular regressogram, the only difference being that the
regular partition of $[0,1)$ is randomly translated.  So at first
order, the bias of the histogram built on the regular subdivision of
$[0,1)$ and of the one built on the randomly translated one are equal.

\begin{remark}[Border effects]
  The border effects, also known as boundary bias, highlighted by
  Corollary~\ref{cor.bias.toy.H2} is a well-known phenomenon for
  kernel estimators \citep[see e.g. Chapter~5.4 in][]{Was:2006}.
  Since the infinite forest is equivalent to a kernel estimator in
  terms of bias, it suffers from the same phenomenon. We could use
  standard techniques to suppress these border effects (e.g. by
  working on the torus instead of interval $[0,1)$), but this is out
  of the scope of this paper.
\end{remark}

\subsection{Discussion: single tree vs. infinite forest}\label{subsec_discuss_toy_model}
We can now compare a single tree and an infinite forest for the toy
model $\cUtoy{\nfeu}$, first in terms of approximation error for a
given $\nfeu$, then in terms of risk for a well-chosen $\nfeu$.  In
this section, we assume \eqref{hyp.s-2-fois-derivable.alt}
and~\eqref{hyp.unif} hold true.

\subsubsection*{Approximation error}
Corollary~\ref{cor.bias.toy.H2} and
Eq.~\eqref{eq.comp-biais-arbre-foret.gal} allow to compare the
approximation errors of a single tree and of an infinite forest: for
all $x \in [\nfeu^{-1} , 1 - \nfeu^{-1} ]$,
\[ \Biasinfty{\cUtoy{\nfeu}}(x) \leq \frac{L(\CHdeuxa)}{\nfeu^4} \quad
\mbox{whereas} \quad \Bias_{\cUtoy{\nfeu},1}(x) \geq \frac{1}{12 \nfeu^2}
\paren{s^{\prime}(x)}^2  -
\frac{L(\norm{s^{\prime}}_{\infty} , \CHdeuxa)}{\nfeu^3}  \]
is much larger, 
where we recall that notation $L(\cdot)$ is defined at the end of
Section~\ref{sec.intro}. 
The same comparison occurs when
integrating over $x \in [\nfeu^{-1} , 1 - \nfeu^{-1} ]$.
Therefore, considering an infinite forest instead of a single tree
decreases the approximation error from an order of magnitude, and not
only from a constant factor, when the number $\nfeu + 1$ of leaves of
each tree tends to infinity. More precisely, the bias decreasing rate
of an infinite forest is smaller or equal to the square of the bias
rate of a single tree.

\subsubsection*{Risk bounds for a well-chosen $\nfeu$}
Combining Eq.~\eqref{eq.risk-cond.three.terms.avec-notation}  with approximation
error controls (Corollary~\ref{cor.bias.toy.H2}) and the
general bounds on the estimation error
(Proposition~\ref{pro.maj-variance}), we can compare the statistical
risks of estimators built on a single tree and on an infinite forest,
respectively.
For all $\narb \in [1,+\infty]$ and $\nfeu \geq 1$, suppose $\bV_{\narb}
\sim (\cUtoy{\nfeu})^{\otimes \narb}$ and $\nobs \geq 1$ data points
are available. 
Let $\varepsilon \in ]0, 1/2[$ and consider only trees with $\nfeu
\geq 1/\varepsilon$ leaves and points $x \in [\varepsilon,1-\varepsilon]$, in order to avoid border effects. 
Then, 
\begin{align*}
&\quad \int_{\varepsilon}^{1-\varepsilon} \E\croch{\paren{ \ERM\paren{x;
      \bV_{\narb}; D_{\nobs}} - s(x) }^2} dx 
\\
&= 
\int_{\varepsilon}^{1-\varepsilon} \Bias_{\cUtoy{\nfeu},\narb}(x) dx +
\int_{\varepsilon}^{1-\varepsilon} \E\croch{ \paren{\ERM\paren{x; \bV_{\narb};
  D_{\nobs}} - \tilde{s}_{\bV_{\narb}}(x)}^2 } dx
\\
&\leq \int_{\varepsilon}^{1-\varepsilon}
\Bias_{\cUtoy{\nfeu},\narb}(x) dx + \int_0^1 \E\croch{ \paren{\ERM\paren{x; \bV_{\narb};
  D_{\nobs}} - \tilde{s}_{\bV_{\narb}}(x)}^2 } dx
\\
&\leq \int_{\varepsilon}^{1-\varepsilon}
\Bias_{\cUtoy{\nfeu},\narb}(x) dx 
+ \frac{2 \sigma^2 (\nfeu + 1)}{\nobs} 
+ \frac{2 \norm{s^{\prime}}_{\infty}^2 (\nfeu+1)}{\nobs \nfeu^2}
+ \frac{\norm{s}_{\infty}^2 \croch{ (\nfeu-1) e^{-\nobs / \nfeu} + 2}}{\nobs} 
\end{align*}
using Eq.~\eqref{eq.pro.maj-variance.arbre-Lipschitz} and that if $\bU \sim
\cUtoy{\nfeu}$, $\sup_{\lambda \in \bU}
\diam_{L^2}(\lambda)\leq \nfeu^{-1}$ and 
$\P(X \in \lambda) = \nfeu^{-1}$ for every $\lambda \in \bU \setminus
\set{ \left[ 0 , \frac{1-T}{\nfeu} \right) \, , \, \left[
    \frac{\nfeu-T}{\nfeu} , 1 \right) } $ a.s.

So, if we are able to choose the number of leaves $\nfeu+1$
optimally---for instance by cross-validation
\citep[see e.g.][]{Arl_Cel:2010:surveyCV}---, the risk of an infinite forest
estimator, defined by:
\[\forall x\in\X \quad \ERM_{\infty}\paren{x, D_{\nobs}} \egaldef
\E_{\bU \sim \cU} \croch{ \ERM\paren{x, \bU, D_{\nobs}} \sachant
  D_{\nobs} } \enspace ,\]
is upper bounded as follows: if $\nobs \geq 1/\varepsilon$ and if \eqref{hyp.s-2-fois-derivable.alt} holds true, 
\begin{align*}
  &\qquad \inf_{1/\varepsilon \leq \nfeu \leq \nobs}
  \int_{\varepsilon}^{1-\varepsilon} \E\croch{\paren{
      \ERM_{\infty}\paren{x, D_{\nobs}} - s(x) }^2} dx
  \\
  &\leq \inf_{1/\varepsilon \leq \nfeu \leq \nobs} \set{
\frac{L(\CHdeuxa)}{\nfeu^4}
+ \frac{2 \sigma^2 (\nfeu + 1)}{\nobs} 
+ \frac{2 \norm{s^{\prime}}_{\infty}^2 (\nfeu+1)}{\nobs \nfeu^2}
+ \frac{\norm{s}_{\infty}^2 \croch{ (\nfeu-1) e^{-\nobs / \nfeu} + 2}}{\nobs} 
 }
  \\
  &\leq L(\CHdeuxa) \inf_{1/\varepsilon \leq \nfeu \leq \nobs} \set{
    \frac{1}{\nfeu^4} 
+ \frac{\sigma^2 \nfeu }{\nobs} 
+ \norm{s}_{\infty}^2 \frac{\nfeu}{\nobs} e^{-\nobs / \nfeu}
 }
+ \frac{\norm{s}_{\infty}^2 + \norm{s^{\prime}}_{\infty}^2 }{\nobs}
  \\
  &\leq L(\CHdeuxa) 
  \paren{ \frac{\sigma^2}{\nobs} }^{4/5} \paren{ 1 +
    \frac{ \norm{s}_{\infty}^2 }{\nobs^{4/5} \sigma^{12/5}} } 
    + \frac{ \norm{s}_{\infty}^2 + \norm{s^{\prime}}_{\infty}^2 }{\nobs}
\\
&\leq L(\CHdeuxa) 
  \paren{ \frac{\sigma^2}{\nobs} }^{4/5}
\end{align*}
by Lemma~\ref{le.opt-risk-bis} in Section~\ref{sec.pr.technical}, assuming in addition $\nobs \geq L(\sigma^2, \varepsilon, \norm{s}_{\infty},  \norm{s^{\prime}}_{\infty})$. 
Thus, we recover the classical convergence risk
rate of a kernel estimator when the regression function satisfies \eqref{hyp.s-2-fois-derivable.alt} \citep[see e.g. Chapter~5.4 in][]{Was:2006}.

\bigskip

For $\narb=1$, the risk of a tree estimator is lower bounded by the
following. We again suppose that $\varepsilon \in ]0, 1/2[$, and in
addition, we assume that $\int_{\varepsilon}^{1-\varepsilon} \paren{
  s^{\prime}(x) }^2 dx >0$ and fix $\nfeu_0 = \left\lfloor \max \set{
    \frac{24 L(\norm{s^{\prime}}_{\infty} ,
      \CHdeuxa)}{\int_{\varepsilon}^{1-\varepsilon} \paren{
        s^{\prime}(x) }^2 dx} , 1/\varepsilon} \right\rfloor + 1$.
From a slight adaptation of Eq.~\eqref{eq.pro.maj-variance.arbre-gal-min} in Proposition~\ref{pro.maj-variance} (by
integrating only over leaves $\lambda \in \bU$ such that $\lambda \subset [\varepsilon , 1-\varepsilon]$), if $\bU
\sim \cUtoy{\nfeu}$ we have:

\begin{align*}
  \int_{\varepsilon}^{1-\varepsilon} \E\croch{\paren{\ERM\paren{x; \bU;
      D_{\nobs}} - \tilde{s}_{\bU}(x)}^2} dx
  &\geq
  \frac{\sigma^2}{\nobs} \croch{1-2 \exp(-\nobs / \nfeu)} \times \card\set{ \lambda \in
    \bU \telque \lambda \subset [\varepsilon, 1-\varepsilon] }
  \\
  &\geq \frac{\sigma^2}{\nobs} \croch{1- 2 \exp(-\nobs / \nfeu)} \nfeu (1-2\varepsilon)
\end{align*}

so that if $\nobs \geq \nfeu_0$, 
\begin{align*}
  &\qquad \inf_{\nfeu_0 \leq \nfeu \leq n}
  \int_{\varepsilon}^{1-\varepsilon} \E\croch{\paren{ \ERM \paren{x,
        \bU, D_{\nobs}} - s(x) }^2} dx
  \\
  &= \inf_{\nfeu_0 \leq \nfeu \leq n}
  \int_{\varepsilon}^{1-\varepsilon} \paren{
    \E\croch{\paren{\tilde{s}_{\bU}(x) - s(x) }^2} +
    \E\croch{\paren{ \ERM \paren{x, \bU, D_{\nobs}} - \tilde{s}_{\bU}(x) }^2} } dx
  \\
  &\geq \inf_{\nfeu_0 \leq \nfeu \leq n} \set{
    \int_{\varepsilon}^{1-\varepsilon} \Bias_{\cUtoy{\nfeu},1}(x) dx +
    \frac{\sigma^2 \nfeu(1-2\varepsilon) \croch{ 1 - 2 \exp\paren{ - \nobs / \nfeu }
      }}{\nobs} }
  \\
  &\geq \inf_{\nfeu_0 \leq \nfeu \leq n} \set{ \nfeu^{-2} \frac{1}{12}
    \int_{\varepsilon}^{1-\varepsilon} \paren{s^{\prime}(x)}^2 \, dx -
    L(\norm{s^{\prime}}_{\infty} , \CHdeuxa) \nfeu^{-3} +
    \frac{\sigma^2 \nfeu (1-2\varepsilon) \croch{1 - 2\exp\paren{-1}} }{\nobs}
  }
  \\
  &\geq \inf_{\nfeu_0 \leq \nfeu \leq n} \set{ \nfeu^{-2} \frac{1}{24}
    \int_{\varepsilon}^{1-\varepsilon} \paren{s^{\prime}(x)}^2 \, dx +
    \frac{\sigma^2 \nfeu (1-2\varepsilon) \croch{ 1 - 2\exp\paren{-1} } }{\nobs} }
  \\
  &\geq L(s, \varepsilon) \paren{ \frac{\sigma^2}{\nobs} }^{2/3} 
\end{align*}
by Lemma~\ref{le.opt-risk}, assuming in addition $\nobs \geq L(\sigma^2,\varepsilon)$. 

Here, we recover the classical risk rate of a regular histogram
estimator \citep[see e.g. Chapter~4 in][]{Gyo_Las_Krz_Wal:2002}.
Therefore, an infinite forest estimator attains (up to some constant)
the minimax rate of convergence \citep[see e.g. Chapter~3
in][]{Gyo_Las_Krz_Wal:2002} over the set of $C^2$ functions---all $C^2$
functions satisfy \eqref{hyp.s-2-fois-derivable.alt}---, whereas a
single tree estimator does not (except maybe for constant functions
$s$).

Note finally that when taking care of the borders, even an infinite
forest estimator is not sufficient for attaining the minimax rate of
convergence (at least, with our upper bounds, but they are tight under
additional assumptions according to Corollary~\ref{cor.bias.toy.H3} in
the next section).
So, as for classical kernel regression estimators, taking into account
border effects can be crucial for some random forests estimators.

\subsection{Tighter bound for three times differentiable functions}
\label{sec.toy.H3}

\begin{corollary}\label{cor.bias.toy.H3}
  Let $\nfeu \geq 2$, $\epstoy_{\nfeu}=1/\nfeu$ and assume
  \eqref{hyp.s-3-fois-derivable.alt} and \eqref{hyp.unif} hold true.
  Then, for every $ x \in \croch{ \epstoy_{\nfeu} , 1 -
    \epstoy_{\nfeu} }$,
  \begin{align} \label{eq.bias.toy.H3.Biasinfty}
    \absj{ \Biasinfty{\cUtoy{\nfeu}}(x) - \frac{\paren{s^{\prime\prime}(x)}^2 }{144 \nfeu^4} }  &\leq \frac{ 3 \CHtroisa \paren{ \norm{s^{\prime\prime}}_{\infty} + \frac{3 \CHtroisa}{2}}}{4 \nfeu^5}
  \end{align}
  and for every $ x \in (0,1) \backslash \croch{ \epstoy_{\nfeu} , 1 - \epstoy_{\nfeu} }$, 
  \begin{equation} \label{eq.bias.toy.H3.Biasinfty.border}
    \begin{split}
      & \absj{ \Biasinfty{\cUtoy{\nfeu}}(x) - \frac{ \paren{s^{\prime}(x)}^2 (1 - \nfeu \min\set{ x , 1 - x})^4}{16 \nfeu^2} } \\
      & \qquad \leq \frac{\norm{s^{\prime}}_{\infty} \norm{s^{\prime\prime} }_{\infty} + 2  \CHtroisa \norm{s^{\prime}}_{\infty} + \norm{s^{\prime\prime}}_{\infty}^2 +  \CHtroisa \norm{s^{\prime\prime}}_{\infty} + 2 \CHtroisa^2 }{ 4 \nfeu^3 }   \enspace .
    \end{split}
  \end{equation}
  As a consequence, 
  \begin{equation} \label{eq.bias.toy.H3.Biasinfty.integrated}
    \begin{split}
      %%% &\hspace{-1cm} \absj{ \int_0^1  \Biasinfty{\cUtoy{\nfeu}}(x) dx  - \frac{ 1 }{16 \nfeu^2} \int_{[0,\nfeu^{-1}] \cup [1-\nfeu^{-1},1] } \paren{s^{\prime}(x)}^2 (1 - \nfeu \min\set{ x , 1 - x})^4 \, dx } \\
      &\hspace{-1cm} \absj{ \int_0^1  \Biasinfty{\cUtoy{\nfeu}}(x) \, dx  - \frac{ 1 }{16 \nfeu^2} \int_0^{\nfeu^{-1}} \croch{ \paren{s^{\prime}(x)}^2 + \paren{s^{\prime}(1-x)}^2 } (1 - \nfeu x)^4 \, dx } \\
      % &\leq \frac{\norm{s^{\prime}}_{\infty} \norm{s^{\prime\prime} }_{\infty} + 2  \CHtroisa \norm{s^{\prime}}_{\infty} + \norm{s^{\prime\prime}}_{\infty}^2 +  \CHtroisa \norm{s^{\prime\prime}}_{\infty} + 2 \CHtroisa^2 }{ 2 \nfeu^4 } + \frac{ 3 \CHtroisa \paren{ \norm{s^{\prime\prime}}_{\infty} + \frac{3 \CHtroisa}{2}}}{4 \nfeu^5} \\
      &\leq \frac{\norm{s^{\prime}}_{\infty} \norm{s^{\prime\prime} }_{\infty} + 2  \CHtroisa \norm{s^{\prime}}_{\infty} + \norm{s^{\prime\prime}}_{\infty}^2 +  2 \CHtroisa \norm{s^{\prime\prime}}_{\infty} + 4 \CHtroisa^2 }{ 2 \nfeu^4 } 
    \end{split}
  \end{equation}
  and
  \begin{align} \label{eq.bias.toy.H3.Biasinfty.integrated-noborder}
    \absj{ \int_{\epstoy_{\nfeu}}^{1-\epstoy_{\nfeu}}  \Biasinfty{\cUtoy{\nfeu}}(x) \, dx  - \frac{1}{144 \nfeu^4} \int_{\epstoy_{\nfeu}}^{1-\epstoy_{\nfeu}} \paren{s^{\prime\prime}(x)}^2 \, dx } &\leq \frac{ 3 \CHtroisa \paren{ \norm{s^{\prime\prime}}_{\infty} + \frac{3 \CHtroisa}{2}}}{4 \nfeu^5} \enspace .
  \end{align}
\end{corollary}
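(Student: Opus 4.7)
The proof specializes Proposition~\ref{pro.bias.multidim.H3} to $\dimX=1$ and the toy model: Eq.~\eqref{eq.pro.bias.multidim.Bias-H3} reduces everything to computing the moments $m_{A}, m_{B}, m_{AA}, m_{BB}, m_{AB}, m_{AAA}, m_{BBB}$ of $x - A_{1,\bU}(x)$ and $B_{1,\bU}(x) - x$ under $\bU \sim \cUtoy{\nfeu}$, separately for interior and boundary points, and then plugging the result into Eq.~\eqref{eq.pro.bias.multidim.Bias-H3}.

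\textbf{Interior case.} For $x \in [\epstoy_{\nfeu}, 1-\epstoy_{\nfeu}]$ the cell containing $x$ always has length exactly $1/\nfeu$ and, setting $U = \{\nfeu x + T\}$, one has $U \sim U[0,1)$ with $x - A = U/\nfeu$ and $B - x = (1-U)/\nfeu$. The usual moments of the uniform distribution then give $m_{A} = m_{B} = 1/(2\nfeu)$, $m_{AA} = m_{BB} = 1/(3\nfeu^2)$, $m_{AB} = 1/(6\nfeu^2)$, $m_{AAA} = m_{BBB} = 1/(4\nfeu^3)$, hence $\mathcal{M}_{1,\cU,x} = 0$, $\mathcal{M}_{2,\cU,x} = s''(x)/(12\nfeu^2)$ and $\mathcal{R}_{3,\cU,x} = \CHtroisa/(8\nfeu^3)$. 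Substituting into Eq.~\eqref{eq.pro.bias.multidim.Bias-H3} directly yields Eq.~\eqref{eq.bias.toy.H3.Biasinfty}.

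\textbf{Boundary case.} I would treat $x \in [0, \epstoy_{\nfeu})$ (the right boundary follows by the symmetry $x \leftrightarrow 1-x$) and write $f = \nfeu x = \nfeu \min\{x, 1-x\} \in [0,1)$. I would then split on whether $T < 1-f$ (probability $1-f$, in which case $x$ lies in the leftmost cell $[0,(1-T)/\nfeu)$) or $T \geq 1-f$: in each sub-case, $(x-A,B-x)$ is an explicit piecewise-affine function of $T$. Averaging and exploiting the telescoping identities $(1-f)^2 + f(2-f) = 1$ and $(1-f)^{j+1} + (1 - (1-f)^{j+1}) = 1$ produces, after some care, $m_A = f(2-f)/(2\nfeu)$ together with the clean values $m_B = 1/(2\nfeu)$ and $m_{BBB} = 1/(4\nfeu^3)$; the other moments are explicit polynomials in $f$ bounded uniformly by constants times $1/\nfeu^2$ or $1/\nfeu^3$. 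In particular $m_B - m_A = (1-f)^2/(2\nfeu)$, which gives $\mathcal{M}_{1,\cU,x}^2 = (s'(x))^2 (1-f)^4/(16\nfeu^2)$, exactly the leading term of Eq.~\eqref{eq.bias.toy.H3.Biasinfty.border}, while $\mathcal{M}_{2,\cU,x} = O(\norm{s''}_{\infty}/\nfeu^2)$ and $\mathcal{R}_{3,\cU,x} = O(\CHtroisa/\nfeu^3)$. Expanding $\absj{\Biasinfty{\cUtoy{\nfeu}}(x) - \mathcal{M}_{1,\cU,x}^2} \leq 2\absj{\mathcal{M}_{1,\cU,x}}\absj{\mathcal{M}_{2,\cU,x}} + \mathcal{M}_{2,\cU,x}^2 + 2\absj{\mathcal{R}_{3,\cU,x}}\absj{\mathcal{M}_{1,\cU,x} + \mathcal{M}_{2,\cU,x}} + \mathcal{R}_{3,\cU,x}^2$ via the triangle inequality and Eq.~\eqref{eq.pro.bias.multidim.Bias-H3} then gives the $O(1/\nfeu^3)$ correction of Eq.~\eqref{eq.bias.toy.H3.Biasinfty.border}.

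\textbf{Integration.} The integrated bounds~\eqref{eq.bias.toy.H3.Biasinfty.integrated} and~\eqref{eq.bias.toy.H3.Biasinfty.integrated-noborder} follow by integrating the pointwise estimates over $[0,\epstoy_{\nfeu}]$, $[\epstoy_{\nfeu},1-\epstoy_{\nfeu}]$ and $[1-\epstoy_{\nfeu},1]$, and applying the change of variable $u = 1-x$ to merge the two boundary contributions into a single integral over $[0,\epstoy_{\nfeu}]$; the interior leading term $(s''(x))^2/(144\nfeu^4)$ is of order $1/\nfeu^4$ and is absorbed into the error of the fully integrated bound. The whole argument is computational; the main technical obstacle is the bookkeeping of the polynomial-in-$f$ moments at the boundary and the verification of the cancellations that collapse $m_B$ and $m_{BBB}$ to their interior values---without these, the cross term $2\mathcal{M}_{1,\cU,x}\mathcal{M}_{2,\cU,x}$ would leak a spurious term of order $1/(\nfeu \cdot \nfeu^2) = 1/\nfeu^3$ with coefficient depending on $f$, preventing the clean boundary leading term from appearing with no further polynomial correction.
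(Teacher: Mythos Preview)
Your approach is essentially the paper's: specialize Proposition~\ref{pro.bias.multidim.H3} to $\dimX=1$, compute the moments of $(x-A_{\bU}(x),\,B_{\bU}(x)-x)$ for the toy model (the paper packages these as Proposition~\ref{pro.toy_model.moments}), and plug into Eq.~\eqref{eq.pro.bias.multidim.Bias-H3}. One remark: the ``cancellations'' you highlight for $m_B$ and $m_{BBB}$ are correct and pleasant, but they are \emph{not} a genuine obstacle---the paper simply bounds all the boundary moments by $|\PolMun|\leq 1/4$, $|\PolRdeu|\leq 1$, $|\PolRtro|\leq 3/2$ uniformly in $f$, and the cross term $2\mathcal{M}_{1}\mathcal{M}_{2}=O(\nfeu^{-3})$ goes straight into the error of Eq.~\eqref{eq.bias.toy.H3.Biasinfty.border} regardless of its $f$-dependence.
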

Corollary~\ref{cor.bias.toy.H3} is proved in Section~\ref{sec.pr.toy_model.cor-H3}. 
Hence, if $s$ satisfies \eqref{hyp.s-3-fois-derivable.alt} the infinite
forest bias is of the order of $\nfeu^{-4}$ (without taking into
account borders).  This shows, at least for $s$ smooth enough, that
upper bounds of Corollary~\ref{cor.bias.toy.H2} involve the correct
rates.

\subsection{Size of the forest}

According to Eq.~\eqref{eq.pro.bias_decomposition}, taking
$\narb=\infty$ is not necessary for reducing the bias of a tree from
an order of magnitude. In particular, even without border effects,
$\Bias_{\cUtoy{\nfeu},\narb}$ is of the same order as $\Biasinfty{\cUtoy{\nfeu}}$ when
$\narb \geq \nfeu^2$ under assumption \eqref{hyp.s-3-fois-derivable.alt}.
So, we get a practical hint for choosing the size of the forest,
leading to an estimator that can be computed since it does not need
$\narb$ to be infinite.

\section{Purely uniformly random forests}\label{sec.purf}
We now consider a PRF model introduced by \cite{Gen:2012}, called Purely Uniformly Random Forests (PURF).

For every integer $\nfeu \geq 1$, the random partition $\bU \sim \cUpurf{\nfeu}$ is defined as follows.
Let $\xi_1, \ldots, \xi_{\nfeu}$ be independent random variables with uniform distribution over $\X=[0,1)$ 
and let $\xi_{(1)} < \dots < \xi_{(\nfeu)}$ the corresponding order statistics. 
Then, $\bU$ is defined by
\[ 
\bU = \set{ \left[ 0 ,\xi_{(1)}  \right) \, , \, \left[ \xi_{(1)} , \xi_{(2)}\right) \, , \ldots , \left[ \xi_{(\nfeu)} , 1 \right) }  \, . 
\]

\subsection{Interpretation of the bias of the infinite forest} \label{sec.purf.inf-forest}
Similarly to Proposition~\ref{toy_model_kernel}, we can try to
interpret the bias of the infinite forest for any purely random
forest.  
Indeed, as in the proof of
Proposition~\ref{toy_model_kernel}, for any $x \in [0,1)$, by Fubini's theorem,
\begin{gather} \label{eq.inf-forest.gal}
  \tilde{s}_{\infty}(x) =
  \E_{\bU} \croch{ \tilde{s}_{\bU}(x) } = \int_0^1 s(t) \E_{\bU}
  \croch{ \frac{\un_{t \in I_{\bU}(x)}}{\absj{I_{\bU}(x)}} } \, dt
  % \\ &
  = \int_0^1 s(t) h^{\cU}(t,x) dt 
\end{gather}
where $I_{\bU}(x)$ denotes the unique interval of $\bU$
containing $x$ and 
\begin{equation}
\label{def.hcU.gal} 
h^{\cU}(t,x) \egaldef \E_{\bU \sim \cU}
\croch{ \frac{\un_{t \in I_{\bU}(x)}}{\absj{I_{\bU}(x)}} } \enspace. 
\end{equation}
In the toy model case, it turns out that $h^{\cUtoy{\nfeu}}(t,x)$ only
depends on $t-x$ (when $x$ is far enough from the boundary), so we
have an exact link with a kernel estimator. 
In the PURF model case,
$h^{\cUpurf{\nfeu}}(t,x)$ does not only depend on $t-x$, but only
mildly as shown by numerical computations
(Figure~\ref{fig.purf.inf-forest}). Hence, for the PURF model, the
bias of the infinite forest is equal to the bias of an estimator close
to a kernel estimator. Note that $h^{\cUpurf{\nfeu}}$ is compared to
$h^{\cU}$ for the other random forest models considered in this paper
on Figure~\ref{fig.urt+all.inf-forest}.

\begin{figure}
  \begin{center}
    \includegraphics[width=0.45\textwidth]{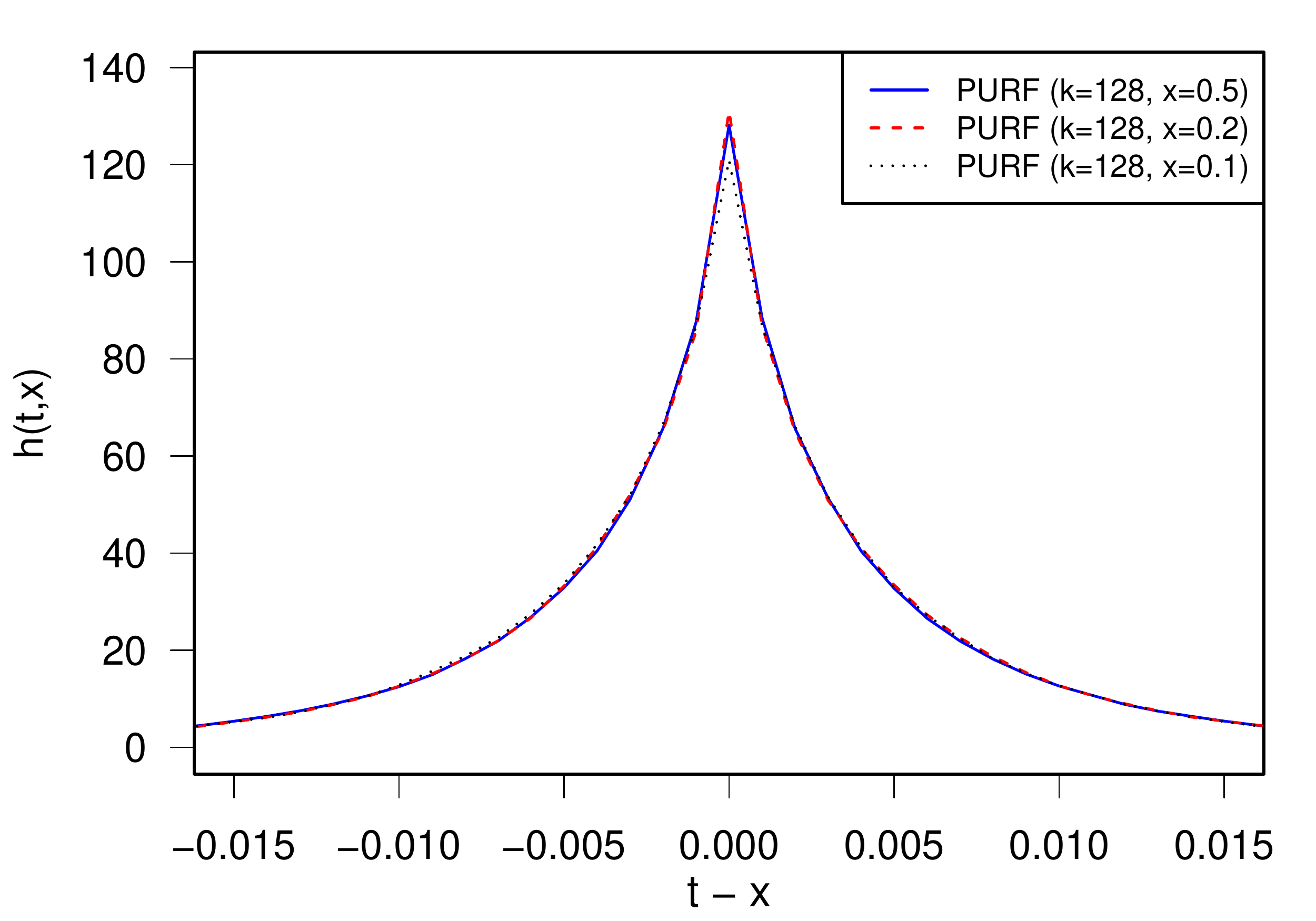}
    \caption{\label{fig.purf.inf-forest} Plot of
      $h^{\cUpurf{\nfeu}}(t,x)$ as a function of $t-x$ for $x\in
      \set{0.1, 0.2, 0.5}$. The values have been estimated by a
      Monte-Carlo approximation with $10\,000$ realizations of $\bU$.}
  \end{center}
\end{figure}

\subsection{Bias for twice differentiable functions}
As a corollary of Proposition~\ref{pro.bias.multidim.H3}, we get the
following estimates of the terms appearing in
decomposition~\eqref{eq.bias-cond.avec-notation} of the bias for the
PURF model.
\begin{corollary}\label{cor.bias.purf.H2}
  Let $\nfeu \geq 1$, $x \in [0,1)$ and assume
  \eqref{hyp.s-2-fois-derivable.alt} and \eqref{hyp.unif} hold true. 
  Then, 
  \begin{align}
    \label{eq.bias.purf.H2.Biasinfty.partout}
    0 \leq \Biasinfty{\cUpurf{\nfeu}}(x) 
    &\leq \frac{ \paren{s^{\prime}(x)}^2 }{2 \nfeu^2} + \frac{2\CHdeuxa^2}{\nfeu^4}
    \\
    \label{eq.bias.purf.H2.Vararbre.partout}
\mbox{and} \quad 
    0 \leq \Vararbre{\cUpurf{\nfeu}}(x)
    &\leq \frac{\paren{ s^{\prime}(x) }^2}{2 \nfeu^2}
    \enspace . 
  \end{align}
  Let $\nfeu\geq 27$ and $\epspurf \egaldef \frac{4\log \nfeu}{\nfeu}$.
  Then, for every $ x \in \croch{ \epspurf , 1 - \epspurf }$, 
  \begin{align}
    \label{eq.bias.purf.H2.Biasinfty.milieu}
    \Biasinfty{\cUpurf{\nfeu}}(x)
    &\leq \frac{2 \CHdeuxa^2}{\nfeu^4}  + \frac{ \paren{ s^{\prime}(x) }^2 }{2 \nfeu^6}
    \\
    \label{eq.bias.purf.H2.Vararbre.milieu}
\mbox{and} \quad 
    \absj{ \Vararbre{\cUpurf{\nfeu}}(x) - \frac{\paren{ s^{\prime}(x) }^2}{2 \nfeu^2} } 
    &\leq \frac{5}{\nfeu^3} \paren{ \absj{ s^{\prime}(x) } + \CHdeuxa }^2
    \enspace . 
  \end{align}
  As a consequence, if $\nfeu\geq 27$, 
  \begin{align}
    \label{eq.bias.purf.H2.Biasinfty.integrated}
    \int_0^1 \Biasinfty{\cUpurf{\nfeu}}(x) dx 
    &\leq 
    \frac{ 4 \norm{s^{\prime}}_{\infty}^2 \log(\nfeu)}{\nfeu^3} + \frac{2 \CHdeuxa^2}{\nfeu^4}  
    \\
    \label{eq.bias.purf.H2.Biasinfty.integrated-noborder}
    \int_{\epspurf}^{1-\epspurf} \Biasinfty{\cUpurf{\nfeu}}(x) dx
    &\leq \frac{2 \CHdeuxa^2}{\nfeu^4}  + \frac{ \norm{s^{\prime}}_{\infty}^2  }{2 \nfeu^6}
    \\
\label{eq.bias.purf.H2.Vararbre.integrated}
\mbox{and} \quad 
\absj{ \Vararbre{\cUpurf{\nfeu}} - \frac{1}{2 \nfeu^2}
      \int_0^1 \paren{ s^{\prime}(x) }^2 \, dx }
%%    % & \leq \frac{3}{\nfeu^3} \paren{ \norm{ s^{\prime} }_{\infty} + \CHdeuxa }^2 + \frac{\epspurf \norm{s^{\prime}}_{\infty}^2}{\nfeu^2}
%%    % \\
%%    % & \leq \frac{6 \norm{ s^{\prime} }_{\infty}^2}{\nfeu^3} +  \frac{6 \CHdeuxa^2}{\nfeu^3}  + \frac{\epspurf \norm{s^{\prime}}_{\infty}^2}{\nfeu^2}
%%    % \\
%%    % & \leq \frac{\paren{ 4 + \frac{6}{\log(27)} } \log(\nfeu) \norm{s^{\prime}}_{\infty}^2 + 6 \CHdeuxa^2}{\nfeu^3}
%%    % \\
%%    % & \leq \frac{5.83 \log(\nfeu) \norm{s^{\prime}}_{\infty}^2 + 6 \CHdeuxa^2 }{\nfeu^3} 
%%    % \\
    & \leq \frac{6 \paren{ (\log(\nfeu) +1) \norm{s^{\prime}}_{\infty}^2  + \CHdeuxa^2} }{\nfeu^3} 
    \enspace . 
  \end{align}
\end{corollary}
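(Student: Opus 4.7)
The plan is to specialize Proposition~\ref{pro.bias.multidim.H3} to dimension $\dimX=1$ and evaluate, for the PURF partition, the moments $m_{A,1,x,\cU}$, $m_{B,1,x,\cU}$, $m_{AA,1,x,\cU}$, $m_{BB,1,x,\cU}$, $m_{AB,1,x,\cU}$, $m_{AAAA,1,x,\cU}$, $m_{BBBB,1,x,\cU}$ that enter $\mathcal{M}_{1,\cU,x}$, $\mathcal{N}_{2,\cU,x}$, $\mathcal{R}_{2,\cU,x}$, $\mathcal{R}_{4,\cU,x}$. Writing $A = A_{1,\bU}(x)$ and $B = B_{1,\bU}(x)$, I condition on $N_L \egaldef \card\sset{i : \xi_i < x} \sim \mathrm{Binomial}(\nfeu, x)$: given $N_L = k \geq 1$, the $k$ points below $x$ are i.i.d.\ $\mathrm{Unif}(0,x)$, so $(x - A)/x \sim \mathrm{Beta}(1,k)$, while $x - A = x$ on $\sset{N_L = 0}$; the symmetric statement holds for $B - x$, and these are conditionally independent given $N_L$. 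Combining the conditional Beta moments with the identity $\binom{\nfeu}{k}/\prod_{j=1}^{p}(k+j) = \binom{\nfeu+p}{k+p}/\prod_{j=1}^{p}(\nfeu+j)$ and the binomial theorem gives closed forms such as
\[
m_{A,1,x,\cU} = \frac{1-(1-x)^{\nfeu+1}}{\nfeu+1}, \quad m_{AA,1,x,\cU} = \frac{2\croch{1-(1-x)^{\nfeu+2}}}{(\nfeu+1)(\nfeu+2)} - \frac{2x(1-x)^{\nfeu+1}}{\nfeu+1},
\]
with analogous formulas for $m_{B}, m_{BB}, m_{AAAA}, m_{BBBB}$ and $m_{AB,1,x,\cU} = \paren{1 - (1-x)^{\nfeu+2} - x^{\nfeu+2}}/\croch{(\nfeu+1)(\nfeu+2)}$.

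For the coarse pointwise bounds~\eqref{eq.bias.purf.H2.Biasinfty.partout}--\eqref{eq.bias.purf.H2.Vararbre.partout} valid on all of $[0,1)$, I plug the universal estimates $m_{AA}, m_{BB} \leq 2/\nfeu^2$, $m_{AB} \geq 0$, $\absj{m_B - m_A} \leq 1/(\nfeu+1)$, and $m_{AAAA}, m_{BBBB} = \mathrm{O}(\nfeu^{-4})$ (all direct consequences of the closed forms together with $x - A, B - x \in [0,1]$) into Proposition~\ref{pro.bias.multidim.H3}, and expand via $(a+b)^2 \leq 2 a^2 + 2 b^2$ for the bias and via~\eqref{eq.pro.bias.multidim.Var-H2} for the variance.

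For the sharp interior bounds~\eqref{eq.bias.purf.H2.Biasinfty.milieu}--\eqref{eq.bias.purf.H2.Vararbre.milieu}, the key observation is that the choice $\epspurf = 4\log(\nfeu)/\nfeu$ is calibrated so that $\max\sset{x^{\nfeu+1}, (1-x)^{\nfeu+1}} \leq e^{-\nfeu \epspurf} = \nfeu^{-4}$ whenever $x \in [\epspurf, 1-\epspurf]$, rendering every boundary correction in the closed forms of Step~1 negligible. In particular $\absj{m_B - m_A}$ is $\mathrm{O}(\nfeu^{-5})$, hence $\mathcal{M}_{1,\cU,x}^2 = \mathrm{O}(\nfeu^{-10})$ is dominated by $\mathcal{R}_{2,\cU,x}^2 \leq \CHdeuxa^2/\nfeu^4$ (which follows from $m_{AA} + m_{BB} - m_{AB} \leq 3/\nfeu^2$), yielding~\eqref{eq.bias.purf.H2.Biasinfty.milieu} after noting that $2 \mathcal{M}_{1,\cU,x}^2 \leq (s^{\prime}(x))^2/(2\nfeu^6)$ for $\nfeu \geq 2$. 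For the variance, combining $m_{AA} = m_{BB} = 2/\croch{(\nfeu+1)(\nfeu+2)} + \mathrm{O}(\nfeu^{-4})$ with $m_{AB} = 1/\croch{(\nfeu+1)(\nfeu+2)} + \mathrm{O}(\nfeu^{-4})$ gives $m_{AA}+m_{BB}-2 m_{AB} = 2/\croch{(\nfeu+1)(\nfeu+2)} + \mathrm{O}(\nfeu^{-4})$, so $\mathcal{N}_{2,\cU,x} = (s^{\prime}(x))^2/(2\nfeu^2) + \mathrm{O}(\nfeu^{-3})$; together with $\mathcal{R}_{4,\cU,x} = \mathrm{O}(\nfeu^{-4})$, inequality~\eqref{eq.pro.bias.multidim.Var-H2} produces~\eqref{eq.bias.purf.H2.Vararbre.milieu}.

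The integrated estimates~\eqref{eq.bias.purf.H2.Biasinfty.integrated}--\eqref{eq.bias.purf.H2.Vararbre.integrated} follow by splitting $[0,1] = [\epspurf, 1-\epspurf] \cup \paren{[0, \epspurf] \cup [1-\epspurf, 1]}$: the interior is handled by integrating the sharp bounds (contributing $\CHdeuxa^2/\nfeu^4$ and $\paren{2\nfeu^2}^{-1} \int (s^{\prime})^2$), while the boundary strip of total measure $2 \epspurf = 8 \log(\nfeu)/\nfeu$ is handled with the coarse bounds, producing the $\log(\nfeu) \norm{s^{\prime}}_\infty^2/\nfeu^3$ corrections; Eq.~\eqref{eq.bias.purf.H2.Biasinfty.integrated-noborder} is merely~\eqref{eq.bias.purf.H2.Biasinfty.milieu} integrated over $[\epspurf, 1-\epspurf]$. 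The main obstacle is the algebraic bookkeeping to carry the explicit closed-form moments through the definitions of $\mathcal{M}_{1,\cU,x}$, $\mathcal{N}_{2,\cU,x}$, $\mathcal{R}_{2,\cU,x}$, $\mathcal{R}_{4,\cU,x}$ while cleanly separating leading-order terms from the exponentially small boundary corrections and producing the polynomial constants displayed in the statement; the delicate quantitative step is the calibration of $\epspurf$, chosen just large enough to force $(1-x)^{\nfeu+1} \leq \nfeu^{-4}$ on the interior (so the $\CHdeuxa^2/\nfeu^4$ bias rate survives) yet small enough that the boundary strip contributes only $\mathrm{O}(\log(\nfeu)/\nfeu^3)$ to the integrated estimates.
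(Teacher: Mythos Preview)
Your proposal is correct and follows essentially the same route as the paper: specialize Proposition~\ref{pro.bias.multidim.H3} to $\dimX=1$, compute the PURF moments of $x-A$ and $B-x$ in closed form, bound $\mathcal{M}_{1}$, $\mathcal{N}_{2}$, $\mathcal{R}_{2}$, $\mathcal{R}_{4}$ coarsely on all of $[0,1)$ and sharply on $[\epspurf,1-\epspurf]$ using $(1-\epspurf)^{\nfeu}\le \nfeu^{-4}$, then integrate by splitting interior and boundary. The only cosmetic difference is that you compute the moments via conditional Beta laws and binomial identities, whereas the paper derives the single joint formula $\P(\alpha_{\nfeu}\ge t,\ \beta_{\nfeu}\ge s)=(1-t-s)^{\nfeu}$ (Proposition~\ref{pro.dist-A-B.purf}) and integrates directly; both yield the same closed forms and the same remainder polynomials $\RestePURF_{j,\nfeu}$.
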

Corollary~\ref{cor.bias.purf.H2} is proved in Section~\ref{sec.pr.purf.cor-H2}.

\subsection{Discussion: single tree vs. infinite forest}\label{subsec_discuss_purf_model}
Results of Corollary~\ref{cor.bias.purf.H2} involve the same rates
as in Corollary~\ref{cor.bias.toy.H2}, so, the discussion of
Section~\ref{subsec_discuss_toy_model} is also valid for the PURF
model (with boundaries of size $\epspurf$ instead of $\epstoy$) except
for the lower bound of the estimation error when avoiding border
effects. 
However, we conjecture that the result is the same than for
the toy model, but solving all technical issues for proving this 
is beyond the scope of the paper. 
So, to sum up, for $\nobs$ sufficiently large, we would again
have that the infinite forest decreasing rate smaller or equal to the square of
the single tree one. This implies that infinite forests would reach the
minimax rate of convergence for $C^2$ functions whereas single tree
does not.

\subsection{Tighter bound for three times differentiable functions}
When $s$ is smooth enough, the rates obtained in Corollary~\ref{cor.bias.purf.H2} are tight, as shown by the following corollary of Proposition~\ref{pro.bias.multidim.H3}. 
\begin{corollary}\label{cor.bias.purf.H3}
  Let $\nfeu\geq 27$ and assume \eqref{hyp.s-3-fois-derivable.alt} and \eqref{hyp.unif} hold true. 
  Then, for every $ x \in \croch{ \epspurf , 1 - \epspurf }$, 
  \begin{align} \label{eq.bias.purf.H3.Biasinfty}
    \absj{ \Biasinfty{\cU}(x) - \frac{ \paren{s^{\prime\prime}(x)}^2 }{4 \nfeu^4} } \leq \frac{ \paren{ 3 \CHtroisa + \frac{1}{27^2}  \absj{s^{\prime} (x)} + 2 \absj{ s^{\prime\prime} (x) } } ^2}{\nfeu^5}
  \end{align}
  and for every $ x \in (0,1) \backslash \croch{ \epspurf , 1 - \epspurf }$, 
  \begin{align} \label{eq.bias.purf.H3.Biasinfty.border}
    \absj{ \Biasinfty{\cU}(x) - \frac{ \paren{s^{\prime}(x)}^2 \paren{ x^{\nfeu+1} - \paren{1-x}^{\nfeu+1} } ^2 }{4 \nfeu^2} }
    \leq \frac{ \paren{ \CHtroisa + \absj{s^{\prime}(x) } + \absj{s^{\prime\prime}(x) } } ^2 }{2 \nfeu^3}
  \end{align}
  As a consequence, 
  \begin{align} \label{eq.bias.purf.H3.Biasinfty.integrated}
    \absj{ \int_0^1  \Biasinfty{\cU}(x) \, dx - \frac{\paren{s^{\prime}(0)}^2 + \paren{s^{\prime}(1)}^2 }{8 \nfeu^3} }
%%    \leq 
%%    \frac{ 4 \paren{ \CHtroisa + \norm{s^{\prime} }_{\infty} + \norm{ s^{\prime\prime} }_{\infty} }^2 \log \nfeu 
%% + \paren{ 2\norm{s^{\prime} }_{\infty} +  \norm{ s^{\prime\prime} }_{\infty} }^2 }{\nfeu^4}
    \leq 
    \frac{ 6 \paren{ \CHtroisa + \norm{s^{\prime} }_{\infty} + \norm{ s^{\prime\prime} }_{\infty} }^2 \log \nfeu }{\nfeu^4}
%%    
%%  \end{align}
\\
\mbox{and} \quad 
%%  \begin{align} 
  \label{eq.bias.purf.H3.Biasinfty.integrated-noborder}
    \absj{ \int_{\epspurf}^{1-\epspurf} \Biasinfty{\cU}(x) \, dx - \frac{1}{4 \nfeu^4} \int_0^1 \paren{s^{\prime\prime}(x)}^2 \, dx } \leq \frac{ \paren{ 3  \CHtroisa +  \frac{1}{27^2} \norm{s^{\prime} }_{\infty} + 2 \norm{ s^{\prime\prime} }_{\infty} }^2 }{\nfeu^5}
  \end{align}
\end{corollary}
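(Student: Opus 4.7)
The plan is to apply Proposition~\ref{pro.bias.multidim.H3}, specifically inequality~\eqref{eq.pro.bias.multidim.Bias-H3}, which under \eqref{hyp.s-3-fois-derivable.alt} gives a tight two-sided bound on $\Biasinfty{\cU}(x)$ in terms of $\mathcal{M}_{1,\cU,x}$, $\mathcal{M}_{2,\cU,x}$ and $\mathcal{R}_{3,\cU,x}$. With $\dimX = 1$ every sum collapses to a single term and the cross-term $m_{B-A,i,j,x,\cU}$ drops out. The proof thus reduces to computing the seven one-dimensional endpoint moments $m_{A}$, $m_{B}$, $m_{AA}$, $m_{BB}$, $m_{AB}$, $m_{AAA}$, $m_{BBB}$ under $\bU \sim \cUpurf{\nfeu}$, assembling $\mathcal{M}_{1,\cU,x}$, $\mathcal{M}_{2,\cU,x}$, $\mathcal{R}_{3,\cU,x}$, and separating the analysis into interior and boundary regimes.

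First I would derive the joint law of $U := x - A_{1,\bU}(x)$ and $V := B_{1,\bU}(x) - x$ from the elementary observation that $\{U \geq u, V \geq v\}$ holds iff no split $\xi_i$ falls in $(x-u, x+v)$, yielding $\P(U \geq u, V \geq v) = (1-u-v)^{\nfeu}$ on the natural rectangle, with atoms of mass $(1-x)^{\nfeu}$ at $U = x$ and $x^{\nfeu}$ at $V = 1-x$. The marginal survival $\P(U \geq u) = (1-u)^{\nfeu}$ combined with $\E[U^k] = k \int_0^x u^{k-1}(1-u)^{\nfeu}\, du$ (and Fubini for $\E[UV]$) gives closed forms such as
\[
m_A = \frac{1 - (1-x)^{\nfeu+1}}{\nfeu+1}, \quad m_B = \frac{1 - x^{\nfeu+1}}{\nfeu+1}, \quad m_{AB} = \frac{1 - x^{\nfeu+2} - (1-x)^{\nfeu+2}}{(\nfeu+1)(\nfeu+2)},
\]
and analogous Beta-type expressions for $m_{AA}, m_{BB}, m_{AAA}, m_{BBB}$. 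Inserting these yields
\[
\mathcal{M}_{1,\cU,x} = \frac{s'(x)\bigl((1-x)^{\nfeu+1} - x^{\nfeu+1}\bigr)}{2(\nfeu+1)}, \quad \mathcal{M}_{2,\cU,x} = \frac{s''(x)}{2(\nfeu+1)(\nfeu+2)} + \text{boundary corrections}, \quad \mathcal{R}_{3,\cU,x} = O\paren{\CHtroisa / \nfeu^3}.
\]

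For $x \in [\epspurf, 1-\epspurf]$, the choice $\epspurf = 4 \log \nfeu / \nfeu$ makes $(1-x)^{\nfeu}, x^{\nfeu} \leq \nfeu^{-4}$, so $\mathcal{M}_{1,\cU,x}$ and every $x^{\nfeu+k}$, $(1-x)^{\nfeu+k}$ correction inside $\mathcal{M}_{2,\cU,x}$ is of order $\nfeu^{-5}$; hence $(\mathcal{M}_1+\mathcal{M}_2)^2 = (s''(x))^2 / (4\nfeu^4) + O(\nfeu^{-5})$, and since the residual $2|\mathcal{R}_3(\mathcal{M}_1+\mathcal{M}_2)| + \mathcal{R}_3^2$ in Proposition~\ref{pro.bias.multidim.H3} is also $O(\nfeu^{-5})$, inequality~\eqref{eq.bias.purf.H3.Biasinfty} follows. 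In the boundary regime, instead $\mathcal{M}_{1,\cU,x}^2$ becomes the leading term $(s'(x))^2\bigl((1-x)^{\nfeu+1} - x^{\nfeu+1}\bigr)^2 / [4(\nfeu+1)^2]$, and $\mathcal{M}_{2,\cU,x}^2$ together with all cross terms gets absorbed into the $O(\nfeu^{-3})$ residual of~\eqref{eq.bias.purf.H3.Biasinfty.border}.

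The integrated bounds~\eqref{eq.bias.purf.H3.Biasinfty.integrated-noborder} and~\eqref{eq.bias.purf.H3.Biasinfty.integrated} come by integrating the pointwise ones; the only nontrivial step is evaluating the leading boundary integral, for which one Taylor-expands $s'$ around $0$ and uses $\int_0^1 (1-x)^{2\nfeu+2}\, dx = 1/(2\nfeu+3)$ to extract the contribution $(s'(0))^2/(8\nfeu^3)$ (symmetrically $(s'(1))^2/(8\nfeu^3)$ near $x = 1$). The conceptually routine but practically tedious main obstacle is the bookkeeping: each of the seven moments carries several residuals in $x^{\nfeu+j}$ and $(1-x)^{\nfeu+j}$ with its own Beta-function constant, and carrying these through the nonlinear combinations in $\mathcal{M}_{2,\cU,x}^2$ and $(\mathcal{M}_{1,\cU,x}+\mathcal{M}_{2,\cU,x})^2$ requires careful algebra to achieve the precise constants (in particular the $1/27^2$ coefficient, whose origin is simply optimizing $\nfeu^{-4(\nfeu+1)/\nfeu}$ against the smallest admissible $\nfeu = 27$).
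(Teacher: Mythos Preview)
Your proposal is correct and follows essentially the same route as the paper: apply Proposition~\ref{pro.bias.multidim.H3} under \eqref{hyp.s-3-fois-derivable.alt}, compute $\mathcal{M}_{1,\cU,x}$, $\mathcal{M}_{2,\cU,x}$, $\mathcal{R}_{3,\cU,x}$ from the PURF moment formulas (your derivation of the joint survival function $\P(U\geq u,V\geq v)=(1-u-v)^{\nfeu}$ is exactly the content of Proposition~\ref{pro.dist-A-B.purf}, and the resulting moments are Proposition~\ref{pro.purf.moments}), separate interior from boundary using $(1-\epspurf)^{\nfeu}\leq\nfeu^{-4}$, and finally integrate with a Taylor expansion of $s'$ at the endpoints to isolate the $1/(8\nfeu^3)$ contribution. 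The paper organizes the residuals via the named polynomials $\RestePURF_{j,\nfeu}$ rather than tracking each Beta-constant individually, but the substance is identical.
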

Corollary~\ref{cor.bias.purf.H3} is proved in Section~\ref{sec.pr.purf.cor-H3}.
As for the toy model, Corollary~\ref{cor.bias.purf.H3} implies that under \eqref{hyp.s-3-fois-derivable.alt}, 
a PURF with $\narb$ trees behaves as the
infinite forest as soon as $\narb \geq \nfeu^2$.

\section{Balanced purely random forests} \label{sec.multidim.BPRF}
We consider in this section the following multidimensional PRF model,
that we call Balanced Purely Random Forests (BPRF). 

\subsection{Description of the model} \label{sec.BPRF.description}
Let $\dimX \geq 1$ be fixed and $\X = [0,1)^{\dimX}$. 
We define the sequence $(\bU_\prof)_{\prof \in \N}$ of random partitions (or random trees) as follows: 
\begin{itemize}
\item $\bU_0 = [0,1)^{\dimX}$ a.s.
\item for every $\prof \in \N$, given $\bU_\prof$, we define $\bU_{\prof+1}$ by splitting each piece  $\lambda \in \bU_\prof$ into two pieces, where the split is made along some random direction (chosen uniformly over $\set{1, \ldots, \dimX}$) at some point chosen uniformly. \\
  Formally, given $\bU_\prof = \set{ \lambda_{1,\prof}, \ldots, \lambda_{2^{\prof},\prof} }$, let $L_{1,\prof}, \ldots, L_{2^{\prof},\prof} , Z_{1,\prof} , \ldots, Z_{2^{\prof} , \prof}$ be independent random variables, independent from $\bU_\prof$, such that 
  \[ \forall j \in \set{1, \ldots, 2^{\prof}} \, ,  \quad L_{j,\prof} \sim \mathcal{U}\paren{\set{1, \ldots, \dimX}} \quad \mbox{and} \quad Z_{j,\prof} \sim \mathcal{U}\paren{\croch{0,1}} \enspace . \]
  Then, $\bU_{\prof+1}$ is defined as follows: 
  for every $j \in \set{1, \ldots, 2^{\prof}}$, 
  $\lambda_{j,\prof} = \prod_{i=1}^{\dimX} [ A_i , B_i )$ is split into 
  \begin{align*} \lambda_{2j-1,\prof+1} &=  \prod_{i < L_{j,\prof}} [A_i , B_i)  \times [ A_{L_{j,\prof}} , \paren{ 1 - Z_{j,\prof} } A_{L_{j,\prof}} + Z_{j,\prof} B_{L_{j,\prof}} ) \times  \prod_{i > L_{j,\prof}} [ A_i , B_i ) \\
    \mbox{and} \quad 
    \lambda_{2j,\prof+1} &=  \prod_{i < L_{j,\prof}} [ A_i , B_i )  \times  [ \paren{ 1 - Z_{j,\prof} } A_{L_{j,\prof}} + Z_{j,\prof} B_{L_{j,\prof}}   , B_{L_{j,\prof}} ) \times  \prod_{i > L_{j,\prof}} [A_i , B_i )  \enspace . \end{align*}
\end{itemize}

Then, for every $\prof \in \N$, we get a random partition $\bU_\prof \sim \cUurt{\prof}$ of $\X = [0,1)^{\dimX}$ into $\nfeu = 2^{\prof}$ pieces. 

This model is very close to the UBPRF model introduced in
\cite{Bre:2000} and theoretically studied by \cite{Bia_Dev_Lug:2008}.
The only difference is that, at each step {\em all} sets of the
current partition are split in BPRF, resulting with balanced trees,
whereas in UBPRF, only {\em one} set (randomly selected with a uniform
distribution) of the current partition is split; see also
Section~\ref{sec.BPRF.discussion} for a comparison of these two
models.  

We also point out a similitude between BPRF and another model: 
\cite{Rah_Rec:2008} %%% in Section~4
use $\cUurt{1}$ as random partitioning scheme, but without considering the same forest estimator at the end: 
instead of averaging the tree estimators with uniform weights as we do, 
\cite{Rah_Rec:2008} make a weighted average with data-driven weights. 

\subsection{Interpretation of the bias of the infinite forest}
As in Section~\ref{sec.purf.inf-forest}, we can try to interpret the bias of
the infinite forest for $\cUurt{\prof}$ as being equal to the bias
of an estimator close to a kernel estimator with ``kernel function''
$h^{\cUurt{\prof}}$ given by Eq.~\eqref{def.hcU.gal}.
Contrary to the PURF model case, $ t-x \mapsto h^{\cUurt{\prof}}(t,x)$
strongly depends on $x$, as shown by the left plot of
Figure~\ref{fig.urt+all.inf-forest}.  The right plot of
Figure~\ref{fig.urt+all.inf-forest} compares $h^{\cU}$ with
$\cU=\cUurt{\prof}$ to $\cU \in \set{ \cUtoy{\nfeu}, \cUpurf{\nfeu} }$
for a fixed $x=1/2$ and $\nfeu=2^{\prof}=128$: it turns out that
$h^{\cUtoy{}}$ and $h^{\cUpurf{}}$ are the narrowest---$h^{\cUpurf{}}$
appearing as a smooth approximation of $h^{\cUtoy{}}$---whereas
$h^{\cUurt{}}$ is significantly flatter than the others.
This relative flatness can explain the slower rates obtained for the
bias of the BPRF model in the next section.

\begin{figure}
  \begin{center}
    \includegraphics[width=0.45\textwidth]{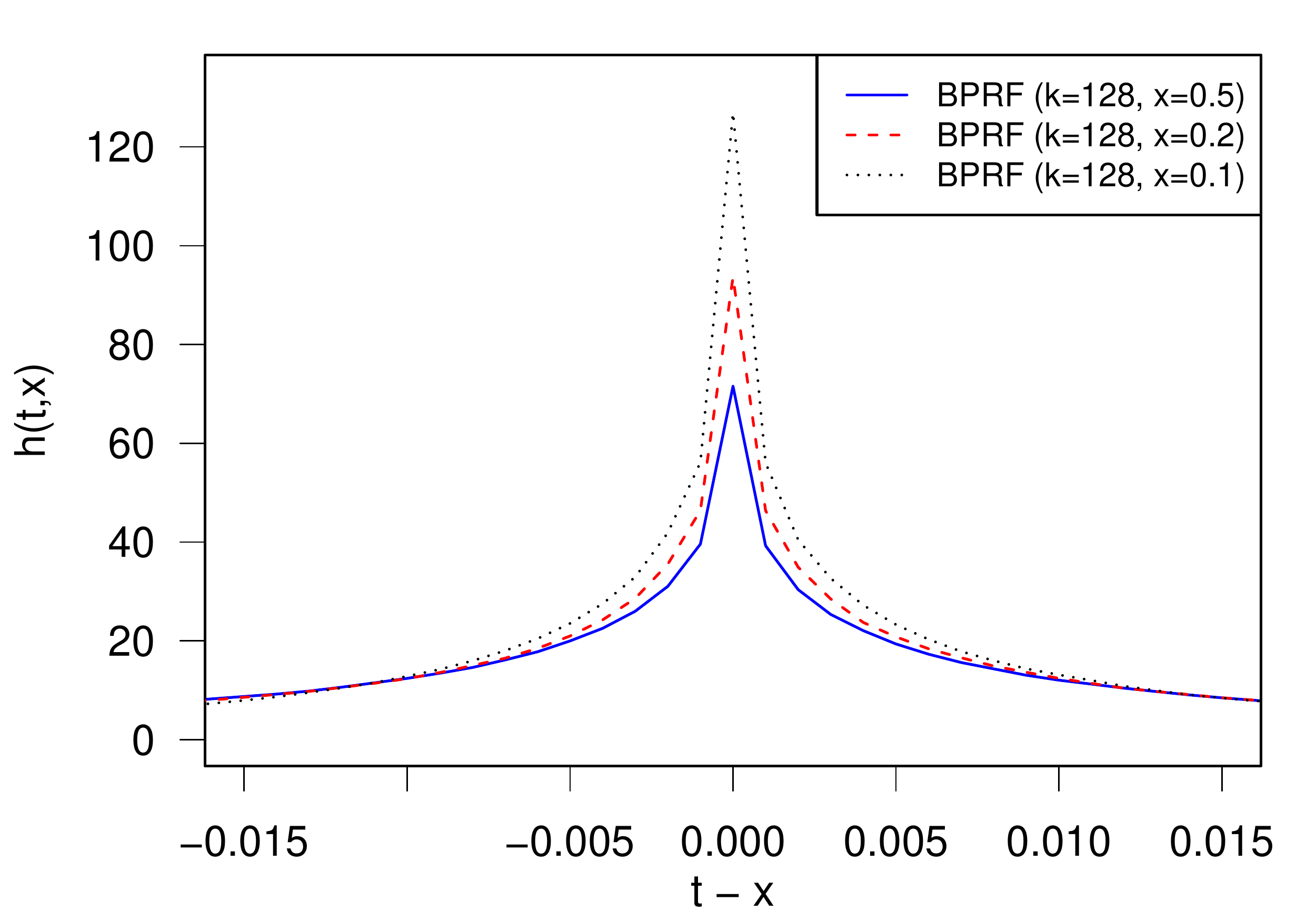}
    \hspace{0.05\textwidth}
    \includegraphics[width=0.45\textwidth]{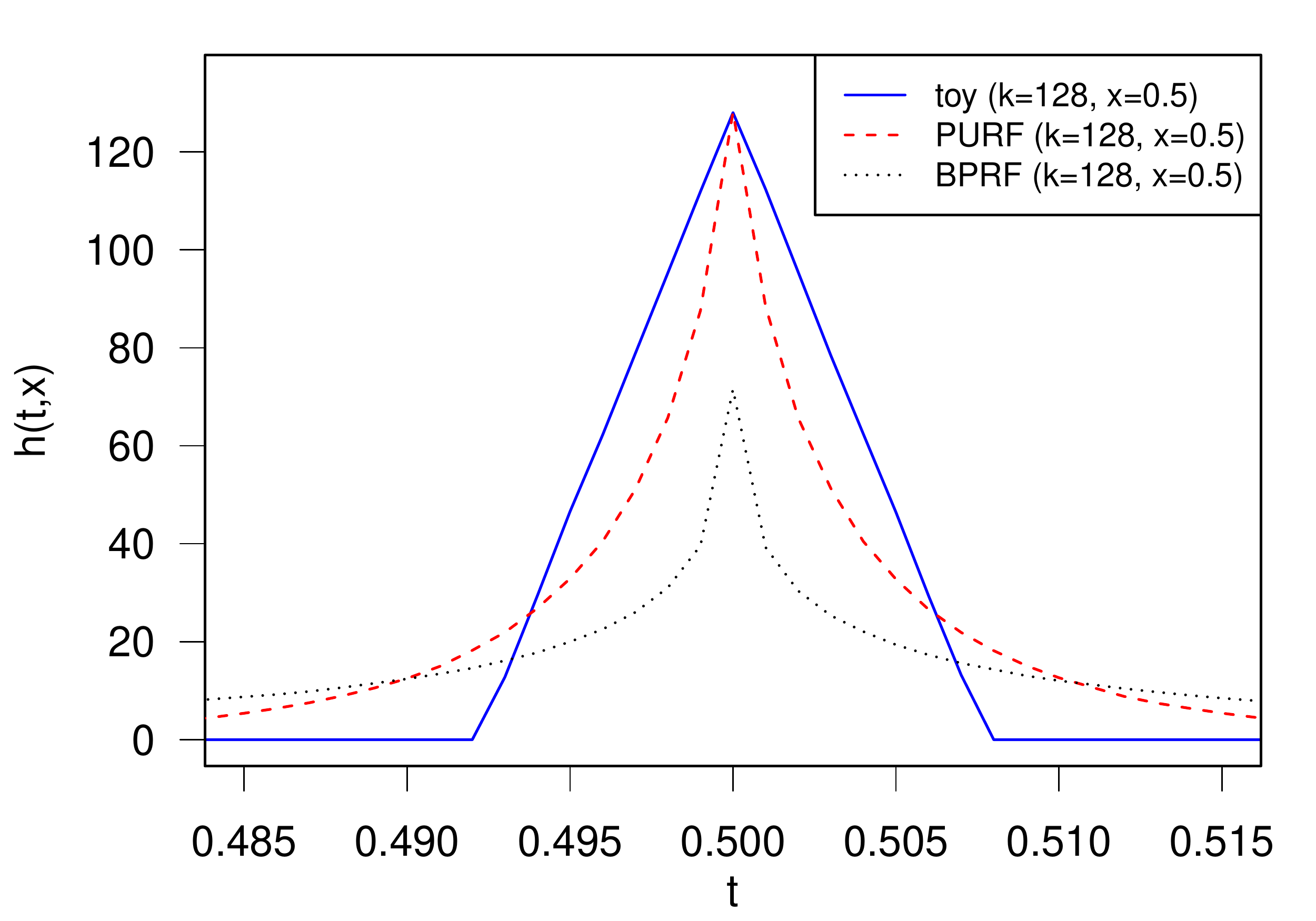}
    \caption{\label{fig.urt+all.inf-forest} Left: Plot of
      $h^{\cUurt{7}}(t,x)$ as a function of $t-x$ for $x\in \set{0.1,
        0.2, 0.5}$. Right: Plot of $h^{\cU}(t,x)$ with $\cU \in \set{
        \cUtoy{128} , \cUpurf{128} , \cUurt{7} }$ as a function of $t$
      for $x=0.5$.
      The values have been estimated by a Monte-Carlo approximation
      with $10\,000$ realizations of $\bU$.}
  \end{center}
\end{figure}

\subsection{Bias for twice differentiable functions}
As a corollary of Proposition~\ref{pro.bias.multidim.H3}, we get the
following estimates of the terms appearing in
decomposition~\eqref{eq.bias-cond.avec-notation} of the bias for the
BPRF model.
\begin{corollary}\label{cor.bias.BPRF.H2}
  Let $\prof\geq 2$  and assume \eqref{hyp.s-2-fois-derivable.alt} and
  \eqref{hyp.unif} hold true. 
  Then, for every $ x \in [0,1)^{\dimX}$, 
  \begin{align} 
    \Biasinfty{\cUurt{\prof}}(x) 
    \label{eq.bias.BPRF.H2.Biasinfty}
    &\leq \paren{1 - \frac{1}{2 \dimX}}^{2 \prof} \croch{ \frac{1}{2}  \paren{ \nabla s(x) \cdot (1-2x) }^2  + 2 \dimX^2 \CHdeuxa^2 }
    \\ 
    \notag 
    &\leq \paren{1 - \frac{1}{2 \dimX}}^{2 \prof} \croch{ \frac{\dimX}{2}  \sup_{x \in [0,1)^{\dimX}} \norm{ \nabla s(x) }_2^2   + 2 \dimX^2 \CHdeuxa^2 } 
    \\ 
    \label{eq.bias.BPRF.H2.Vararbre}
    &\hspace{-3cm}\mbox{and} \qquad \absj{\Vararbre{\cUurt{\prof}}(x) - \frac{1}{2} \paren{1 - \frac{1}{2 \dimX}}^{\prof} \sum_{i=1}^{\dimX} \croch{ \paren{ \frac{\partial s}{\partial x_i} (x) }^2 x_i (1-x_i) } } 
    \\ \notag 
    &\leq 
    \frac{\dimX}{4} \sup_{x \in [0,1)^{\dimX}} \norm{ \nabla s(x) }_2^2 \paren{1 - \frac{1}{2 \dimX}}^{2\prof}  
    \\
    \notag 
    &\qquad + \croch{  
      \dimX \max_{1 \leq i \leq \dimX} \paren{ \frac{\partial s}{\partial x_i} (x) }^2  
      + \dimX^2 \max_{1 \leq i \neq j \leq \dimX} \absj{ \frac{\partial^2 s}{\partial x_i \partial x_j} (x) }
      + 5 \dimX \CHdeuxa^2 } \\
    \notag 
    &\qquad \times \sqrt{ \paren{1 - \frac{1}{2 \dimX}}^{\prof} \paren{1 - \frac{2}{3 \dimX}}^{\prof} }
    \enspace . 
  \end{align}
  As a consequence, 
  \begin{align} 
    \int_{[0,1)^{\dimX}} \Biasinfty{\cUurt{\prof}}(x) dx 
    \label{eq.bias.BPRF.H2.Biasinfty.integrated}
    &\leq \paren{1 - \frac{1}{2 \dimX}}^{2 \prof} \croch{ \frac{1}{2}  \int_{[0,1)^{\dimX}} \paren{ \nabla s(x) \cdot (1-2x) }^2 \, dx + 2 \dimX^2 \CHdeuxa^2 }
    \\
    \notag 
    &\leq \paren{1 - \frac{1}{2 \dimX}}^{2 \prof} \croch{ \frac{\dimX}{2}  \sup_{x \in [0,1)^{\dimX}} \norm{ \nabla s(x) }_2^2   + 2 \dimX^2 \CHdeuxa^2 }
    \\ 
    \notag 
    &\hspace{-3cm}\mbox{and} \qquad \absj{ \int_{[0,1)^{\dimX}} \Vararbre{\cUurt{\prof}}(x) dx - \frac{1}{2} \paren{1 - \frac{1}{2 \dimX}}^{\prof} \sum_{i=1}^{\dimX} \int_{[0,1)^{\dimX}} \croch{ \paren{ \frac{\partial s}{\partial x_i} (x) }^2 x_i (1-x_i) } \, dx } 
    \\
    \label{eq.bias.BPRF.H2.Vararbre.integrated}
    &\leq 
    \frac{\dimX}{4} \sup_{x \in [0,1)^{\dimX}} \norm{ \nabla s(x) }_2^2 \paren{1 - \frac{1}{2 \dimX}}^{2\prof}  
    \\
    \notag 
    &\qquad + 
    \sup_{x \in [0,1)^{\dimX}} \croch{  
      \dimX \max_{ 1 \leq i \leq \dimX} \paren{ \frac{\partial s}{\partial x_i} (x) }^2  
      + \dimX^2 \max_{1 \leq i \neq j \leq \dimX} \absj{ \frac{\partial^2 s}{\partial x_i \partial x_j} (x) }
      + 5 \dimX \CHdeuxa^2 } \\
    \notag 
    &\qquad \times \sqrt{ \paren{1 - \frac{1}{2 \dimX}}^{\prof} \paren{1 - \frac{2}{3 \dimX}}^{\prof} }
    \enspace .
  \end{align}
\end{corollary}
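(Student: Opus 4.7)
The plan is to apply Proposition~\ref{pro.bias.multidim.H3} to $\cU = \cUurt{\prof}$, which reduces the task to estimating the moments $m_{\star,i,x,\cU}$ entering $\mathcal{M}_{1,\cU,x}$, $\mathcal{R}_{2,\cU,x}$, $\mathcal{N}_{2,\cU,x}$ and $\mathcal{R}_{4,\cU,x}$. The key structural observation is that, along the branch from the root down to the cell $I_{\bU_\prof}(x)$ containing $x$, each of the $\prof$ successive splits uses a direction drawn i.i.d.\ uniformly on $\set{1,\ldots,\dimX}$. Writing $N_i$ for the number of splits in direction $i$, the vector $(N_1,\ldots,N_\dimX)$ is multinomial with parameters $(\prof;1/\dimX,\ldots,1/\dimX)$; moreover, conditionally on this vector, the intervals $[A_i,B_i)$ in distinct directions evolve independently, each one being the cell containing $x_i$ after $N_i$ steps of an independent one-dimensional BPRF.

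I would then carry out the 1D analysis. Set $L = x_i - A_i$, $R = B_i - x_i$ and $W = L + R$. Because the split point is uniform on $[A_i,B_i)$, a 1D split transforms $(L,R)$ into a clean mixture: with probability $R/W$ one has $L^{\mathrm{new}} = L$ and $R^{\mathrm{new}} \sim \mathcal{U}(0,R)$, and symmetrically with probability $L/W$. Short one-step computations from this give the exact identities
\[ \E[R^{\mathrm{new}} - L^{\mathrm{new}} \mid L, R] = \tfrac{R - L}{2}, \qquad \E[L^{\mathrm{new}} R^{\mathrm{new}} \mid L, R] = \tfrac{LR}{2}, \qquad \E[(W^{\mathrm{new}})^2 \mid L,R] = \tfrac{W^2}{3} + LR. \]
Iterating from $L^{(0)} = x_i$, $R^{(0)} = 1 - x_i$ and averaging over $N_i \sim \mathrm{Binomial}(\prof,1/\dimX)$ via the identity $\E[r^{N_i}] = (1 - (1-r)/\dimX)^{\prof}$ for $r \in (0,1)$, I obtain in particular
\[ m_{B,i} - m_{A,i} = (1 - 2 x_i)(1 - 1/(2\dimX))^{\prof}, \qquad m_{AB,i} = x_i(1 - x_i)(1 - 1/(2\dimX))^{\prof}, \]
together with a two-term closed form $\E[W_i^2] = (1 - 6 x_i(1-x_i))(1 - 2/(3\dimX))^{\prof} + 6 x_i(1-x_i)(1 - 1/(2\dimX))^{\prof}$. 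This is where the two rates $(1 - 1/(2\dimX))^{\prof}$ and $(1 - 2/(3\dimX))^{\prof}$ appearing in the corollary arise.

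Plugging these into the definitions yields $\mathcal{M}_{1,\cU,x} = \tfrac{1}{2}(1 - 1/(2\dimX))^{\prof}\, \nabla s(x) \cdot (1 - 2x)$, which contributes the leading term of~\eqref{eq.bias.BPRF.H2.Biasinfty} through the bound $\Biasinfty{\cU}(x) \leq 2 \mathcal{M}_{1,\cU,x}^2 + 2 \mathcal{R}_{2,\cU,x}^2$ from Proposition~\ref{pro.bias.multidim.H3}. For the $2\dimX^2 \CHdeuxa^2$ constant I check the uniform estimate $\E[L_i^2 + R_i^2 - L_i R_i] = \E[W_i^2] - 3 \E[L_i R_i] \leq (1 - 1/(2\dimX))^{\prof}$, by discussing the two cases $x_i(1-x_i) \leq 1/6$ and $> 1/6$ separately, which gives $\mathcal{R}_{2,\cU,x} \leq (\CHdeuxa \dimX / 3)(1 - 1/(2\dimX))^{\prof}$. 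For the variance, the identity
\[ \mathcal{N}_{2,\cU,x} - \mathcal{M}_{1,\cU,x}^2 \;=\; \tfrac{1}{4}\, \var_{\bU \sim \cU}\paren{\nabla s(x) \cdot (R - L)}, \]
combined with the conditional independence of $(R_i, L_i)_i$ given $(N_1, \ldots, N_\dimX)$ and the total-variance formula, extracts the leading term $\tfrac{1}{2}(1 - 1/(2\dimX))^{\prof} \sum_i (\partial_i s(x))^2 x_i(1-x_i)$ of~\eqref{eq.bias.BPRF.H2.Vararbre} from $\sum_i (\partial_i s(x))^2 \E[\var(R_i - L_i \mid N_i)]$, the cross-direction covariances contributing only lower-order corrections.

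The main technical obstacle is producing the error in~\eqref{eq.bias.BPRF.H2.Vararbre} at the mixed rate $\sqrt{(1 - 1/(2\dimX))^{\prof}(1 - 2/(3\dimX))^{\prof}}$. This is addressed through the second estimate of Proposition~\ref{pro.bias.multidim.H3}, which requires a bound on $\mathcal{R}_{4,\cU,x}$ whose dominant factor is $(1 - 2/(3\dimX))^{\prof}$. Using $L_i^4 + R_i^4 \leq W_i^2$ (valid since $L_i, R_i, W_i \in [0,1]$) together with the two-term closed form for $\E[W_i^2]$ yields precisely such a control on $\mathcal{R}_{4,\cU,x}$. Substituting into $2 \sqrt{\mathcal{R}_{4,\cU,x}(\mathcal{N}_{2,\cU,x} - \mathcal{M}_{1,\cU,x}^2)} + \mathcal{R}_{4,\cU,x}$ then produces the desired mixed-rate error, the $a_p^2$-type contributions being absorbed into the $(1 - 1/(2\dimX))^{2\prof}$ term displayed separately in~\eqref{eq.bias.BPRF.H2.Vararbre}. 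Finally, the integrated bounds~\eqref{eq.bias.BPRF.H2.Biasinfty.integrated}--\eqref{eq.bias.BPRF.H2.Vararbre.integrated} follow by integrating the pointwise inequalities over $[0,1)^\dimX$ and, where needed, replacing each partial derivative by its sup-norm.
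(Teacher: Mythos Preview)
Your overall approach matches the paper's: apply Proposition~\ref{pro.bias.multidim.H3} after computing the moments of $(x_i - A_i, B_i - x_i)$ via one-dimensional recursions. Your formulas for $m_{B,i}-m_{A,i}$, $m_{AB,i}$, $\E[W_i^2]$, and hence for $\mathcal{M}_{1,\cU,x}$ and $\mathcal{R}_{2,\cU,x}$, are correct and the bound on $\Biasinfty{\cUurt{\prof}}(x)$ goes through as you describe.

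There is, however, a real gap in the variance part. You claim that the inequality $L_i^4 + R_i^4 \leq W_i^2$ together with your closed form for $\E[W_i^2]$ yields a bound on $\mathcal{R}_{4,\cU,x}$ ``whose dominant factor is $(1-2/(3\dimX))^{\prof}$''. It does not: your own formula reads
\[
\E[W_i^2] \;=\; 6x_i(1-x_i)\paren{1-\tfrac{1}{2\dimX}}^{\prof} \;+\; \paren{1-6x_i(1-x_i)}\paren{1-\tfrac{2}{3\dimX}}^{\prof},
\]
and since $(1-1/(2\dimX))^{\prof} > (1-2/(3\dimX))^{\prof}$, the first term dominates for every $x_i\in(0,1)$. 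Consequently your bound only gives $\mathcal{R}_{4,\cU,x}\lesssim (1-1/(2\dimX))^{\prof}$, and plugging this into $2\sqrt{\mathcal{R}_{4,\cU,x}(\mathcal{N}_{2,\cU,x}-\mathcal{M}_{1,\cU,x}^2)}+\mathcal{R}_{4,\cU,x}$ produces an error of order $(1-1/(2\dimX))^{\prof}$, i.e.\ of the \emph{same} order as the leading term $\tfrac12(1-1/(2\dimX))^{\prof}\sum_i(\partial_i s)^2 x_i(1-x_i)$. The stated bound~\eqref{eq.bias.BPRF.H2.Vararbre}, whose error is of the strictly smaller order $\sqrt{(1-1/(2\dimX))^{\prof}(1-2/(3\dimX))^{\prof}}$, therefore cannot be obtained this way.

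What is needed is a direct control $\E[L_i^4+R_i^4]\leq C\,(1-2/(3\dimX))^{\prof}$. The paper obtains this (Eq.~\eqref{eq.BPRF.mom-one-dim.sum-4}--\eqref{eq.BPRF.mom-one-dim.sum-4.maj}) by setting up a recursion for $\E[L_i^4+R_i^4]$ analogous to the ones you already wrote for $\E[LR]$ and $\E[W^2]$: one step of the split gives
\[
\E\bigl[(L')^4+(R')^4 \,\big|\, L,R\bigr] \;=\; \paren{1-\tfrac{4}{5}}(L^4+R^4) \;+\; \tfrac{4}{5}\,\frac{L^3R - L^2R^2 + LR^3}{\,1\,}\cdot\frac{1}{L+R}\cdot(L+R),
\]
and averaging over the random direction, then iterating together with the already-computed closed forms for $\E[(LR)^2]$ and $\E[LR(L^2+R^2)]$, produces a three-term expression whose leading factor is $(1-2/(3\dimX))^{\prof}$ with a uniform constant. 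With this in hand, the rest of your argument for~\eqref{eq.bias.BPRF.H2.Vararbre} is correct.
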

Corollary~\ref{cor.bias.BPRF.H2} is proved in Section~\ref{sec.pr.multidim.BPRF.cor-H2}. 
Remark that contrary to the toy and PURF model, there is no border effect for the approximation error in the BPRF model. 

\subsection{Discussion: single tree vs. infinite forest}
\label{sec.BPRF.discussion}
We can now compare a single tree and an infinite forest for the toy model $\cUurt{\prof}$, first in terms of approximation error for a given $\prof$, then in terms of risk for a well-chosen $\prof$. 
In this section, we assume \eqref{hyp.s-2-fois-derivable.alt} and~\eqref{hyp.unif} hold true. 
\subsubsection*{Approximation error}
Let
\[ \alphaurt \egaldef \frac{- \log\paren{ 1 - \frac{1}{2 \dimX}} }
{\log(2)} > 0 \] be such that
\[ \nfeu^{-\alphaurt} = \paren{ 1 - \frac{1}{2 \dimX} }^{\prof} \quad
\mbox{when} \quad \nfeu = 2^{\prof} 
\, . \]
Corollary~\ref{cor.bias.BPRF.H2} and Eq.~\eqref{eq.comp-biais-arbre-foret.gal}
allow to compare the approximation errors of a single tree and of an
infinite forest: %% under assumption \eqref{hyp.s-2-fois-derivable.alt},
\begin{equation}
\label{eq.BPRF.approx-bornes}
\Biasinfty{\cU} \leq L(s,\dimX) \nfeu^{-2\alphaurt} \quad
\mbox{whereas} \quad \Bias_{\cU,1} \geq L(s,\dimX) \nfeu^{-\alphaurt}
- L(s,\dimX) \nfeu^{-2 \alphaurt}
\enspace . 
\end{equation}
 
Therefore, considering an infinite forest instead of a single tree
decreases the approximation error from an order of magnitude, and not
only from a constant factor when the height of the trees tends to
infinity. 
We emphasize that, as in
Section~\ref{subsec_discuss_toy_model} and
\ref{subsec_discuss_purf_model}, we get an infinite forest bias
decreasing rate smaller or equal to the square of the single tree one.

Nevertheless, the single tree bias rate is strictly slower than the
bias rate of a classical regular partitioning estimate (with a cubic
partition in $k$ sets), which is $\nfeu^{-2/\dimX}$ \citep[see
e.g. Chapter~4 in][]{Gyo_Las_Krz_Wal:2002}. Indeed, we have that for all $\dimX \geq 1$, 
\[ \alpha \leq \frac{1}{2 \log(2) \dimX } < \frac{2}{\dimX} \]
since $\log(1+u) \leq u$ for all $u > -1$. 

\subsubsection*{Risk bounds for a well-chosen $\prof$}
The above controls on the approximation errors imply controls 
on the statistical risk of the estimators built on a single
tree and on an infinite forest, respectively.
Indeed, for all $\narb \in [1,+\infty]$, if $\nobs \geq 1$ data points are
available, the statistical risk of the estimator built upon a random
forest of $\narb$ trees with $ \nfeu = 2^{\prof} \geq 2$ leaves can 
be bounded by Eq.~\eqref{eq.risk-cond.three.terms.avec-notation} and
Proposition~\ref{pro.maj-variance}. 
In order to apply Proposition~\ref{pro.maj-variance}, we need the following lemma.
\begin{lemma} \label{le.BPRF} 
  Let $\prof \geq 0$. Then, 
  \begin{equation} \label{eq.le.BPRF.somme-diametres} \E_{\bU \sim
      \cUurt{\prof}} \croch{ \sum_{\lambda \in \bU} \paren{
        \diam_{L^2}(\lambda) }^2 } = 2^{\prof} \paren{ 1 - \frac{2}{3
        \dimX} }^{\prof}
  \end{equation}
and for every $u>0$, 
\begin{equation} \label{eq.BPRF.somme-exp-pl.1}
\E_{\bU \sim \cUurt{\prof}} \croch{ \sum_{\lambda \in \bU} \exp\paren{- \nobs |\lambda| } } 
\leq 2^{\prof} \croch{ \frac{1}{u} + \paren{ 1 - \frac{1}{u}} 
\exp\paren{-\nobs e^{-\paren{ \prof +\sqrt{u \prof}}} } 
}
\enspace . 
\end{equation}
In particular, if $\nobs \geq \exp\paren{\prof+\sqrt{5 \prof}}$, 
\begin{equation} \label{eq.BPRF.somme-exp-pl.2}
\E_{\bU \sim \cUurt{\prof}} \croch{ \sum_{\lambda \in \bU} \exp\paren{- \nobs |\lambda| } } 
\leq 2^{\prof} \kappa 
\quad \mbox{where} \quad 
\kappa \egaldef \frac{  1 + 4e^{-1} }{5} < \frac{1}{2} 
\enspace . 
\end{equation}
\end{lemma}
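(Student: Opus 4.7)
The plan is to handle the two identities separately, each exploiting the independence and uniformity built into the recursive construction of $\bU_\prof \sim \cUurt{\prof}$.

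For Eq.~\eqref{eq.le.BPRF.somme-diametres}, I would set $S_\prof \egaldef \E_{\bU \sim \cUurt{\prof}}\croch{\sum_{\lambda \in \bU}\paren{\diam_{L^2}(\lambda)}^2}$ and derive a linear recursion. Given any $\lambda = \prod_{i=1}^{\dimX}[A_i, B_i)$ of the partition at depth $\prof$, the split along a uniform direction $L \sim \mathcal{U}(\set{1,\ldots,\dimX})$ at fraction $Z \sim \mathcal{U}([0,1])$ affects only the $L$-th coordinate, so the sum of squared Euclidean diameters over its two children equals $(Z^2 + (1-Z)^2)(B_L - A_L)^2 + 2\sum_{i \neq L}(B_i - A_i)^2$. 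Using $\E[Z^2 + (1-Z)^2] = 2/3$ and then averaging over $L$ yields $2(1 - \frac{2}{3\dimX})\sum_{i=1}^{\dimX}(B_i - A_i)^2$. Summing over $\lambda \in \bU_\prof$ and applying the tower property produces the recursion $S_{\prof+1} = 2(1 - \frac{2}{3\dimX})S_\prof$, and iterating from the initial condition gives the claimed closed form.

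For Eqs.~\eqref{eq.BPRF.somme-exp-pl.1}--\eqref{eq.BPRF.somme-exp-pl.2}, since $\card(\bU_\prof) = 2^\prof$ almost surely, letting $\lambda^\star$ denote a uniformly chosen leaf gives
\[
\E_{\bU \sim \cUurt{\prof}}\croch{\sum_{\lambda \in \bU}\exp(-\nobs|\lambda|)} = 2^\prof \, \E\croch{\exp(-\nobs|\lambda^\star|)}.
\]
Tracing the root-to-leaf path of $\lambda^\star$, at each level the volume is multiplied by either $Z_k$ or $1-Z_k$ with equal probability (the branch choice being independent of $Z_k$), so the multipliers are i.i.d.\ $\mathcal{U}([0,1])$. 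Hence $|\lambda^\star| \egalloi \prod_{k=1}^{\prof} V_k$ with i.i.d.\ uniform $V_k$, and setting $Y \egaldef -\sum_{k=1}^{\prof}\log V_k$ yields $Y \sim \Gamma(\prof,1)$ with $\E[Y] = \var(Y) = \prof$. The task reduces to bounding $\E[\exp(-\nobs e^{-Y})]$.

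Splitting this expectation at $c \egaldef \prof + \sqrt{u\prof}$, I would use $\exp(-\nobs e^{-Y}) \leq 1$ on $\set{Y \geq c}$ and $\exp(-\nobs e^{-Y}) \leq \exp(-\nobs e^{-c})$ on $\set{Y < c}$. Chebyshev's inequality gives $\P(Y \geq c) \leq \var(Y)/(u\prof) = 1/u$, and combining the two contributions produces $\frac{1}{u} + (1 - \frac{1}{u})\exp(-\nobs e^{-c})$, which is Eq.~\eqref{eq.BPRF.somme-exp-pl.1}. For Eq.~\eqref{eq.BPRF.somme-exp-pl.2}, taking $u = 5$ and invoking $\nobs \geq \exp(\prof + \sqrt{5\prof})$ forces $\nobs e^{-c} \geq 1$, so $\exp(-\nobs e^{-c}) \leq e^{-1}$, and the constants collapse to $\kappa = (1 + 4e^{-1})/5$. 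The only mildly delicate step is the distributional identification $|\lambda^\star| \egalloi \prod_{k=1}^\prof V_k$; once it is in place, the remainder is elementary algebra combined with a standard Chebyshev estimate.
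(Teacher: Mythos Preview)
Your proposal is correct and follows essentially the same approach as the paper: the recursion $S_{\prof+1} = 2(1 - \tfrac{2}{3\dimX})S_\prof$ for the squared-diameter sum, the identification of the leaf volume as a product of $\prof$ i.i.d.\ uniform variables, the threshold split combined with Chebyshev on $-\log |\lambda|$, and the specialization $u=5$ all match the paper's argument. The only cosmetic difference is that you phrase the volume law via a uniformly chosen leaf $\lambda^\star$, whereas the paper notes directly that each fixed leaf has this marginal distribution; the two formulations are equivalent.
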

Lemma~\ref{le.BPRF} is proved in Section~\ref{sec.pr.le.BPRF}. 
The proof of Lemma~\ref{le.BPRF} in Section~\ref{sec.pr.le.BPRF} also shows the 
volume of each element of a partition $\bU \sim \cUurt{\prof}$ is typically of order 
$\exp\paren{-\prof \pm L\sqrt{\prof}}$, so it is hopeless to consider values of $\prof$ such that this typical volume is smaller than $1/\nobs$. 
Hence, throughout this subsection, for comparing risks with a well-chosen $\prof$, 
we only consider values of $\prof$ such that 
\begin{equation} 
\label{eq.BPRF.hyp-prof}
\nobs \geq \exp\paren{\prof+\sqrt{5\prof}}
\quad \Leftrightarrow \quad 
\prof \leq \paren{ \sqrt{\frac{5}{4} +\log \nobs} - \frac{\sqrt{5}}{2} }^2 
\enspace . 
\end{equation}

Remark that under assumption \eqref{hyp.s-2-fois-derivable.alt}, $s$ is
$K$-Lipschitz with respect to the $L^2$ distance on $\X$ with $K =
\sup_{x \in \X} \norm{\nabla s(x)}_2 = \norm{\nabla s}_{\infty,2}$. 
So, Proposition~\ref{pro.maj-variance} shows that for the BPRF model 
with trees having $\nfeu = 2^{\prof}$ leaves, 
if $\nobs \geq 1$ data points are available 
and if Eq.~\eqref{eq.BPRF.hyp-prof} holds true, 
\begin{align*}
\E\croch{\paren{ \ERM_{\infty} \paren{X; D_{\nobs}} - s(X) }^2} 
&\leq 
\Biasinfty{\cUurt{\prof}} 
+ \frac{2 \sigma^2 \nfeu}{\nobs}
+ \frac{2 \norm{\nabla s}_{\infty,2}^2}{\nobs \nfeu^{\betaurt}} 
+ \frac{\norm{s}_{\infty}^2 \nfeu }{\nobs} \croch{ \frac{1}{u} + \exp\paren{ - \nobs \nfeu^{- (1 + \sqrt{u/p}) / \log 2} }}
\end{align*}
for every $u \geq 1$, where 
\[ 
\betaurt \egaldef \frac{- \log \paren{ 2 \paren{ 1 -
      \frac{2}{3 \dimX}}} } {\log(2)} 
\enspace . \]
So, since $\Biasinfty{\cUurt{\prof}}  \leq L(s,d) \nfeu^{-2\alphaurt}$, 
if we are able to choose the number of leaves $\nfeu =2^{\prof}$
optimally (with an estimator selection procedure, such as
cross-validation), the risk of the infinite forest estimator is upper
bounded as follows: 
\begin{align}
\notag 
  &\qquad \E\croch{\paren{ \ERM_{\infty} \paren{X; D_{\nobs}} - s(X) }^2}
  \\
  &\leq 
L(s,d)  \inf_{ u \geq 1 , \, \nfeu = 2^{\prof}, \, 0 \leq \prof \leq \paren{ \sqrt{\frac{5}{4} +\log \nobs} - \frac{\sqrt{5}}{2} }^2  } 
 \set{  
	\nfeu^{-2\alphaurt} 
	+ \frac{\sigma^2 \nfeu}{\nobs} 
	+ \frac{1}{\nobs \nfeu^{\betaurt}}
	+ \frac{\nfeu}{\nobs} 
		 \croch{ \frac{1}{u} + \exp\paren{ - \nobs \nfeu^{- (1 + \sqrt{u/\prof}) / \log 2} }}
  } 
  \label{eq.BPRF.maj-risk-for-infinie.1}
\enspace . 
\end{align}
Now, for upper bounding the infimum, two cases must be distinguished: 
(i) when $\dimX \leq 3$,  so that $1/(1+2\alphaurt) < \log 2$, 
%%% Note: la vraie condition est $1/(1+2 \alphaurt) < \log (2)$ qui equivaut a $d < (1-\sqrt{2/e})^{-1} / 2  \approx 3.51$
%
and (ii) when $\dimX \geq 4$, so that $1/(1+2\alphaurt) > \log 2$. 

In case (i), some nonnegative integer 
$\prof^* \leq \paren{ \sqrt{\frac{5}{4} +\log \nobs} - \frac{\sqrt{5}}{2} }^2 $ exists such that 
\[ \nfeu^* = 2^{\prof^*} \in \croch{\paren{ \frac{\nobs}{\sigma^2} }^{1/(1+2\alphaurt)},
      2 \paren{ \frac{\nobs}{\sigma^2} }^{1/(1+2\alphaurt)}}
\] if $\nobs \geq L(\sigma^2)$. 
Since $\alphaurt \geq \log(6/5)/\log(2) > (1/\log(2)-1)/2$, for some (small enough) numerical constants $\delta_1,\delta_2>0$,  if $n \geq L(\sigma^2)$, 
\[ 
\nobs \nfeu^{* \, - \frac{1+\delta_1}{ \log 2 }} 
\geq L \nobs \paren{\frac{\nobs}{\sigma^2}}^{- \frac{1+\delta_1}{(1+2\alphaurt) \log(2) } } 
\geq \nobs^{\delta_2}
\enspace , 
\]
taking $u=\delta_1^2 \prof^*$ in Eq.~\eqref{eq.BPRF.maj-risk-for-infinie.1} yields 
\begin{align}
  &\qquad \E\croch{\paren{ \ERM_{\infty} \paren{X; D_{\nobs}} - s(X) }^2}
  \\
  &\leq 
  L(s, d) \croch{ 
  \paren{\frac{\sigma^2}{\nobs}}^{\frac{2\alphaurt}{2\alphaurt + 1}}
  + L(\sigma^2) \nobs^{- \frac{2\alphaurt}{2\alphaurt + 1}} 
		 \croch{ \frac{1}{\delta_1^2 \prof^*} + \exp\paren{ - \nobs^{\delta_2}}}
}
\notag 
\\
&\leq L(s, d) \paren{\frac{\sigma^2}{\nobs}}^{\frac{2\alphaurt}{2\alphaurt + 1}}
\notag %  \label{eq.BPRF.maj-risk-for-infinie.2}
\end{align}
as soon as $\nobs \geq L(\sigma^2)$. 

In case (ii), a similar reasoning with some integer 
\[ \prof^* \in \left( \paren{ \sqrt{\frac{5}{4} +\log \nobs} - \frac{\sqrt{5}}{2} }^2  -1 , 
\paren{ \sqrt{\frac{5}{4} +\log \nobs} - \frac{\sqrt{5}}{2} }^2  \right] \]
yields 
\begin{align}
\E\croch{\paren{ \ERM_{\infty} \paren{X; D_{\nobs}} - s(X)
    }^2} 
&\leq L(s,\dimX,\sigma^2) \nobs^{-2 \alphaurt \log 2} 
2^{2 \alphaurt \sqrt{5} \sqrt{\log n + \frac{5}{4}}}
\enspace . 
\notag %  \label{eq.BPRF.maj-risk-for-infinie.3}
\end{align}
In particular, we get a rate of order $\nobs^{-(2 \alphaurt \log 2-\delta)}$ for every $\delta>0$, which is slightly worse than the rate $n^{- 2\alphaurt / (2 \alphaurt + 1)}$ for $\dimX \geq 4$ since $\log 2 \leq 1/(1+2 \alphaurt)$.

\bigskip

For lower bounding the risk of a single tree, we apply 
Eq.~\eqref{eq.pro.maj-variance.arbre-gal-min} in Proposition~\ref{pro.maj-variance}.
By Eq.~\eqref{eq.BPRF.approx-bornes}, 
if $\prof \geq \prof_0 = L(s,\dimX)$ and $\nfeu = 2^{\prof}$, 
\[ 
\Bias_{\cUurt{\prof},1}
\geq 
L(s,\dimX) \nfeu^{-\alphaurt} - L(s,\dimX) \nfeu^{-2\alphaurt}
\geq L(s,\dimX) \nfeu^{-\alphaurt}
\]
so that Eq.~\eqref{eq.risk-cond.three.terms.avec-notation}, Proposition~\ref{pro.maj-variance} and 
Eq.~\eqref{eq.BPRF.somme-exp-pl.2} in Lemma~\ref{le.BPRF} imply, 
if $\bU \sim \cUurt{\prof}$ with $\prof_0 \leq \prof \leq \paren{ \sqrt{\frac{5}{4} +\log \nobs} - \frac{\sqrt{5}}{2} }^2$, 
\begin{align}
  \E\croch{\paren{ \ERM\paren{X; \bU; D_{\nobs}} - s(X) }^2} 
&\geq
  \inf_{\nfeu=2^{\prof}, \prof_0 \leq \prof \leq \paren{ \sqrt{\frac{5}{4} +\log \nobs} - \frac{\sqrt{5}}{2} }^2 } 
  \set{
    L(s,\dimX) \nfeu^{-\alphaurt} + \frac{\sigma^2  \nfeu \paren{ 1 - 2 \kappa } }{\nobs} }
\enspace . 
 \label{eq.BPRF.min-risk-arbre.1}
\end{align}
Here, again, we must distinguish the cases (i) $\dimX \leq 3$ and (ii) $\dimX \geq 4$. 
If $\dimX \leq 3$, 
by Lemma~\ref{le.opt-risk}, Eq.~\eqref{eq.BPRF.min-risk-arbre.1} shows that 
if in addition $\nobs \geq L(\dimX,\sigma^2)$, 
\begin{align*}
  \E\croch{\paren{ \ERM\paren{X; \bU; D_{\nobs}} - s(X) }^2} 
&\geq L(s,\dimX) \paren{ \frac{\sigma^2}{\nobs} }^{\alphaurt/
(\alphaurt + 1)} 
\enspace . 
\end{align*}
If $\dimX \geq 4$, Eq.~\eqref{eq.BPRF.min-risk-arbre.1} shows that 
\begin{align*}
  \E\croch{\paren{ \ERM\paren{X; \bU; D_{\nobs}} - s(X) }^2} 
&\geq
  L(s,\dimX)  \inf_{0 \leq \nfeu \leq 2^{5/2} \nobs^{\log(2)}} 
  \set{
    \nfeu^{-\alphaurt} + \frac{\sigma^2  \nfeu }{\nobs} }
    \\
&\geq 
    L(s,\dimX) \nobs^{-\alphaurt \log(2)}
\end{align*}
for $n \geq L(\sigma^2,\dimX)$, since the function $x \to x^{-\alphaurt} + \frac{\sigma^2 x}{\nobs}$ is then decreasing on $(0, (\nobs \alphaurt /\sigma^2)^{1/(\alphaurt+1)}]$ 
and 
\[ 
\frac{1}{\alphaurt+1} \geq \frac{1}{1 + \frac{\log(8/7)}{\log(2)}} > \log (2)
\enspace . 
\]

\bigskip

So, in both cases ($\dimX \leq 3$ or $\dimX \geq 4$), the infinite forest 
has a faster rate of convergence (in terms of risk) than a single tree. 
But, even with an infinite forest with $\dimX\leq 3$, since 
\[ 
\frac{2 \alphaurt}{2 \alphaurt + 1} \leq \frac{1}{1 + 2 \log(2) \dimX } < \frac{4}{4+\dimX} 
\enspace , 
\]
the rate obtained is slower than the minimax rate $\nobs^{-4/(\dimX+4)}$ over the set of $C^2$
functions \citep[see e.g.][]{Gyo_Las_Krz_Wal:2002}.

Intuitively, the BPRF model is not minimax because it is not adaptive
enough.  Indeed, the partitioning process splits each set of the
current partition regardless of its size: so a relatively small set is
still split the same number of times than a relatively large set. 
We
conjecture that a partitioning scheme with a random choice of the next
set to be split, with a probability of choosing each set proportional 
to its size---as in the PURF model, see Section~\ref{sec:rf-prf-partitioning}---, would be better
and could reach the minimax rate for $C^2$ functions. 
This is proved for $\dimX=1$ in Section~\ref{sec.purf}.

Finally, we note that the UBPRF model~\ref{algo.ubprf}
would certainly suffer from the same lack of adaptivity because the next 
set to be split is chosen with a uniform distribution on all
sets. So, this model would certainly not be minimax either, and we conjecture
that it would be even worse than the BPRF model.

\subsection{Tighter bound for three times differentiable functions}
The bounds in Corollary~\ref{cor.bias.BPRF.H2} are tight when $s$ is smooth enough, as shown by the following corollary of Proposition~\ref{pro.bias.multidim.H3}. 
\begin{corollary}\label{cor.bias.BPRF.H3}
  Let $\prof \geq 2$ and assume \eqref{hyp.s-3-fois-derivable.alt} and
  \eqref{hyp.unif} hold true. 
  Then, for every $ x \in [0, 1)^{\dimX}$, 
  \begin{equation} \label{eq.bias.BPRF.H3.Biasinfty}
    \begin{split}
      & \quad \absj{ \Biasinfty{\cUurt{\prof}}(x) - \frac{1}{4} \paren{1 - \frac{1}{2 \dimX}}^{2 \prof} \paren{  \nabla s(x) \cdot (1-2x) + \sum_{i=1}^{\dimX} \croch{ \frac{\partial^2 s}{\partial x_i^2} (x) x_i (1-x_i) } }^2 }  
      \\
      &\leq 
      6 \dimX^4 \paren{1 - \frac{2}{3 \dimX}}^{3 \prof / 4}  \paren{1 - \frac{1}{2 \dimX}}^{\prof}  \paren{  \norm{\nabla s(x)}_2^2  +   \max_{i,j} \paren{\frac{\partial^2 s}{\partial x_i \partial x_j} (x)}^2  +\CHtroisa^2  }
      \enspace .
    \end{split}
  \end{equation}
  As a consequence, 
  \begin{equation} \label{eq.bias.BPRF.H3.Biasinfty.integrated}
    \begin{split}
      &\hspace{-2cm} \absj{ \int_{[0,1)^{\dimX}} \Biasinfty{\cUurt{\prof}}(x) \, dx  - \frac{1}{4} \paren{1 - \frac{1}{2 \dimX}}^{2 \prof} \int_{[0,1)^{\dimX}} \paren{  \nabla s(x) \cdot (1-2x) + \sum_{i=1}^{\dimX} \croch{ \frac{\partial^2 s}{\partial x_i^2} (x) x_i (1-x_i) } }^2  \, dx } \\
      &\hspace{-1cm}\leq 
      6 \dimX^4 \paren{1 - \frac{2}{3 \dimX}}^{3 \prof / 4}  \paren{1 - \frac{1}{2 \dimX}}^{\prof}  \paren{ \sup_{x \in [0,1)^{\dimX}} \norm{\nabla s(x)}_2^2  +  \sup_{x \in [0,1)^{\dimX}} \max_{i,j} \paren{\frac{\partial^2 s}{\partial x_i \partial x_j} (x)}^2  + \CHtroisa^2  }
      \enspace .
    \end{split}
  \end{equation}
\end{corollary}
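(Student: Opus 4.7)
I would apply Proposition~\ref{pro.bias.multidim.H3}, specifically inequality~\eqref{eq.pro.bias.multidim.Bias-H3}, which under~\eqref{hyp.s-3-fois-derivable.alt} reduces the task to (i) computing $\mathcal{M}_{1,\cUurt{\prof},x}+\mathcal{M}_{2,\cUurt{\prof},x}$ in closed form so as to recover the main term, and (ii) bounding the remainder $\mathcal{R}_{3,\cUurt{\prof},x}$ by something of smaller order. The integrated bound~\eqref{eq.bias.BPRF.H3.Biasinfty.integrated} will then follow by integrating~\eqref{eq.bias.BPRF.H3.Biasinfty} in $x \in [0,1)^{\dimX}$ and replacing the $x$-dependent coefficients by their suprema.

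\textbf{Moment computation.} All moments $m_{*,i,x,\cU}$ entering $\mathcal{M}_{1}$, $\mathcal{M}_{2}$ and $\mathcal{R}_{3}$ admit closed-form recursions for $\cU = \cUurt{\prof}$. From the point of view of $x$, the cell $I_{\bU}(x)$ at depth $\prof$ is built in $\prof$ independent steps, each consisting of picking a direction $L_t \sim \mathcal{U}\paren{\set{1,\ldots,\dimX}}$ and then cutting the current cell uniformly along $L_t$, keeping the sub-cell containing $x$. Writing $U_i = B_{i,\bU}(x) - x_i$ and $V_i = x_i - A_{i,\bU}(x)$, the pair $(U_i,V_i)$ is unchanged at step $t$ when $L_t \neq i$ and undergoes a uniform-split transition when $L_t = i$. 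Crucially, conditionally on the direction sequence $(L_t)_{1 \leq t \leq \prof}$, the processes $(U_i,V_i)$ and $(U_j,V_j)$ for $i \neq j$ evolve from disjoint sets of independent uniform variables, hence are conditionally independent; this reduces all multi-dimensional computations to one-dimensional recursions indexed by $K_i = \card\set{t : L_t = i} \sim \textrm{Bin}\paren{\prof,1/\dimX}$.

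\textbf{Key identities.} A direct calculation shows that $\E\croch{U_i' - V_i' \sachant U_i, V_i} = (U_i - V_i)/2$ under a single uniform split along coordinate $i$, so by induction on $K_i$, $\E\croch{U_i - V_i \sachant K_i} = 2^{-K_i}(1-2 x_i)$; averaging over $K_i$ yields $m_{B,i,x,\cU} - m_{A,i,x,\cU} = \paren{1 - 1/(2\dimX)}^{\prof}(1 - 2 x_i)$, which plugged into $\mathcal{M}_{1,\cU,x}$ produces the $\nabla s(x) \cdot (1-2x)$ piece of the main term. A similar (but lengthier) recursion on the joint moments $\paren{\E\croch{U_i^{a} V_i^{b}}}_{a+b \leq 2}$ yields $m_{AA,i,x,\cU} + m_{BB,i,x,\cU} - m_{AB,i,x,\cU} = 3 \paren{1 - 1/(2 \dimX)}^{\prof} x_i(1-x_i) + $ lower-order terms, feeding the $\sum_i \paren{\partial^2 s / \partial x_i^2}(x)\, x_i(1-x_i)$ piece through $\mathcal{M}_{2,\cU,x}$. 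The cross moment $m_{B-A,i,j,x,\cU}$ for $i \neq j$ factorises by the conditional-independence principle into $\E\croch{2^{-(K_i+K_j)}}(1-2x_i)(1-2x_j) = \paren{1 - 1/\dimX}^{\prof}(1-2x_i)(1-2x_j)$, which is of smaller order $\paren{1-1/(2\dimX)}^{2\prof}$ and is therefore absorbed into the error term.

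\textbf{Remainder term and main obstacle.} To bound $\mathcal{R}_{3,\cU,x}$ I would control $\E\croch{U_i^3 + V_i^3}$ by Hölder interpolation: since $U_i,V_i \leq 1$, $\E\croch{U_i^3} \leq \E\croch{U_i^2}^{3/4} \E\croch{U_i^6}^{1/4} \leq \E\croch{U_i^2}^{3/4}$, and $\E\croch{U_i^2 + V_i^2}$ obeys a recursion whose integral in $x$ is essentially computed in Lemma~\ref{le.BPRF}, giving rate $\paren{1 - 2/(3\dimX)}^{\prof}$; Hölder then produces the $\paren{1-2/(3\dimX)}^{3\prof/4}$ factor appearing in the stated error. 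Plugging these ingredients into~\eqref{eq.pro.bias.multidim.Bias-H3} and combining the absorbed cross terms with the remainder yields~\eqref{eq.bias.BPRF.H3.Biasinfty}, and integration in $x$ gives~\eqref{eq.bias.BPRF.H3.Biasinfty.integrated}. The hardest step is the second-moment computation: the single-step transition does not close on $\E\croch{U^2}, \E\croch{UV}, \E\croch{V^2}$ alone (it brings in terms of the form $\E\croch{U^3/L}$ with $L = U+V$), so one must identify the smallest joint-moment system on which the recursion closes in finite dimension and diagonalise it to extract the precise $\paren{1-1/(2\dimX)}^{\prof}\, x_i(1-x_i)$ scaling with the correct coefficient $3$.
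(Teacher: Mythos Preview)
Your overall strategy matches the paper's: apply Eq.~\eqref{eq.pro.bias.multidim.Bias-H3} of Proposition~\ref{pro.bias.multidim.H3} and compute the BPRF moments via the Markov recursion of Proposition~\ref{pro.multidim.BPRF.reformulation}. Your identification of $\mathcal{M}_{1,\cU,x}$, of the leading part of $\mathcal{M}_{2,\cU,x}$, and of the cross moment $m_{B-A,i,j,x,\cU}=\paren{1-1/\dimX}^{\prof}(1-2x_i)(1-2x_j)$ is correct.

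There is, however, a genuine gap in your treatment of $\mathcal{R}_{3,\cU,x}$. You interpolate $\E\croch{U_i^3}\leq \E\croch{U_i^2}^{3/4}$ and then assert that $\E\croch{U_i^2+V_i^2}$ has rate $\paren{1-2/(3\dimX)}^{\prof}$. That rate is incorrect: from the closed recursion one obtains (Eq.~\eqref{eq.BPRF.mom-one-dim.sum-2} in the paper)
\[
\E\croch{ \paren{\alpha_i^{(\prof)}}^2 + \paren{\beta_i^{(\prof)}}^2 } = 4 x_i(1-x_i)\paren{1-\frac{1}{2\dimX}}^{\prof} + \paren{x_i^2+(1-x_i)^2-4x_i(1-x_i)}\paren{1-\frac{2}{3\dimX}}^{\prof},
\]
whose leading order is $\paren{1-1/(2\dimX)}^{\prof}$, not $\paren{1-2/(3\dimX)}^{\prof}$. (Lemma~\ref{le.BPRF} computes the unweighted sum $\sum_{\lambda\in\bU}\paren{\diam_{L^2}\lambda}^2$, which is a different object from the pointwise second moment.) Your H\"older bound therefore yields only $\mathcal{R}_{3,\cU,x}\leq C\,\paren{1-1/(2\dimX)}^{3\prof/4}$, and the resulting error $2\absj{\mathcal{R}_{3}}\,\absj{\mathcal{M}_1+\mathcal{M}_2}$ is of order $\paren{1-1/(2\dimX)}^{7\prof/4}$, which \emph{dominates} the main term $\paren{1-1/(2\dimX)}^{2\prof}$. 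Inequality~\eqref{eq.bias.BPRF.H3.Biasinfty} cannot follow from this.

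The fix, and this is what the paper does, is to interpolate from the \emph{fourth} moment instead: one shows $\E\croch{\paren{\alpha_i^{(\prof)}}^4+\paren{\beta_i^{(\prof)}}^4}\leq 8\paren{1-2/(3\dimX)}^{\prof}$ (Eq.~\eqref{eq.BPRF.mom-one-dim.sum-4.maj}), and then Jensen's inequality $\E\croch{\alpha^3+\beta^3}\leq 2^{1/4}\paren{\E\croch{\alpha^4+\beta^4}}^{3/4}$ delivers the required rate $\paren{1-2/(3\dimX)}^{3\prof/4}$ (Eq.~\eqref{eq.BPRF.mom-one-dim.sum-3}).

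A minor remark on your ``hardest step'': the concern is unnecessary. The one-step recursion \emph{does} close on the symmetric combinations $\alpha^2+\beta^2$, $\alpha\beta$, and $\alpha-\beta$ (see Eq.~\eqref{eq.pr.pro.multidim.BPRF.one-dim.base}; the $\alpha^{\delta+1}/(\alpha+\beta)$ contributions cancel once you add the two symmetric cases). Only the asymmetric moments $\E\croch{\alpha^2}$, $\E\croch{\beta^2}$ taken separately fail to close, and those are not needed, since $m_{AA}+m_{BB}$, $m_{AB}$, and $m_B-m_A$ are precisely the inputs to $\mathcal{M}_1$ and $\mathcal{M}_2$.
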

Corollary~\ref{cor.bias.BPRF.H3} is proved in Section~\ref{sec.pr.multidim.BPRF.cor-H3}.

\subsection{Size of the forest}
As for the previous models, under \eqref{hyp.s-3-fois-derivable.alt}, Corollary~\ref{cor.bias.BPRF.H3} and
Eq.~\eqref{eq.pro.bias_decomposition} show a BPRF forest of size $\narb$ has
an approximation error of the same order of magnitude as an infinite
forest when
\[ 
\narb \geq \nfeu^{\alpha} = 2^{\alpha \prof} = \paren{1 - \frac{1}{2 \dimX}}^{-\prof} 
\, . 
\]

\section{Simulation experiments}\label{sec.simu}
In order to illustrate mathematical results from previous sections, we
lead some simulation experiments with R \citep{R_Core:2014}, focusing
on approximation errors, as defined in Section~\ref{sec.general.risk}.
We consider the models from
Sections~\ref{sec_toy_model}--\ref{sec.multidim.BPRF} (toy, PURF,
BPRF), with $\dimX=1$ for toy and PURF, and $\dimX \in \{1,5,10\}$ for
BPRF.
In addition, we consider a PRF model discussed in Section~3 of
\cite{Bia:2012}, that we call Hold-out RF in the following. 
Hold-out RF is the original RF model~\ref{model.original} except that the tree partitioning is performed
using an extra sample $D_{\nobs}'$, independent from the learning
sample $D_{\nobs}$. 
As a consequence, assumption~\eqref{hyp.purely-random} holds for
the Hold-out RF model, so decomposition~\eqref{eq.risk-cond.three.terms} is valid
and we can compute the corresponding approximation error, as a
function of the number $\narb$ of trees in the forest.

\subsection{Framework}

For all experiments, we take the input space $\X=[0,1)^{\dimX}$ and
suppose that \eqref{hyp.unif} holds. We choose the following
regression functions:
\begin{itemize}
\item \textbf{sinusoidal} (if $\dimX=1$): $x \mapsto \sin(2 \pi x)$,
\item \textbf{absolute value} (if $\dimX=1$): $x \mapsto \absj{
    x-\frac{1}{2} }$,
\item \textbf{sum} (for any $\dimX \geq 1$): $x \mapsto
  \sum_{j=1}^\dimX x_j$,
\item \textbf{Friedman1} (for any $\dimX\geq 5$): 
  \[ x \mapsto \textbf{1/10} \times \croch{ 10 \sin(\pi x_1 x_2) + 20 (x_3 -
  0.5)^2 + 10 x_4 + 5 x_5 } \] which is proportional to the
  \textbf{Friedman1} function that was introduced by \citet{Fri:1991}.
  Here we add the scaling factor $1/10$ in order to have a function with
  a range comparable to that of \textbf{sum}.
\end{itemize}
For all PRF models, we choose $\nfeu$, the number of leaves (minus one for toy
and PURF), among $\set{ 2^5, 2^6, 2^7, 2^8 , 2^9 }$; 
the last value $2^9$ is sometimes removed for computational reasons.

Quantities $\Bias_{\cU,1}$ and $\Biasinfty{\cU}$ are estimated 
by Monte-Carlo approximation using:
\begin{itemize}
\item $1000$ realizations of $X$,
\item for $\Bias_{\cU,1}$, $500$ realizations of $\bU$,
\item for $\Biasinfty{\cU}$, $\nfeu^2$ realizations of $\bU$ for toy
  and PURF models, $\nfeu^{2\alphaurt}$ realizations for BPRF model
  with $\alphaurt = - \log(1-1/(2 \dimX))/\log(2)$ 
  (which ensures our estimation of the convergence rates is precise enough, 
  according to our theoretical results), 
  and $\nfeu^2$
  realizations for Hold-out RF model, which empirically appears to be
  sufficient for estimating the convergence rates correctly for this RF model.
\end{itemize}
Furthermore, for each computation of $\Bias_{\cU,1}$ and
$\Biasinfty{\cU}$ we add some ``borderless'' estimations of the bias, 
that is, integrating only over $x \in [ \epsilon , 1 - \epsilon]$ with 
$\epsilon = \epstoy_{\nfeu}$ or $\epspurf$ depending on the model. 

\bigskip

In addition, for the Hold-out RF model:
\begin{itemize}
\item we simulate the $D_{\nobs}'$ sample with $\nobs = \nfeu^2$ (for
  each value of $\nfeu$) and choose a gaussian random noise with
  variance $\sigma^2 = 1/16$,
\item we use the \texttt{randomForest} R package \citep{Lia_Wie:2002}
  to build the trees on the sample $D_{\nobs}'$: we use parameters
  \texttt{maxnodes} (to control the number of leaves) and
  \texttt{ntree} (to set the number of trees), and take the default
  values for all other parameters (in particular \texttt{mtry}).
\end{itemize}

\medskip

Finally, for each scenario, we plot the bias as a function of $\nfeu$ in $\log_2$-$\log_2$ scale, 
and estimate the slope of the plot by fitting a simple linear model 
in order to get an approximation of
the convergence rates.

\subsection{One-dimensional input space}
\label{sec.simu.one-dim}
We consider in this subsection the one-dimensional case ($\dimX=1$). 
Figure~\ref{fig.sinus} shows results for the \textbf{sinusoidal}
regression function.
\begin{figure}[!ht]
  \begin{center}
  \begin{minipage}{0.45\textwidth}
    \includegraphics[width=\textwidth]{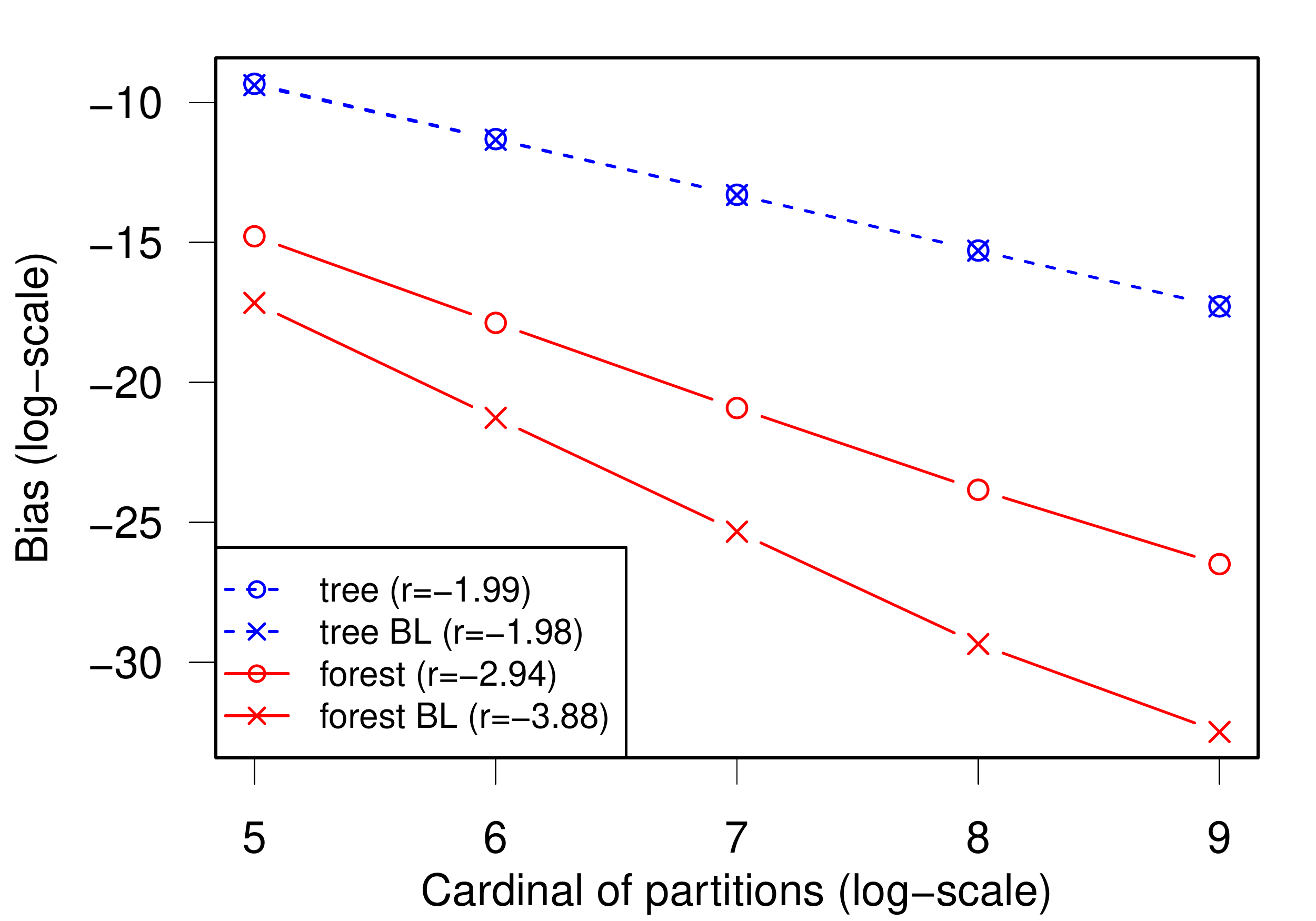}\\
    \centerline{(a) toy, $\dimX=1$}
    \end{minipage}
    \hspace{0.01\textwidth}
  \begin{minipage}{0.45\textwidth}
  \includegraphics[width=\textwidth]{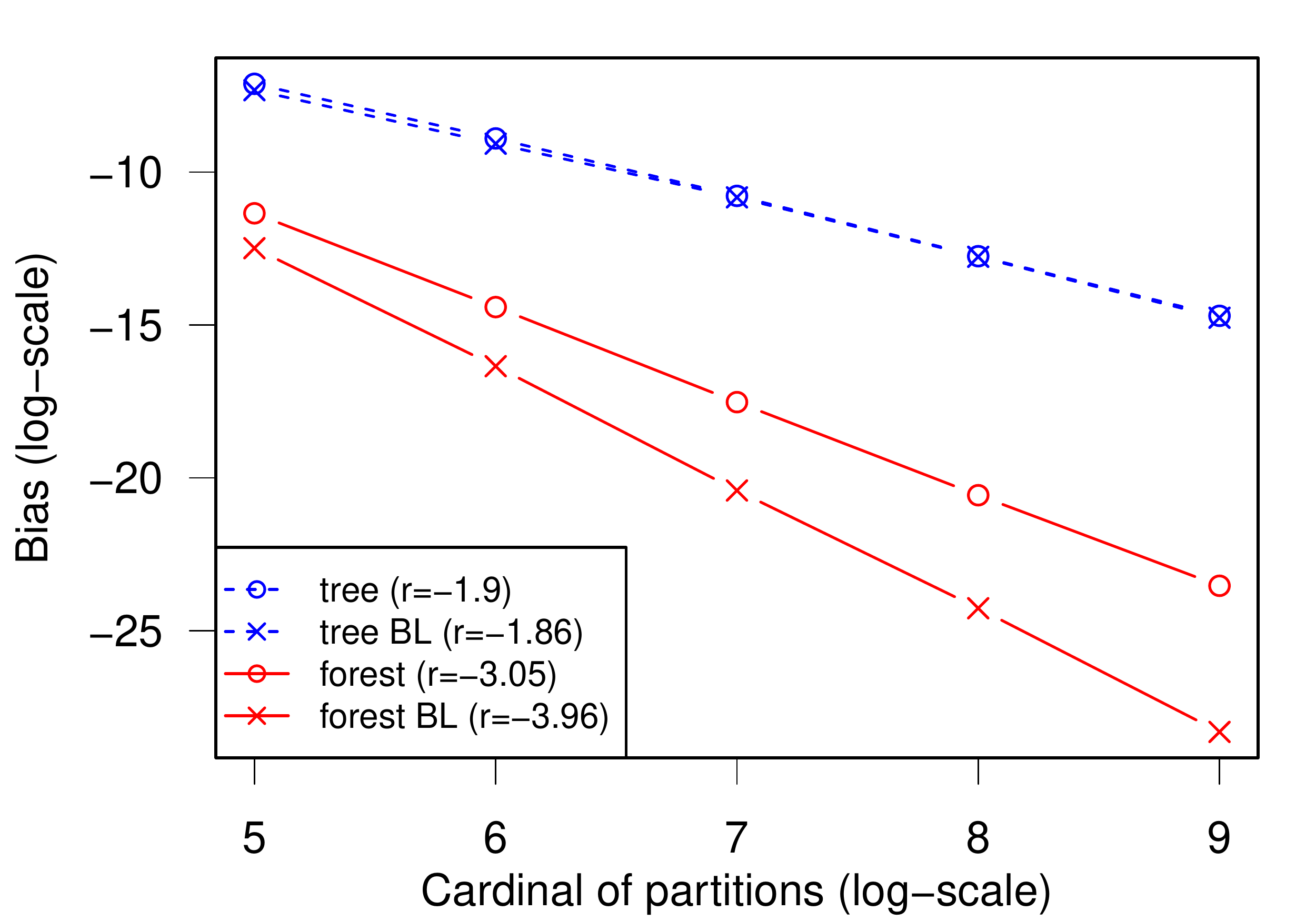}\\
  \centerline{(b) PURF, $\dimX=1$}
  \end{minipage} \\
  \begin{minipage}{0.45\textwidth}
    \includegraphics[width=\textwidth]{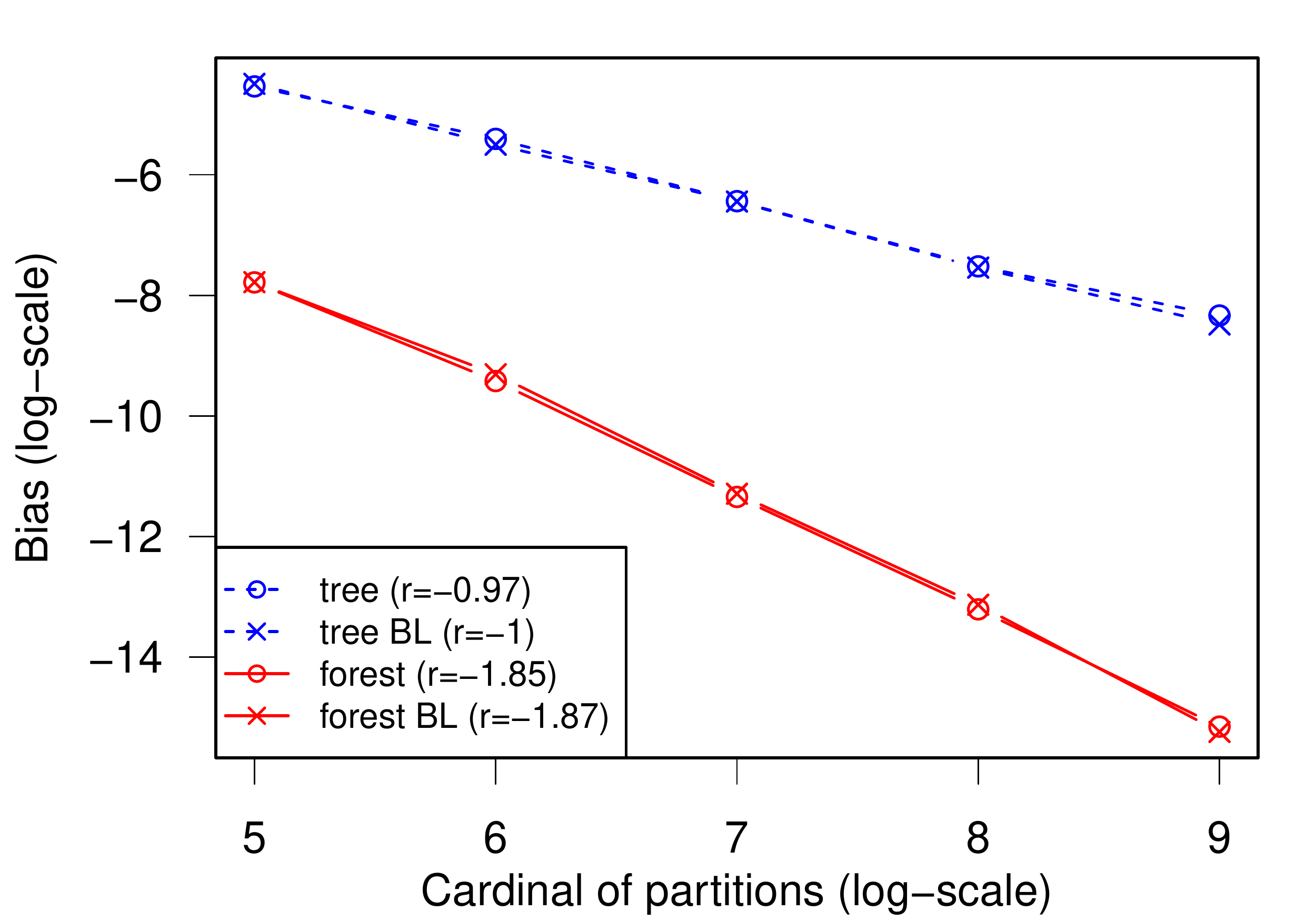}\\
    \centerline{(c) BPRF, $\dimX=1$}
    \end{minipage}
    \hspace{0.01\textwidth}
  \begin{minipage}{0.45\textwidth}
    \includegraphics[width=\textwidth]{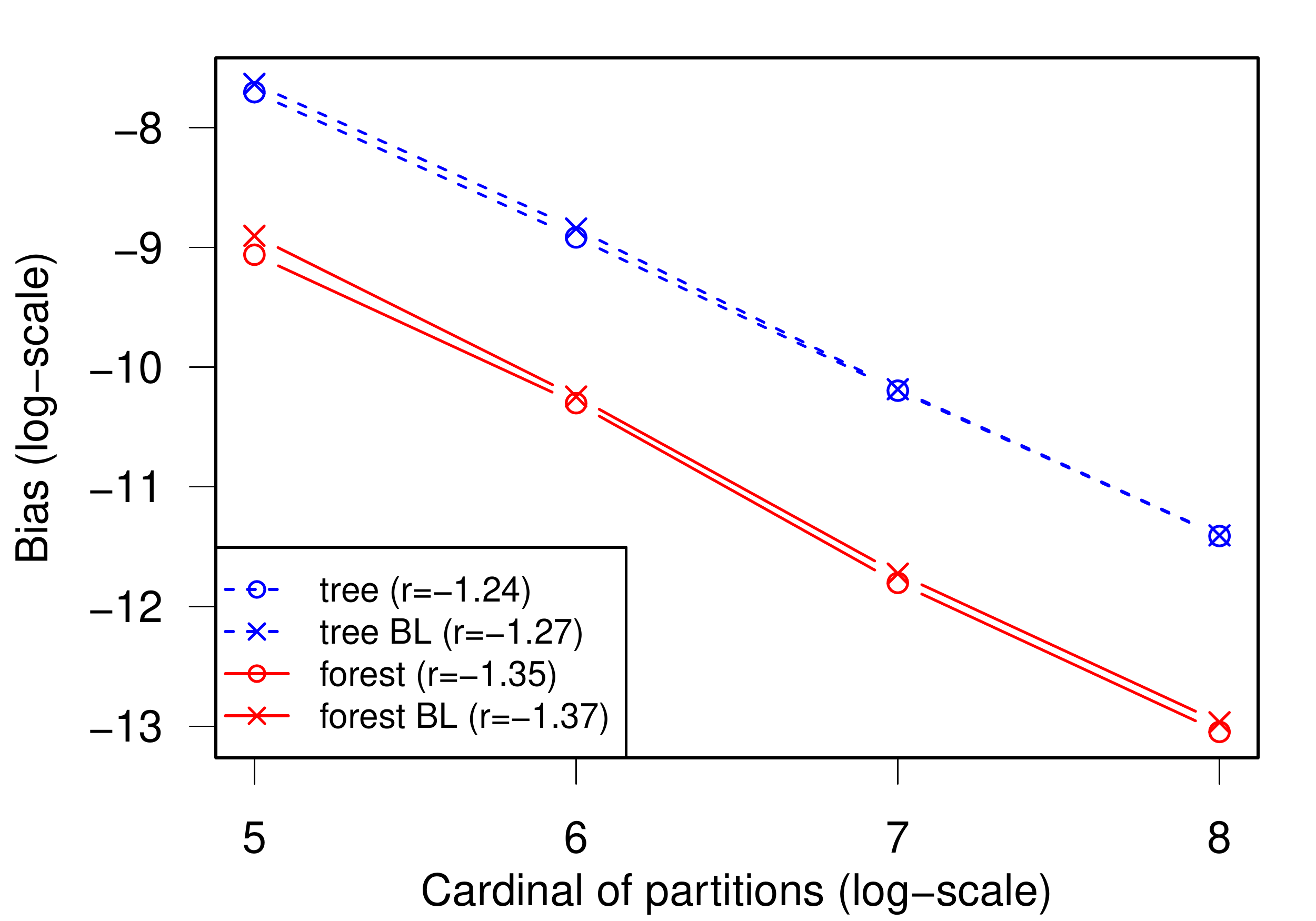}\\
    \centerline{(d) Hold-out RF, $\dimX=1$}
    \end{minipage}
    \caption{\label{fig.sinus} Plot of $\Bias_{\cU,1}$ and
      $\Biasinfty{\cU}$ (in $\log_2$-scale) against $\nfeu$ (in
      $\log_2$-scale) for (a) toy, (b) PURF, (c) BPRF and (d)
      Hold-out RF models, for the \textbf{sinusoidal} regression
      function. BL corresponds to borderless situations and $r$
      denotes the slope of a linear model fitted to the scatter
      plot.}
  \end{center}
\end{figure}
Plots are in $\log_2$-$\log_2$ scale, so as expected we obtain linear
behaviors. 
For toy and PURF models (top graphs) we get decreasing
rates very close to what can be expected from Sections~\ref{sec_toy_model}--\ref{sec.purf}: 
$\nfeu^{-2}$ for trees (with or without borders),
$\nfeu^{-3}$ for forests and $\nfeu^{-4}$ for borderless
forests. 
Similarly, we get the right decreasing rates for BPRF model
(bottom left graph): indeed, if $\dimX=1$ then $\alphaurt = 1$, so
trees and forests rates are respectively $\nfeu^{-1}$ and
$\nfeu^{-2}$. 
For Hold-out RF model we get rates about $\nfeu^{-1.25}$
for trees and $\nfeu^{-1.35}$ for forests as expected from Section~\ref{sec.multidim.BPRF}. 
These rates are
surprisingly slow (in particular compared to toy and PURF models) 
and a forest does not bring much improvement compared to a single tree. 
But as shown in the next section, the one-dimensional case
is not the best framework for the Hold-Out-RF model compared to other PRF models.

Results for the \textbf{absolute value} regression function are
presented in Figure~\ref{fig.abs}.
\begin{figure}[!ht]
  \begin{center}
  \begin{minipage}{0.45\textwidth}
    \includegraphics[width=\textwidth]{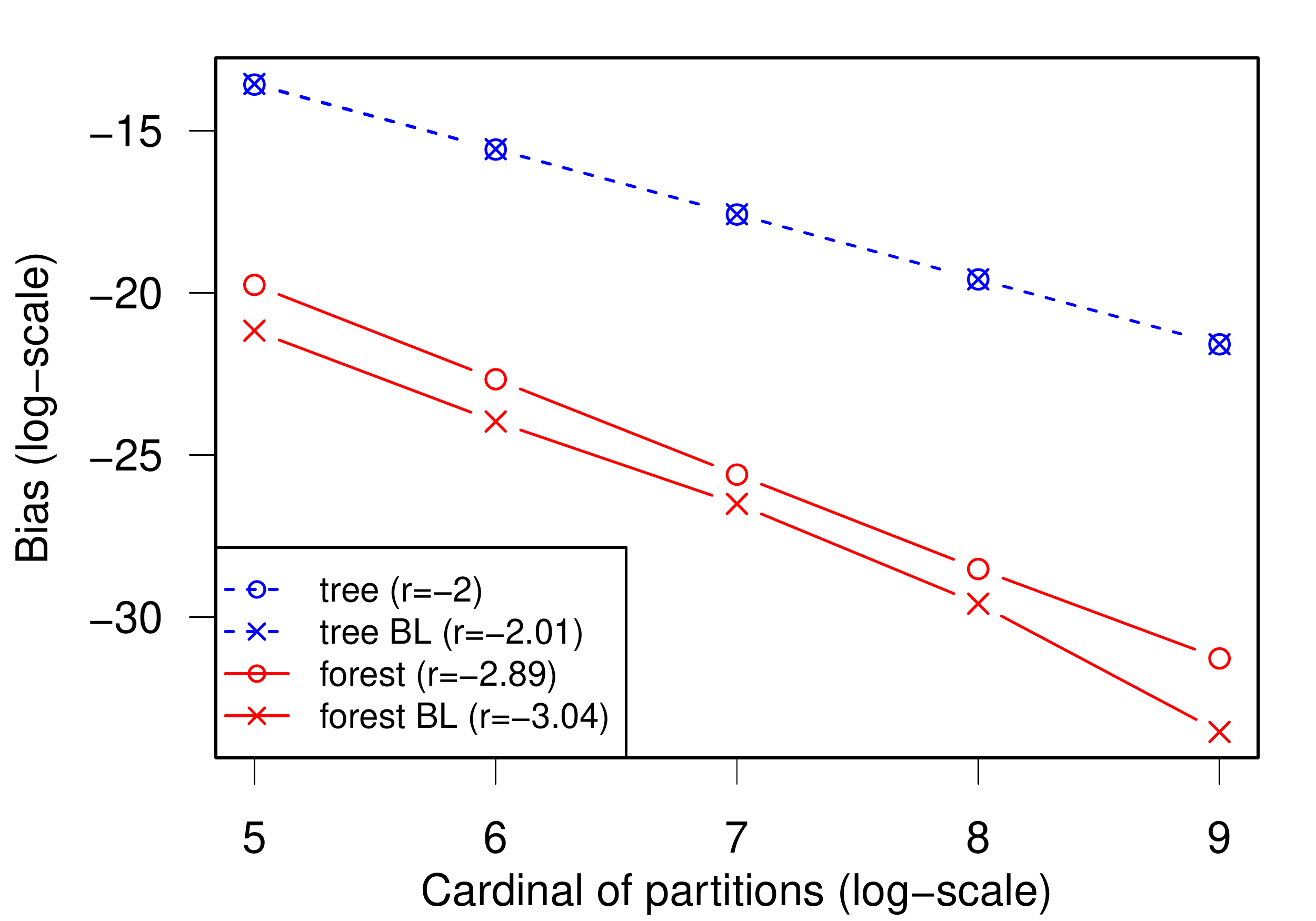}
\centerline{(a) toy, $\dimX=1$}
\end{minipage}
    \hspace{0.01\textwidth}
  \begin{minipage}{0.45\textwidth}
    \includegraphics[width=\textwidth]{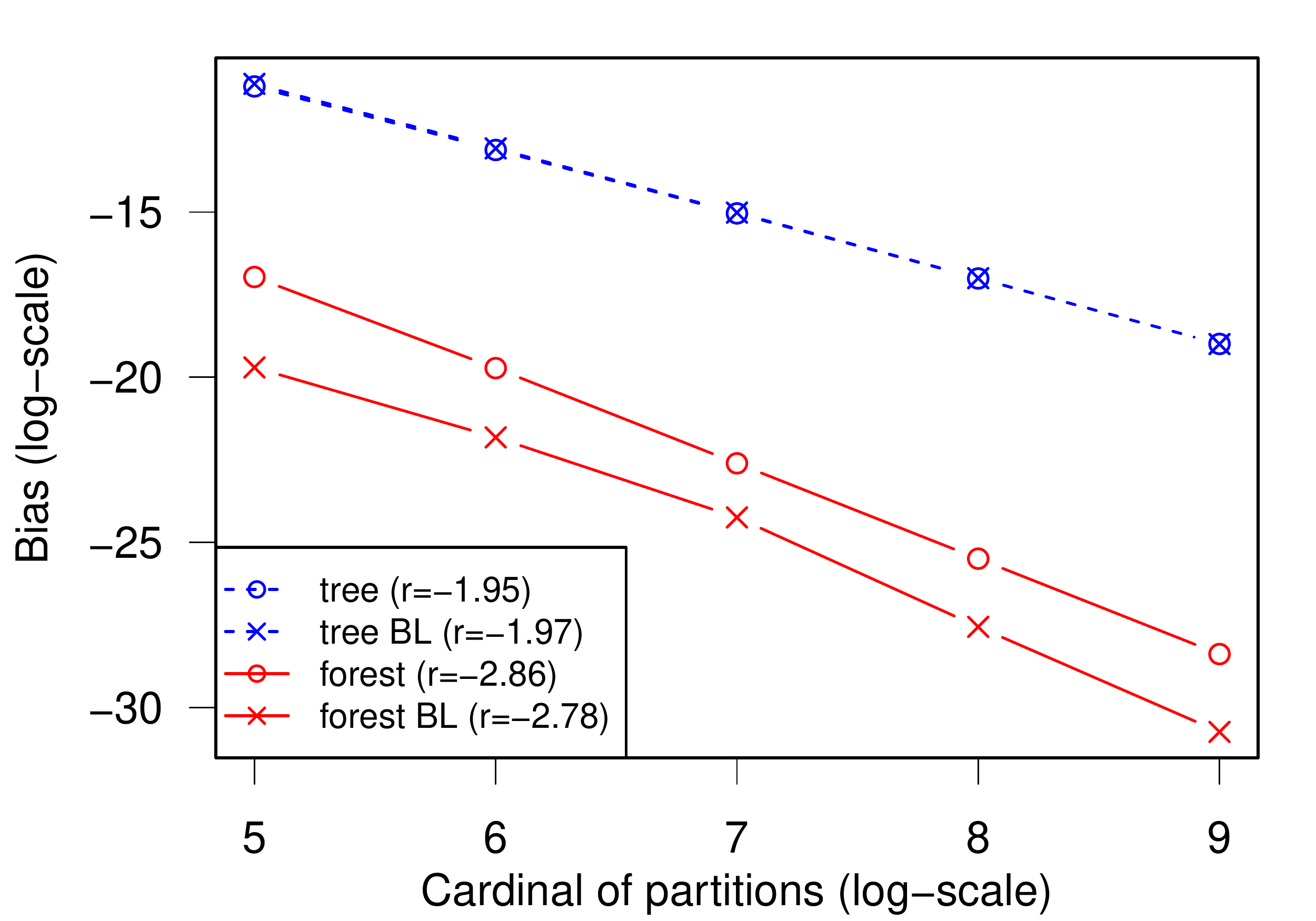}
\centerline{(b) PURF, $\dimX=1$}
\end{minipage}
\\
  \begin{minipage}{0.45\textwidth}
    \includegraphics[width=\textwidth]{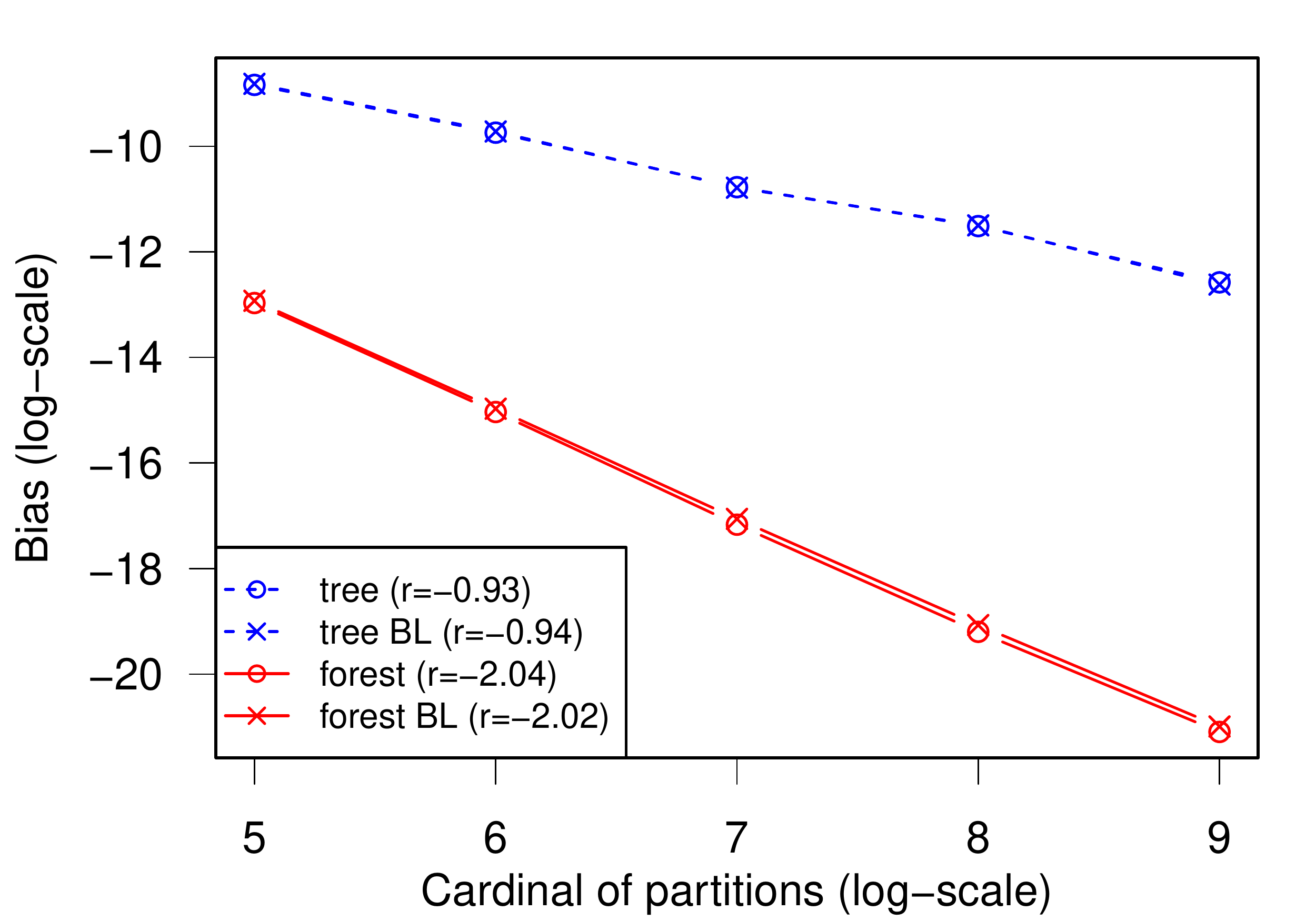}
\centerline{(c) BPRF, $\dimX=1$}
\end{minipage}
    \hspace{0.01\textwidth}
  \begin{minipage}{0.45\textwidth}
    \includegraphics[width=\textwidth]{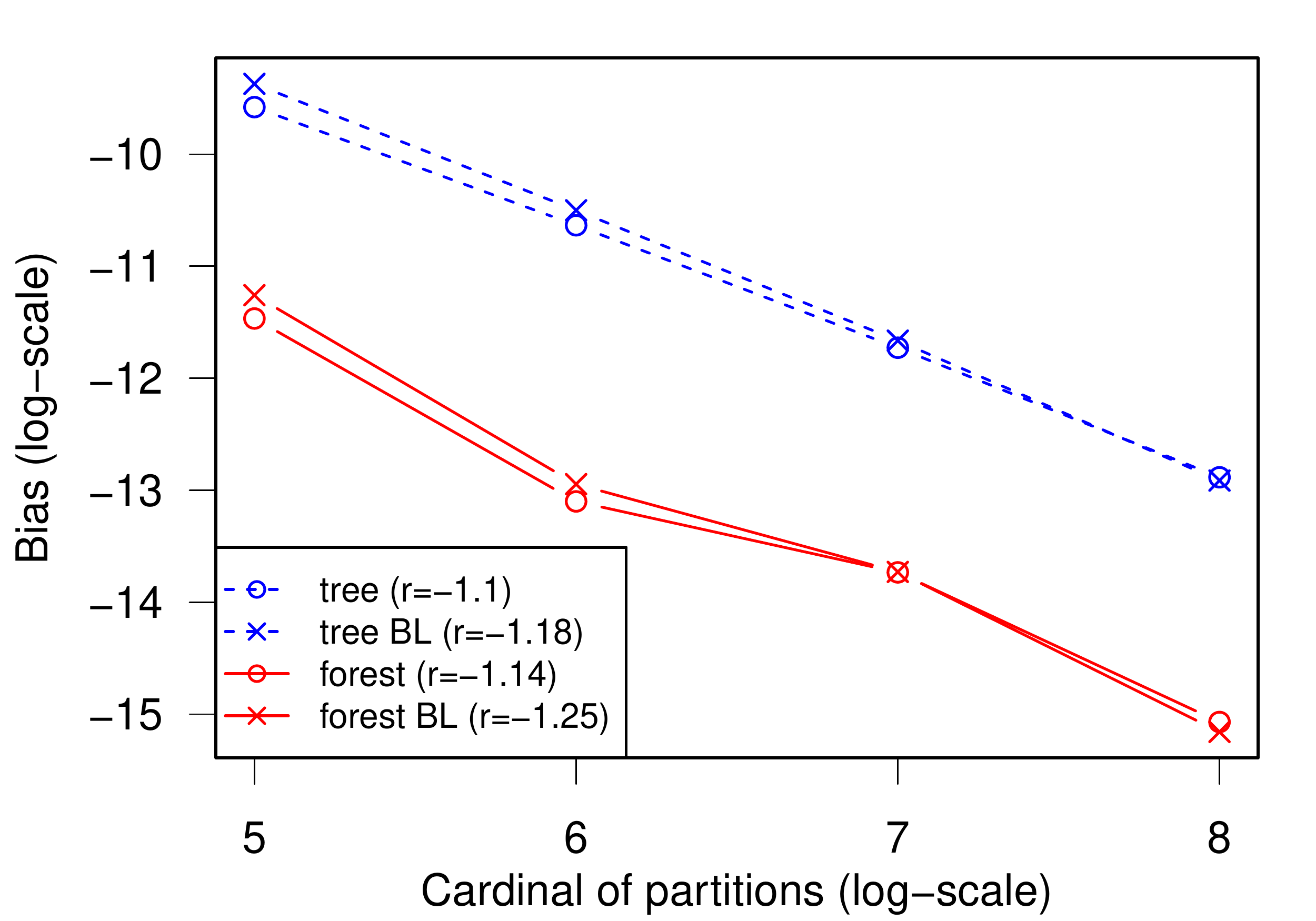}
\centerline{(d) Hold-out RF, $\dimX=1$}
\end{minipage}
\caption{\label{fig.abs} Plot of $\Bias_{\cU,1}$ and
  $\Biasinfty{\cU}$ (in $\log_2$-scale) against $\nfeu$ (in
  $\log_2$-scale) for (a) toy, (b) PURF, (c) BPRF and (d) Hold-out RF
  models, for the \textbf{absolute value} regression function. BL
  corresponds to borderless situations and $r$ denotes the slope of a
  linear model fitted to the scatter plot.}
  \end{center}
\end{figure}
The \textbf{absolute value} regression function presents a singularity
at point $x=1/2$, and it acts as a border point.  
Hence, compared to
the sinusoidal regression function, the only change is that there is
no differences between borderless and regular approximation errors of forests: 
both reach the rate $\nfeu^{-3}$ for toy and PURF models. 
The Hold-out RF model again reaches relatively poor rates,
and the forest does not improve significantly the bias compared to a
single tree.

\subsection{Multidimensional input space}
\label{sec.simu.multi-dim}
For $\dimX>1$, we investigate the behaviors of BPRF and Hold-out RF models.
First, Figure~\ref{fig.id} shows the results for the \textbf{sum}
regression function when $\dimX \in \{5,10\}$.
\begin{figure}[!ht]
  \begin{center}
  \begin{minipage}{0.45\textwidth}
    \includegraphics[width=\textwidth]{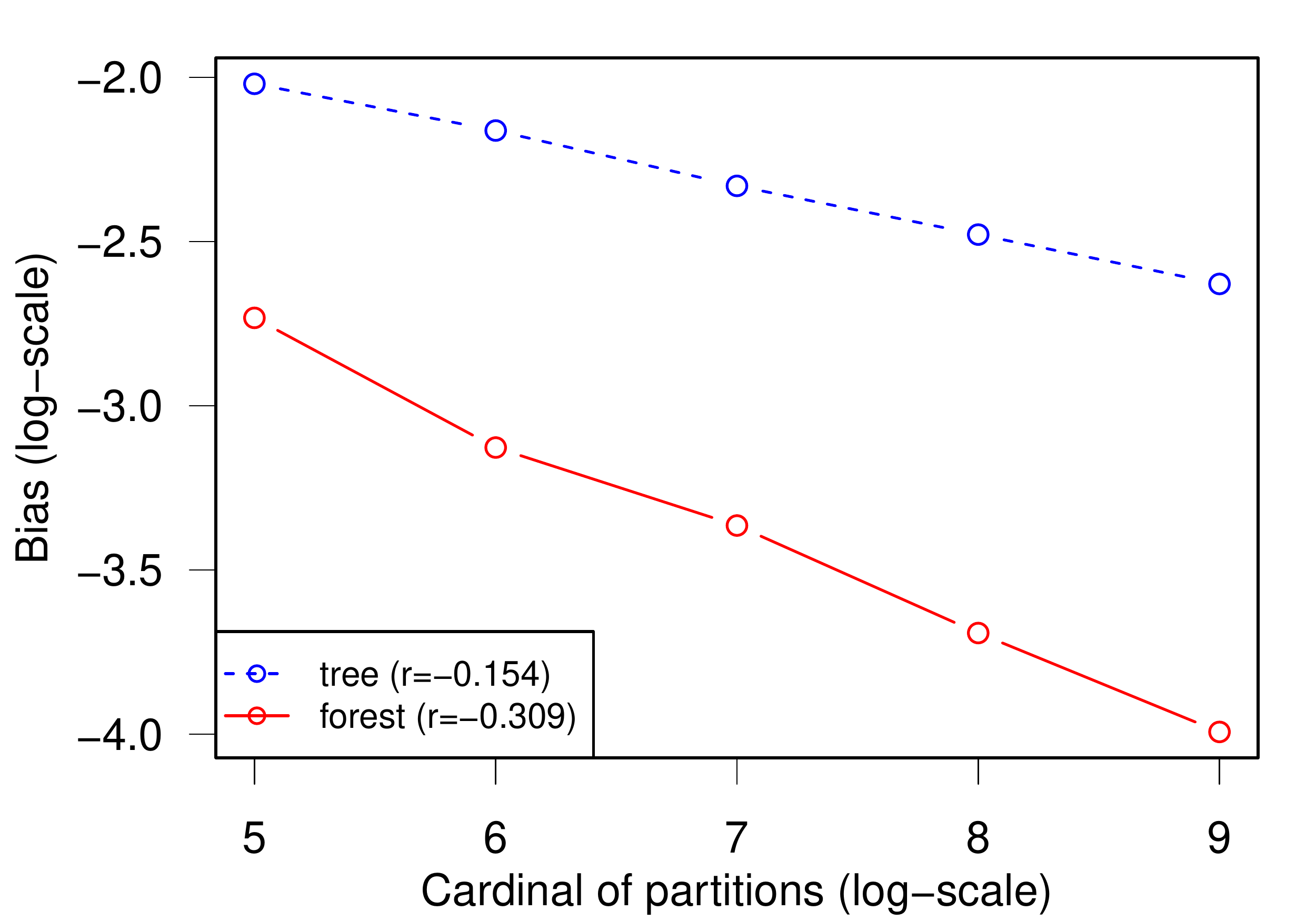}
\centerline{(a) BPRF, $\dimX=5$}
\end{minipage}
    \hspace{0.01\textwidth}
  \begin{minipage}{0.45\textwidth}
    \includegraphics[width=\textwidth]{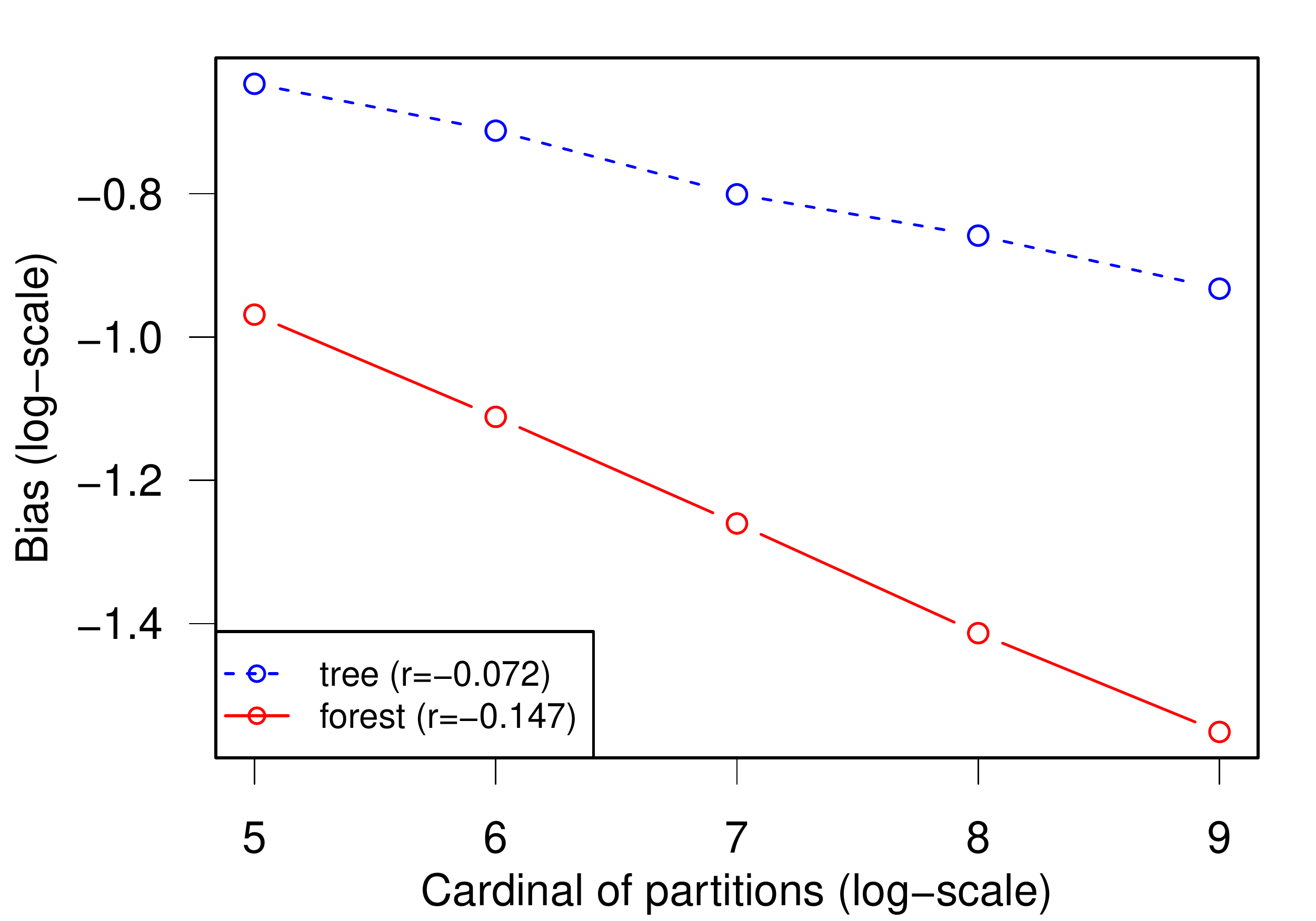}
\centerline{(b) BPRF, $\dimX=10$}
\end{minipage}
\\
  \begin{minipage}{0.45\textwidth}
    \includegraphics[width=\textwidth]{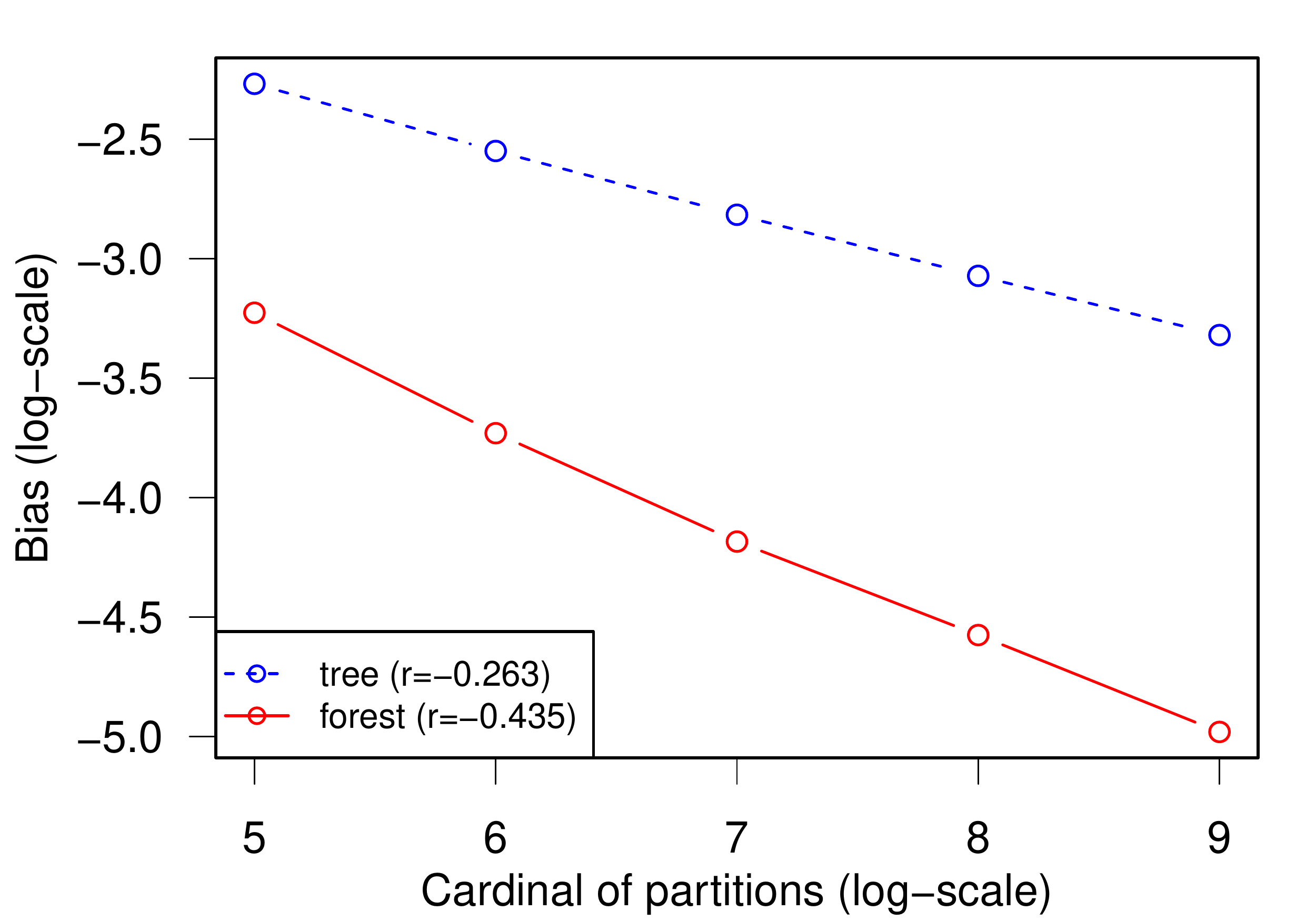}
\centerline{(c) Hold-out RF, $\dimX=5$}
\end{minipage}
    \hspace{0.01\textwidth}
  \begin{minipage}{0.45\textwidth}
    \includegraphics[width=\textwidth]{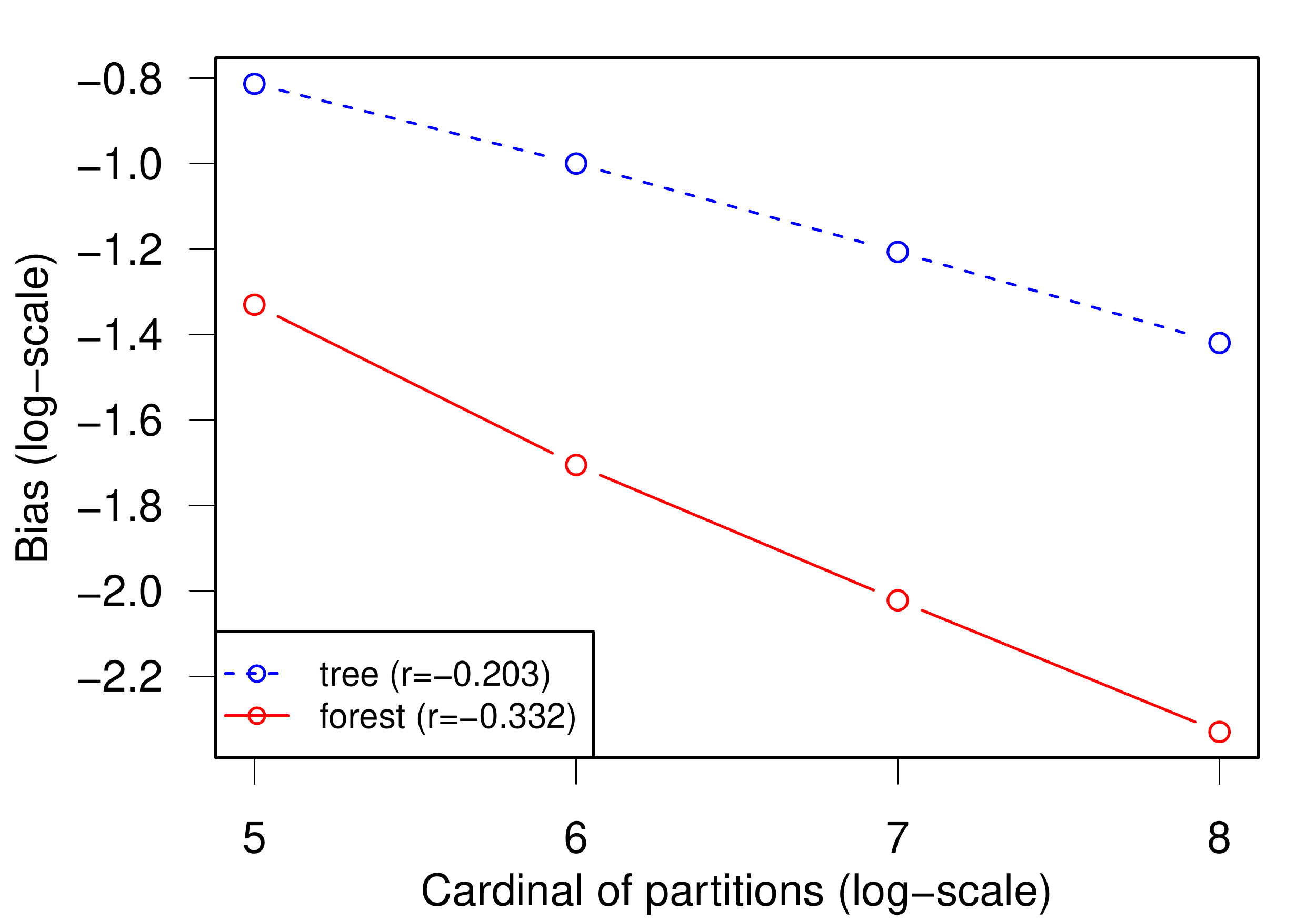}
\centerline{(d) Hold-out RF, $\dimX=10$}
\end{minipage}
\caption{\label{fig.id} Plot of $\Bias_{\cU,1}$ and
  $\Biasinfty{\cU}$ (in $\log_2$-scale) against $\nfeu$ (in
  $\log_2$-scale) for BPRF with 
  (a) $\dimX=5$ and (b) $\dimX=10$, and
  Hold-out RF model with (c) $\dimX=5$ and (d) $\dimX=10$, for the
  \textbf{sum} regression function. $r$ denotes the slope of a linear
  model fitted to the scatter plot.}
  \end{center}
\end{figure}

As in the one-dimensional case, we observe linear behaviors in $\log_2$-$\log_2$ scale. 
For BPRF (top graphs), trees and forests reach approximately the decreasing rates we can expect 
from Section~\ref{sec.multidim.BPRF}, 
respectively $\nfeu^{-\alpha}$ and $\nfeu^{-2\alpha}$ with $\alphaurt
= \log(10/9) / \log(2) \approx 0.152$ when $\dimX=5$ and $\alphaurt =
\log(20/19) / \log(2) \approx 0.074$ when $\dimX = 10$.

Compared to BPRF, the Hold-out RF model reaches better rates in
the multidimensional framework. Moreover, it suffers less from the
increase of the dimension: BPRF rates are divided by $2.1$ when
$\dimX$ increases from $5$ to $10$, whereas Hold-out RF model
rates are only divided by $1.3$.

Forests rates with the Hold-out RF model are about $1.6$ times faster than tree
rates, which illustrates a significant gain brought by forests.
Note however the comparison with BPRF model is partly
unfair, because Hold-out RF can make use of an extra sample
$D_{\nobs}^{\prime}$ for building appropriate partitions of $\X$; nevertheless, with 
BPRF, if such an extra sample is available, it can only be used for reducing the final risk 
by a constant factor (since it doubles the sample size) but not for improving the risk rate. 

\begin{figure}[!ht]
  \begin{center}
  \begin{minipage}{0.45\textwidth}
    \includegraphics[width=\textwidth]{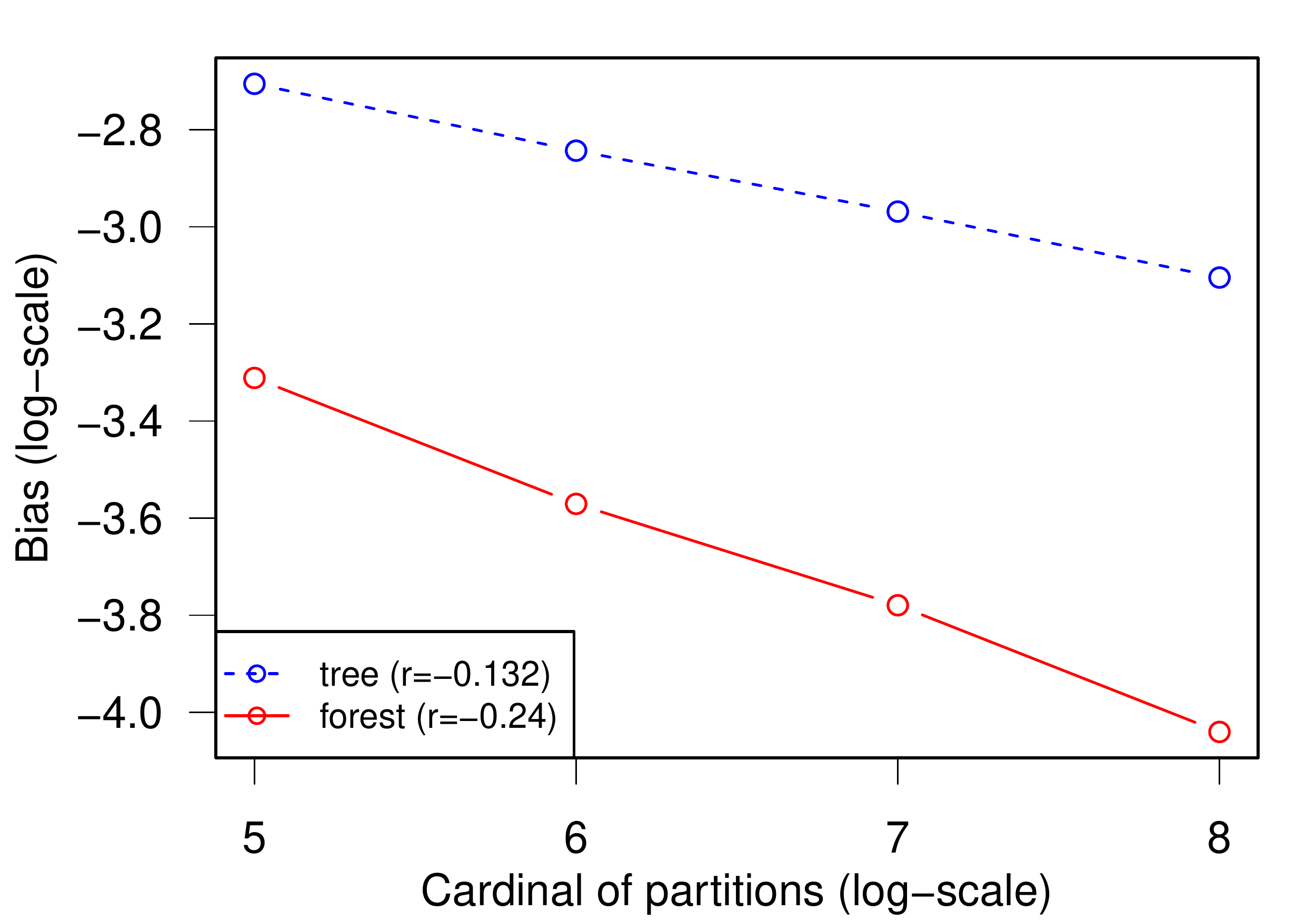}
\centerline{(a) BPRF, $\dimX=5$}
\end{minipage}
    \hspace{0.01\textwidth}
  \begin{minipage}{0.45\textwidth}
    \includegraphics[width=\textwidth]{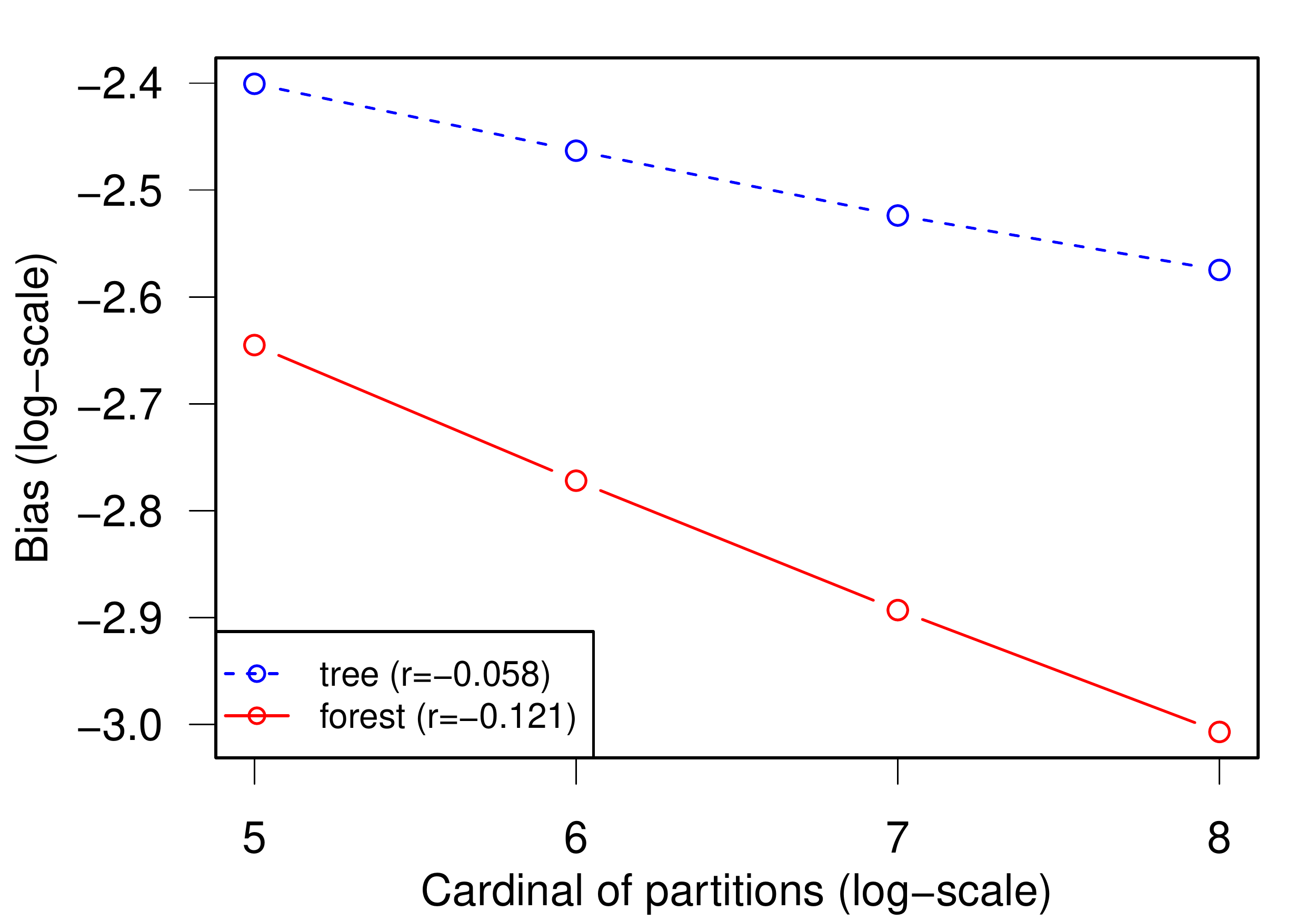}
\centerline{(b) BPRF, $\dimX=10$}
\end{minipage}
\\
  \begin{minipage}{0.45\textwidth}
    \includegraphics[width=\textwidth]{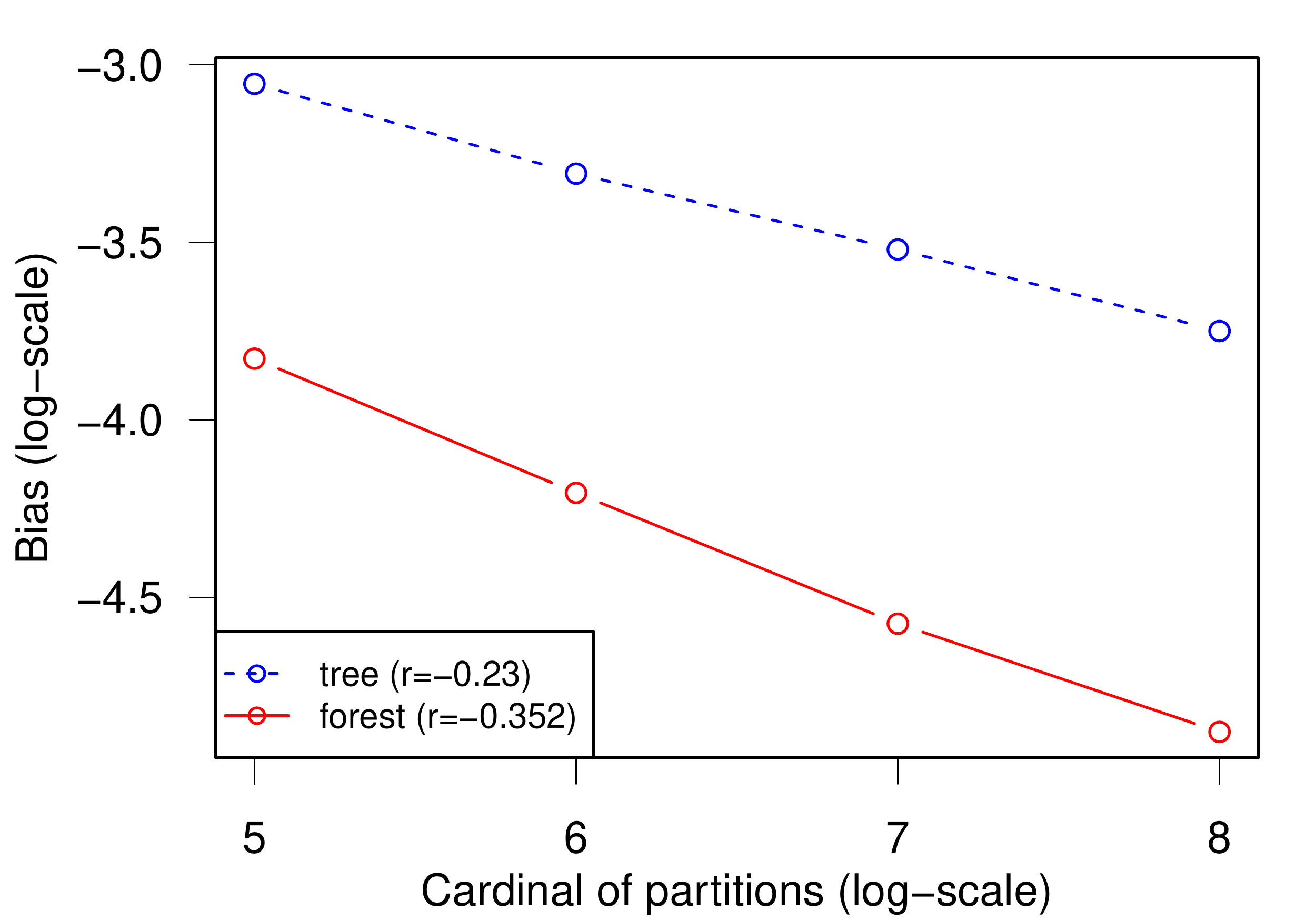}
\centerline{(c) Hold-out RF, $\dimX=5$}
\end{minipage}
    \hspace{0.01\textwidth}
  \begin{minipage}{0.45\textwidth}
    \includegraphics[width=\textwidth]{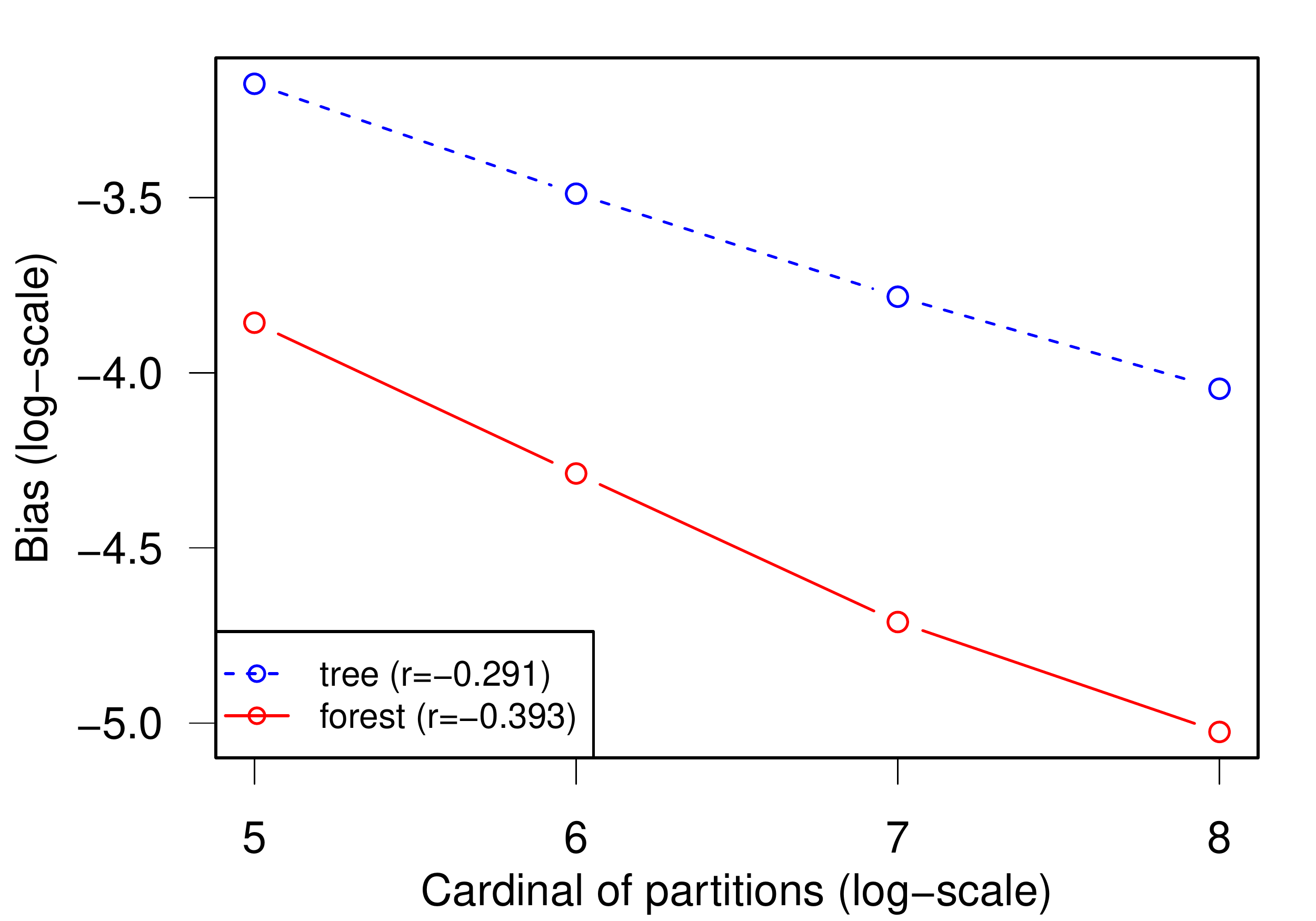}
\centerline{(d) Hold-out RF, $\dimX=10$}
\end{minipage}
\caption{\label{fig.fried1} Plot of $\Bias_{\cU,1}$ and
  $\Biasinfty{\cU}$ (in $\log_2$-scale) against $\nfeu$ (in
  $\log_2$-scale) for BPRF with (a) $\dimX=5$ and (b) $\dimX=10$, and
  Hold-out RF model with (c) $\dimX=5$ and (d) $\dimX=10$, for the
  \textbf{Friedman1} regression function. $r$ denotes the slope of a linear
  model fitted to the scatter plot.}
  \end{center}
\end{figure}
Results for the \textbf{Friedman1} regression function are shown in
Figure~\ref{fig.fried1}.
Note that when $\dimX=10$, the last five variables are non-informative
since the regression function does not depend on them.

For BPRF, rates are slightly worse than for \textbf{sum}, and we still
observe a factor of $2$ between trees and forests rates. 
The decrease of the rates might be explained by two reasons: the complexity of
\textbf{Friedman1} function, and when $\dimX=10$ the presence of five
non-informative variables.

For the Hold-out RF model, rates are still much better than for
BPRF, and forest rates are better than tree rates by a factor $1.5$.
When $\dimX=5$, rates are smaller than for the \textbf{sum}
regression function, probably due to the complexity of the
regression function.  When $\dimX=10$, we surprisingly get faster
rates than when $\dimX=5$.
Obtaining approximation rates as good for $\dimX=10$ as for $\dimX=5$
is expected since the number of informative variables is the same in
both cases, and Hold-out RF are known to adapt to the sparsity
of the regression function (see Section~\ref{sec.disc.comp} for more
details).
But obtaining significantly better rates for a more difficult problem
(when $\dimX=10$) is quite surprising.  Investigating this phenomenon
requires a more systematic simulation study with more examples, which
is out of the scope of the present paper.

\section{Discussion}\label{sec.disc}

In this paper, we analyze several purely random forests models and
show, for each of them, that an infinite forest improves by an order
of magnitude the approximation error of a single tree, assuming the
regression function is smooth.  Since the estimation error of a forest
is smaller or equal to that of a single tree, we deduce that forests
reach a strictly better risk convergence rate.

\subsection{Comparison between the toy, PURF and BPRF models}
\label{sec.disc.comp}
In dimension $\dimX=1$, we can compare the results for the BPRF model in
Section~\ref{sec.multidim.BPRF} with the results obtained in
Sections~\ref{sec_toy_model} and~\ref{sec.purf} for the toy and PURF models. 
Indeed, we get
\[ \alphaurt = 1 \]
so that the approximation error of an infinite forest is of order
$\nfeu^{-2}$ for BPRF, instead of $\nfeu^{-4}$ for toy and PURF (when
avoiding border effects).

Intuitively, it seems the BPRF model tends to leave large ``holes'' in
the space $\X$, whereas the toy model is almost regular, and the PURF
model stays close to regular.  Recall the PURF model can be seen as a
recursive tree construction, where only one leaf is split into two
leaves at each step, and the choice of this leaf is made with
probability equal to its size.  On the contrary, the BPRF model keeps
splitting all leaves whatever their size, which may lead to very small
leaves but also to much larger ones in some significant part of the
space $\X$.

\subsection{Comparison with other random forest models}\label{sec.disc.others}
Another random forest model has been suggested by \citet{Bre:2004}
and was more precisely analyzed by \cite{Bia:2012}.
In the latter paper, the random partitioning---which depends on some
parameters $(p_i)_{1 \leq i \leq \dimX} \in [0,1)^{\dimX}$ with $p_1 +
\cdots + p_{\dimX}=1$---is as follows:
\begin{algo} \label{model.Biau2012} \hfill\\ \vspace*{-3ex}
\begin{itemize}
\item Start from $[0,1)^\dimX$,
\item Repeat $\nfeu$ times: 
for each set $\lambda$ of the current partition, 
  \begin{itemize}
  \item choose a split variable $j$ randomly, with probability distribution over $\{1, \ldots, \dimX\}$ given by $(p_i)_{1 \leq i \leq \dimX}$, 
  \item split $\lambda$ along coordinate $j$ at the midpoint $t_j$ of
    $\lambda_j$, that is put $\{ x \in \lambda \, / \, x_j < t_j\}$
    and $\{ x \in \lambda \, / \, x_j \geq t_j\}$ at the two children
    nodes below $\lambda$.
  \end{itemize}
\end{itemize}
\end{algo}

In a framework where only $S$ variables (among $\dimX$) are
``strong'' (i.e. have an effect on the output variable), the main
result of \citet{Bia:2012} is that if the probability weights are well
tuned (i.e., roughly, if $p_j \approx 1/S$ for a strong variable and
$p_j \approx 0$ for a noisy variable), then for model~\ref{model.Biau2012}, the infinite forest rate
of convergence only depends on $S$ and is faster than the minimax rate
in dimension $\dimX$.
In other words, \cite{Bia:2012} shows that such random forests adapt to
sparsity.

Even if the framework of \cite{Bia:2012} is quite different from ours,
let us give a quick comparison between the different rates obtained when $S=\dimX$.
Assuming the regression function is Lipschitz, for the model studied
by \cite{Bia:2012}, the infinite forest bias is at most
of order $\nfeu^{-\theta}$ with $\theta = 3/(4 \log(2) \dimX)$.  This
is comparable to our result $k^{-2\alphaurt}$ for BPRF, 
because $2 \alphaurt \approx 1/ ( \log(2) \dimX)$ when $\dimX$ is large enough.  
Our rate for BPRF is a little bit faster,
but recall that we make a stronger assumption on the smoothness of the
regression function ($C^2$ instead of Lipschitz), and we only consider
$\dimX=S$.  The problem of knowing if the combination of these two
analyses could give better rates (in a sparse framework with $C^2$
regression function) is beyond the scope of the paper and we let this
point for future research. 

\bigskip

Finally, as mentioned in Section~\ref{sec.BPRF.discussion}, we
conjecture the BPRF model reaches better rates than the UBPRF model~\ref{algo.ubprf}.
Intuitively, as we discuss in Section~\ref{sec.disc.comp}, UBPRF
model tends to leave even larger ``holes'' in $\X$ than BPRF, because
instead of constructing balanced trees, it randomly chooses at each
step the next leaf to be split, with a uniform distribution on all
leaves.

\subsection{General conclusions}
\label{sec.disc.general}

For all PRF models studied in this paper, we get
that the infinite forest bias order of magnitude is equal to the
square of the single tree bias. 
Consequently, if a single tree reaches the 
minimax convergence rate for $C^1$ functions, we directly have that a
large enough forest is minimax for $C^2$ functions. 
So, compared to
trees, forests can well approximate regression functions with one more
level of smoothness.  
Further research is needed to know whether this
phenomenon is general for all PRF models.

Interestingly, our analysis helps to suggest better PRF partitioning
mechanism.  It seems PRF models benefit from a choice of the next set
of the current partition to be split with a probability proportional to the size
of the set. This statement is justified by the comparison between
BPRF and PURF models in dimension 1, and it leads us to conjecture
that PRF models reaching $C^2$-minimax rates of convergence could also
be derived in dimension $\dimX$. For instance, this should be the case
with the generalization of the PURF model to any $\dimX \geq 1$,
consisting in replacing ``length'' by ``volume'' in
Model~\ref{algo.purf}, and by choosing
uniformly a coordinate $j$ before performing the split along it.

For practical use of PRF models, we suggest an order of magnitude for
the number of trees in a forest that is sufficient to get a bias term as small as
for an infinite forest.
More precisely, if the bias of a single
tree is of order $k^{-\gamma}$  for some $\gamma >0$, 
our results suggest it is sufficient to build $\narb =
k^{\gamma}$ trees to get a forest which reaches same rates as the
(theoretical) infinite forest.

Finally, we mention that all our general results in
Sections~\ref{sec.general}--\ref{sec.multidim} can be applied for any
random forest satisfying assumption~\eqref{hyp.purely-random}, that
is, when random partitions are obtained independently from the
learning sample $D_{\nobs}$. 
Hence, if for a random forest model, we
are able to compute quantities appearing in
Proposition~\ref{pro.bias.multidim.H3}, we can deduce results on the bias
and risk convergence rates for these random
forests. 
In particular, we have in mind the Hold-out RF model defined in Section~\ref{sec.simu}.
Addressing this point appears to be an interesting future research
topic, not only from the theoretical point of view, but also in
practice because the Hold-out RF model can achieve very good performances.

\section*{Acknowledgments}
The authors are grateful to Guillaume Obozinski and Francis Bach for
several discussions. The authors acknowledge the partial support of
French Agence Nationale de la Recherche, under grants {\sc Detect}
(ANR-09-JCJC-0027-01) and {\sc Calibration} (Blanc SIMI 1 2011 projet
Calibration).

\bibliographystyle{apalike}
\bibliography{purfbias}

\begin{thebibliography}{}

\bibitem[Arlot, 2008]{Arl:2008a}
Arlot, S. (2008).
\newblock {$V$}-fold cross-validation improved: {$V$}-fold penalization.
\newblock arXiv:0802.0566v2.

\bibitem[Arlot and Celisse, 2010]{Arl_Cel:2010:surveyCV}
Arlot, S. and Celisse, A. (2010).
\newblock A survey of cross-validation procedures for model selection.
\newblock {\em Statist. Surv.}, 4:40--79.

\bibitem[Biau, 2012]{Bia:2012}
Biau, G. (2012).
\newblock Analysis of a random forests model.
\newblock {\em J. Mach. Learn. Res.}, 13:1063--1095.

\bibitem[Biau and Devroye, 2010]{Bia_Dev:2010}
Biau, G. and Devroye, L. (2010).
\newblock On the layered nearest neighbour estimate, the bagged nearest
  neighbour estimate and the random forest method in regression and
  classification.
\newblock {\em J. Multivariate Anal.}, 101(10):2499--2518.

\bibitem[Biau et~al., 2008]{Bia_Dev_Lug:2008}
Biau, G., Devroye, L., and Lugosi, G. (2008).
\newblock Consistency of random forests and other averaging classifiers.
\newblock {\em J. Mach. Learn. Res.}, 9:2015--2033.

\bibitem[Breiman, 1996]{Bre:1996}
Breiman, L. (1996).
\newblock Bagging predictors.
\newblock {\em Machine learning}, 24(2):123--140.

\bibitem[Breiman, 2000]{Bre:2000}
Breiman, L. (2000).
\newblock Some infinity theory for predictor ensembles.
\newblock Technical Report Technical Report 577, U.C. Berkeley Department of
  Statistics.
\newblock available at http://www.stat.berkeley.edu/tech-reports/577.pdf.

\bibitem[Breiman, 2001]{Bre:2001}
Breiman, L. (2001).
\newblock Random forests.
\newblock {\em Machine Learning}, 45:5--32.

\bibitem[Breiman, 2004]{Bre:2004}
Breiman, L. (2004).
\newblock Consistency for a simple model of random forests.
\newblock Technical Report Technical Report 670, U.C. Berkeley Department of
  Statistics.
\newblock available at http://www.stat.berkeley.edu/tech-reports/670.pdf.

\bibitem[Breiman et~al., 1984]{Bre_etal:1984}
Breiman, L., Friedman, J.~H., Olshen, R.~A., and Stone, C.~J. (1984).
\newblock {\em {Classification and Regression Trees}}.
\newblock {Wadsworth Statistics/Probability Series}. Wadsworth Advanced Books
  and Software, Belmont, CA.

\bibitem[B{\"u}hlmann and Yu, 2002]{Buh_Yu:2002}
B{\"u}hlmann, P. and Yu, B. (2002).
\newblock {Analyzing bagging}.
\newblock {\em Ann. Statist.}, 30(4):927--961.

\bibitem[Cutler and Zhao, 2001]{Cut_Zha:2001}
Cutler, A. and Zhao, G. (2001).
\newblock Pert - perfect random tree ensembles perfect random trees.
\newblock {\em Computing Science and Statistics}, 33:490--497.

\bibitem[Friedman, 1991]{Fri:1991}
Friedman, J.~H. (1991).
\newblock Multivariate adaptive regression splines.
\newblock {\em The annals of statistics}, pages 1--67.

\bibitem[Genuer, 2012]{Gen:2012}
Genuer, R. (2012).
\newblock Variance reduction in purely random forests.
\newblock {\em Journal of Nonparametric Statistics}, 24(3):543--562.

\bibitem[Geurts et~al., 2006]{Geu_Ern_Weh:2006}
Geurts, P., Ernst, D., and Wehenkel, L. (2006).
\newblock Extremely randomized trees.
\newblock {\em Machine Learning}, 63:3--42.

\bibitem[Gy{\"o}rfi et~al., 2002]{Gyo_Las_Krz_Wal:2002}
Gy{\"o}rfi, L., Krzy{\.z}ak, A., Kohler, M., and Walk, H. (2002).
\newblock A distribution-free theory of nonparametric regression.

\bibitem[Latinne et~al., 2001]{Lat_Dev_Dec:2001}
Latinne, P., Debeir, O., and Decaestecker, C. (2001).
\newblock Limiting the number of trees in random forests.
\newblock In {\em Multiple classifier systems ({C}ambridge, 2001)}, volume 2096
  of {\em Lecture Notes in Comput. Sci.}, pages 178--187. Springer, Berlin.

\bibitem[Liaw and Wiener, 2002]{Lia_Wie:2002}
Liaw, A. and Wiener, M. (2002).
\newblock Classification and regression by randomforest.
\newblock {\em R News}, 2(3):18--22.

\bibitem[Lin and Jeon, 2006]{Lin_Jeo_2006}
Lin, Y. and Jeon, Y. (2006).
\newblock Random forests and adaptive nearest neighbors.
\newblock {\em Journal of the American Statistical Association},
  101(474):578--590.

\bibitem[{R Core Team}, 2014]{R_Core:2014}
{R Core Team} (2014).
\newblock {\em R: A Language and Environment for Statistical Computing}.
\newblock R Foundation for Statistical Computing, Vienna, Austria.

\bibitem[Rahimi and Recht, 2007]{Rah_Rec:2007}
Rahimi, A. and Recht, B. (2007).
\newblock Random features for large-scale kernel machines.
\newblock In Platt, J., Koller, D., Singer, Y., and Roweis, S., editors, {\em
  Advances in Neural Information Processing Systems 20}, pages 1177--1184.

\bibitem[Rahimi and Recht, 2008]{Rah_Rec:2008}
Rahimi, A. and Recht, B. (2008).
\newblock Weighted sums of random kitchen sinks: Replacing minimization with
  randomization in learning.
\newblock In Koller, D., Schuurmans, D., Bengio, Y., and Bottou, L., editors,
  {\em Advances in Neural Information Processing Systems 21}, pages 1313--1320.

\bibitem[Wasserman, 2006]{Was:2006}
Wasserman, L. (2006).
\newblock {\em {All of nonparametric statistics}}.
\newblock {Springer Texts in Statistics}. Springer, New York.

\end{thebibliography}

%%%%%%%%%%%%%%%%%%%%%%%%%%%%%%%%%%%%%%%%%% 
%%%%%%%%%%%%%%%% PROOFS %%%%%%%%%%%%%%%%%%
%%%%%%%%%%%%%%%%%%%%%%%%%%%%%%%%%%%%%%%%%% 

\appendix

\section{Proofs: general results on the variance and bias terms}
\label{sec.pr.gen_res}

\subsection{General bounds on the variance term}
\label{sec.pr.gen_res.variance}

\begin{proof}[Proof of Proposition~\ref{pro.maj-variance}]
  % 
  %%%% Majoration de la variance d'une foret par celle d'un arbre via la convexite (en supposant les arbres de meme loi, comme partout dans le papier). \\
  % 
  First, by convexity of $u \mapsto u^2$, 
  \begin{align*}
    \E\croch{ \paren{ \tilde{s}_{\bV_{\narb}}(X) - \ERM (X ; \bV_{\narb})}^2 } 
    &= \E\croch{ \paren{ \frac{1}{\narb} \sum_{j=1}^{\narb} \paren{ \tilde{s}_{\bU^j}(X) - \ERM (X ; \bU^j)} }^2 } 
    \\
    &\leq \frac{1}{\narb}  \sum_{j=1}^{\narb} \E\croch{  \paren{ \tilde{s}_{\bU^j}(X) - \ERM (X ; \bU^j) }^2 } 
    \\
    &= \E\croch{  \paren{ \tilde{s}_{\bU^1}(X) - \ERM (X ; \bU^1) }^2 } 
  \end{align*}
  which proves Eq.~\eqref{eq.pro.maj-variance.foret-vs-arbre}.

  %%%% Majorations de la variance d'un arbre en general
  % 
  Now, remark that conditionally to $\bU^1$, $\ERM (\cdot ; \bU^1)$ is a classical regressogram estimator, and $\E\scroch{  \sparen{ \tilde{s}_{\bU^1}(X) - \ERM (X ; \bU^1) }^2 \sachant \bU^1} $ is its estimation error. 
  Therefore, using Proposition~1 in \cite{Arl:2008a}, we get 
  \begin{align} 
\notag 
      \E\croch{  \paren{ \tilde{s}_{\bU^1}(X) - \ERM (X ; \bU^1) }^2 \sachant \bU^1} &= \frac{1}{\nobs} \sum_{\lambda \in \bU^1} \paren{ 1 + \delta_{ \nobs , \pl }} \paren{ \sigma^2 + \E\croch{ \paren{s(X) - \widetilde{s}_{\bU^1}(X) }^2 \sachant X \in \lambda \, , \, \bU^1} } \\
    \label{eq.pr.pro.maj-variance.Ep1} 
      & \quad + \frac{1}{\nobs} \sum_{\lambda \in \bU^1} \betl^2 (1 - \pl)^{\nobs} 
      \\
\notag 
      \mbox{where} \qquad 
      - 2 \exp\paren{ - \nobs \pl } 
      &\leq - 2 (1 - \pl)^{\nobs} 
%%      \leq ( 1 - (1 - \pl)^{\nobs} )^2 -1 
      \leq \delta_{ \nobs , \pl } 
      \leq 1  
      \enspace , 
      \pl \egaldef \P \paren{ X \in \lambda}
  \end{align}
and the last term---which does not appear in Proposition~1 of \cite{Arl:2008a}---comes from our convention to take  $\ERM(\cdot;\bU^1)=0$ on each $\lambda\in \bU^1$ such that no data point in $D_n$ belongs to $\lambda$. 
Since $\absj{s(X) - \widetilde{s}_{\bU^1}(X)} \leq 2 \norm{s}_{\infty}$ almost surely 
and $\absj{\betl}\leq \norm{s}_{\infty}$ for all $\lambda \in \bU^1$, 
Eq.~\eqref{eq.pr.pro.maj-variance.Ep1} implies Eq.~\eqref{eq.pro.maj-variance.arbre-gal} 
and Eq.~\eqref{eq.pro.maj-variance.arbre-gal-min} by integrating over $\bU^1$ 
(which can be done separately thanks to assumption \eqref{hyp.purely-random}). 

%%%% Majorations de la variance d'un arbre lorsque s est Lipschitz
% 
When $s$ is $K$-Lipschitz w.r.t. $\delta$, for any $\lambda \in \bU^1$
and $x \in \lambda$,
\[ \absj{ s(x) - \widetilde{s}_{\bU^1}(x) } \leq K \diam_{\delta}
  (\lambda) 
\] 
so that Eq.~\eqref{eq.pr.pro.maj-variance.Ep1} implies
\[ 
\E\croch{ \paren{ \tilde{s}_{\bU^1}(X) - \ERM (X ; \bU^1) }^2
    \sachant \bU^1} \leq \frac{1}{\nobs} \paren{2 \sigma^2 \nfeu 
    + 2 K^2
    \sum_{\lambda \in \bU^1} \paren{ \diam_{\delta} (\lambda) }^2 } 
    +  \frac{\norm{s}_{\infty}^2}{\nobs} \sum_{\lambda \in \bU^1}  \exp\paren{- \nobs \pl} 
\enspace ,
\]
thus Eq.~\eqref{eq.pro.maj-variance.arbre-Lipschitz} by integrating
over $\bU^1$ (which can be done separately thanks to assumption
\eqref{hyp.purely-random}).
\end{proof}

\subsection{Approximation of the bias term}
\label{sec.pr.gen_res.bias}

\begin{proof}[Proof of Proposition~\ref{pro.bias.multidim.H3}]
  First, we assume \eqref{hyp.s-2-fois-derivable.alt} holds true. Let 
  \begin{align*}
    T_2(x , \bU) &\egaldef \frac{1}{\absj{I_{\bU}(x)}} \int_{I_{\bU}(x)} \croch{ s(t) - s(x) - \nabla s(x) \cdot (t-x) } dt 
    \\
    &= \tilde{s}_{\bU}(x) - s(x) - \nabla s(x) \cdot \paren{ C_{\bU}(x) - x} 
    \\
    \mbox{where} \quad C_{\bU}(x) &\egaldef \frac{1}{\absj{I_{\bU}(x)}} \int_{I_{\bU}(x)} t dt = \paren{ \frac{A_{i,\bU}(x) + B_{i,\bU}(x)}{2} }_{1 \leq i \leq \dimX}  \qquad \mbox{is the center of } I_{\bU}(x) \enspace  . 
  \end{align*}
  Then, integrating \eqref{hyp.s-2-fois-derivable.alt} over $t \in I_{\bU}(x)$ yields 
  \begin{align}
    \notag \absj{T_2(x, \bU)} &\leq \CHdeuxa \frac{1}{\absj{I_{\bU}(x)}} \int_{I_{\bU}(x)} \norm{t-x}^2_2 dt \\
    &= \frac{\CHdeuxa}{3} \sum_{i=1}^{\dimX} \croch{ \paren{ x_i - A_{i,\bU}(x)}^2 + \paren{ B_{i,\bU}(x) - x_i }^2 - \paren{ x_i - A_{i,\bU}(x)} \paren{ B_{i,\bU}(x) - x_i } } \enspace .
    \label{eq.pr.pro.bias.multidim.maj-T2}
  \end{align}
  By definition, 
  \begin{align*}
    \Biasinfty{\cU}(x) &= \paren{ s(x) - \E_{\bU} \croch{ \tilde{s}_{\bU}(x) } }^2 \\
    &= \paren{ \nabla s(x) \cdot \E_{\bU} \croch{ C_{\bU}(x) - x} + \E_{\bU} \croch{ T_2(x, \bU) } }^2 \\
    &= \paren{ \nabla s(x) \cdot \E_{\bU} \croch{ C_{\bU}(x) - x} }^2  + 2 \paren{ \nabla s(x) \cdot \E_{\bU} \croch{ C_{\bU}(x) - x} } \E_{\bU} \croch{ T_2(x, \bU) } + \paren{ \E_{\bU} \croch{ T_2(x, \bU) } }^2 \\
    &= \mathcal{M}_{1,\cU,x}^2 + 2 \mathcal{M}_{1,\cU,x} \E_{\bU} \croch{ T_2(x, \bU) } + \paren{ \E_{\bU} \croch{ T_2(x, \bU) } }^2 \enspace .
  \end{align*}
  We conclude the proof of Eq.~\eqref{eq.pro.bias.multidim.Bias-H2} by remarking that \eqref{eq.pr.pro.bias.multidim.maj-T2} implies 
  \begin{align*}
    \absj{ \E_{\bU} \croch{ T_2(x, \bU) } } &\leq \E_{\bU} \absj{ T_2(x, \bU) } 
    % \\ &
    \leq \frac{\CHdeuxa}{3} \sum_{i=1}^{\dimX} \croch{ m_{AA,i,x,\cU} + m_{BB,i,x,\cU} - m_{AB,i,x,\cU} }  = \mathcal{R}_{2,\cU,x} 
    \enspace .
  \end{align*}

  Let us now prove Eq.~\eqref{eq.pro.bias.multidim.Var-H2}. By definition of $\Vararbre{\cU}(x)$ and $T_2(x,\bU)\,$, 
  \begin{align}
    \notag 
    \Vararbre{\cU}(x) &= \var_{\bU \sim \cU} \paren{ \tilde{s}_{\bU}(x) - s(x) } \\
    \notag 
    &= \var_{\bU \sim \cU} \paren{ \nabla s(x) \cdot \paren{ C_{\bU}(x) - x} + T_2(x,\bU) } \\
    \notag 
    &= \var_{\bU \sim \cU} \paren{ \nabla s(x) \cdot \paren{ C_{\bU}(x) - x} } 
    + \var_{\bU \sim \cU} \paren{ T_2(x,\bU) }
    \\
    \notag 
    &\qquad + 2 \cov_{\bU \sim \cU} \paren{  \nabla s(x) \cdot \paren{ C_{\bU}(x) - x} + T_2(x,\bU) \, , \, T_2(x,\bU) }
  \end{align}
  so that
  \begin{equation}
    \label{eq.pr.pro.bias.multidim.VU}
    \begin{split}
      & \absj{ \Vararbre{\cU}(x) - \var_{\bU \sim \cU} \paren{ \nabla s(x) \cdot \paren{ C_{\bU}(x) - x} } }
      \\
      & \qquad \leq \E_{\bU \sim \cU} \croch{ T_2^2(x,\bU) } 
      + 2 \sqrt{ \var_{\bU \sim \cU} \paren{ \nabla s(x) \cdot \paren{ C_{\bU}(x) - x} } \E_{\bU \sim \cU} \croch{ T_2^2(x,\bU) }  }
    \end{split}
  \end{equation}
  To prove Eq.~\eqref{eq.pro.bias.multidim.Var-H2}, we compute the two terms appearing in Eq.~\eqref{eq.pr.pro.bias.multidim.VU}. 
  First, using computations made when proving Eq.~\eqref{eq.pro.bias.multidim.Bias-H2}, 
  \begin{align*}
    & \quad \var_{\bU \sim \cU} \paren{ \nabla s(x) \cdot \paren{ C_{\bU}(x) - x} } \\
    &= \E_{\bU \sim \cU}\croch{ \paren{ \nabla s(x) \cdot \paren{ C_{\bU}(x) - x}  }^2 } - \mathcal{M}_{1,\cU,x}^2 \\
    &= \E\left[ \frac{1}{4} \sum_{i=1}^{\dimX} \croch{ \paren{ \frac{\partial s}{\partial x_i} (x)}^2 (B_{i,\bU}(x) - x_i -(x_i - A_{i,\bU}(x)))^2 } \right. \\
    &\qquad \left. + \frac{1}{4} \sum_{i\neq j} \croch{ \frac{\partial s}{\partial x_i} (x) \frac{\partial s}{\partial x_j} (x) (B_{i,\bU}(x) - x_i -(x_i - A_{i,\bU}(x))) (B_{j,\bU}(x)  - x_j -(x_j - A_{j,\bU}(x))) } \right] 
    - \mathcal{M}_{1,\cU,x}^2 \\
    &= \frac{1}{4} \sum_{i=1}^{\dimX} \croch{ \paren{ \frac{\partial s}{\partial x_i} (x)}^2 \paren{ m_{AA,i,x,\cU} + m_{BB,i,x,\cU} - 2 \, m_{AB,i,x,\cU} } } 
    \\ & \qquad 
    + \frac{1}{4} \sum_{1\leq i\neq j \leq \dimX} \croch{ \frac{\partial s}{\partial x_i} (x) \frac{\partial s}{\partial x_j} (x) m_{B-A,i,j,x,\cU} } - \mathcal{M}_{1,\cU,x}^2 \\
    &= \mathcal{N}_{2,\cU,x} - \mathcal{M}_{1,\cU,x}^2  \enspace .
  \end{align*}
  Second, using Eq.~\eqref{eq.pr.pro.bias.multidim.maj-T2} and Cauchy-Schwarz inequality, 
  \begin{align*}
    \E_{\bU \sim \cU} \croch{ T_2^2(x,\bU) } 
    &\leq \frac{\CHdeuxa^2}{9} \E_{\bU \sim \cU} \croch{ \paren{ \sum_{i=1}^{\dimX} \croch{ \paren{ x_i - A_{i,\bU}(x)}^2 + \paren{ B_{i,\bU}(x) - x_i }^2 } }^2 } 
    \\
    &\leq  \frac{2 \dimX \CHdeuxa^2}{9} \E_{\bU \sim \cU} \croch{ \sum_{i=1}^{\dimX} \croch{ \paren{ x_i - A_{i,\bU}(x)}^4 + \paren{ B_{i,\bU}(x) - x_i }^4 } }
    \\
    &=    \frac{2 \dimX \CHdeuxa^2}{9} \sum_{i=1}^{\dimX} \croch{  m_{AAAA,i,x,\cU} + m_{BBBB,i,x,\cU} } = \mathcal{R}_{4,\cU,x}
  \end{align*}
  which concludes the proof of Eq.~\eqref{eq.pro.bias.multidim.Var-H2}. 

  \medskip

  Let us now assume \eqref{hyp.s-3-fois-derivable.alt} holds true. 
  Let 
  \begin{align} \notag 
    T_3(x , \bU) &\egaldef \frac{1}{\absj{I_{\bU}(x)}} \int_{I_{\bU}(x)} \croch{ s(t) - s(x) -  \nabla s(x) \cdot (t-x) - \frac{1}{2} (t-x)^T \paren{ \sparen{ \nabla^{(2)} s} (x) } (t-x) } \, dt 
    \\
    &= \tilde{s}_{\bU}(x) - s(x) - \nabla s(x) \cdot \paren{ C_{\bU}(x) - x} - \frac{1}{2 \absj{I_{\bU}(x)}} \int_{I_{\bU}(x)} (t-x)^T \paren{ \sparen{ \nabla^{(2)} s} (x) } (t-x) \, dt
    \enspace  . 
    \label{eq.pr.pro.bias.multidim.eq-T3}
  \end{align}
  The last term of Eq.~\eqref{eq.pr.pro.bias.multidim.eq-T3} is equal to 
  \begin{align}
    \notag 
    & \qquad \frac{1}{2 \absj{I_{\bU}(x)}} \int_{I_{\bU}(x)} (t-x)^T \nabla^{(2)} s (x) (t-x) \, dt \\
    \notag 
    &= \frac{1}{2 \absj{I_{\bU}(x)}} \int_{I_{\bU}(x)} \sum_{1\leq i,j \leq \dimX} \frac{\partial^2 s}{\partial x_i \partial x_j}(x) (t_i - x_i)(t_j - x_j) \, dt \\ 
    \notag 
    & = \frac{1}{2} \sum_{i=1}^{\dimX} \croch{ \frac{\partial^2 s}{\partial x_i^2}(x) \frac{1}{B_{i,\bU}(x) - A_{i,\bU}(x)} \int_{A_{i,\bU}(x)}^{B_{i,\bU}(x)} (t_i-x_i)^2 \, dt_i } \\ 
    \notag 
    & + \frac{1}{2} \sum_{1\leq i \neq j \leq \dimX} \croch{ \frac{\partial^2 s}{\partial x_i \partial x_j}(x) \frac{1}{B_{i,\bU}(x) - A_{i,\bU}(x)} \int_{A_{i,\bU}(x)}^{B_{i,\bU}(x)} (t_i-x_i) \, dt_i \frac{1}{B_{j,\bU}(x) - A_{j,\bU}(x)} \int_{A_{j,\bU}(x)}^{B_{j,\bU}(x)} (t_j-x_j) \, dt_j } \\ 
    \label{eq.pr.pro.bias.multidim.last-T3}
    & = \frac{1}{6} \sum_{i=1}^{\dimX} \croch{ \frac{\partial^2 s}{\partial x_i^2}(x) \paren{ (B_{i,\bU}(x) - x_i)^2 + (x_i -A_{i,\bU}(x))^2 - (B_{i,\bU}(x) - x_i)(x_i -A_{i,\bU}(x)) } } \\ 
    \notag 
    & \qquad + \frac{1}{8} \sum_{1\leq i \neq j \leq \dimX} \croch{ \frac{\partial^2 s}{\partial x_i \partial x_j}(x) (B_{i,\bU}(x) - x_i -(x_i - A_{i,\bU}(x))) (B_{j,\bU}(x) - x_j -(x_j - A_{j,\bU}(x))) }
    \enspace . 
  \end{align}
  So, combining  Eq.~\eqref{eq.pr.pro.bias.multidim.eq-T3} with Eq.~\eqref{eq.pr.pro.bias.multidim.last-T3}, 
  \begin{align} 
    \notag 
    \E_{\bU \sim \cU} \croch{ \tilde{s}_{\bU}(x) - s(x) } 
    &= \mathcal{M}_{1,\cU,x}
    +  \frac{1}{6} \sum_{i=1}^{\dimX} \croch{ \frac{\partial^2 s}{\partial x_i^2}(x) \paren{ m_{BB,i,x,\cU} + m_{AA,i,x,\cU} - m_{AB,i,x,\cU} } }  
    \\ 
    \notag 
    & \qquad +  \frac{1}{8} \sum_{1\leq i \neq j \leq \dimX} \croch{ \frac{\partial^2 s}{\partial x_i \partial x_j}(x)  m_{B-A,i,j,x,\cU} }
    + \E_{\bU \sim \cU} \croch{ T_3(x,\bU)} 
    \\
    &= \mathcal{M}_{1,\cU,x}  + \mathcal{M}_{2,\cU,x} + \E_{\bU \sim \cU} \croch{ T_3(x,\bU)} 
    \label{eq.pr.pro.bias.multidim.eq-biais-H3}
    \enspace .
  \end{align}
  We now bound the last term in Eq.~\eqref{eq.pr.pro.bias.multidim.eq-biais-H3}. 
  Integrating \eqref{hyp.s-3-fois-derivable.alt} over $t \in I_{\bU}(x)$ yields 
  \begin{align}
    \notag \absj{T_3(x, \bU)} &\leq \CHtroisa \frac{1}{\absj{I_{\bU}(x)}} \int_{I_{\bU}(x)} \norm{t-x}^3_3 dt \\
    \notag &= \CHtroisa \sum_{i=1}^{\dimX} \croch{ \frac{1}{B_{i,\bU}(x) - A_{i,\bU}(x)} \int_{A_{i,\bU}(x)}^{B_{i,\bU}(x)} \absj{t_i - x_i}^3 dt_i }\\
    \notag &= \frac{\CHtroisa}{4} \sum_{i=1}^{\dimX} \frac{\paren{ B_{i,\bU}(x) - x_i}^4 + \paren{x_i - A_{i,\bU}(x)}^4 }{B_{i,\bU}(x) - A_{i,\bU}(x)} \\
    &\leq \frac{\CHtroisa}{4} \sum_{i=1}^{\dimX} \croch{\paren{ B_{i,\bU}(x) - x_i}^3 + \paren{x_i - A_{i,\bU}(x)}^3 }
    \label{eq.pr.pro.bias.multidim.maj-T3}
  \end{align}
  since $B_{i,\bU}(x) - A_{i,\bU}(x) \geq B_{i,\bU}(x) - x_i$ and $B_{i,\bU}(x) - A_{i,\bU}(x) \geq x_i - A_{i,\bU}(x) \,$.
  Therefore, 
  \begin{align}
    \E_{\bU\sim\cU} \absj{T_3(x, \bU)} &\leq \frac{\CHtroisa}{4} \sum_{i=1}^{\dimX} \croch{ m_{BBB,i,x,\cU} + m_{AAA,i,x,\cU} } = \mathcal{R}_{3,\cU,x} \enspace .
    \label{eq.pr.pro.bias.multidim.maj-ET3}
  \end{align}
  By definition of $\Biasinfty{\cU}(x)$ and Eq.~\eqref{eq.pr.pro.bias.multidim.eq-biais-H3}, 
  \begin{align*}
    \Biasinfty{\cU}(x) &= \paren{ s(x) - \E_{\bU} \croch{ \tilde{s}_{\bU}(x) } }^2 \\
    &= \paren{ \mathcal{M}_{1,\cU,x}  + \mathcal{M}_{2,\cU,x} + \E_{\bU \sim \cU} \croch{ T_3(x,\bU)}  }^2 \\
    &= \paren{ \mathcal{M}_{1,\cU,x}  + \mathcal{M}_{2,\cU,x} }^2 + \paren{ \E_{\bU \sim \cU} \croch{ T_3(x,\bU)}  }^2 + 2  \paren{ \mathcal{M}_{1,\cU,x}  + \mathcal{M}_{2,\cU,x} } \E_{\bU \sim \cU} \croch{ T_3(x,\bU)}  
  \end{align*}
  which leads to  Eq.~\eqref{eq.pro.bias.multidim.Bias-H3} thanks to Eq.~\eqref{eq.pr.pro.bias.multidim.maj-ET3}. 

\end{proof}

\section{Proofs: the (one-dimensional) toy model} \label{sec.pr.toy_model}

\subsection{Distribution of $A_{\bU,x}$ and $B_{\bU,x}$} \label{sec.pr.toy_model.distrib-A-B}
The purpose of the section is to specify the distributions of $A_{\bU,x}$ and $B_{\bU,x}$ in the ``toy model'' case. We will prove the following proposition. 
\begin{proposition} \label{pro.toy_model.distrib-A-B}
  Let $\nfeu \geq 2$ and $\bU \sim \cUtoy{\nfeu}$ as defined in Section~\ref{sec_toy_model}.
  \begin{enumerate}
  \item For every $x \in \croch{ \frac{1}{\nfeu}, 1-\frac{1}{\nfeu} }$, 
    \begin{equation} \label{eq.pro.toy_model.distrib-A-B.centre}
      A_{\bU,x} = x+\frac{V_x-1}{\nfeu} \qquad \mbox{and} \quad B_{\bU,x} = x+\frac{V_x}{\nfeu} \end{equation}
    where 
    \[ V_x \egaldef 1 + \left\lfloor \nfeu x + T \right\rfloor - (\nfeu x+T)  \]
    has a uniform distribution over $(0,1)$. 
  \item For every $x \in \croch{ 0 , \frac{1}{\nfeu} }$, 
    \begin{equation} \label{eq.pro.toy_model.distrib-A-B.gauche}
      \begin{cases}
        \mbox{with probability } \nfeu x \, ,  \quad &A_{\bU,x} \sim \mathcal{U}\paren{\croch{0,x}} \quad \mbox{and} \quad B_{\bU,x} = A_{\bU,x} + \frac{1}{\nfeu} 
        \\
        \mbox{with probability } 1-\nfeu x \, , \quad &A_{\bU,x} = 0 \quad \mbox{and} \quad B_{\bU,x} \sim \mathcal{U}\paren{\croch{0,\frac{1}{\nfeu} - x}} \, . 
      \end{cases}
    \end{equation}
  \item For every $x \in \croch{ \frac{\nfeu-1}{\nfeu} , 1 }$, 
    \begin{equation} \label{eq.pro.toy_model.distrib-A-B.droite}
      \begin{cases}
        \mbox{with probability } \nfeu(1-x) \, ,  \quad &A_{\bU,x} = B_{\bU,x} - \frac{1}{\nfeu}  \quad \mbox{and} \quad B_{\bU,x} \sim \mathcal{U}\paren{\croch{x,1}}
        \\
        \mbox{with probability } 1-\nfeu(1-x) \, , \quad &A_{\bU,x} \sim \mathcal{U}\paren{\croch{1 - \frac{1}{\nfeu} , x}} \quad \mbox{and} \quad  B_{\bU,x} = 1 \, . 
      \end{cases}
    \end{equation}
  \end{enumerate}
\end{proposition}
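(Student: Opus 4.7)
The plan is to directly identify, for each value of $x$, which element of the random partition $\bU$ contains $x$, as a function of the single random variable $T \sim \mathcal{U}([0,1))$. The breakpoints of $\bU$ are $t_0 = 0$, $t_k = (k-T)/\nfeu$ for $k=1,\ldots,\nfeu$, and $t_{\nfeu+1}=1$, so $A_{\bU,x}$ and $B_{\bU,x}$ are entirely determined by $T$ once we know which index $k$ satisfies $t_k \leq x < t_{k+1}$.

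For the middle range $x \in [1/\nfeu, 1-1/\nfeu]$, first I would note that $x$ necessarily falls in one of the intervals $[t_k,t_{k+1})$ with $1 \leq k \leq \nfeu-1$, so the endpoints are both ``regular'' (i.e.\ of the form $(k-T)/\nfeu$). The condition $t_k \leq x$ is equivalent to $k \leq \nfeu x + T$, so $k = \lfloor \nfeu x + T \rfloor$. Substituting yields $B_{\bU,x} = (k+1-T)/\nfeu = x + V_x/\nfeu$ and $A_{\bU,x} = (k-T)/\nfeu = x + (V_x-1)/\nfeu$ with $V_x$ as defined. The uniform distribution of $V_x$ then follows from the standard fact that for any real $y$, the fractional part $\{y + T\}$ is uniform on $[0,1)$ when $T$ is, which is exactly what we need since (up to a measure-zero set) $V_x = 1 - \{\nfeu x + T\}$.

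For the left boundary $x \in [0, 1/\nfeu)$, I would partition the range of $T$ according to which interval contains $x$: $x$ belongs to the first interval $[0, (1-T)/\nfeu)$ exactly when $T < 1 - \nfeu x$ (an event of probability $1 - \nfeu x$), and otherwise to the second one. In the first case $A_{\bU,x} = 0$ while $B_{\bU,x} = (1-T)/\nfeu$, and conditional uniformity of $T$ on $[0, 1-\nfeu x)$ gives the stated conditional distribution of $B_{\bU,x}$ (with $A_{\bU,x}$ deterministic). In the second case $A_{\bU,x} = (1-T)/\nfeu$ and $B_{\bU,x} - A_{\bU,x} = 1/\nfeu$, and conditional uniformity of $T$ on $[1 - \nfeu x, 1)$ gives $A_{\bU,x} \sim \mathcal{U}([0,x])$. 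The right boundary case $x \in [1-1/\nfeu, 1]$ is handled by an analogous partition of $T$ (splitting according to whether $x$ lies in the last or the second-to-last interval), or equivalently by the symmetry $T \leftrightarrow 1-T$, $x \leftrightarrow 1-x$ combined with the left boundary analysis.

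There is no genuine obstacle; the only point requiring care is the bookkeeping of which interval contains $x$ as $T$ varies, and the consistent handling of the half-open conventions at the endpoints (in particular verifying that in the boundary cases the ``other'' interval endpoint condition $x \geq (\nfeu-1-T)/\nfeu$, resp.\ $x < (1-T)/\nfeu$, is automatically satisfied). This is a straightforward case check given the uniform distribution of $T$.
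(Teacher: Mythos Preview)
Your proposal is correct and follows essentially the same approach as the paper's own proof: identify the index of the interval containing $x$ via $\lfloor \nfeu x + T\rfloor$ in the interior case, split into two cases according to the sign of $(1-T)/\nfeu - x$ at the left boundary, and invoke the reflection $x \leftrightarrow 1-x$ for the right boundary. The only difference is cosmetic---you are slightly more explicit about why $V_x$ is uniform (via the fractional-part argument) and about why the interior index lies in $\{1,\ldots,\nfeu-1\}$, points the paper leaves implicit.
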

\begin{proof}[Proof of Proposition~\ref{pro.toy_model.distrib-A-B}]
  {\bf Proof of Eq.~\eqref{eq.pro.toy_model.distrib-A-B.centre}} \quad 
  If $x \in \croch{ \frac{1}{\nfeu}, 1-\frac{1}{\nfeu} }$, 
  \[ A_{\bU,x}  = \frac{i(x) - T}{\nfeu} \leq x < \frac{1+i(x)-T}{\nfeu} = B_{\bU,x} \]
  for some integer $i(x)$. So, 
  \[ i(x) \leq \nfeu x + T < i(x) + 1 \]
  hence 
  \[ i(x) = \left\lfloor \nfeu x + T \right\rfloor \enspace , \]
  which proves Eq.~\eqref{eq.pro.toy_model.distrib-A-B.centre}; $V_x$ has a uniform distribution over $(0,1)$ since $T$ has a uniform distribution over $[0,1)$.

  \noindent {\bf Proof of Eq.~\eqref{eq.pro.toy_model.distrib-A-B.gauche}} \quad 
  We now assume $x \in \croch{ 0 , \frac{1}{\nfeu} }$. Then, two events can occur: 
  \begin{itemize}
  \item[(a)] if $(1-T)/\nfeu \leq x\,$, then $A_{\bU,x} = \frac{1-T}{\nfeu}$ and $B_{\bU,x} = \frac{2-T}{\nfeu} = \nfeu^{-1} + A_{\bU,x}\,$. Therefore, conditionally to $\set{(1-T)/\nfeu \leq x}\,$, $A_{\bU,x} \sim \mathcal{U}\paren{\croch{0,x}}$. 
  \item[(b)] if $(1-T)/\nfeu > x\,$, then $A_{\bU,x} = 0$ and $B_{\bU,x} = \frac{1-T}{\nfeu}\,$. Therefore, conditionally to $\set{(1-T)/\nfeu > x}\,$, $B_{\bU,x} \sim \mathcal{U}\paren{\croch{x,\frac{1}{\nfeu}}}\,$. 
  \end{itemize}
  Since case (a) has probability $\nfeu x$ and case (b) has probability $1-\nfeu x$, we have proved Eq.~\eqref{eq.pro.toy_model.distrib-A-B.gauche}. 

  \noindent {\bf Proof of Eq.~\eqref{eq.pro.toy_model.distrib-A-B.droite}} \quad 
  We simply deduce Eq.~\eqref{eq.pro.toy_model.distrib-A-B.droite} from Eq.~\eqref{eq.pro.toy_model.distrib-A-B.gauche} by symmetry, since reflecting the unit interval $[0,1)$ only changes $x$ into $1-x$ and exchanges the roles of $A_{\bU,x}$ and $B_{\bU,x}\,$.
\end{proof}

\subsection{Proof of Proposition~\ref{toy_model_kernel}} \label{sec.pr.toy_model.link-kernel}
By Eq.~\eqref{eq.valeur-foret-x.multidim} and Eq.~\eqref{eq.pro.toy_model.distrib-A-B.centre}, for all $x \in \croch{ \frac{1}{\nfeu} , 1 - \frac{1}{\nfeu}}$, 
\begin{align}
  \notag 
  \tilde{s}_{\bU}(x) &= \frac{1}{\absj{I_{\bU}(x)}} \int_{I_{\bU}(x)} s(t)  dt \\
  &= \nfeu \int_0^1 s(t) \un_{x+\frac{V_x-1}{\nfeu} \leq t < x+\frac{V_x}{\nfeu}} \, dt \enspace .
  \notag
\end{align}
So, using the Fubini theorem:
\begin{align*}
  \tilde{s}_{\infty}(x) &= \E_{\bU} \croch{ \tilde{s}_{\bU}(x) } \\
  &= \nfeu \int_0^1 s(t) \P \paren{ x+\frac{V_x-1}{\nfeu} \leq t < x+\frac{V_x}{\nfeu} } \, dt \enspace .
\end{align*}
We conclude by remarking that 
\begin{align*}
  \P \paren{ x+\frac{V_x-1}{\nfeu} \leq t < x+\frac{V_x}{\nfeu} } &= \P \paren{ \nfeu (t-x) < V_x \leq \nfeu(t-x)+1 } \\
  &=\begin{cases} 1-\nfeu(t-x) \mbox{ if } x \leq t \leq x+\frac{1}{\nfeu} \\
    1+\nfeu(t-x) \mbox{ if } x-\frac{1}{\nfeu} \leq t < x
  \end{cases}
\end{align*}
since the distribution of $V_x$ is uniform over $[0,1)$.
\qed

\subsection{Computation of the key quantities} \label{sec.pr.toy_model.key-quant}

\begin{proposition} \label{pro.toy_model.moments}
  Let $\nfeu \geq 2$, $\bU \sim \cUtoy{\nfeu}$, $x \in [0,1)$, $g(x) \egaldef \nfeu \min\set{x , 1-x, \nfeu^{-1}}$, $\alpha_{\nfeu} = x - A_{\bU,x}$ and $\beta_{\nfeu} = B_{\bU,x} - x$. 
  Then, 
  \begin{align} 
    \label{eq.toy_model.AB.bord} 
    \E\croch{ \alpha_{\nfeu} \beta_{\nfeu} } &= \frac{g(x)}{\nfeu^2} \croch{ \frac{1}{2} - g(x) + g(x)^2 - \frac{g(x)^3}{3} }
    \\ \label{eq.toy_model.A-B.bord}
    \E\croch{ \alpha_{\nfeu}  - \beta_{\nfeu} } &= \paren{ \un_{x > 1 - \nfeu^{-1}} - \un_{x < \nfeu^{-1}} } \times \frac{(1-g(x))^2} {2 \nfeu} 
    \\ \label{eq.toy_model.AA+BB.bord}
    \E\croch{ \alpha_{\nfeu}^2 + \beta_{\nfeu}^2 } &= \frac{-2 g(x)^3 + 3 g(x)^2 + 1}{3 \nfeu^2}
    \\ \label{eq.toy_model.AAA+BBB.bord}
    \E\croch{ \alpha_{\nfeu}^3 + \beta_{\nfeu}^3 } &= \frac{-4 g(x)^3 + 9 g(x)^2 - 6 g(x) + 2}{2 \nfeu^3} 
    \\ \label{eq.toy_model.AAAA+BBBB.bord}
    \E\croch{ \alpha_{\nfeu}^4 + \beta_{\nfeu}^4 } &= \frac{1 + 5 g(x)^4 - 4 g(x)^5}{5 \nfeu^4} 
    \enspace .
  \end{align}
  In particular, if $x \in \croch{\nfeu^{-1} , 1-\nfeu^{-1}}$, 
  \begin{align} 
    \label{eq.toy_model.AB} 
    \E\croch{ \alpha_{\nfeu} \beta_{\nfeu} } &= \frac{1}{6 \nfeu^2}
    \\ \label{eq.toy_model.A-B}
    \E\croch{ \alpha_{\nfeu}  - \beta_{\nfeu} } &= 0
    \\ \label{eq.toy_model.Akap+Bkap}
    \forall \kappa \geq 1 \, , \quad 
    \E\croch{ \alpha_{\nfeu}^{\kappa} + \beta_{\nfeu}^{\kappa} } &= \frac{2}{(1+\kappa) \nfeu^{\kappa}}
    \enspace .
  \end{align}
\end{proposition}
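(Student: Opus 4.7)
The strategy is to compute each expectation by conditioning on the three cases of Proposition~\ref{pro.toy_model.distrib-A-B} (interior, left border, right border), and then check that the resulting polynomials in $g(x)$ agree with the stated formulas. A useful observation at the outset is that the interior formulas Eqs.~\eqref{eq.toy_model.AB}--\eqref{eq.toy_model.Akap+Bkap} are exactly the specializations $g(x) = 1$ of Eqs.~\eqref{eq.toy_model.AB.bord}--\eqref{eq.toy_model.AAAA+BBBB.bord}, so once the general formulas are established on the borders, one only needs to check that the same formulas hold on the interior.

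First I would handle the interior $x \in [\nfeu^{-1}, 1 - \nfeu^{-1}]$, where Eq.~\eqref{eq.pro.toy_model.distrib-A-B.centre} gives the simple representation $\alpha_\nfeu = (1 - V_x)/\nfeu$, $\beta_\nfeu = V_x/\nfeu$ with $V_x \sim \mathcal{U}([0,1))$. Every monomial moment then reduces to a Beta integral
\[
\E[\alpha_\nfeu^{a} \beta_\nfeu^{b}] = \nfeu^{-(a+b)} \int_0^1 (1-v)^{a} v^{b} \, dv = \frac{a!\,b!}{(a+b+1)!\,\nfeu^{a+b}},
\]
which immediately yields Eqs.~\eqref{eq.toy_model.AB}, \eqref{eq.toy_model.A-B} and \eqref{eq.toy_model.Akap+Bkap}. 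A one-line substitution shows that setting $g(x) = 1$ in Eqs.~\eqref{eq.toy_model.AB.bord}--\eqref{eq.toy_model.AAAA+BBBB.bord} reproduces these values (for instance, $1/2 - 1 + 1 - 1/3 = 1/6$ for the product moment).

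Next I would treat the left border $x \in [0, \nfeu^{-1})$, where $g(x) = \nfeu x$. Conditioning on the two events in Eq.~\eqref{eq.pro.toy_model.distrib-A-B.gauche}: with probability $g(x)$ one has $\alpha_\nfeu \sim \mathcal{U}([0,x])$ and $\beta_\nfeu = \nfeu^{-1} - \alpha_\nfeu$, and with probability $1 - g(x)$ the variable $\alpha_\nfeu = x$ is deterministic while $\beta_\nfeu \sim \mathcal{U}([0, \nfeu^{-1} - x])$. Each of the five expectations is then a weighted sum of elementary moments of uniform variables on intervals of length $x$ or $\nfeu^{-1} - x$; after writing $x = g/\nfeu$ and collecting powers of $\nfeu^{-1}$ one obtains polynomials in $g$ matching the statement. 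For example,
\[
\E[\alpha_\nfeu \beta_\nfeu] = g(x) \int_0^{x} \frac{u\,(\nfeu^{-1} - u)}{x}\, du + \bigl(1 - g(x)\bigr) \cdot x \cdot \tfrac{1}{2}\bigl(\nfeu^{-1} - x\bigr),
\]
which simplifies to Eq.~\eqref{eq.toy_model.AB.bord}. The right border $x \in (1 - \nfeu^{-1}, 1]$ then follows by symmetry: reflecting $[0,1)$ through $1/2$ swaps $\alpha_\nfeu \leftrightarrow \beta_\nfeu$ and leaves $g$ invariant, so the symmetric moments Eqs.~\eqref{eq.toy_model.AB.bord}, \eqref{eq.toy_model.AA+BB.bord}, \eqref{eq.toy_model.AAA+BBB.bord}, \eqref{eq.toy_model.AAAA+BBBB.bord} are unchanged, while $\E[\alpha_\nfeu - \beta_\nfeu]$ changes sign, which is exactly the role of the factor $\un_{x > 1 - \nfeu^{-1}} - \un_{x < \nfeu^{-1}}$ in Eq.~\eqref{eq.toy_model.A-B.bord}.

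The main obstacle is purely bookkeeping: the polynomial algebra for $\E[\alpha_\nfeu^{3} + \beta_\nfeu^{3}]$ and especially $\E[\alpha_\nfeu^{4} + \beta_\nfeu^{4}]$ produces terms in $g, g^2, \ldots, g^5$ that must collapse to the compact expressions shown. I would avoid any clever shortcut and instead tabulate the uniform moments $\E[U^k] = L^k/(k+1)$ for $U \sim \mathcal{U}([0,L])$ once, then expand the two-case sum term by term, keeping powers of $\nfeu^{-1}$ explicit until the final step where $x$ is replaced by $g/\nfeu$.
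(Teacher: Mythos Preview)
Your proposal is correct and follows essentially the same approach as the paper's proof: both split into the interior case (using the representation $(\alpha_\nfeu,\beta_\nfeu)=((1-V)/\nfeu,V/\nfeu)$ with $V$ uniform) and the border case (conditioning on the two events of Proposition~\ref{pro.toy_model.distrib-A-B}, writing $(\alpha_\nfeu,\beta_\nfeu)$ as $(xV,\nfeu^{-1}-xV)$ or $(x,(\nfeu^{-1}-x)V)$, and reducing the right border to the left one by symmetry). The only cosmetic difference is that you phrase the border computation via the density of $\alpha_\nfeu$ on $[0,x]$ whereas the paper parameterizes by a uniform $V$ on $[0,1)$; the integrals are identical after the change of variable $u=xV$.
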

The main results of Proposition~\ref{pro.toy_model.moments} are summarized in Table~\ref{tab.pro.toy_model.moments}. 
\begin{table}
  \begin{center}
    \begin{tabular}{|l|c|c|}
      \hline
      Quantity & Order of magnitude & Eq. number \\
      \hline
      $\E\croch{ \alpha_{\nfeu} -\beta_{\nfeu} }$ & $ \nfeu^{-1} \un_{x \notin [\nfeu^{-1}  , 1-\nfeu^{-1} ] }$ & \eqref{eq.toy_model.A-B.bord}, \eqref{eq.toy_model.A-B} \\
      $\E\croch{ \alpha_{\nfeu} \beta_{\nfeu}   }$  & $\nfeu^{-2}$ & \eqref{eq.toy_model.AB.bord}, \eqref{eq.toy_model.AB}  \\
      $\E\croch{ \alpha_{\nfeu}^{\kappa} + \beta_{\nfeu}^{\kappa} }$ ($\kappa=2,3,4$) & $\nfeu^{-\kappa}$ & \eqref{eq.toy_model.AA+BB.bord}, \eqref{eq.toy_model.AAA+BBB.bord}, \eqref{eq.toy_model.AAAA+BBBB.bord}, \eqref{eq.toy_model.Akap+Bkap} \\
      \hline
    \end{tabular}
  \end{center}
  \caption{Summary of the results proved by Proposition~\ref{pro.toy_model.moments} for the one-dimensional ``toy model''. \label{tab.pro.toy_model.moments}}
\end{table}

\begin{proof}[Proof of Proposition~\ref{pro.toy_model.moments}]
  \noindent {\bf First case: $x \in \croch{\nfeu^{-1} , 1-\nfeu^{-1}}$} \quad 
  By Proposition~\ref{pro.toy_model.distrib-A-B}, we get that $(\alpha_{\nfeu},\beta_{\nfeu}) = ((1-V)/\nfeu,V/\nfeu)$ for some random variable $V=V_x$ with uniform distribution over $[0,1)$. Therefore, 
  \begin{align*} 
    \E\croch{ \alpha_{\nfeu} \beta_{\nfeu} } &= \frac{1}{\nfeu^2} \E\croch{ (1-V) V } = \frac{1}{6 \nfeu^2}
    \\ 
    \E\croch{ \alpha_{\nfeu}  - \beta_{\nfeu} } &= \frac{1}{\nfeu} \E\croch{ 1 - 2 V} = 0
    \\ 
    \E\croch{ \alpha_{\nfeu}^{\kappa} + \beta_{\nfeu}^{\kappa} } &= \frac{2}{\nfeu^{\kappa}} \E\croch{ V^{\kappa}} = \frac{2}{(1 + {\kappa}) \nfeu^{{\kappa}}}
  \end{align*}
  for any $\kappa \geq 1$. 

  \noindent {\bf Second case: $x \notin \croch{\nfeu^{-1} , 1-\nfeu^{-1}}$} \quad 
  It is sufficient to consider the case $x < \nfeu^{-1}$ since we can deduce results when $x > 1 - \nfeu^{-1}$ by symmetry (exchanging $\alpha_{\nfeu}$ and $\beta_{\nfeu}$, and replacing $x$ by $1-x$). 
  By Proposition~\ref{pro.toy_model.distrib-A-B}, some random variable $V$ with uniform distribution over $[0,1)$ exists such that 
  \begin{equation*}
    \begin{cases}
      (\alpha_{\nfeu},\beta_{\nfeu}) = \paren{ x V, \nfeu^{-1} - xV } &\mbox{with probability } \nfeu x \\
      (\alpha_{\nfeu},\beta_{\nfeu}) = \paren{ x  , (\nfeu^{-1} - x) V } &\mbox{with probability } 1 - \nfeu x
    \end{cases}
  \end{equation*}
  Therefore, we can compute all the desired quantities as follows. 
  \begin{align*}
    \E\croch{ \alpha_{\nfeu} \beta_{\nfeu} } &= \nfeu x \E\croch{ x V \paren{ \nfeu^{-1} - xV }} + \paren{ 1 - \nfeu x} \E\croch{ x \paren{ \nfeu^{-1} - x } V } \\
    &= \frac{g(x)^2 \E\croch{ V (1- g(x) V)}} {\nfeu^2} + \frac{(1-g(x))^2 g(x) \E\croch{ V }}{\nfeu^2} \\
    &= \frac{g(x)}{\nfeu^2} \croch{ g(x) \paren{ \frac{g(x)}{2} - \frac{g(x)^2}{3}} + \frac{(1-g(x))^2}{2} }
    \\ 
    &= \frac{g(x)}{\nfeu^2} \croch{ \frac{1}{2} - g(x) + g(x)^2 - \frac{g(x)^3}{3} }
    \\ 
    \E\croch{ \alpha_{\nfeu} - \beta_{\nfeu} } &= \nfeu x \E\croch{ 2xV - \nfeu^{-1} } + (1 - \nfeu x) \E\croch{ x - (\nfeu^{-1} - x) V } \\
    &= \frac{1}{\nfeu} \paren{ g(x) \E\croch{ 2 g(x) V - 1} + (1 - g(x)) \E\croch{ g(x) - (1-g(x)) V} } \\
    &= \frac{1}{2 \nfeu} \paren{ 2 g(x) (g(x)-1) + (1-g(x)) (3g(x) - 1) } = \frac{- (1-g(x))^2} {2 \nfeu} 
    \\ 
    \E\croch{ \alpha_{\nfeu}^2 + \beta_{\nfeu}^2 } &= \nfeu x \E\croch{ x^2 V^2 + \paren{\nfeu^{-1} - xV}^2 } + (1 - \nfeu x) \paren{ x^2 + \nfeu^{-2} (1-\nfeu x)^2 \E\croch{V^2} } \\
    &= \frac{1}{\nfeu^2} \croch{ g(x) \paren{ \frac{g(x)^2}{3} + 1 - g(x) + \frac{g(x)^2}{3} } + (1-g(x)) \paren{ g(x)^2 + \frac{(1-g(x))^2 }{3} } } \\
    &= \frac{-2 g(x)^3 + 3 g(x)^2 + 1}{3 \nfeu^2}
    \\
    \E\croch{ \alpha_{\nfeu}^3 + \beta_{\nfeu}^3 } &= \nfeu x \E\croch{ x^3 V^3 + \nfeu^{-3} (1 - \nfeu x V)^3} + (1-\nfeu x) \croch{ x^3 + \nfeu^{-3} (1-\nfeu x)^3 \E\croch{V^3}} \\
    &= \frac{1}{\nfeu^3} \croch{ g(x) \paren{ \frac{g(x)^3}{4} + 1 - \frac{3g(x)}{2} + g(x)^2 - \frac{g(x)^3}{4}} + (1-g(x)) \paren{ g(x)^3 + \frac{(1-g(x))^3}{4}} } \\
    &= \frac{-4 g(x)^3 + 9 g(x)^2 - 6 g(x) + 2}{2 \nfeu^3} 
    \\ 
    \E\croch{ \alpha_{\nfeu}^4 + \beta_{\nfeu}^4 } &= \nfeu x \E\croch{ x^4 V^4 + \nfeu^{-4} (1 - \nfeu x V)^4} + (1-\nfeu x) \croch{ x^4 + \nfeu^{-4} (1-\nfeu x)^4 \E\croch{V^4}} \\
    % &= \frac{1}{\nfeu^4} \croch{ g(x) \E\croch{ g(x)^4 V^4 + (1 - g(x) V)^4} + (1-g(x)) \croch{ g(x)^4 + (1-g(x))^4 \E\croch{V^4}} } \\
    &= \frac{1}{\nfeu^4} \croch{ g(x) \paren{ \frac{g(x)^4}{5} + 1 - 2g(x) + 2g(x)^2 - g(x)^3 + \frac{g(x)^4}{5} } + (1-g(x)) \paren{ g(x)^4 + \frac{(1-g(x))^4}{5}} } \\
    % &= \frac{1}{\nfeu^4} \croch{ g(x) \paren{ 1 - 2g(x) + 2g(x)^2 - g(x)^3 + \frac{2g(x)^4}{5} } + \frac{(1-g(x))}{5} \paren{ 1 - 4 g(x) + 6 g(x)^2 - 4 g(x)^3 + 6 g(x)^4} } \\
    &= \frac{1 + 5 g(x)^4 - 4 g(x)^5}{5 \nfeu^4} 
  \end{align*}
\end{proof}

\subsection{Proof of Corollary~\ref{cor.bias.toy.H2}} \label{sec.pr.toy_model.cor-H2}
The proof directly follows from the combination of Proposition~\ref{pro.bias.multidim.H3} and Proposition~\ref{pro.toy_model.moments}. 

First, using Proposition~\ref{pro.toy_model.moments}, we compute the key quantities appearing in the result of Proposition~\ref{pro.bias.multidim.H3} under assumptions \eqref{hyp.s-2-fois-derivable.alt} (with $\dimX=1$) and \eqref{hyp.unif}. 
\begin{align*}
  \mathcal{M}_{1,\cU,x} &= s^{\prime}(x) \croch{ \un_{x > 1 - \nfeu^{-1}} - \un_{x < \nfeu^{-1} } } \frac{ \PolMun(g(x)) }{\nfeu}
  \qquad \mbox{where} \quad \PolMun(X) \egaldef \frac{(1-X)^2}{4} 
  \\
  \mathcal{N}_{2,\cU,x} &= \paren{ s^{\prime}(x) }^2 \frac{ \PolNdeu(g(x)) }{\nfeu^2} 
  \qquad \mbox{where} \quad \PolNdeu(X) \egaldef \frac{1}{12} \paren{ 2 X^4 - 8 X^3 + 9 X^2 - 3 X + 1} 
  \\
  \mathcal{R}_{2,\cU,x} &= \CHdeuxa \frac{ \PolRdeu(g(x)) }{\nfeu^2} 
  \qquad \mbox{where} \quad \PolRdeu(X) \egaldef \frac{1}{9} \paren{ X^4 - 5 X^3 + 6 X^2 - \frac{3}{2} X + 1} 
  \\
  \mathcal{R}_{4,\cU,x} &= \CHdeuxa^2 \frac{ \PolRqua(g(x)) }{\nfeu^4} 
  \qquad \mbox{where} \quad \PolRqua(X) \egaldef \frac{2}{45} \paren{ - 4 X^5 + 5 X^4 + 1 } 
  \enspace .
\end{align*}
So, Eq.~\eqref{eq.pro.bias.multidim.Bias-H2} yields, for every $x \in [0,1)$, 
\begin{align*}
  \Biasinfty{\cU}(x) &\leq \paren{ \mathcal{M}_{1,\cU,x} + \mathcal{R}_{2,\cU,x}}^2 
  % \\
  % &\leq \paren{s^{\prime}(x)}^2 \frac{ \PolMun(g(x))^2 }{\nfeu^2} + \frac{2 \CHdeuxa \norm{s^{\prime}}_{\infty} \PolMun(g(x)) \PolRdeu(g(x)) }{\nfeu^3} + \frac{\CHdeuxa^2 \PolRdeu(g(x))^2 }{\nfeu^4} \\
  % &
  \leq \paren{s^{\prime}(x)}^2 \frac{ \un_{g(x)<1} }{16 \nfeu^2} + \frac{\CHdeuxa \norm{s^{\prime}}_{\infty} \un_{g(x)<1} }{2 \nfeu^3} + \frac{\CHdeuxa^2 \PolRdeu(g(x))^2}{\nfeu^4}
\end{align*}
where we used that $\croch{ \un_{x > 1 - \nfeu^{-1}} - \un_{x < \nfeu^{-1} } }^2 \PolMun(g(x))^2 = \PolMun(g(x))^2 \leq 1/16 \un_{g(x)<1}$ and $\absj{\PolRdeu(g(x))} \leq 1$. 
Considering separately the cases $g(x)=1$ and $g(x)<1$ yields Eq.~\eqref{eq.bias.toy.H2.Biasinfty} and~\eqref{eq.bias.toy.H2.Biasinfty.border} (since $\absj{\PolRdeu(g(x))} \leq 1$). 

Integrating Eq.~\eqref{eq.bias.toy.H2.Biasinfty} over $x \in [\nfeu^{-1} , 1 - \nfeu^{-1} ]$ directly leads to Eq.~\eqref{eq.bias.toy.H2.Biasinfty.integrated-noborder}. 
Integrating Eq.~\eqref{eq.bias.toy.H2.Biasinfty.border} over $x \in [0,\nfeu^{-1}] \cup [1 - \nfeu^{-1} , 1]$ yields 
\begin{align*}
  \int_{[0,\nfeu^{-1}] \cup [1 - \nfeu^{-1} , 1]} \Biasinfty{\cU}(x) \, dx 
  &\leq \frac{1}{16 \nfeu^2 } \int_{[0,\nfeu^{-1}] \cup [1 - \nfeu^{-1} , 1]} \paren{ s^{\prime}(x) }^2 \, dx + \frac{ \CHdeuxa \norm{s^{\prime}}_{\infty} + \CHdeuxa^2}{\nfeu^4} 
  \\
  &\leq \frac{\norm{s^{\prime}}_{\infty}^2}{8 \nfeu^3 } + \frac{ \CHdeuxa \norm{s^{\prime}}_{\infty} + \CHdeuxa^2}{\nfeu^4}
\end{align*}
which implies Eq.~\eqref{eq.bias.toy.H2.Biasinfty.integrated} together with Eq.~\eqref{eq.bias.toy.H2.Biasinfty.integrated-noborder}.

Furthermore, Eq.~\eqref{eq.pro.bias.multidim.Var-H2} yields, for every $x \in [0,1)$, 
\begin{align*}
  \absj{ \Vararbre{\cU}(x) - \frac{\paren{ s^{\prime}(x) }^2 \paren{ \PolNdeu(g(x)) - \PolMun(g(x))^2 } }{\nfeu^2} } 
  &\leq \frac{ 2 \norm{s^{\prime}}_{\infty}}{\nfeu} \sqrt{ \mathcal{R}_{4,\cU,x} } + \mathcal{R}_{4,\cU,x} \\
  &\leq \frac{ 2 \norm{s^{\prime}}_{\infty} \CHdeuxa }{\nfeu^3} + \frac{ \CHdeuxa^2}{3 \nfeu^4}  \\
  &\leq \frac{ 2 \norm{s^{\prime}}_{\infty} \CHdeuxa + \CHdeuxa^2}{\nfeu^3} 
\end{align*}
where we used that $\croch{ \un_{x > 1 - \nfeu^{-1}} - \un_{x < \nfeu^{-1} } }^2 \PolMun(g(x))^2 = \PolMun(g(x))^2$, $\absj{ \PolNdeu(g(x)) } \leq 1$ and $\absj{ \PolRqua(g(x)) } \leq 1/3$. 
If $x \in [\nfeu^{-1} , 1 - \nfeu^{-1}]$, we have $g(x)=1$ and $\PolNdeu(1) - \PolMun(1)^2 = 1/12$, which proves Eq.~\eqref{eq.bias.toy.H2.Vararbre}. 
Otherwise, we have proved Eq.~\eqref{eq.bias.toy.H2.Vararbre.border} with 
\[ \PolVarU(X) \egaldef \PolNdeu(X) - \PolMun(X)^2 = \frac{1}{12} \paren{ 2 X^4 - 8 X^3 + 9 X^2 - 3 X + 1}  -  \frac{(1-X)^4}{16} \enspace .  \]
Integrating Eq.~\eqref{eq.bias.toy.H2.Vararbre} and~\eqref{eq.bias.toy.H2.Vararbre.border} over $x$ yields 
\begin{align*}
  &\qquad \absj{ \Vararbre{\cU} - \frac{1}{12 \nfeu^2} \int_0^1 \paren{ s^{\prime}(x) }^2 \, dx } 
  \\
  &\leq \frac{2 \norm{s^{\prime}}_{\infty} \CHdeuxa + \CHdeuxa^2 }{ \nfeu^3 } 
  + \frac{1}{12 \nfeu^2}  \int_0^{\nfeu^{-1}} \paren{ s^{\prime}(x) }^2 \absj{ 12 Q(\nfeu x) - 1} \, dx 
  \\
  &\qquad + \frac{1}{12 \nfeu^2}  \int_{1-\nfeu^{-1}}^{1} \paren{ s^{\prime}(x) }^2 \absj{ 12 Q(\nfeu(1-x)) - 1} \, dx 
%%  \\
%%  &\leq \frac{2 \norm{s^{\prime}}_{\infty} \CHdeuxa + \CHdeuxa^2 }{ \nfeu^3 } 
%%  + \frac{ 13 \norm{s^{\prime}}_{\infty}^2 }{6 \nfeu^3} 
  \\
  &\leq \frac{2 \norm{s^{\prime}}_{\infty} \CHdeuxa + \CHdeuxa^2 + 3 \norm{s^{\prime}}_{\infty}^2 }{ \nfeu^3 } 
  \enspace . 
\end{align*}
\qed

\subsection{Proof of Corollary~\ref{cor.bias.toy.H3}} \label{sec.pr.toy_model.cor-H3}
Using Proposition~\ref{pro.toy_model.moments}, we compute the quantities appearing in the result of Proposition~\ref{pro.bias.multidim.H3} under assumptions \eqref{hyp.s-3-fois-derivable.alt} (with $\dimX=1$) and \eqref{hyp.unif}
\begin{align*}
  \mathcal{M}_{2,\cU,x} &=  \frac{ s^{\prime\prime}(x) \PolRdeu(g(x)) }{ 2 \nfeu^2} 
  \\
  \mathcal{R}_{3,\cU,x} &= \CHtroisa \frac{\PolRtro(g(x))}{\nfeu^3} 
  \qquad \mbox{where} \quad \PolRtro(X) \egaldef \frac{1}{8} \paren{ -4X^3 + 9 X^2 - 6 X^2 + 2}
  \enspace .
\end{align*}
Then, Eq.~\eqref{eq.pro.bias.multidim.Bias-H3} yields
\begin{align*}
  & \qquad \absj{ \Biasinfty{\cU}(x) - \paren{ \mathcal{M}_{1,\cU,x}^2 + \mathcal{M}_{2,\cU,x}^2 } } \\
  &= \absj{ \Biasinfty{\cU}(x) - \frac{ \paren{s^{\prime}(x)}^2 \PolMun(g(x))^2}{\nfeu^2} - \frac{\paren{s^{\prime\prime}(x)}^2 \PolRdeu(g(x))^2 }{4 \nfeu^4} }  \\
  &\leq 2 \absj{\mathcal{M}_{1,\cU,x}} \absj{\mathcal{M}_{2,\cU,x}} + 2 \absj{ \mathcal{R}_{3,\cU,x} \paren{ \mathcal{M}_{1,\cU,x} + \mathcal{M}_{2,\cU,x} } } + \paren{ \mathcal{R}_{3,\cU,x} }^2 \\
  &\leq \frac{\norm{s^{\prime}}_{\infty} \norm{s^{\prime\prime} }_{\infty} \PolMun(g(x))}{ \nfeu^3 } 
  +  \frac{3 \CHtroisa }{\nfeu^4} \paren{ \norm{s^{\prime}}_{\infty} \PolMun(g(x)) + \frac{\norm{s^{\prime\prime}}_{\infty} }{4 \nfeu} } + \frac{9 \CHtroisa^2 }{4 \nfeu^6}
\end{align*}
where we used that 
$\absj{ \un_{x > 1 - \nfeu^{-1}} - \un_{x < \nfeu^{-1} } } \PolMun(g(x)) = \PolMun(g(x))$, 
$\absj{ \PolRdeu(g(x)) } \leq 1$, 
$\absj{\PolRtro(g(x))} \leq 3/2 $. 
When $x \in [\nfeu^{-1} , 1-\nfeu^{-1}]$, we get that 
\begin{align*}
  \absj{ \Biasinfty{\cU}(x) - \frac{\paren{s^{\prime\prime}(x)}^2 \PolRdeu(g(x))^2 }{4 \nfeu^4} }  
  \leq \frac{3 \CHtroisa \norm{s^{\prime\prime}}_{\infty} }{4 \nfeu^5}  + \frac{9 \CHtroisa^2 }{4 \nfeu^6} 
  \leq \frac{ 3 \CHtroisa \paren{ \norm{s^{\prime\prime}}_{\infty} + \frac{3 \CHtroisa}{2}}}{4 \nfeu^5}
\end{align*}
which implies Eq.~\eqref{eq.bias.toy.H3.Biasinfty}. 
When $x \in [0,\nfeu^{-1}] \cup [1-\nfeu^{-1} , 1]$, we get that 
\begin{align*}
  &\qquad  \absj{ \Biasinfty{\cU}(x) - \frac{ \paren{s^{\prime}(x)}^2 \PolMun(g(x))^2}{\nfeu^2}  }  \\
  &\leq \frac{\norm{s^{\prime}}_{\infty} \norm{s^{\prime\prime} }_{\infty} \PolMun(g(x))}{ \nfeu^3 } 
  +  \frac{3 \CHtroisa }{\nfeu^4} \paren{ \norm{s^{\prime}}_{\infty} \PolMun(g(x)) + \frac{\norm{s^{\prime\prime}}_{\infty} }{4 \nfeu} } 
  + \frac{9 \CHtroisa^2 }{4 \nfeu^6} 
  + \frac{\paren{s^{\prime\prime}(x)}^2 \PolRdeu(g(x))^2 }{4 \nfeu^4} \\
  &\leq \frac{\norm{s^{\prime}}_{\infty} \norm{s^{\prime\prime} }_{\infty}}{ 4 \nfeu^3 } 
  + \frac{3 \CHtroisa \norm{s^{\prime}}_{\infty} + \norm{s^{\prime\prime}}_{\infty}^2 }{4 \nfeu^4}  
  + \frac{3 \CHtroisa \norm{s^{\prime\prime}}_{\infty} }{4 \nfeu^5}
  + \frac{9 \CHtroisa^2 }{4 \nfeu^6} \\
  &\leq \frac{\norm{s^{\prime}}_{\infty} \norm{s^{\prime\prime} }_{\infty} + \frac{3}{2}  \CHtroisa \norm{s^{\prime}}_{\infty} + \frac{1}{2} \norm{s^{\prime\prime}}_{\infty}^2 + \frac{3}{4}  \CHtroisa \norm{s^{\prime\prime}}_{\infty} + \frac{9}{8} \CHtroisa^2 }{ 4 \nfeu^3 } \\
  &\leq \frac{\norm{s^{\prime}}_{\infty} \norm{s^{\prime\prime} }_{\infty} + 2  \CHtroisa \norm{s^{\prime}}_{\infty} + \norm{s^{\prime\prime}}_{\infty}^2 +  \CHtroisa \norm{s^{\prime\prime}}_{\infty} + 2 \CHtroisa^2 }{ 4 \nfeu^3 } 
\end{align*}
which implies Eq.~\eqref{eq.bias.toy.H3.Biasinfty.border}. 

Finally, integrating Eq.~\eqref{eq.bias.toy.H3.Biasinfty} yields Eq.~\eqref{eq.bias.toy.H3.Biasinfty.integrated-noborder}, and integrating Eq.~\eqref{eq.bias.toy.H3.Biasinfty.border} yields Eq.~\eqref{eq.bias.toy.H3.Biasinfty.integrated}. 
\qed

\section{Proofs: the (one-dimensional) purely uniformly random forest model} \label{sec.pr.purf}

\subsection{Distribution of $A_{\bU,x}$ and $B_{\bU,x}$}\label{sec.pr.purf.distrib-A-B}

Let $\nfeu \geq 1$ and $x \in (0,1)$ be fixed, $\bU \sim \cUpurf{\nfeu}$ and $\xi_1, \ldots, \xi_{\nfeu}$ be the i.i.d. uniform variables used for defining $\bU$. 
Then, 
\begin{align*}
  A_{\bU,x} &= \max \set{ \xi_i \telque i\in\set{1,\ldots,\nfeu} \mbox{ and } \xi_i \leq x } 
  \\
  B_{\bU,x} &= \min \set{ \xi_i \telque i\in\set{1,\ldots,\nfeu} \mbox{ and } \xi_i > x } 
\end{align*}
with the conventions $\max \emptyset = 0$ and $\min \emptyset = 1$.
Let us define 
\[ N_{\bU,x} \egaldef \card\set{ i\in\set{1,\ldots,\nfeu} \telque \xi_i \leq x } \enspace . \]
Then, we have the following proposition.
\begin{prop} \label{pro.dist-A-B.purf}
  Let $\nfeu \geq 1$ be some integer, $x\in(0,1)$, $l \in \set{ 0 , \ldots, \nfeu }$, $\bU \sim \cUpurf{\nfeu}$, $V_1, \ldots, V_\nfeu$ be i.i.d. uniform random variables over $[0,x]$ and $W_1, \ldots, W_\nfeu$ be i.i.d. uniform random variables over $(0,1-x)$.
  Then, conditionally to the event $\set{ N_{\bU,x} = \narb }$, $A_{\bU,x}$ and $B_{\bU,x}$ are independent with the following distributions:
  \begin{align*}
    \mbox{if } l \neq 0 \, , \qquad  A_{\bU,x} &\egalloi \max\set{ V_1, \ldots, V_l } \enspace ; \\
    \mbox{if } l \neq \nfeu \, , \qquad   B_{\bU,x} &\egalloi \min\set{ W_1, \ldots, W_{\nfeu-l} }  + x \enspace ;
  \end{align*} %\label{distrib_A_x}
  $A_{\bU,x}=0$ a.s. if $l=0$\,\textup{;} and $B_{\bU,x}=1$ a.s. if $l=\nfeu$.

  As a consequence, for every $s \in [0,1-x]$ and $t \in [0,x]$, 
  \begin{align}
    \label{eq.purf.loi-jointe-alpha-beta}
    \P\paren{ A_{\bU,x} \leq x - t \, , \, B_{\bU,x} \geq  x + s }
    &= (1-t-s)^{\nfeu} 
    \\
    \label{eq.purf.loi-alpha}
    \P\paren{ A_{\bU,x} \leq x - t }
    &= (1-t)^{\nfeu} 
    \\
    \label{eq.purf.loi-beta}
    \P\paren{ B_{\bU,x} \geq x + s }
    &= (1-s)^{\nfeu} 
    \enspace .
  \end{align}
\end{prop}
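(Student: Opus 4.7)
The plan is to proceed in two stages: first establish the conditional distribution statement via a standard order statistics argument, then derive the joint law formula by direct computation on the underlying i.i.d. uniforms.

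For the conditional claim, I will use the following well-known fact about i.i.d. uniforms: conditionally on the event $\{N_{\bU,x} = l\}$, the $l$ variables among $\xi_1, \ldots, \xi_{\nfeu}$ that fall in $[0,x]$ are i.i.d.\ uniform on $[0,x]$, the $\nfeu - l$ that fall in $(x,1)$ are i.i.d.\ uniform on $(x,1)$, and these two groups are independent of each other. This is a direct consequence of the fact that, conditionally on which subset of indices lands in $[0,x]$ (which by exchangeability has a uniform law on subsets of size $l$), the $\xi_i$'s keep independent uniform laws on their respective sub-intervals. By definition, $A_{\bU,x}$ is the maximum of the first group (with $A_{\bU,x} = 0$ when $l=0$ by convention $\max \emptyset = 0$), and $B_{\bU,x}$ is the minimum of the second group (with $B_{\bU,x} = 1$ when $l = \nfeu$). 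Since these are measurable functions of independent collections, the independence of $A_{\bU,x}$ and $B_{\bU,x}$ conditional on $N_{\bU,x}=l$ follows, and the stated distributions in terms of $V_1, \ldots, V_l$ and $W_1, \ldots, W_{\nfeu - l}$ are immediate.

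For the joint law formula \eqref{eq.purf.loi-jointe-alpha-beta}, rather than integrating the conditional expressions above with respect to the binomial law of $N_{\bU,x}$, I would observe directly from the definitions of $A_{\bU,x}$ and $B_{\bU,x}$ that for $t \in [0,x]$ and $s \in [0,1-x]$,
\[
\set{A_{\bU,x} \leq x - t \, , \, B_{\bU,x} \geq x + s}
= \set{\forall i \in \{1,\ldots,\nfeu\}, \; \xi_i \notin (x-t,\, x+s)} \, ,
\]
where the edge cases $l=0$ and $l=\nfeu$ are handled correctly by the conventions $\max \emptyset = 0 \leq x-t$ and $\min \emptyset = 1 \geq x+s$. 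Since the $\xi_i$ are i.i.d.\ uniform on $[0,1)$ and $(x-t,x+s) \subset [0,1)$ has length $t+s$, independence yields $(1-t-s)^{\nfeu}$. Equations \eqref{eq.purf.loi-alpha} and \eqref{eq.purf.loi-beta} then follow by specializing to $s=0$ and $t=0$ respectively.

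There is no serious obstacle here: the main point to be careful about is the boundary behavior when $l=0$ or $l=\nfeu$ (ensuring the conventions $\max \emptyset = 0$, $\min \emptyset = 1$ make the set equality above hold without case distinction) and the fact that $\{\xi_i = x\}$ has probability zero, so one may freely interchange open and closed endpoints at $x$ when describing the exceptional events.
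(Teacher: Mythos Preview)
Your argument is correct. For the conditional distribution you follow essentially the same reasoning as the paper (which spells out the conditioning on the indicator pattern $\Omega(\delta)$ in slightly more detail, but the substance is the same order-statistics fact you invoke).

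For the joint unconditional law~\eqref{eq.purf.loi-jointe-alpha-beta}, however, you take a genuinely different and more direct route. The paper derives it by conditioning on $N_{\bU,x}$ and summing over the binomial weights:
\[
\sum_{l=0}^{\nfeu} \binom{\nfeu}{l} x^l (1-x)^{\nfeu-l} \paren{\frac{x-t}{x}}^{l} \paren{\frac{1-x-s}{1-x}}^{\nfeu-l} = (1-t-s)^{\nfeu} \, ,
\]
using the conditional independence and the CDFs of the max and min just established. Your observation that the event $\{A_{\bU,x} \leq x-t,\, B_{\bU,x} \geq x+s\}$ is exactly $\{\forall i,\ \xi_i \notin (x-t, x+s)\}$ bypasses this entirely and yields $(1-t-s)^{\nfeu}$ in one line from the i.i.d.\ uniform structure. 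This is cleaner and makes the first half of the proposition logically unnecessary for obtaining~\eqref{eq.purf.loi-jointe-alpha-beta}--\eqref{eq.purf.loi-beta}; the paper's route, on the other hand, has the minor advantage of actually \emph{using} the conditional statement it just proved, so the two parts of the proposition are more tightly linked.
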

\begin{proof}[Proof of Proposition~\ref{pro.dist-A-B.purf}]
\hfill \\ 
  \paragraph{Distribution conditionally to $N_{\bU,x}$}
  First, remark that for every $i$, $\loi\paren{\xi_i \sachant \xi_i \leq x} = \loi\paren{ V_1}$ and $\loi\paren{\xi_i - x\sachant \xi_i > x} = \loi\paren{ W_1}$. 
  This implies the result when $l \in \set{0,\nfeu}$, the independence between a deterministic variable and any random variable being straightforward.

  Let us now assume $l \in \set{1 , \ldots, \nfeu-1}$.
  For every $\delta = (\delta_1, \ldots, \delta_\nfeu) \in \set{0,1}^{\nfeu}$ such that $\sum_{i=1}^{\nfeu} \delta_i = l$, conditionally to 
  \[ \Omega(\delta) \egaldef \set{ \un_{\xi_1 \leq x} = \delta_1 , \ldots , \un_{\xi_\nfeu \leq x} = \delta_\nfeu } \enspace , \]
  we have 
  \begin{align*} 
    A_{\bU,x} &= \max\set{ \xi_i \telque \delta_i = 1} \egalloi \max\set{ V_1, \ldots, V_l } \\
    \mbox{and} \quad 
    B_{\bU,x} &= \max\set{ \xi_i \telque \delta_i = 0} \egalloi \max\set{ W_1, \ldots, W_{\nfeu-l} } + x
    \enspace . \end{align*} 
  Since 
  \[ \loi\paren{ (A_{\bU,x},B_{\bU,x}) \sachant \Omega(\delta) } \] is the same for all $\delta\in \set{0,1}^{\nfeu}$ such that $\sum_{i=1}^{\nfeu} \delta_i = l$, we get that 
  \[ \loi\paren{ (A_{\bU,x},B_{\bU,x}) \sachant N_{\bU,x}=l } = \loi\paren{ (A_{\bU,x},B_{\bU,x}) \sachant \Omega \paren{ \delta^{(l)} } } \]
  where $\delta^{(l)}_1=\dots=\delta^{(l)}_l=1$ and $\delta^{(l)}_{l+1}=\dots=\delta^{(l)}_{\nfeu}=0$, hence the result. 

  \paragraph{Joint unconditional distribution}
  Since $N_{\bU,x}$ has a binomial distribution with parameters $(n,x)$, we get from the conditional distribution of $(A_{\bU,x},B_{\bU,x})$ that for every $t \in [0,x]$ and $s \in [0,1-x]$, 
  \begin{align*}
    & \quad \P\paren{ A_{\bU,x} \leq x - t \, , \, B_{\bU,x} \geq x + s }
    \\
    &= \sum_{l=0}^{\nfeu} \croch{ \P\paren{N_{\bU,x} = l} \P\paren{ \max_{1 \leq i \leq l} V_i \leq x - t}  \P\paren{ \min_{1 \leq i \leq n-l} W_i \geq s} }
    \\
    &= \sum_{l=0}^{\nfeu} \croch{ {\nfeu \choose l} x^l (1-x)^{\nfeu - l} \paren{ \frac{x-t}{x}}^{l} \paren{ \frac{1 - x - s}{1-x}}^{\nfeu - l} }
    \\
    &= \sum_{l=0}^{\nfeu} \croch{ {\nfeu \choose l} \paren{ x - t}^{l} \paren{ 1 - x - s}^{\nfeu - l} }
    \\
    &= (1-t-s)^{\nfeu} 
    \enspace ,
  \end{align*}
  were on the second line, we used the convention $\max \emptyset = 0$ and $\min \emptyset = 1-x$. 
  Eq.~\eqref{eq.purf.loi-alpha} and~\eqref{eq.purf.loi-beta} respectively follow by taking $t=0$ (resp. $s=0$), since $A_{\bU,x} \leq x$ and $B_{\bU,x} \geq x$ a.s.
\end{proof}

\subsection{Computation of the key quantities} \label{sec.pr.purf.key-quant}

\begin{proposition} \label{pro.purf.moments}
  Let  $\nfeu \geq 1$, $\bU \sim \cUpurf{\nfeu}$, $x \in [0,1)$, $\alpha_{\nfeu} \egaldef x - A_{\bU,x}$, $\beta_{\nfeu} \egaldef B_{\bU,x} - x$, and for every $j$, $P_j(x) \egaldef x^j + (1-x)^j$.
  Then, 
  \begin{align} 
    \label{eq.purf.A-B.general}
    \E\croch{ \alpha_{\nfeu}  - \beta_{\nfeu} } 
    &= \frac{\PolPURFdiff{\nfeu}(x)}{\nfeu +1} 
    \quad \mbox{with} \quad 
    \PolPURFdiff{\nfeu}(x) \egaldef x^{\nfeu +1} - (1-x)^{\nfeu +1}
    \\ \label{eq.purf.AB.general} 
    \E\croch{ \alpha_{\nfeu} \beta_{\nfeu} } 
    &= \frac{1 + \PolPURFprod{\nfeu}(x)}{(\nfeu+1)(\nfeu+2)} 
    \quad \mbox{with} \quad 
    \PolPURFprod{\nfeu}(x) \egaldef - P_{\nfeu+2}(x)
    \\ \label{eq.purf.Akap+Bkap.general}
    \forall \kappa \in \set{2,3,4} \, , 
    \quad 
    \E\croch{ \alpha_{\nfeu}^{\kappa} + \beta_{\nfeu}^{\kappa} } &= \frac{2 (\kappa!) + \RestePURF_{1+\kappa,\nfeu}(x)}{\prod_{j=1}^{\kappa} (n+j)}
    \\
    \notag 
    \mbox{with} \quad 
    \PolPURFdeu{\nfeu}(x) &\egaldef -2 P_{\nfeu+2}(x) - 2x(1-x) (\nfeu+2) P_{\nfeu}(x)
    \\
    \notag 
    \PolPURFtro{\nfeu}(x) &\egaldef -6P_{\nfeu+3}(x) - 6x(1-x)(\nfeu+3) P_{\nfeu+2}(x) \\
    \notag 
    &\quad - 3 x^2 (1-x)^2 (\nfeu+2) (\nfeu+3) P_{\nfeu-1}(x)
    \\
    \notag 
    \mbox{and} \quad 
    \PolPURFqua{\nfeu}(x) &\egaldef -4 x^3(1-x)^3 (\nfeu+2)(\nfeu+3)(\nfeu+4) P_{\nfeu-2}(x) 
    \\
    \notag 
    & -12 x^2(1-x)^2 (\nfeu+3)(\nfeu+4) P_{\nfeu}(x) 
    -24 x(1-x) (\nfeu+4) P_{\nfeu+2}(x) 
    -24 P_{\nfeu+4}(x) 
    \enspace .
  \end{align}
  Note that whatever $x \in [0,1)$, 
  \begin{gather}
    \label{eq.purf.maj.restes.general} 
    \absj{ \PolPURFdiff{\nfeu}(x)} \leq 1
    \qquad 
    -1 \leq \PolPURFprod{\nfeu}(x) \leq 0 
    \qquad 
    \forall \kappa\in\set{2,3,4} \, , \quad 
    -2(\kappa!) \leq \RestePURF_{1+\kappa,\nfeu}(x) \leq 0 
  \end{gather}

  Assume $\nfeu \geq 27$.  
  Let $\epspurf \egaldef \frac{4\log \nfeu}{\nfeu} \leq 1/2$, $I_{\nfeu} \egaldef [\epspurf,1-\epspurf]$. 
  Then, 
  \begin{align}
    \label{eq.purf.A-B.maj.milieu} 
    \sup_{x \in I_{\nfeu}} \absj{ \PolPURFdiff{\nfeu}(x)} &\leq 
    % \nfeu^{ -4 (\nfeu+1)/\nfeu} \leq 
    \nfeu^{-4}
    \\
    \label{eq.purf.AB.maj.milieu} 
    \sup_{x \in I_{\nfeu}} \absj{ \PolPURFprod{\nfeu}(x)} &\leq 
    %% 2 \nfeu^{ -4 (\nfeu+2)/\nfeu} \leq 
    2 \nfeu^{-4}
    \\
    \label{eq.purf.A2+B2.maj.milieu} 
    \sup_{x \in I_{\nfeu}} \absj{ \PolPURFdeu{\nfeu}(x)} &\leq 
    %% 4 \nfeu^{-4} + (\nfeu+2) \nfeu^{-4} \leq 
    \frac{11}{9} \nfeu^{-3}
    \\
    \label{eq.purf.A3+B3.maj.milieu} 
    \sup_{x \in I_{\nfeu}} \absj{ \PolPURFtro{\nfeu}(x)} &\leq 
    %% 6 \nfeu^{-4} + 3 (\nfeu+3) \nfeu^{-4} + \frac{3}{16} (\nfeu+2) (\nfeu+3) \nfeu^{-4(\nfeu-1)/\nfeu} \leq 
    %% \paren{ \frac{3 e^{1/2}}{16} + \frac{1}{27} \paren{ 3 + \frac{15 e^{1/2}}{16}} + \frac{1}{27^2} \paren{ 15 + \frac{9 e^{1/2}}{8} } } \nfeu^{-2} \leq 
    %% 0.51 \nfeu^{-2} \leq 
    \nfeu^{-2}
    \\
    \label{eq.purf.A4+B4.maj.milieu} 
    \sup_{x \in I_{\nfeu}} \absj{ \PolPURFqua{\nfeu}(x)} &\leq 
    % 24 \nfeu^{-4} + 6 (\nfeu+4) \nfeu^{-4} + \frac{3}{2} (\nfeu+3) (\nfeu+4) \nfeu^{-4} + \frac{1}{8} (\nfeu+2) (\nfeu+3) (\nfeu+4) \nfeu^{-4(\nfeu-2)/\nfeu} \\ &\leq
    % 24 \nfeu^{-4} + 6 \paren{ 1+\frac{4}{27} } \nfeu^{-3} + \frac{3}{2} \paren{ 1+\frac{3}{27} } \paren{ 1+\frac{4}{27} } \nfeu^{-2} + \frac{1}{8} \paren{ 1+\frac{2}{27} } \paren{ 1+\frac{3}{27} } \paren{ 1+\frac{4}{27} } \nfeu^{-1} n^{8/\nfeu} \\ &\leq
    % \croch{ \frac{24}{27^3} + \frac{6}{27^2}  \paren{ 1+\frac{4}{27} }   + \frac{3}{2 \times 27} \paren{ 1+\frac{3}{27} } \paren{ 1+\frac{4}{27} }  + \frac{e}{8} \paren{ 1+\frac{2}{27} } \paren{ 1+\frac{3}{27} } \paren{ 1+\frac{4}{27} } } \nfeu^{-1}  \\ &\leq
    0.548 \nfeu^{-1} \leq 
    \nfeu^{-1}
    \enspace . 
  \end{align}
\end{proposition}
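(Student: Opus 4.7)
The plan is to reduce every moment to an integral of the tail probabilities provided by Proposition~\ref{pro.dist-A-B.purf}, identify the resulting closed forms with the stated polynomials, and then derive the bounds.

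From Eq.~\eqref{eq.purf.loi-jointe-alpha-beta}--\eqref{eq.purf.loi-beta} one reads off $\P(\alpha_{\nfeu}\geq t)=(1-t)^{\nfeu}$, $\P(\beta_{\nfeu}\geq s)=(1-s)^{\nfeu}$ and $\P(\alpha_{\nfeu}\geq t,\beta_{\nfeu}\geq s)=(1-t-s)^{\nfeu}$ on the natural supports. The identity $\E[\alpha_{\nfeu}]=[1-(1-x)^{\nfeu+1}]/(\nfeu+1)$ together with the $x\leftrightarrow 1-x$ counterpart for $\beta_{\nfeu}$ directly proves Eq.~\eqref{eq.purf.A-B.general}. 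Applying Fubini to $\E[\alpha_{\nfeu}\beta_{\nfeu}]=\int_0^x\int_0^{1-x}(1-t-s)^{\nfeu}\,ds\,dt$, integrating first in $s$ and then in $t$, proves Eq.~\eqref{eq.purf.AB.general}. For $\kappa\in\{2,3,4\}$, the substitution $u=1-t$ followed by binomial expansion of $(1-u)^{\kappa-1}$ in $\E[\alpha_{\nfeu}^{\kappa}]=\kappa\int_0^x t^{\kappa-1}(1-t)^{\nfeu}dt$ yields the closed form $\kappa\sum_{j=0}^{\kappa-1}\binom{\kappa-1}{j}(-1)^{j}[1-(1-x)^{\nfeu+j+1}]/(\nfeu+j+1)$, and symmetry furnishes $\E[\beta_{\nfeu}^{\kappa}]$. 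Summing the two, the constant terms assemble via $\kappa B(\kappa,\nfeu+1)=\kappa!/\prod_{j=1}^{\kappa}(\nfeu+j)$ into $2\kappa!/\prod_{j=1}^{\kappa}(\nfeu+j)$, isolating the remainder $\RestePURF_{\kappa+1,\nfeu}(x)$ as an explicit $P_{\nfeu+j+1}(x)$-weighted alternating sum.

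Matching this alternating sum with the stated expressions for $\PolPURFdeu{\nfeu},\PolPURFtro{\nfeu},\PolPURFqua{\nfeu}$ is the most laborious step: it requires repeatedly applying the algebraic identity $P_{j+1}(x)=P_{j+2}(x)+x(1-x)P_{j}(x)$ to trade off factors $\nfeu+j$ against powers of $x(1-x)$ until the chosen canonical form is reached. This is especially fiddly for $\kappa=4$, where the four terms $P_{\nfeu+1},P_{\nfeu+2},P_{\nfeu+3},P_{\nfeu+4}$ must be reorganized into the even-indexed family $P_{\nfeu-2},P_{\nfeu},P_{\nfeu+2},P_{\nfeu+4}$ with the prescribed coefficients $4x^3(1-x)^3(\nfeu+2)(\nfeu+3)(\nfeu+4)$ down to $24$, but the rearrangement is entirely mechanical.

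The uniform bounds~\eqref{eq.purf.maj.restes.general} then follow at once: inspection of the explicit formulas shows $\PolPURFprod{\nfeu},\PolPURFdeu{\nfeu},\PolPURFtro{\nfeu},\PolPURFqua{\nfeu}$ are sums of nonpositive monomials in the nonnegative quantities $x(1-x)$ and $P_{\nfeu+\ell}(x)\in[0,1]$, hence $\leq 0$; the matching lower bound $\RestePURF_{\kappa+1,\nfeu}(x)\geq -2\kappa!$ is the direct rearrangement $2\kappa!+\RestePURF_{\kappa+1,\nfeu}(x)=\prod_{j=1}^{\kappa}(\nfeu+j)\E[\alpha_{\nfeu}^{\kappa}+\beta_{\nfeu}^{\kappa}]\geq 0$. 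For $\PolPURFdiff{\nfeu}$ the bound is immediate from $x^{\nfeu+1},(1-x)^{\nfeu+1}\in[0,1]$.

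Finally, for $x\in I_{\nfeu}=[\epspurf,1-\epspurf]$, the refined estimates~\eqref{eq.purf.A-B.maj.milieu}--\eqref{eq.purf.A4+B4.maj.milieu} rest on the key inequality $P_{\nfeu+j}(x)\leq 2(1-\epspurf)^{\nfeu}\leq 2e^{-\nfeu\epspurf}=2\nfeu^{-4}$, valid for $j\geq 0$ since $\max(x,1-x)\leq 1-\epspurf$ and $\nfeu\epspurf=4\log\nfeu$; for $j<0$ (as in the $P_{\nfeu-2}$ term of $\PolPURFqua{\nfeu}$) one absorbs the extra factor $(1-\epspurf)^{-|j|}\leq 2^{|j|}$ using $\epspurf\leq 1/2$. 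Inserting these bounds into the explicit expressions and using $x(1-x)\leq 1/4$ converts the base $\nfeu^{-4}$ decay into the stated rates, losing one factor of $\nfeu$ for each polynomial coefficient $(\nfeu+j)$ in the remainder. Sharp tracking of the lower-order corrections of the form $1+O(1/\nfeu)$, needed to reach the quoted constants $11/9$ and $0.548$, is the only delicate point of the argument; this is where the quantitative assumption $\nfeu\geq 27$ is used.
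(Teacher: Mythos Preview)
Your proposal is correct and follows the same overall route as the paper: compute the moments from the tail probabilities of Proposition~\ref{pro.dist-A-B.purf}, identify the explicit remainders, then read off the uniform and interior bounds. The one technical difference is in how you evaluate $\E[\alpha_{\nfeu}^{\kappa}+\beta_{\nfeu}^{\kappa}]$: you substitute $u=1-t$ and expand $(1-u)^{\kappa-1}$ binomially, which produces an alternating sum in the $P_{\nfeu+j+1}(x)$ that must then be reorganized via the identity $P_{j+1}(x)=P_{j+2}(x)+x(1-x)P_j(x)$ to reach the stated canonical form. The paper instead integrates $\int t^{\kappa-1}(1-t)^{\nfeu}\,dt$ by parts $\kappa$ times, which generates the factors $x^{\ell}(1-x)^{\ell}P_{\nfeu+\kappa-2\ell}(x)$ and the coefficients $\prod(\nfeu+j)$ directly, so no subsequent reshuffling is needed. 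Your detour is harmless but is the source of the ``laborious'' and ``fiddly'' steps you flag; integrating by parts would spare you all of them.
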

The main results of Proposition~\ref{pro.purf.moments} are summarized in Table~\ref{tab.pro.purf.moments}. 
\begin{table}
  \begin{center}
    \begin{tabular}{|l|c|c|}
      \hline
      Quantity & Order of magnitude & Eq. number \\
      \hline
      $\E\croch{ \alpha_{\nfeu} -\beta_{\nfeu} }$ & $\leq \nfeu^{-4} + \nfeu^{-1}\un_{x \notin\croch{\epspurf , 1-\epspurf} } $ & \eqref{eq.purf.A-B.general} \\
      $\E\croch{ \alpha_{\nfeu} \beta_{\nfeu}   }$  & $\nfeu^{-2}$ & \eqref{eq.purf.AB.general}  \\
      $\E\croch{ \alpha_{\nfeu}^{\kappa} + \beta_{\nfeu}^{\kappa} }$ ($\kappa=2,3,4$) & $\nfeu^{-\kappa}$ & \eqref{eq.purf.Akap+Bkap.general}\\
      \hline
    \end{tabular}
  \end{center}
  \caption{Summary of the results proved by Proposition~\ref{pro.purf.moments} for the one-dimensional PURF model. \label{tab.pro.purf.moments}}
\end{table}

\begin{proof}[Proof of Proposition~\ref{pro.purf.moments}]
  Since $\alpha_{\nfeu} = x - A_{\bU,x}$ and $\beta_{\nfeu} = B_{\bU,x} - x$, from Eq.~\eqref{eq.purf.loi-jointe-alpha-beta}, we get the joint distribution of $(\alpha_{\nfeu},\beta_{\nfeu})$: 
  \begin{equation}
    \label{eq.purf.joint-distribution-alpha-beta} 
    \forall t \in [0,x] \, , \, \forall x \in [0,1-x] \, , \quad 
    \P\paren{ \alpha_{\nfeu} \leq t \, , \, \beta_{\nfeu} \geq s } = (1-t-s)^{\nfeu} \enspace . 
  \end{equation}
  We deduce the marginal distributions of $\alpha_{\nfeu}$ and $\beta_{\nfeu}$ similarly to Eq.~\eqref{eq.purf.loi-alpha} and~\eqref{eq.purf.loi-beta}. 

  \paragraph{Formulas for all $x\in[0,1)$}
  We deduce from Eq.~\eqref{eq.purf.joint-distribution-alpha-beta} that 
  \begin{align*}
    \E\croch{ \alpha_{\nfeu}  - \beta_{\nfeu} } 
    &= \int_0^x (1-t)^{\nfeu} \, dt - \int_0^{1-x} (1-t)^{\nfeu} \, dt
    % \\ &
    = \frac{x^{\nfeu+1}-(1-x)^{\nfeu+1}}{\nfeu+1}
    \\ 
    \E\croch{ \alpha_{\nfeu} \beta_{\nfeu} } &= \int_0^x \paren{ \int_0^{1-x} (1-t-s)^{\nfeu} \, ds} dt
    \\
    &=\frac{1}{(\nfeu+1)(\nfeu+2)} - \frac{x^{\nfeu+2}+(1-x)^{\nfeu+2}}{(\nfeu+1)(\nfeu+2)}
    \\ 
    \E\croch{ \alpha_{\nfeu}^2 + \beta_{\nfeu}^2 } &= \int_0^x 2t(1-t)^{\nfeu} \, dt + \int_0^{1-x} 2t(1-t)^{\nfeu} \, dt
    \\
    &=\frac{4}{(\nfeu+1)(\nfeu+2)} -2 \, \frac{(1-x)x^{\nfeu+1}+x(1-x)^{\nfeu+1}}{\nfeu+1} -2 \, \frac{x^{\nfeu+2}+(1-x)^{\nfeu+2}}{(\nfeu+1)(\nfeu+2)}
    \\ 
    \E\croch{ \alpha_{\nfeu}^3 + \beta_{\nfeu}^3 } &= \int_0^x 3t^2(1-t)^{\nfeu} \, dt + \int_0^{1-x} 3t^2(1-t)^{\nfeu} \, dt
    \\
    &= \frac{12}{(\nfeu+1)(\nfeu+2)(\nfeu+3)} -3 \, \frac{(1-x)^2 x^{\nfeu+1} + x^2 (1-x)^{\nfeu+1}}{\nfeu+1}
    \\
    & \quad -6 \, \frac{(1-x) x^{\nfeu+2} + x(1-x)^{\nfeu+2}}{(\nfeu+1)(\nfeu+2)} -6 \, \frac{x^{\nfeu+3} + (1-x)^{\nfeu+3}}{(\nfeu+1)(\nfeu+2)(\nfeu+3)}
    \\ 
    \E\croch{ \alpha_{\nfeu}^4 + \beta_{\nfeu}^4 } &= \int_0^x 4t^3 (1-t)^{\nfeu} \, dt + \int_0^{1-x} 4t^3 (1-t)^{\nfeu} \, dt
    \\
    &= \frac{48}{(\nfeu+1)(\nfeu+2)(\nfeu+3)(\nfeu+4)} -4 \, \frac{(1-x)^3 x^{\nfeu+1} + x^3 (1-x)^{\nfeu+1}}{\nfeu+1}
    \\
    & \quad -12 \, \frac{(1-x)^2 x^{\nfeu+2} + x^2 (1-x)^{\nfeu+2}}{(\nfeu+1)(\nfeu+2)} -24 \, \frac{(1-x) x^{\nfeu+3} + x (1-x)^{\nfeu+3}}{(\nfeu+1)(\nfeu+2)(\nfeu+3)}
    \\
    & \quad -24 \, \frac{x^{\nfeu+4} + (1-x)^{\nfeu+4}}{(\nfeu+1)(\nfeu+2)(\nfeu+3)(\nfeu+4)}
  \end{align*}
  which proves Eq.~\eqref{eq.purf.A-B.general}, \eqref{eq.purf.AB.general} and \eqref{eq.purf.Akap+Bkap.general}.

  \paragraph{Upper bounds on remainder terms for every $x \in [0,1)$}
  The bound on $\PolPURFdiff{\nfeu}$ is straightforward. 
  The other bounds follow from Eq.~\eqref{eq.purf.AB.general} and~\eqref{eq.purf.Akap+Bkap.general} and the remark that $P_{\nfeu}(x) \geq 0$, $\E\croch{\alpha_{\nfeu}\beta_{\nfeu}} \geq 0$ and $\E\croch{\alpha_{\nfeu}^{\kappa} + \beta_{\nfeu}^{\kappa}}\geq 0$ for every $\kappa \geq 0$. 

  \paragraph{Upper bounds on remainder terms for every $x \in I_{\nfeu}$}
  First note that $ \nfeu \geq 27$ implies $\epspurf \leq 1/2$, since $x \mapsto \log(x)/x$ is a decreasing function on $[e,+\infty)$, and 
  \begin{gather*}
    %% \absj{ \frac{1}{\nfeu+1} - \frac{1}{\nfeu} } \leq \frac{1}{\nfeu^2}
    %% \qquad 
    %% \absj{ \frac{1}{(\nfeu+1)(\nfeu+2)} - \frac{1}{\nfeu^2}} \leq \frac{3}{\nfeu^3}
    %% \qquad 
    %% ... 
    %% \\
    \forall j \geq 0 \, , \quad \max\set{ \absj{x}^j \, , \, \absj{1-x}^j } 
    \leq \absj{1 - (\epspurf)^j } = \exp\croch{ j \log \paren{ 1 - \frac{4 \log(\nfeu)}{\nfeu}}} 
    \leq \nfeu^{- 4 j / \nfeu} 
    \\
    \absj{x (1-x)} \leq \frac{1}{4} 
    \qquad 
    \nfeu^{4/\nfeu} = \exp\paren{\frac{4  \log(\nfeu)}{\nfeu}} \leq e^{1/2} 
    \enspace .
  \end{gather*}
  Then, standard computations lead to Eq.~\eqref{eq.purf.A-B.maj.milieu}--\eqref{eq.purf.A4+B4.maj.milieu}. 
\end{proof}

\subsection{Proof of Corollary~\ref{cor.bias.purf.H2}} \label{sec.pr.purf.cor-H2}
The proof directly follows from the combination of Proposition~\ref{pro.bias.multidim.H3} and Proposition~\ref{pro.purf.moments}. 
First, we use Eq.~\eqref{eq.purf.A-B.general}, \eqref{eq.purf.AB.general} and \eqref{eq.purf.Akap+Bkap.general} in Proposition~\ref{pro.purf.moments} to compute the key quantities appearing in the result of Proposition~\ref{pro.bias.multidim.H3} with $\dimX=1$, under assumptions \eqref{hyp.s-2-fois-derivable.alt} and \eqref{hyp.unif}. 
\begin{align*}
  \mathcal{M}_{1,\cU,x} 
  &= \frac{ - s^{\prime}(x) \PolPURFdiff{\nfeu}(x)}{2 (\nfeu+1)}
  \qquad \mbox{so} \quad 
  \absj{\mathcal{M}_{1,\cU,x}} \leq \frac{ \absj{s^{\prime}(x)} }{2 \nfeu}
  \\
  \mathcal{N}_{2,\cU,x} 
  &= \frac{\paren{ s^{\prime}(x) }^2}{4 (\nfeu+1) (\nfeu+2)} \paren{ 2 + \PolPURFdeu{\nfeu}(x) - 2 \PolPURFprod{\nfeu}(x)}
  \leq \frac{\paren{ s^{\prime}(x) }^2}{2 \nfeu^2} 
  \\
  \mathcal{R}_{2,\cU,x} 
  &= \frac{\CHdeuxa}{3 (\nfeu+1) (\nfeu+2)} \paren{ 3 + \PolPURFdeu{\nfeu}(x) - \PolPURFprod{\nfeu}(x)}
  \leq \frac{\CHdeuxa}{\nfeu^2} 
  \\
  \mathcal{R}_{4,\cU,x} 
  &= \frac{2 \CHdeuxa^2}{9 (\nfeu+1) (\nfeu+2) (\nfeu+3) (\nfeu+4)} \paren{48 + \PolPURFqua{\nfeu}(x)}
  \leq \frac{32 \CHdeuxa^2}{3 \nfeu^4} 
  \enspace ,
\end{align*}
where all bounds follow from Eq.~\eqref{eq.purf.maj.restes.general}. 
In particular, 
\begin{align}
  \label{eq.purf.formule-N2-M1^2.gal}
  \mathcal{N}_{2,\cU,x} -  \mathcal{M}_{1,\cU,x}^2 
  &= \frac{\paren{ s^{\prime}(x) }^2}{4 (\nfeu+1) (\nfeu+2)} \croch{  2 + \PolPURFdeu{\nfeu}(x) - 2 \PolPURFprod{\nfeu}(x) - \frac{\nfeu+2}{\nfeu+1}  \paren{ \PolPURFdiff{\nfeu}(x) }^2 }
  \enspace . 
\end{align}

\paragraph{Whatever $x\in[0,1)$,} we deduce that 
\begin{align}
  \label{eq.purf.maj-M1+R2.gal}
  \paren{ \mathcal{M}_{1,\cU,x} + \mathcal{R}_{2,\cU,x}}^2 
  \leq \paren{ \frac{ \absj{s^{\prime}(x)} }{2 \nfeu} + \frac{\CHdeuxa}{\nfeu^2}  }^2
  &\leq \frac{ \paren{s^{\prime}(x)}^2 }{2 \nfeu^2} + \frac{2\CHdeuxa^2}{\nfeu^4}
  \\
  \mbox{and} \quad 
  2 \sqrt{ \mathcal{R}_{4,\cU,x} \: \paren{ \mathcal{N}_{2,\cU,x} - \paren{ \mathcal{M}_{1,\cU,x} }^2 } } + \mathcal{R}_{4,\cU,x}
  &\leq 2 \sqrt{ \mathcal{R}_{4,\cU,x} \: \mathcal{N}_{2,\cU,x} } + \mathcal{R}_{4,\cU,x}
  \notag \\  
  &\leq \frac{8 \CHdeuxa \absj{ s^{\prime}(x) }}{\sqrt{3} \nfeu^3} + \frac{32 \CHdeuxa^2}{3 \nfeu^4} 
    \label{eq.purf.maj-R4N2+N2.gal}
  \enspace . 
\end{align}
Hence, by Proposition~\ref{pro.bias.multidim.H3}, Eq.~\eqref{eq.purf.maj-M1+R2.gal} proves Eq.~\eqref{eq.bias.purf.H2.Biasinfty.partout}.  
From Eq.~\eqref{eq.purf.formule-N2-M1^2.gal}, since $\PolPURFdeu{\nfeu}(x) - 2 \PolPURFprod{\nfeu}(x) \leq 0$, 
\[
\mathcal{N}_{2,\cU,x} -  \mathcal{M}_{1,\cU,x}^2  \leq \frac{\paren{ s^{\prime}(x) }^2}{2 (\nfeu+1) (\nfeu+2)}
\enspace ,
\]
which proves Eq.~\eqref{eq.bias.purf.H2.Vararbre.partout}, together with Eq.~\eqref{eq.purf.maj-R4N2+N2.gal}.

\paragraph{If $x \in [\epspurf,1-\epspurf]$ and $\nfeu \geq 27$,} we can use Eq.~\eqref{eq.purf.A-B.maj.milieu}--\eqref{eq.purf.A4+B4.maj.milieu} in order to make the bounds more precise: 
\begin{gather} \label{eq.purf.maj-M1+R2.milieu}
  \paren{ \mathcal{M}_{1,\cU,x} + \mathcal{R}_{2,\cU,x}}^2 
  \leq 2 \mathcal{M}_{1,\cU,x}^2 + 2 \mathcal{R}_{2,\cU,x}^2
  \leq \frac{ \paren{ s^{\prime}(x) }^2 }{2 \nfeu^6} 
  + \frac{2 \CHdeuxa^2}{\nfeu^4} 
\end{gather}
Hence, by Proposition~\ref{pro.bias.multidim.H3}, Eq.~\eqref{eq.purf.maj-M1+R2.milieu} proves Eq.~\eqref{eq.bias.purf.H2.Biasinfty.milieu}.  

Furthermore, Eq.~\eqref{eq.purf.formule-N2-M1^2.gal} and Eq.~\eqref{eq.purf.A-B.maj.milieu}--\eqref{eq.purf.A2+B2.maj.milieu} imply that 
\begin{align*}
  &\qquad 
  \absj{ \mathcal{N}_{2,\cU,x} -  \mathcal{M}_{1,\cU,x}^2 - \frac{\paren{ s^{\prime}(x) }^2}{2 (\nfeu+1) (\nfeu+2)} }
  \\
  &\leq 
  \frac{\paren{ s^{\prime}(x) }^2}{4 (\nfeu+1) (\nfeu+2)} 
  \croch{ \absj{ \PolPURFdeu{\nfeu}(x) - 2 \PolPURFprod{\nfeu}(x)} + \frac{\nfeu+2}{\nfeu+1} \paren{ \PolPURFdiff{\nfeu}(x) }^2 } 
  % \\
  % &\leq 
  % \frac{\paren{ s^{\prime}(x) }^2}{4 \nfeu^2} 
  % \croch{ \frac{11}{9 \nfeu^3} + \frac{1}{4 \nfeu^4} + \frac{29}{28 \nfeu^8}}
  % \\
  % &\leq 
  % \frac{\paren{ s^{\prime}(x) }^2}{4 \nfeu^5} 
  % \croch{ \frac{11}{9} + \frac{1}{4 \times 27} + \frac{29}{28 \times 27^5}} 
  % \\
  % &\leq 
  % \frac{0.308 \paren{ s^{\prime}(x) }^2}{\nfeu^5} 
  % \\ &
  \leq 
  \frac{\paren{ s^{\prime}(x) }^2}{3\nfeu^5} 
\end{align*}
which leads to Eq.~\eqref{eq.bias.purf.H2.Vararbre.milieu} by Proposition~\ref{pro.bias.multidim.H3} and Eq.~\eqref{eq.purf.maj-R4N2+N2.gal}.

\paragraph{Integrated results}
Eq.~\eqref{eq.bias.purf.H2.Biasinfty.integrated} follows from integrating Eq.~\eqref{eq.bias.purf.H2.Biasinfty.partout} over $x \in (0,\epspurf) \cup (1-\epspurf,1)$ and~\eqref{eq.bias.purf.H2.Biasinfty.milieu} over $x \in [\epspurf , 1-\epspurf]$. 
Eq.~\eqref{eq.bias.purf.H2.Biasinfty.integrated-noborder} follows from integrating Eq.~\eqref{eq.bias.purf.H2.Biasinfty.milieu} over $x \in [\epspurf , 1-\epspurf]$. 
Eq.~\eqref{eq.bias.purf.H2.Vararbre.integrated} follows from integrating Eq.~\eqref{eq.bias.purf.H2.Vararbre.partout} over $x \in (0,\epspurf) \cup (1-\epspurf,1)$ and~\eqref{eq.bias.purf.H2.Vararbre.milieu} over $x \in [\epspurf , 1-\epspurf]$. 
\qed

\subsection{Proof of Corollary~\ref{cor.bias.purf.H3}} \label{sec.pr.purf.cor-H3}
The proof directly follows from the combination of Proposition~\ref{pro.bias.multidim.H3} and Proposition~\ref{pro.purf.moments}. 

We again use Proposition~\ref{pro.purf.moments} to compute the key quantities appearing in the result of Proposition~\ref{pro.bias.multidim.H3} with $\dimX=1$, under assumptions \eqref{hyp.s-3-fois-derivable.alt} and \eqref{hyp.unif}. 
\begin{align*}
  \mathcal{M}_{1,\cU,x} 
  &= \frac{ - s^{\prime}(x) \PolPURFdiff{\nfeu}(x)}{2 (\nfeu+1)}
  \qquad \mbox{so} \quad 
  \absj{\mathcal{M}_{1,\cU,x}} \leq \frac{ \absj{s^{\prime}(x)} }{2 \nfeu}
  \\
  \mathcal{M}_{2,\cU,x} 
  &= \frac{ s^{\prime\prime}(x) }{6 (\nfeu+1) (\nfeu+2)} \paren{ 3 + \PolPURFdeu{\nfeu}(x) -  \PolPURFprod{\nfeu}(x)}
  \qquad \mbox{so} \quad 
  \absj{\mathcal{M}_{2,\cU,x}} \leq \frac{\absj{s^{\prime}(x) }}{2 \nfeu^2} 
  \\
  \mathcal{R}_{3,\cU,x} 
  &= \frac{\CHtroisa}{4 (\nfeu+1) (\nfeu+2) (\nfeu+3)} \paren{ 12 + \PolPURFtro{\nfeu}(x) }
  \qquad \mbox{so} \quad
  \absj{\mathcal{R}_{3,\cU,x} } \leq \frac{3 \CHtroisa}{\nfeu^3} 
  \enspace ,
\end{align*}
where all bounds follow from Eq.~\eqref{eq.purf.maj.restes.general}. 
In particular, 
\begin{align}
  \label{eq.purf.formule-M1+M2.gal}
  \paren{\mathcal{M}_{1,\cU,x} +  \mathcal{M}_{2,\cU,x}}^2 - \frac{\paren{ s^{\prime}(x) }^2 \PolPURFdiff{\nfeu}(x)}{4 (\nfeu+1)^2}
  &=  \frac{\paren{s^{\prime\prime}(x)}^2 \paren{ 3+\PolPURFdeu{\nfeu}(x) -  \PolPURFprod{\nfeu}(x)}^2 }{36 (\nfeu+1)^2 (\nfeu+2)^2} \notag \\
  &+ \frac{s^{\prime}(x) s^{\prime\prime}(x) \PolPURFdiff{\nfeu}(x) \paren{ 3+\PolPURFdeu{\nfeu}(x) -  \PolPURFprod{\nfeu}(x)} }{6(\nfeu+1)^2 (\nfeu+2)}
  \enspace . 
\end{align}

\paragraph{If $ x \in (0,1) \backslash \croch{ \epspurf , 1 - \epspurf
  }$ and $\nfeu \geq 27$,} we deduce that 
\begin{align}
  \label{eq.purf.maj-M1+M2.gal}
  \absj{ \paren{\mathcal{M}_{1,\cU,x} +  \mathcal{M}_{2,\cU,x}}^2 - \frac{\paren{ s^{\prime}(x) }^2 \PolPURFdiff{\nfeu}(x)}{4 \nfeu^2} } \leq \frac{\paren{s^{\prime}(x)}^2}{2 \nfeu^3} + \frac{\absj{s^{\prime}(x)}\absj{s^{\prime\prime}(x)}}{2 \nfeu^3} + \frac{\paren{s^{\prime\prime}(x)}^2}{4 \nfeu^4}
\end{align}

\begin{align}
  \label{eq.purf.maj-R3M1+M2+R3.gal}
  \mbox{and} \quad 
  2 \absj{ \mathcal{R}_{3,\cU,x} \paren{ \mathcal{M}_{1,\cU,x} +  \mathcal{M}_{2,\cU,x} } } +  \paren{ \mathcal{R}_{3,\cU,x} }^2
  & \leq \frac{3 \CHtroisa}{\nfeu^4} \paren{ \absj{s^{\prime}(x)} + \frac{\absj{s^{\prime\prime}(x)}}{\nfeu} } + \frac{9 \CHtroisa^2}{\nfeu^6}
  \enspace . 
\end{align}
Hence, by Proposition~\ref{pro.bias.multidim.H3}, Eq.~\eqref{eq.purf.maj-M1+M2.gal} and \eqref{eq.purf.maj-R3M1+M2+R3.gal} prove Eq.~\eqref{eq.bias.purf.H3.Biasinfty.border}.  

\paragraph{If $x \in [\epspurf,1-\epspurf]$ and $\nfeu \geq 27$,} we can use Eq.~\eqref{eq.purf.A-B.maj.milieu}--\eqref{eq.purf.A4+B4.maj.milieu} to get:
\begin{gather} \label{eq.purf.maj-M1+M2.milieu}
  \absj{ \paren{\mathcal{M}_{1,\cU,x} +  \mathcal{M}_{2,\cU,x}}^2 - \frac{\paren{ s^{\prime\prime}(x) }^2 }{4 \nfeu^4} } \leq \frac{\absj{s^{\prime}(x)}\absj{s^{\prime\prime}(x)}}{2 \nfeu^7} + \frac{\paren{s^{\prime\prime}(x)}^2}{3 \nfeu^7} + \frac{\paren{s^{\prime}(x)}^2}{4 \nfeu^{10}}
\end{gather}
\begin{align}
  \label{eq.purf.maj-R3M1+M2+R3.milieu}
  \mbox{and} \quad 
  2 \absj{ \mathcal{R}_{3,\cU,x} \paren{ \mathcal{M}_{1,\cU,x} +  \mathcal{M}_{2,\cU,x} } } +  \paren{ \mathcal{R}_{3,\cU,x} }^2
  & \leq \frac{3 \CHtroisa}{\nfeu^5} \paren{ \absj{s^{\prime\prime}(x)} + \frac{\absj{s^{\prime}(x)}}{\nfeu^3} } + \frac{9 \CHtroisa^2}{\nfeu^6}
  \enspace . 
\end{align}

Hence, by Proposition~\ref{pro.bias.multidim.H3}, Eq.~\eqref{eq.purf.maj-M1+M2.milieu} and \eqref{eq.purf.maj-R3M1+M2+R3.milieu} prove Eq.~\eqref{eq.bias.purf.H3.Biasinfty}.  

\paragraph{Integrated results}
Eq.~\eqref{eq.bias.purf.H3.Biasinfty.integrated-noborder} follows from integrating Eq.~\eqref{eq.bias.purf.H3.Biasinfty} over $x \in [\epspurf , 1-\epspurf]$. 

Eq.~\eqref{eq.bias.purf.H3.Biasinfty.integrated} follows from integrating Eq.~\eqref{eq.bias.purf.H3.Biasinfty.border} over $x \in (0,\epspurf) \cup (1-\epspurf,1)$ and~\eqref{eq.bias.purf.H3.Biasinfty} over $x \in [\epspurf , 1-\epspurf]$ and from the following statement:
\begin{align}
  &\absj{ \int_{(0,\epspurf) \cup (1-\epspurf,1)} \paren{s^{\prime}(x)}^2 \paren{ x^{\nfeu+1} - (1-x)^{\nfeu+1}}^2 \, dx - \frac{\paren{s^{\prime}(0)}^2 + \paren{s^{\prime}(1)}^2}{ 2 \nfeu } } \notag \\
  &\quad \leq \frac{1}{2 \nfeu^2} \croch{ 4 \norm{s^{\prime}}_{\infty}^2 + 2 \norm{s^{\prime}}_{\infty}\norm{s^{\prime\prime}}_{\infty} + \norm{s^{\prime\prime}}_{\infty}^2 }
\end{align}
obtained by a Taylor expansion of $s^{\prime}$ around $0$ and $1$ and direct calculations of integrals.

\qed

\section{Proofs: the (d-dimensional) balanced purely random forest model}  \label{sec.pr.multidim.BPRF}
The main result of Section~\ref{sec.multidim.BPRF},
Corollary~\ref{cor.bias.BPRF.H2}, is implied by
Proposition~\ref{pro.bias.multidim.H3}, where key quantities have been
replaced by their exact values (or upper bounds on them).  As shown in
Section~\ref{sec.multidim}, for every fixed $x \in [0,1)^{\dimX}$, the
key quantities are expectations of functions of the non-negative
random variables $(x_i - A_{i,\bU}(x))_{1 \leq i \leq \dimX}$ and
$(B_{i,\bU}(x) - x_i)_{1 \leq i \leq \dimX}\,$.  So, keeping $x$
fixed, we can focus on these random variables.  From a convenient
formulation of their distribution
(Section~\ref{sec.pr.multidim.BPRF.reformulation}), we will be able
to compute all quantities needed
(Sections~\ref{sec.pr.multidim.BPRF.one-dim}
and~\ref{sec.pr.multidim.BPRF.multi-dim}). Then, we will prove
Corollary~\ref{cor.bias.BPRF.H2} in
Section~\ref{sec.pr.multidim.BPRF.cor-H2}.

\subsection{Equivalent formulation of the model} \label{sec.pr.multidim.BPRF.reformulation}
\begin{proposition} \label{pro.multidim.BPRF.reformulation}
  Let $\dimX \geq 1$, $x \in [0,1)^{\dimX}$, and $\paren{\bU_\prof}_{\prof \in \N}$ be some random sequence distributed according to the BPRF model detailed in Section~\ref{sec.multidim.BPRF}. 
  For every $\prof \in \N\,$, let $I_{\bU_\prof}(x) = \prod_{i=1}^{\dimX} [ A_{i,\bU_\prof}(x) \, , \, B_{i,\bU_\prof}(x) )$ denote the unique element of $\bU_\prof$ to which $x$ belongs, and define 
  \[ \forall i \in \set{1, \ldots, \dimX} \, , \quad \alpha_i^{(\prof)} = x_i - A_{i,\bU_\prof}(x) \quad \mbox{and} \quad \beta_i^{(\prof)} = B_{i,\bU_\prof}(x) - x_i \enspace . \]
  Then, the sequence $\paren{ \paren{\alpha_i^{(\prof)}, \beta_i^{(\prof)}}_{1 \leq i \leq \dimX} }_{ \prof \in \N}$ is distributed as follows: 
  \[ \paren{\alpha_i^{(0)}, \beta_i^{(0)}}_{1 \leq i \leq \dimX} = (x_i , 1 - x_i)_{1 \leq i \leq \dimX} \quad \mbox{a.s.} \]
  and for every $\prof \in \N$, given $\paren{\alpha_i^{(\prof)}, \beta_i^{(\prof)}}_{1 \leq i \leq \dimX}\,$, for every $i \in \set{1, \ldots, \dimX} \,$, 
  \begin{equation*}
    \paren{ \alpha_i^{(\prof+1)}, \beta_i^{(\prof+1)} } = 
    \begin{cases}
      \paren{ \alpha_i^{(\prof)}, \beta_i^{(\prof)} } \quad \mbox{if } i \neq J_{\prof+1} \\
      \paren{ U_{\prof+1} \alpha_i^{(\prof)}, \beta_i^{(\prof)} } \quad \mbox{with probability } \frac{\alpha_i^{(\prof)}}{\alpha_i^{(\prof)} + \beta_i^{(\prof)}} \mbox{ if } i = J_{\prof+1} \\
      \paren{ \alpha_i^{(\prof)}, U_{\prof+1} \beta_i^{(\prof)} } \quad \mbox{with probability } \frac{\beta_i^{(\prof)}}{\alpha_i^{(\prof)} + \beta_i^{(\prof)}} \mbox{ if } i = J_{\prof+1}
    \end{cases}
  \end{equation*}
  where $(J_\prof)_{\prof \geq 1}$ and $(U_\prof)_{\prof \geq 1}$ are two independent sequences of i.i.d. random variables, with $J_\prof \sim \mathcal{U}(\set{1, \ldots, \dimX})$ and $U_\prof \sim \mathcal{U}([0,1])$. 
\end{proposition}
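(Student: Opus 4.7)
The plan is to argue by induction on $\prof \in \N$. The base case $\prof = 0$ is immediate since $\bU_0 = [0,1)^{\dimX}$ a.s., giving $I_{\bU_0}(x) = [0,1)^{\dimX}$ and hence $\paren{\alpha_i^{(0)}, \beta_i^{(0)}} = (x_i, 1-x_i)$. For the inductive step, the crucial observation is that although the BPRF procedure splits every piece of $\bU_\prof$ simultaneously to build $\bU_{\prof+1}$, only the split applied to the single piece $\lambda_{j^*,\prof} = I_{\bU_\prof}(x)$ (where $j^* = j^*(\bU_\prof, x)$ is the a.s.\ unique index with $x \in \lambda_{j^*,\prof}$) can alter the cell containing $x$, so the recursion for $\paren{\alpha_i^{(\prof+1)}, \beta_i^{(\prof+1)}}_i$ depends only on $(L_{j^*,\prof}, Z_{j^*,\prof})$ and on $\bU_\prof$.

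Set $J_{\prof+1} \egaldef L_{j^*,\prof}$. Since, in the model definition, $L_{j^*,\prof}$ and $Z_{j^*,\prof}$ are independent of each other and of $\bU_\prof$, with respective distributions $\mathcal{U}(\set{1,\ldots,\dimX})$ and $\mathcal{U}([0,1])$, $J_{\prof+1}$ has the claimed distribution and is independent of the past. On the event $\set{i \neq J_{\prof+1}}$ the coordinate-$i$ boundaries are unchanged, so $\paren{\alpha_i^{(\prof+1)}, \beta_i^{(\prof+1)}} = \paren{\alpha_i^{(\prof)}, \beta_i^{(\prof)}}$. On the event $\set{i = J_{\prof+1}}$, write $\alpha = \alpha_i^{(\prof)}$, $\beta = \beta_i^{(\prof)}$, $Z = Z_{j^*,\prof}$, so that the split location along coordinate $i$ is $t = A_{i,\bU_\prof}(x) + Z(\alpha + \beta)$ and $x_i - t = \alpha - Z(\alpha + \beta)$. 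Thus $x_i \geq t$ iff $Z \leq \alpha/(\alpha+\beta)$, an event of probability $\alpha/(\alpha+\beta)$ on which $x$ lies in the right child, yielding $\paren{\alpha_i^{(\prof+1)}, \beta_i^{(\prof+1)}} = (\alpha - Z(\alpha+\beta), \beta)$; conditionally on this event, $Z$ is uniform on $[0, \alpha/(\alpha+\beta)]$, so $U \egaldef 1 - Z(\alpha+\beta)/\alpha$ is uniform on $[0,1]$ and $\alpha_i^{(\prof+1)} = U \alpha$. The complementary event is handled symmetrically, yielding $\beta_i^{(\prof+1)} = U' \beta$ for a $\mathcal{U}([0,1])$ variable $U'$. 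Defining $U_{\prof+1}$ as $U$ or $U'$ depending on which event occurs gives a single $\mathcal{U}([0,1])$ random variable.

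The main obstacle is purely bookkeeping: one needs to check that the sequences $(J_\prof)_{\prof \geq 1}$ and $(U_\prof)_{\prof \geq 1}$ built this way are independent of each other and consist of i.i.d.\ variables. This follows from the fact that the family $\sset{L_{j,\prof'}, Z_{j,\prof'} \telque \prof' \geq 0, \ 1 \leq j \leq 2^{\prof'}}$ is mutually independent by construction, that $j^*(\bU_\prof, x)$ is measurable with respect to $\bU_\prof$ (hence to the variables indexed by $\prof' < \prof$), and that at each step the selection $U_{\prof+1} = U$ vs.\ $U'$ depends on $Z_{j^*,\prof}$ through a measurable partition of $[0,1]$ into two intervals whose conditional laws are both uniform after rescaling. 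Assembling these observations inductively yields the claimed joint distribution.
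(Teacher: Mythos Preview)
Your proof is correct and follows essentially the same approach as the paper: both argue by induction on $\prof$, identify the unique cell $\lambda_{j^*,\prof}$ containing $x$, set $J_{\prof+1} = L_{j^*,\prof}$, and split into the cases according to whether the cut point falls to the left or right of $x_i$, extracting a uniform $U_{\prof+1}$ from the conditional law of $Z_{j^*,\prof}$ on each subinterval. Your version is slightly more explicit in writing out $U = 1 - Z(\alpha+\beta)/\alpha$ and in flagging the measurability of $j^*$ with respect to the past when justifying the joint independence of $(J_\prof)$ and $(U_\prof)$, but the argument is the same.
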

\begin{proof}[Proof of Proposition~\ref{pro.multidim.BPRF.reformulation}]
  By the definition of $\bU_0 = [0,1)^{\dimX}$, we get that for all $i$, $A_{i,\bU_0}(x) = 0$ and $B_{i,\bU_\prof}(x)=1$, hence 
  $\paren{\alpha_i^{(0)}, \beta_i^{(0)}}_{1 \leq i \leq \dimX} = (x_i , 1 - x_i)_{1 \leq i \leq \dimX}$ almost surely. 

  Then, let $\prof \in \N$, and denote by $\lambda_{j(\prof,x),\prof} = I_{\bU_\prof}(x)$ the piece of $\bU_\prof$ to which $x$ belongs; $\lambda_{j(\prof,x),\prof}$  is split into two pieces in $\bU_{\prof+1}$ ---one of them being $I_{\bU_{\prof+1}}(x)\,$--- along the direction $J_{\prof+1} = L_{j(\prof,x),\prof}\,$, at some random position $S_\prof(x) = (1 - Z_{j(\prof,x),\prof} ) A_{L_{j(\prof,x),\prof}} +  Z_{j(\prof,x),\prof} B_{L_{j(\prof,x),\prof}} \,$.  
  So, given $\bU_\prof$ (in particular, given $\paren{\alpha_i^{(\prof)}, \beta_i^{(\prof)}}_{1 \leq i \leq \dimX}$), for all $i \in \set{1, \ldots, \dimX}\,$, several cases can occur. 
  \begin{itemize}
  \item[1.] If $i \neq J_{\prof+1}$, then $(A_{i,\bU_{\prof+1}}(x) , B_{i,\bU_{\prof+1}}(x)) = (A_{i,\bU_\prof}(x) , B_{i,\bU_\prof}(x))$ so that $\paren{ \alpha_i^{(\prof+1)}, \beta_i^{(\prof+1)} } =  \paren{ \alpha_i^{(\prof)}, \beta_i^{(\prof)} }$.
  \item[2.] If $i = J_{\prof+1}$, two sub-cases are possible, depending on the relative position of $x$ and the point where $[ A_{i,\bU_\prof}(x) , B_{i,\bU_\prof}(x))$ is split. 
    \begin{itemize}
    \item[2a.] If the split is on the left side of $x$, i.e., if $S_\prof(x) <x$, then 
      \[ (A_{i,\bU_{\prof+1}}(x) , B_{i,\bU_{\prof+1}}(x)) = (S_\prof(x) , B_{i,\bU_\prof}(x))  \]
      so that 
      \[ \paren{ \alpha_i^{(\prof+1)}, \beta_i^{(\prof+1)} } =  \paren{ x - S_\prof(x) , \beta_i^{(\prof)} } \enspace . \]
    \item[2b.]  If the split is on the right side of $x$, i.e., if $S_\prof(x) \geq x$, then 
      \[ (A_{i,\bU_{\prof+1}}(x) , B_{i,\bU_{\prof+1}}(x)) = (A_{i,\bU_\prof}(x) , S_\prof(x))  \]
      so that 
      \[ \paren{ \alpha_i^{(\prof+1)}, \beta_i^{(\prof+1)} } =  \paren{ \alpha_i^{(\prof)}, S_\prof(x) - x } \enspace . \]
    \end{itemize}
  \end{itemize}

  To finish the proof, we remark that given that $i = J_{\prof+1}$, the sub-case 2a has probability $\frac{\alpha_i^{(\prof)}} {\alpha_i^{(\prof)} + \beta_i^{(\prof)}}$  and the sub-case 2b has probability $\frac{\beta_i^{(\prof)}}{\alpha_i^{(\prof)} + \beta_i^{(\prof)}}\,$. 
  Furthermore, given that sub-case 2a holds, the split is chosen uniformly in $[ A_{i,\bU_\prof}(x) , x)$, so that $\alpha_i^{(\prof+1)} = x - S_\prof(x) $ is equal to $\alpha_i^{(\prof)}$ multiplied by a uniform random variable, which defines $U_{\prof+1}$. 
  Similarly, given that sub-case 2b holds, the split is chosen uniformly in $[ x, B_{i,\bU_\prof}(x) )$, so that $\beta_i^{(\prof+1)} = S_\prof(x) - x$ is equal to $\beta_i^{(\prof)}$ multiplied by a uniform random variable, which defines $U_{\prof+1}$. 

  Since the random variables $( L_{j,\prof} , Z_{j,\prof} )_{1 \leq j \leq 2^{\prof} \, , \, \prof \in \N}$ are all independent, so are $(J_\prof , U_\prof)_{\prof \geq 1}$, and $J_\prof \sim \mathcal{U}(\set{1,\ldots, \dimX})$ as all the $L_{j,\prof}$. 
\end{proof}

\subsection{One-dimensional quantities} \label{sec.pr.multidim.BPRF.one-dim}
We start by computing the quantities depending only on one direction $i \in \set{1, \ldots, \dimX}$, i.e., of the form 
\[ \E\croch{ \paren{ \alpha_i^{(\prof)} }^{\delta} \paren{ \beta_i^{(\prof)} }^{\eta} } \]
for some $(\delta, \eta) \in \N^2$. 

\begin{proposition} \label{pro.multidim.BPRF.one-dim}
  With the notation of Proposition~\ref{pro.multidim.BPRF.reformulation}, for every $\prof \in \N$ and $i \in \set{1, \ldots, \dimX}\,$, 
  \begin{align} 
    \label{eq.BPRF.mom-one-dim.del-del}
    \forall \delta \in \N \, , \quad 
    \E\croch{ \paren{ \alpha_i^{(\prof)} \beta_i^{(\prof)} }^{\delta}  } 
    &= \paren{1 - \frac{\delta}{\dimX (\delta + 1)} }^{\prof} \paren{x_i (1-x_i)}^{\delta}
    \\ \label{eq.BPRF.mom-one-dim.diff-1}
    \E\croch{ \alpha_i^{(\prof)}  - \beta_i^{(\prof)} } 
    &= \paren{1 - \frac{1}{2 \dimX} }^{\prof} \paren{ 2 x_i - 1}
    \\ \label{eq.BPRF.mom-one-dim.sum-2}
    \E\croch{ \paren{\alpha_i^{(\prof)}}^2 + \paren{ \beta_i^{(\prof)} }^2 } 
    &= 4 x_i (1-x_i) \paren{1 - \frac{1}{2 \dimX} }^{\prof} 
    \\ \notag &\hspace{-1cm}+ \paren{x_i^2 + (1-x_i)^2 - 4 x_i (1-x_i)} \paren{1 - \frac{2}{3 \dimX} }^{\prof}
    \\ \label{eq.BPRF.mom-one-dim.sum-4}
    \E\croch{ \paren{\alpha_i^{(\prof)}}^4 + \paren{ \beta_i^{(\prof)} }^4 } 
    &= 12 x_i^2 (1-x_i)^2 \paren{1 - \frac{2}{3 \dimX} }^{\prof}  
    \\ \notag &\hspace{-1cm}+ 16 \croch{ x_i (1-x_i) \paren{x_i^2 + (1-x_i)^2} - 3 x_i^2 (1-x_i)^2} \paren{1 - \frac{3}{4 \dimX} }^{\prof}
    \\ \notag &\hspace{-4cm}+ \croch{ x_i^4 + (1-x_i)^4 + 36 x_i^2 (1-x_i)^2  - 16 x_i (1-x_i) \paren{x_i^2 + (1-x_i)^2}  } \paren{1 - \frac{4}{5 \dimX} }^{\prof}
    \\ \label{eq.BPRF.mom-one-dim.sum-4.maj}
    &\leq 8 \paren{1 - \frac{2}{3 \dimX} }^{\prof}
    \\ \label{eq.BPRF.mom-one-dim.sum-3}
    \E\croch{ \paren{\alpha_i^{(\prof)}}^3 + \paren{ \beta_i^{(\prof)} }^3 } 
    &\leq 2^{5/2} \paren{1 - \frac{2}{3 \dimX} }^{3\prof/4} \ll  \paren{1 - \frac{1}{2 \dimX} }^{\prof}
    \quad \mbox{as } \prof \to +\infty \enspace .
  \end{align}
\end{proposition}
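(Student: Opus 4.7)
The plan is to use the one-step recursion given by Proposition~\ref{pro.multidim.BPRF.reformulation} to derive, for each moment of interest, a recurrence relation for its expectation, and then solve the recurrence explicitly with the initial condition $(\alpha_i^{(0)}, \beta_i^{(0)}) = (x_i, 1-x_i)$. Conditionally on $(\alpha_i^{(\prof)}, \beta_i^{(\prof)})$, the update rule is: with probability $1 - 1/\dimX$ (when $J_{\prof+1} \neq i$) nothing changes, and with probability $1/\dimX$ either $\alpha_i$ or $\beta_i$ is multiplied by an independent $U \sim \mathcal{U}([0,1])$, the probability of each case being proportional to $\alpha_i$ or $\beta_i$ respectively. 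Combined with $\E[U^k] = 1/(k+1)$, each update yields a closed-form conditional expectation in terms of polynomials in $(\alpha_i^{(\prof)}, \beta_i^{(\prof)})$.

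First I would treat the simplest cases. For~\eqref{eq.BPRF.mom-one-dim.del-del}, the update multiplies $(\alpha_i \beta_i)^\delta$ by $1/(\delta+1)$ conditionally on the event of probability $1/\dimX$ that $i$ is selected, so
\[
\E\bigl[(\alpha_i^{(\prof+1)} \beta_i^{(\prof+1)})^\delta \bigm| \bU_\prof \bigr] = \left[1 - \frac{1}{\dimX} + \frac{1}{\dimX(\delta+1)}\right](\alpha_i^{(\prof)} \beta_i^{(\prof)})^\delta
\]
and the claim follows by induction. For~\eqref{eq.BPRF.mom-one-dim.diff-1}, similar arithmetic, using the identity $(\alpha^2 - \beta^2)/(\alpha+\beta) = \alpha - \beta$, shows $\E[\alpha_i^{(\prof+1)} - \beta_i^{(\prof+1)} \mid \bU_\prof] = (1 - 1/(2\dimX))(\alpha_i^{(\prof)} - \beta_i^{(\prof)})$, and the result follows by iteration.

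For~\eqref{eq.BPRF.mom-one-dim.sum-2}, the corresponding computation using $(\alpha^3+\beta^3)/(\alpha+\beta) = \alpha^2 - \alpha\beta + \beta^2$ gives the coupled recurrence
\[
\E\bigl[(\alpha_i^{(\prof+1)})^2 + (\beta_i^{(\prof+1)})^2 \bigm| \bU_\prof\bigr] = \left(1 - \frac{2}{3\dimX}\right)\bigl[(\alpha_i^{(\prof)})^2 + (\beta_i^{(\prof)})^2\bigr] + \frac{2}{3\dimX} \alpha_i^{(\prof)} \beta_i^{(\prof)},
\]
so taking expectations and using~\eqref{eq.BPRF.mom-one-dim.del-del} with $\delta=1$ gives a scalar linear recurrence with two geometric modes $(1-2/(3\dimX))^\prof$ and $(1-1/(2\dimX))^\prof$; the constants are determined by matching at $\prof=0$. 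Equation~\eqref{eq.BPRF.mom-one-dim.sum-4} follows the same pattern but with more bookkeeping: the conditional expectation of the fourth-power sum, via $(\alpha^5+\beta^5)/(\alpha+\beta) = (\alpha^4+\beta^4) - \alpha\beta(\alpha^2+\beta^2) + \alpha^2\beta^2$, couples to $\E[\alpha_i \beta_i (\alpha_i^2 + \beta_i^2)]$ and $\E[(\alpha_i\beta_i)^2]$. One must therefore compute the analogous recurrence for $\E[\alpha_i\beta_i(\alpha_i^2+\beta_i^2)]$ (itself coupled to $\E[(\alpha_i\beta_i)^2]$ already known from~\eqref{eq.BPRF.mom-one-dim.del-del}), leading to a triangular $3 \times 3$ linear system whose solution naturally produces the three exponential modes $(1-4/(5\dimX))^\prof$, $(1-3/(4\dimX))^\prof$, $(1-2/(3\dimX))^\prof$; initial-condition matching at $\prof=0$ pins down the coefficients.

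The main obstacle is~\eqref{eq.BPRF.mom-one-dim.sum-3}, since an exact recurrence for $\E[\alpha_i^3+\beta_i^3]$ would couple to odd-indexed quantities not previously computed (via $(\alpha^4+\beta^4)/(\alpha+\beta)$). I would bypass this by a Lyapunov/power-mean bound. The power-mean inequality yields $\alpha^3+\beta^3 \leq 2^{1/4}(\alpha^4+\beta^4)^{3/4}$ pointwise for $\alpha,\beta \geq 0$, and Jensen applied to the concave function $u \mapsto u^{3/4}$ gives $\E[(\alpha_i^4+\beta_i^4)^{3/4}] \leq (\E[\alpha_i^4+\beta_i^4])^{3/4}$. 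Combined with~\eqref{eq.BPRF.mom-one-dim.sum-4.maj} this produces $\E[\alpha_i^3+\beta_i^3] \leq 2^{1/4} \cdot 8^{3/4}\,(1-2/(3\dimX))^{3\prof/4} = 2^{5/2}(1-2/(3\dimX))^{3\prof/4}$. The final comparison $(1-2/(3\dimX))^{3/4} < 1 - 1/(2\dimX)$ follows from a second-order Taylor expansion in $1/\dimX$ around $0$ (the two sides agree to first order, with a strictly smaller second derivative on the left), which establishes the asymptotic $\ll$ statement. Finally,~\eqref{eq.BPRF.mom-one-dim.sum-4.maj} is obtained by observing that the three nonnegative coefficients in~\eqref{eq.BPRF.mom-one-dim.sum-4} each multiply an exponential bounded by $(1-2/(3\dimX))^\prof$ and summing their $x_i$-dependent prefactors, all uniformly bounded by $8$ on $[0,1]$.
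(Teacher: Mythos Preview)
Your overall strategy matches the paper's: derive a one-step conditional recursion for each moment, solve the resulting (triangular) linear recurrences, and handle \eqref{eq.BPRF.mom-one-dim.sum-3} via the power-mean inequality plus Jensen. However, two steps in your write-up do not go through as stated.

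First, for \eqref{eq.BPRF.mom-one-dim.sum-4.maj}, the three $x_i$-dependent prefactors in \eqref{eq.BPRF.mom-one-dim.sum-4} are \emph{not} all nonnegative. For example, at $x_i = 1/2$ the second prefactor equals $16\bigl[\tfrac{1}{4}\cdot\tfrac{1}{2} - 3\cdot\tfrac{1}{16}\bigr] = -1$, and the third prefactor is negative on a subinterval of $(0,1)$ as well. When a prefactor is negative, replacing its exponential by the larger $(1-2/(3\dimX))^{\prof}$ moves the bound the wrong way, so your argument breaks. The paper fixes this by bounding each prefactor \emph{from above} by a nonnegative constant (specifically $3/4$, $4$, and $13/4$, obtained by dropping the subtracted terms inside each bracket), and only then replacing each exponential by $(1-2/(3\dimX))^{\prof}$; the sum $3/4 + 4 + 13/4 = 8$ yields the stated bound.

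Second, your justification of $(1-2/(3\dimX))^{3/4} < 1 - 1/(2\dimX)$ by a second-order Taylor expansion in $1/\dimX$ around $0$ only proves the inequality for sufficiently large $\dimX$. The asymptotic statement in \eqref{eq.BPRF.mom-one-dim.sum-3} is as $\prof \to +\infty$ for \emph{each fixed} $\dimX \geq 1$, so you need the strict inequality for every integer $\dimX \geq 1$. The paper proves this for all $\dimX \geq 1$ by raising both sides to the fourth power and verifying directly that the resulting polynomial in $1/\dimX$, namely $\tfrac{\dimX^2}{6} + \bigl(\tfrac{8}{27}-\tfrac{1}{2}\bigr)\dimX + \tfrac{1}{16}$, is positive on $[1,\infty)$.
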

The main results of Proposition~\ref{pro.multidim.BPRF.one-dim} are summarized in Table~\ref{tab.pro.multidim.BPRF.one-dim}. 
\begin{table}
  \begin{center}
    \begin{tabular}{|l|c|c|}
      \hline
      Quantity & Order of magnitude & Eq. number \\
      \hline 
      $\E\croch{ \alpha_i^{(\prof)} \beta_i^{(\prof)}   } \phantom{\Bigg|}$  & $\paren{1 - \frac{1}{2 \dimX} }^{\prof} $ & \eqref{eq.BPRF.mom-one-dim.del-del} \\
      $\E\croch{ \alpha_i^{(\prof)}  - \beta_i^{(\prof)} } \phantom{\Bigg|}$ & $\paren{1 - \frac{1}{2 \dimX} }^{\prof} $ & \eqref{eq.BPRF.mom-one-dim.diff-1} \\
      $\E\croch{ \paren{\alpha_i^{(\prof)}}^2 + \paren{ \beta_i^{(\prof)} }^2 } \phantom{\Bigg|}$ & $\paren{1 - \frac{1}{2 \dimX} }^{\prof} $ & \eqref{eq.BPRF.mom-one-dim.sum-2} \\
      \hline
      % $\E\croch{ \paren{\alpha_i^{(\prof)} \beta_i^{(\prof)}}^2   }$ & $\paren{1 - \frac{2}{3 \dimX} }^{\prof} $ & \eqref{eq.BPRF.mom-one-dim.del-del} \\
      % $\E\croch{ \paren{\alpha_i^{(\prof)}}^3 - \paren{\beta_i^{(\prof)}}^3   }$ & $\paren{1 - \frac{2}{3 \dimX} }^{\prof} $ & \eqref{eq.BPRF.mom-one-dim.diff-3} \\
      $\E\croch{ \paren{\alpha_i^{(\prof)}}^4 + \paren{ \beta_i^{(\prof)} }^4  } \phantom{\Bigg|}$ & $\paren{1 - \frac{2}{3 \dimX} }^{\prof} $ & \eqref{eq.BPRF.mom-one-dim.sum-4} \\
      % $\E\croch{ \alpha_i^{(\prof)} \beta_i^{(\prof)} \paren{ \alpha_i^{(\prof)} - \beta_i^{(\prof)} }   }$ & $\paren{1 - \frac{2}{3 \dimX} }^{\prof} $ & \eqref{eq.BPRF.mom-one-dim.prod-1-fois-diff} \\
      % $\E\croch{ \alpha_i^{(\prof)} \beta_i^{(\prof)} \paren{ \paren{\alpha_i^{(\prof)}}^2 +  \paren{\beta_i^{(\prof)}}^2  }   }$ & $\paren{1 - \frac{2}{3 \dimX} }^{\prof} $ & \eqref{eq.BPRF.mom-one-dim.prod-1-fois-sum-carre} \\
      % \hline
      $\E\croch{ \paren{\alpha_i^{(\prof)}}^3 + \paren{ \beta_i^{(\prof)} }^3   } \phantom{\Bigg|}$ & $\ll \paren{1 - \frac{1}{2 \dimX} }^{\prof} $ & \eqref{eq.BPRF.mom-one-dim.sum-3} \\
      \hline
      $\E\croch{ \paren{ \alpha_i^{(\prof)}  - \beta_i^{(\prof)} } \paren{ \alpha_j^{(\prof)} - \beta_j^{(\prof)} } } \phantom{\Bigg|}$ & $\paren{1 - \frac{1}{\dimX} }^{\prof} $ & \eqref{eq.BPRF.mom-multi-dim.prod-diff-1} \\
      % $\E\croch{  \alpha_i^{(\prof)} \beta_i^{(\prof)} \alpha_j^{(\prof)} \beta_j^{(\prof)}  }$ & $\paren{1 - \frac{1}{\dimX} }^{\prof} $ & \eqref{eq.BPRF.mom-multi-dim.prod-prod} \\
      \hline
    \end{tabular}
  \end{center}
  \caption{Summary of the results proved by
    Propositions~\ref{pro.multidim.BPRF.one-dim}
    and~\ref{pro.multidim.BPRF.multi-dim} for the
    $\dimX$-dimensional BPRF model. \label{tab.pro.multidim.BPRF.one-dim}}
\end{table}

\begin{proof}[Proof of Proposition~\ref{pro.multidim.BPRF.one-dim}] 
  We start the proof by a general formula that will be used repeatedly. By Proposition~\ref{pro.multidim.BPRF.reformulation}, for every $\prof, \delta, \eta \in \N$, 
  \begin{align} \notag 
    &\qquad \E\croch{ \paren{ \alpha_i^{(\prof+1)} }^{\delta} \paren{ \beta_i^{(\prof+1)} }^{\eta} \sachant \paren{\alpha_{\ell}^{(\prof)},\beta_{\ell}^{(\prof)}}_{1 \leq \ell \leq \dimX}  } 
    \\ \notag 
    &= \paren{ \alpha_i^{(\prof)} }^{\delta} \paren{ \beta_i^{(\prof)} }^{\eta} \croch{  1 - \frac{1}{\dimX} + \frac{1}{\dimX} \frac{\alpha_i^{(\prof)} \E\croch{ U_{\prof+1}^{\delta} } + \beta_i^{(\prof)} \E\croch{ U_{\prof+1}^{\eta} } }{\alpha_i^{(\prof)} + \beta_i^{(\prof)}} } 
    \\
    &= \paren{ \alpha_i^{(\prof)} }^{\delta} \paren{ \beta_i^{(\prof)} }^{\eta} \croch{  1 - \frac{1}{\dimX} + \frac{1}{\dimX} \frac{\alpha_i^{(\prof)} (\delta+1)^{-1} + \beta_i^{(\prof)} (\eta+1)^{-1} } {\alpha_i^{(\prof)} + \beta_i^{(\prof)}} }  
    \label{eq.pr.pro.multidim.BPRF.one-dim.base}
  \end{align}
  since for every $ t \geq 0$, $E\scroch{U_{\prof+1}^t} = (t+1)^{-1}\,$. 
  \paragraph{Proof of Eq.~\eqref{eq.BPRF.mom-one-dim.del-del}} 
  By Eq.~\eqref{eq.pr.pro.multidim.BPRF.one-dim.base} with $\delta = \eta$, for every $\prof \in \N$, 
  \begin{align*}
    \E\croch{ \paren{ \alpha_i^{(\prof+1)} \beta_i^{(\prof+1)} }^{\delta} \sachant \paren{\alpha_{\ell}^{(\prof)},\beta_{\ell}^{(\prof)}}_{1 \leq \ell \leq \dimX}  } 
    &= 
    \paren{ \alpha_i^{(\prof+1)} \beta_i^{(\prof+1)} }^{\delta} \croch{  1 - \frac{\delta}{\dimX (\delta + 1)}  } 
    \enspace ,
  \end{align*}
  so that 
  \[ \E\croch{ \paren{ \alpha_i^{(\prof+1)} \beta_i^{(\prof+1)} }^{\delta} } = \E\croch{ \paren{ \alpha_i^{(\prof+1)} \beta_i^{(\prof+1)} }^{\delta} } \croch{  1 - \frac{\delta}{\dimX (\delta + 1)}  }   \]
  which implies Eq.~\eqref{eq.pr.pro.multidim.BPRF.one-dim.base} since 
  $\paren{ \alpha_i^{(0)} \beta_i^{(0)} }^{\delta} = (x_i (1-x_i))^{\delta}\,$. 

  \paragraph{Proof of Eq.~\eqref{eq.BPRF.mom-one-dim.diff-1}}
  Let $\varepsilon \geq 0$. By Eq.~\eqref{eq.pr.pro.multidim.BPRF.one-dim.base} with $(\delta , \eta) \in \set{(\varepsilon,0) , (0,\varepsilon)}\,$, for every $\prof \in \N$, 
  \begin{equation*} 
    \begin{split}
      & \qquad \E\croch{ \paren{ \alpha_i^{(\prof+1)} }^{\varepsilon}  - \paren{ \beta_i^{(\prof+1)} }^{\varepsilon} \sachant \paren{\alpha_{\ell}^{(\prof)},\beta_{\ell}^{(\prof)}}_{1 \leq \ell \leq \dimX}  } 
      \\
      &= \paren{ \alpha_i^{(\prof)} }^{\varepsilon}  - \paren{ \beta_i^{(\prof)} }^{\varepsilon}
      - \frac{\varepsilon}{\dimX (\varepsilon + 1)} \frac{ \paren{ \alpha_i^{(\prof)} }^{\varepsilon+1}  - \paren{ \beta_i^{(\prof)} }^{\varepsilon+1} }{\alpha_i^{(\prof)} + \beta_i^{(\prof)} } 
      \enspace ,
    \end{split}
  \end{equation*}
  Hence, if $\varepsilon$ is an odd integer, 
  \begin{equation} \label{eq.pr.pro.multidim.BPRF.one-dim.diff-puiss}
    \begin{split}
      \E\croch{ \paren{ \alpha_i^{(\prof+1)} }^{\varepsilon}  - \paren{ \beta_i^{(\prof+1)} }^{\varepsilon} } 
      = \paren{ 1 - \frac{\varepsilon}{\dimX (\varepsilon + 1)}  } \E\croch{ \paren{ \alpha_i^{(\prof)} }^{\varepsilon}  - \paren{ \beta_i^{(\prof)} }^{\varepsilon} } 
      \\  
      + \frac{\varepsilon}{\dimX (\varepsilon + 1)} 
      \sum_{1 \leq j \leq \varepsilon-1} \paren{ (-1)^{j} \E \croch{ \paren{\alpha_i^{(\prof)}}^j \paren{\beta_i^{(\prof)}}^{\varepsilon-j}  } }
    \end{split}
  \end{equation}

  In particular, taking $\varepsilon = 1$ in Eq.~\eqref{eq.pr.pro.multidim.BPRF.one-dim.diff-puiss} yields 
  \begin{equation*} 
    \E\croch{ \alpha_i^{(\prof+1)}   - \beta_i^{(\prof+1)}  } 
    = \paren{ 1 - \frac{1}{2 \dimX}  } \E\croch{ \alpha_i^{(\prof)}   -  \beta_i^{(\prof)} } 
  \end{equation*}
  and we get Eq.~\eqref{eq.BPRF.mom-one-dim.diff-1}
  since $ \alpha_i^{(0)}   - \beta_i^{(0)} =  2 x_i - 1$. 

  \paragraph{Proof of Eq.~\eqref{eq.BPRF.mom-one-dim.sum-2}}
  Let $\varepsilon \geq 0$. By Eq.~\eqref{eq.pr.pro.multidim.BPRF.one-dim.base} with $(\delta , \eta) \in \set{(\varepsilon,0) , (0,\varepsilon)}\,$, for every $\prof \in \N$, 
  \begin{equation*} 
    \begin{split}
      & \qquad \E\croch{ \paren{ \alpha_i^{(\prof+1)} }^{\varepsilon}  + \paren{ \beta_i^{(\prof+1)} }^{\varepsilon} \sachant \paren{\alpha_{\ell}^{(\prof)},\beta_{\ell}^{(\prof)}}_{1 \leq \ell \leq \dimX}  } 
      \\
      &= \paren{ \alpha_i^{(\prof)} }^{\varepsilon}  + \paren{ \beta_i^{(\prof)} }^{\varepsilon}
      - \frac{\varepsilon}{\dimX (\varepsilon + 1)} \frac{ \paren{ \alpha_i^{(\prof)} }^{\varepsilon+1}  + \paren{ \beta_i^{(\prof)} }^{\varepsilon+1} }{\alpha_i^{(\prof)} + \beta_i^{(\prof)} } 
      \enspace ,
    \end{split}
  \end{equation*}
  Hence, if $\varepsilon$ is an even integer, 
  \begin{equation} \label{eq.pr.pro.multidim.BPRF.one-dim.sum-puiss}
    \begin{split}
      \E\croch{ \paren{ \alpha_i^{(\prof+1)} }^{\varepsilon}  + \paren{ \beta_i^{(\prof+1)} }^{\varepsilon} } 
      = \paren{ 1 - \frac{\varepsilon}{\dimX (\varepsilon + 1)}  } \E\croch{ \paren{ \alpha_i^{(\prof)} }^{\varepsilon}  + \paren{ \beta_i^{(\prof)} }^{\varepsilon} } 
      \\  
      + \frac{\varepsilon}{\dimX (\varepsilon + 1)} 
      \sum_{1 \leq j \leq \varepsilon-1} \paren{ (-1)^{j-1} \E \croch{ \paren{\alpha_i^{(\prof)}}^j \paren{\beta_i^{(\prof)}}^{\varepsilon-j}  } }
    \end{split}
  \end{equation}

  In particular, taking $\varepsilon = 2$ in Eq.~\eqref{eq.pr.pro.multidim.BPRF.one-dim.sum-puiss} yields 
  \begin{align*} 
    &\qquad \E\croch{ \paren{ \alpha_i^{(\prof+1)} }^2   +  \paren{ \beta_i^{(\prof+1)} }^2  } 
    \\
    &= \paren{ 1 - \frac{2}{3 \dimX}  } \E\croch{ \paren{ \alpha_i^{(\prof)}  }^2   +  \paren{ \beta_i^{(\prof)} }^2 } 
    + \frac{2}{3 \dimX} \E \croch{ \alpha_i^{(\prof)} \beta_i^{(\prof)}  }
    \\
    &= \paren{ 1 - \frac{2}{3 \dimX}  } \E\croch{ \paren{ \alpha_i^{(\prof)}  }^2   +  \paren{ \beta_i^{(\prof)} }^2 } 
    + \frac{2}{3 \dimX} \paren{1 - \frac{1}{2 \dimX}}^{\prof} x_i (1-x_i)
  \end{align*}
  by Eq~\eqref{eq.BPRF.mom-one-dim.del-del} with $\delta = 1$. 
  Therefore, applying Lemma~\ref{le.suites-geom} to the sequence $\sparen{  \E\scroch{ \sparen{ \alpha_i^{(n)}  }^2   +  \sparen{ \beta_i^{(n)} }^2 }  }_{n \in \N}$ proves Eq.~\eqref{eq.BPRF.mom-one-dim.sum-2} since $\sparen{ \alpha_i^{(0)}  }^2   +  \sparen{ \beta_i^{(0)} }^2 = x_i^2 + (1-x_i)^2\,$.

  \paragraph{Proof of an additional formula}%Eq.~\eqref{eq.BPRF.mom-one-dim.prod-1-fois-sum-carre}}
  Before proving Eq.~\eqref{eq.BPRF.mom-one-dim.sum-4} and Eq.~\eqref{eq.BPRF.mom-one-dim.sum-4.maj}, we prove the following additional formula: 
  \begin{align}
    \label{eq.BPRF.mom-one-dim.prod-1-fois-sum-carre}
    \E \croch{ \alpha_i^{(\prof)} \beta_i^{(\prof)} \paren{ \paren{\alpha_i^{(\prof)}}^2 +  \paren{\beta_i^{(\prof)}}^2  } } 
    &= 3 x_i^2 (1-x_i)^2 \paren{1 - \frac{2}{3 \dimX} }^{\prof} 
    \\ \notag &\hspace{-1cm}+ \croch{ x_i (1-x_i) \paren{x_i^2 + (1-x_i)^2} - 3 x_i^2 (1-x_i)^2} \paren{1 - \frac{3}{4 \dimX} }^{\prof}
    \enspace . 
  \end{align}
  By Eq.~\eqref{eq.pr.pro.multidim.BPRF.one-dim.base} with $(\delta , \eta) \in \set{(3,1) , (1,3)}\,$, for every $\prof \in \N$, 
  \begin{align*} 
    & \qquad \E\croch{ \alpha_i^{(\prof+1)} \beta_i^{(\prof+1)} \paren{ \paren{\alpha_i^{(\prof+1)}}^2 + \paren{\beta_i^{(\prof+1)}}^2 }  \sachant \paren{\alpha_{\ell}^{(\prof)},\beta_{\ell}^{(\prof)}}_{1 \leq \ell \leq \dimX}  } 
    \\
    &= \paren{ 1 - \frac{1}{\dimX} }  \alpha_i^{(\prof)} \beta_i^{(\prof)} \paren{ \paren{\alpha_i^{(\prof)}}^2 + \paren{\beta_i^{(\prof)}}^2 } 
    +   \frac{1}{4 \dimX} \alpha_i^{(\prof)} \beta_i^{(\prof)} \frac{\paren{\alpha_i^{(\prof)}}^3 + \paren{ \beta_i^{(\prof)}}^3 }{\alpha_i^{(\prof)} + \beta_i^{(\prof)}}  
    +   \frac{1}{2 \dimX} \paren{ \alpha_i^{(\prof)} \beta_i^{(\prof)}}^2 
    \\
    &= \paren{ 1 - \frac{3}{4 \dimX} }  \alpha_i^{(\prof)} \beta_i^{(\prof)} \paren{ \paren{\alpha_i^{(\prof)}}^2 + \paren{\beta_i^{(\prof)}}^2 } 
    +   \frac{1}{4 \dimX} \paren{ \alpha_i^{(\prof)} \beta_i^{(\prof)}}^2 
    \enspace ,
  \end{align*}
  so that 
  \begin{align*} 
    & \qquad \E\croch{ \alpha_i^{(\prof+1)} \beta_i^{(\prof+1)} \paren{ \paren{\alpha_i^{(\prof+1)}}^2 + \paren{\beta_i^{(\prof+1)}}^2 }  } 
    \\
    &= \paren{ 1 - \frac{3}{4 \dimX} }  \E\croch{ \alpha_i^{(\prof)} \beta_i^{(\prof)} \paren{ \paren{\alpha_i^{(\prof)}}^2 + \paren{\beta_i^{(\prof)}}^2 } }
    +   \frac{1}{4 \dimX} x_i^2 (1-x_i)^2 \paren{1 - \frac{2}{3 \dimX}}^{\prof} 
  \end{align*}
  by Eq.~\eqref{eq.BPRF.mom-one-dim.del-del} with $\delta = 2$.  
  Therefore, since $\alpha_i^{(0)} \beta_i^{(0)} \sparen{ \sparen{\alpha_i^{(0)}}^2 + \sparen{\beta_i^{(0)}}^2 } = x_i (1-x_i) \sparen{x_i^2 + (1-x_i)^2 }\,$, applying Lemma~\ref{le.suites-geom} to the sequence $\sparen{  \E\scroch{ \alpha_i^{(n)} \beta_i^{(n)} \sparen{ \sparen{\alpha_i^{(n)}}^2 + \sparen{\beta_i^{(n)}}^2 } }  }_{n \in \N}$ proves Eq.~\eqref{eq.BPRF.mom-one-dim.prod-1-fois-sum-carre}.

  \paragraph{Proof of Eq.~\eqref{eq.BPRF.mom-one-dim.sum-4} and Eq.~\eqref{eq.BPRF.mom-one-dim.sum-4.maj}}
  Taking $\varepsilon = 4$ in Eq.~\eqref{eq.pr.pro.multidim.BPRF.one-dim.sum-puiss} yields 
  \begin{align*} 
    &\qquad \E\croch{ \paren{ \alpha_i^{(\prof+1)} }^4   +  \paren{ \beta_i^{(\prof+1)} }^4  } 
    \\
    &= \paren{ 1 - \frac{4}{5 \dimX}  } \E\croch{ \paren{ \alpha_i^{(\prof)} }^{4}  + \paren{ \beta_i^{(\prof)} }^{4} } 
    \\  
    &+ \frac{4}{5 \dimX } \E \croch{ \alpha_i^{(\prof)} \paren{\beta_i^{(\prof)}}^{3} - \paren{\alpha_i^{(\prof)}}^2 \paren{\beta_i^{(\prof)}}^2 + \paren{\alpha_i^{(\prof)}}^3 \beta_i^{(\prof)} } 
    \\
    &= \paren{ 1 - \frac{4}{5 \dimX}  } \E\croch{ \paren{ \alpha_i^{(\prof)} }^{4}  + \paren{ \beta_i^{(\prof)} }^{4} } 
    + \frac{8}{5 \dimX } x_i^2 (1-x_i)^2 \paren{1 - \frac{2}{3 \dimX}}^{\prof} 
    \\
    &+ \frac{4}{5 d }  \croch{ x_i (1-x_i) \paren{x_i^2 + (1-x_i)^2} - 3 x_i^2 (1-x_i)^2} \paren{1 - \frac{3}{4 \dimX} }^{\prof}
  \end{align*} 
  by Eq.~\eqref{eq.BPRF.mom-one-dim.prod-1-fois-sum-carre} and Eq.~\eqref{eq.BPRF.mom-one-dim.del-del} with $\delta = 2$. 
  Since $\sparen{ \alpha_i^{(0)} }^4   +  \sparen{ \beta_i^{(0)} }^4 = x_i^4 + (1-x_i)^4\,$, applying Lemma~\ref{le.suites-geom} to the sequence $\sparen{ \sparen{ \alpha_i^{(n)} }^4   +  \sparen{ \beta_i^{(n)} }^4 }_{n \in \N}$ proves Eq.~\eqref{eq.BPRF.mom-one-dim.sum-4}. 

  Finally, using that $\forall x_i \in [0,1)\,$, $0 \leq x_i (1-x_i) \leq 1/4\,$, Eq.~\eqref{eq.BPRF.mom-one-dim.sum-4} yields, for every $x \in [0,1)^{\dimX}\,$, 
  \begin{align*}
    \E\croch{ \paren{ \alpha_i^{(\prof)} }^{4}  + \paren{ \beta_i^{(\prof)} }^{4} } 
    &\leq  \frac{3}{4} \paren{1 - \frac{2}{3 \dimX}}^{\prof} + 4 \paren{1 - \frac{3}{4 \dimX}}^{\prof} + \frac{13}{4} \paren{1 - \frac{4}{5 \dimX}}^{\prof}
    \\
    &\leq 8 \paren{1 - \frac{2}{3 \dimX}}^{\prof}
    \enspace ,
  \end{align*}
  which proves Eq.~\eqref{eq.BPRF.mom-one-dim.sum-4.maj}.

  \paragraph{Proof of Eq.~\eqref{eq.BPRF.mom-one-dim.sum-3}}
  By Jensen's inequality, $\forall a,b \geq 0\,$, $a + b \leq 2^{1/4} \sparen{a^{4/3} + b^{4/3}}^{3/4}\,$. 
  Since $\alpha_i^{(\prof)}, \beta_i^{(\prof)} \geq 0$ a.s., we get 
  \begin{align} \notag 
    \E\croch{ \paren{ \alpha_i^{(\prof)} }^{3}  + \paren{ \beta_i^{(\prof)} }^{3} } 
    &\leq 
    2^{1/4} \E\croch{ \paren{\paren{ \alpha_i^{(\prof)} }^{4}  + \paren{ \beta_i^{(\prof)} }^{4}  }^{3/4} }
    \\ \label{pr.eq.BPRF.mom-one-dim.sum-3.Jensen}
    &\leq 
    2^{1/4} \paren{\E\croch{ \paren{ \alpha_i^{(\prof)} }^{4}  + \paren{ \beta_i^{(\prof)} }^{4} } }^{3/4} \enspace . 
  \end{align}
  Then, combining Eq.~\eqref{eq.BPRF.mom-one-dim.sum-4.maj} and Eq.~\eqref{pr.eq.BPRF.mom-one-dim.sum-3.Jensen} leads to the first part of Eq.~\eqref{eq.BPRF.mom-one-dim.sum-3}.
  For the second part, we only have to prove that for every $\dimX \geq 1\,$, $\paren{1 - \frac{2}{3 \dimX}}^{3/4} < 1 - \frac{1}{2 \dimX}\,$, which is equivalent to 
  \[  1 - \frac{2}{\dimX} + \frac{4}{3 \dimX^2} - \frac{8}{27 \dimX^3} = \paren{1 - \frac{2}{3 \dimX}}^3 < \paren{ 1 - \frac{1}{2 \dimX} }^4 = 1 - \frac{2}{\dimX} + \frac{3}{2 \dimX^2} - \frac{1}{2 \dimX^3} + \frac{1}{16 \dimX^4} \enspace , \]
  that is 
  \[ \frac{\dimX^2}{6} + \paren{ \frac{8}{27} - \frac{1}{2} } \dimX + \frac{1}{16} > 0 \enspace , \] 
  which holds true since the polynomial on the left-hand side is increasing on $[33/54, +\infty)$ and positive for $\dimX=1\,$. 
\end{proof}

\subsection{Bi-dimensional quantities} \label{sec.pr.multidim.BPRF.multi-dim}
\begin{proposition} \label{pro.multidim.BPRF.multi-dim}
  With the notation of Proposition~\ref{pro.multidim.BPRF.reformulation}, for every $\prof \in \N$ and $i \neq j \in \set{1, \ldots, \dimX}\,$, 
  \begin{align} 
    \label{eq.BPRF.mom-multi-dim.prod-diff-1}
    \E\croch{ \paren{ \alpha_i^{(\prof)}  - \beta_i^{(\prof)} } \paren{ \alpha_j^{(\prof)}  - \beta_j^{(\prof)} } } &= \paren{1 - \frac{1}{\dimX} }^{\prof} \paren{2 x_i - 1} \paren{2 x_j -1}
  \end{align} 
\end{proposition}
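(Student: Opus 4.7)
The plan is to proceed exactly as in the proof of Proposition~\ref{pro.multidim.BPRF.one-dim}, by conditioning on $\sparen{\alpha_{\ell}^{(\prof)},\beta_{\ell}^{(\prof)}}_{1 \leq \ell \leq \dimX}$ and using the recursive construction of Proposition~\ref{pro.multidim.BPRF.reformulation}, which gives us access to the independent random variables $J_{\prof+1} \sim \mathcal{U}(\{1,\ldots,\dimX\})$ and $U_{\prof+1} \sim \mathcal{U}([0,1])$ governing the $(\prof+1)$-th split. The key observation is that a single split only affects one coordinate, so for a fixed pair $i \neq j$ we can partition the event space into three cases according to $J_{\prof+1}$: either $J_{\prof+1} \notin \{i,j\}$ (in which case neither coordinate is altered), or $J_{\prof+1} = i$ (only coordinate $i$ is altered, $j$ unchanged), or $J_{\prof+1} = j$ (symmetric).

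First I would compute the auxiliary one-dimensional ingredient: when coordinate $i$ is split, the same calculation that yielded Eq.~\eqref{eq.pr.pro.multidim.BPRF.one-dim.diff-puiss} with $\varepsilon = 1$ gives
\[
\E\croch{ \alpha_i^{(\prof+1)} - \beta_i^{(\prof+1)} \sachant \paren{\alpha_{\ell}^{(\prof)},\beta_{\ell}^{(\prof)}}_{1 \leq \ell \leq \dimX}, \, J_{\prof+1} = i } = \frac{\alpha_i^{(\prof)} - \beta_i^{(\prof)}}{2}
\]
(indeed $(\alpha_i/(\alpha_i+\beta_i))(U\alpha_i - \beta_i) + (\beta_i/(\alpha_i+\beta_i))(\alpha_i - U\beta_i)$ integrates to $(\alpha_i - \beta_i)/2$). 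Since $(\alpha_j^{(\prof+1)}, \beta_j^{(\prof+1)}) = (\alpha_j^{(\prof)}, \beta_j^{(\prof)})$ when $J_{\prof+1} = i$, and by independence of $J_{\prof+1}$ and $U_{\prof+1}$, combining the three cases above yields
\[
\E\croch{ \paren{\alpha_i^{(\prof+1)} - \beta_i^{(\prof+1)}}\paren{\alpha_j^{(\prof+1)} - \beta_j^{(\prof+1)}} \sachant \paren{\alpha_{\ell}^{(\prof)},\beta_{\ell}^{(\prof)}}_{1 \leq \ell \leq \dimX} }
= \paren{1 - \frac{1}{\dimX}} \paren{\alpha_i^{(\prof)} - \beta_i^{(\prof)}}\paren{\alpha_j^{(\prof)} - \beta_j^{(\prof)}}
\]
since the coefficient is $\left(1 - \tfrac{2}{\dimX}\right) + \tfrac{1}{\dimX}\cdot\tfrac{1}{2} + \tfrac{1}{\dimX}\cdot\tfrac{1}{2} = 1 - \tfrac{1}{\dimX}$.

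Taking the unconditional expectation produces a geometric recursion in $\prof$, which together with the initial condition $\alpha_i^{(0)} - \beta_i^{(0)} = 2x_i - 1$ (and likewise for $j$) gives the claimed formula
\[
\E\croch{ \paren{\alpha_i^{(\prof)} - \beta_i^{(\prof)}}\paren{\alpha_j^{(\prof)} - \beta_j^{(\prof)}} } = \paren{1 - \frac{1}{\dimX}}^{\prof} (2x_i - 1)(2x_j - 1).
\]
There is no real obstacle here: the only point requiring care is the bookkeeping of the three mutually exclusive cases for $J_{\prof+1}$, and the fact that the product structure decouples cleanly precisely because at most one of the two coordinates $i$ and $j$ is touched at any single step. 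This is why the contraction factor is $1 - 1/\dimX$ rather than the $1 - 1/(2\dimX)$ appearing in the one-dimensional analogue~\eqref{eq.BPRF.mom-one-dim.diff-1}.
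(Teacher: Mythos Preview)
Your argument is correct and is essentially the same as the paper's proof: both condition on $\sparen{\alpha_{\ell}^{(\prof)},\beta_{\ell}^{(\prof)}}_{1 \leq \ell \leq \dimX}$, split according to whether $J_{\prof+1}$ equals $i$, $j$, or neither, compute that the conditional expectation contracts by the factor $1 - 1/\dimX$, and then iterate using the initial value $(2x_i-1)(2x_j-1)$. The only cosmetic difference is that the paper writes out the intermediate expression $\frac{1}{\alpha_i+\beta_i}\bigl[\tfrac{\alpha_i^2-\beta_i^2}{2}+\alpha_i\beta_i-\alpha_i\beta_i\bigr]$ explicitly before simplifying to $(\alpha_i-\beta_i)/2$, whereas you invoke this computation as already done in the one-dimensional case.
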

\begin{proof}[Proof of Proposition~\ref{pro.multidim.BPRF.multi-dim}]
By Proposition~\ref{pro.multidim.BPRF.reformulation}, 
  \begin{align} \notag 
    &\qquad \E\croch{ \paren{ \alpha_i^{(\prof+1)}  - \beta_i^{(\prof+1)} } \paren{ \alpha_j^{(\prof+1)}  - \beta_j^{(\prof+1)} }  \sachant \paren{\alpha_{\ell}^{(\prof)},\beta_{\ell}^{(\prof)}}_{1 \leq \ell \leq \dimX}  } 
    \\ \notag 
    &= \paren{1 - \frac{2}{\dimX} } \paren{ \alpha_i^{(\prof)}  - \beta_i^{(\prof)} } \paren{ \alpha_j^{(\prof)}  - \beta_j^{(\prof)} }
    \\ \notag 
    &\hspace{-1cm}+ \frac{1}{\dimX} \paren{ \alpha_j^{(\prof)}  - \beta_j^{(\prof)} } \frac{1}{ \alpha_i^{(\prof)}  + \beta_i^{(\prof)} } \croch{ \frac{ \paren{\alpha_i^{(\prof)}}^2 - \paren{\beta_i^{(\prof)}}^2 }{2} + \beta_i^{(\prof)} \alpha_i^{(\prof)} - \beta_i^{(\prof)} \alpha_i^{(\prof)} }
    \\ \notag 
    &\hspace{-1cm}+ \frac{1}{\dimX} \paren{ \alpha_i^{(\prof)}  - \beta_i^{(\prof)} } \frac{1}{ \alpha_j^{(\prof)}  + \beta_j^{(\prof)} } \croch{ \frac{ \paren{\alpha_j^{(\prof)}}^2 - \paren{\beta_j^{(\prof)}}^2 }{2} + \beta_j^{(\prof)} \alpha_j^{(\prof)} - \beta_j^{(\prof)} \alpha_j^{(\prof)} }
    \\ \notag 
    &= \paren{1 - \frac{1}{\dimX} } \paren{ \alpha_i^{(\prof)}  - \beta_i^{(\prof)} } \paren{ \alpha_j^{(\prof)}  - \beta_j^{(\prof)} } 
  \end{align} 
  so that 
  \[ 
  \E\croch{  \paren{ \alpha_i^{(\prof+1)}  - \beta_i^{(\prof+1)} } \paren{ \alpha_j^{(\prof+1)}  - \beta_j^{(\prof+1)} } } = 
  \paren{1 - \frac{1}{\dimX} } \E\croch{ \paren{ \alpha_i^{(\prof)}  - \beta_i^{(\prof)} } \paren{ \alpha_j^{(\prof)}  - \beta_j^{(\prof)} } }
  \]
  which implies Eq.~\eqref{eq.BPRF.mom-multi-dim.prod-diff-1} since 
  $ \paren{ \alpha_i^{(0)}  - \beta_i^{(0)} } \paren{ \alpha_j^{(0)}  - \beta_j^{(0)} }  = (2 x_i - 1) (2 x_j - 1)\,$.

\end{proof}

\subsection{Proof of Corollary~\ref{cor.bias.BPRF.H2}} \label{sec.pr.multidim.BPRF.cor-H2}
The proof directly follows from the combination of Propositions~\ref{pro.bias.multidim.H3}, \ref{pro.multidim.BPRF.one-dim} and \ref{pro.multidim.BPRF.multi-dim}. 

First, using Propositions~\ref{pro.multidim.BPRF.one-dim} and \ref{pro.multidim.BPRF.multi-dim}, we compute the key quantities appearing in the result of Proposition~\ref{pro.bias.multidim.H3} with $\cU = \cUurt{\prof}$, under assumptions \eqref{hyp.s-2-fois-derivable.alt} and \eqref{hyp.unif}. 
\begin{align*}
  \mathcal{M}_{1,\cU,x} &= \frac{1}{2} \paren{1 - \frac{1}{2 \dimX}}^{\prof} \nabla s(x) \cdot (1-2x)
  \\
  &\qquad \absj{ \mathcal{N}_{2,\cU,x} - \frac{1}{2} \paren{1 - \frac{1}{2 \dimX}}^{\prof} \sum_{i=1}^{\dimX} \croch{ \paren{ \frac{\partial s}{\partial x_i} (x) }^2 x_i (1-x_i) } }
  \\
  &\leq \frac{1}{4} \paren{1 - \frac{2}{3 \dimX}}^{\prof} \sum_{i=1}^{\dimX} \absj{ \paren{ \frac{\partial s}{\partial x_i} (x) }^2 \paren{ x_i^2 + (1-x_i)^2 - 4 x_i (1-x_i) }  } 
  \\
  &\qquad + \frac{1}{4} \paren{1 - \frac{1}{\dimX}}^{\prof} \sum_{1 \leq i \neq j \leq \dimX} \absj{ \frac{\partial^2 s}{\partial x_i \partial x_j} (x) (2 x_i - 1) (2 x_j - 1) }
  \\
  &\leq \frac{\dimX}{4} \paren{1 - \frac{2}{3 \dimX}}^{\prof} \max_i \paren{ \frac{\partial s}{\partial x_i} (x) }^2  
  + \frac{\dimX^2 - \dimX}{4} \paren{1 - \frac{1}{\dimX}}^{\prof} \max_{1 \leq i \neq j \leq \dimX} \absj{ \frac{\partial^2 s}{\partial x_i \partial x_j} (x) }
  \\
  \mathcal{R}_{2,\cU,x} 
  &= \CHdeuxa \paren{1 - \frac{1}{2 \dimX}}^{\prof}  \sum_{i=1}^{\dimX} \croch{ x_i (1-x_i) }
  + \frac{\CHdeuxa}{3} \paren{1 - \frac{2}{3 \dimX}}^{\prof} \sum_{i=1}^{\dimX} \croch{ x_i^2 + (1-x_i)^2 - 4 x_i (1-x_i) } 
  \\
  &\leq \frac{\dimX \CHdeuxa}{4} \paren{1 - \frac{1}{2 \dimX}}^{\prof}  + \frac{\dimX \CHdeuxa}{3} \paren{1 - \frac{2}{3 \dimX}}^{\prof} 
  % \\
  % &\leq \frac{7 \dimX \CHdeuxa}{12} \paren{1 - \frac{1}{2 \dimX}}^{\prof}
  \\
  &\leq \dimX \CHdeuxa \paren{1 - \frac{1}{2 \dimX}}^{\prof}
  \\
  \mathcal{R}_{4,\cU,x} &\leq \frac{16 \dimX \CHdeuxa^2}{9} \paren{1 - \frac{2}{3 \dimX}}^{\prof} 
  \enspace .
\end{align*}
So, Eq.~\eqref{eq.pro.bias.multidim.Bias-H2} yields, for every $x \in [0,1)$, 
\begin{align*}
  \Biasinfty{\cU}(x) &\leq \paren{ \mathcal{M}_{1,\cU,x} + \mathcal{R}_{2,\cU,x} }^2 \leq 2 \mathcal{M}_{1,\cU,x}^2 + 2 \mathcal{R}_{2,\cU,x}^2 
  \\
  &\leq \paren{1 - \frac{1}{2 \dimX}}^{2 \prof} \croch{ \frac{1}{2}  \paren{ \nabla s(x) \cdot (1-2x) }^2  + 2 \dimX^2 \CHdeuxa^2 }
  \\
  &\leq \paren{1 - \frac{1}{2 \dimX}}^{2 \prof} \croch{ \frac{\dimX}{2}  \sup_{x \in [0,1)^{\dimX}} \norm{ \nabla s(x) }_2^2   + 2 \dimX^2 \CHdeuxa^2 }
\end{align*}
which proves Eq.~\eqref{eq.bias.BPRF.H2.Biasinfty}. 
Then, integrating Eq.~\eqref{eq.bias.BPRF.H2.Biasinfty} over $x \in [0,1)^{\dimX}$ yields Eq.~\eqref{eq.bias.BPRF.H2.Biasinfty.integrated}.

Second, Eq.~\eqref{eq.pro.bias.multidim.Var-H2} yields, for every $x \in [0,1)$, 
\begin{align*}
  & \qquad \absj{ \Vararbre{\cU}(x) - \frac{1}{2} \paren{1 - \frac{1}{2 \dimX}}^{\prof} \sum_{i=1}^{\dimX} \croch{ \paren{ \frac{\partial s}{\partial x_i} (x) }^2 x_i (1-x_i) } } 
  \\
  &\leq 
  \frac{1}{4} \paren{1 - \frac{1}{2 \dimX}}^{2\prof} \paren{ \nabla s(x) \cdot (1-2x) }^2 
  + \frac{\dimX}{4} \paren{1 - \frac{2}{3 \dimX}}^{\prof} \max_i \paren{ \frac{\partial s}{\partial x_i} (x) }^2  
  \\ &\qquad 
  + \frac{\dimX^2 - \dimX}{4} \paren{1 - \frac{1}{\dimX}}^{\prof} \max_{1 \leq i \neq j \leq \dimX} \absj{ \frac{\partial^2 s}{\partial x_i \partial x_j} (x) }
  + 2 \sqrt{ \frac{16 \dimX \CHdeuxa^2}{9} \paren{1 - \frac{2}{3 \dimX}}^{\prof} \mathcal{N}_{2,\cU,x} }
  \\ &\qquad 
  + \frac{16 \dimX \CHdeuxa^2}{9} \paren{1 - \frac{2}{3 \dimX}}^{\prof} 
  \\
  &\leq 
  \frac{\dimX}{4} \sup_{x \in [0,1)^{\dimX}} \norm{ \nabla s(x) }_2^2 \paren{1 - \frac{1}{2 \dimX}}^{2\prof}  
  \\
  &\qquad + \left[ 
    \frac{\dimX}{4} \max_i \paren{ \frac{\partial s}{\partial x_i} (x) }^2  
    + \frac{\dimX^2}{4} \max_{1 \leq i \neq j \leq \dimX} \absj{ \frac{\partial^2 s}{\partial x_i \partial x_j} (x) }
  \right. \\  & \qquad \left.
    + \frac{16 \dimX \CHdeuxa^2}{9} 
    + \frac{8 \dimX \CHdeuxa }{3 \sqrt{2}} \croch{   \max_i \absj{ \frac{\partial s}{\partial x_i} (x) } + \sqrt{ \dimX \max_{1 \leq i \neq j \leq \dimX} \absj{ \frac{\partial^2 s}{\partial x_i \partial x_j} (x) }  } }
  \right] \\
  &\qquad \times \sqrt{ \paren{1 - \frac{1}{2 \dimX}}^{\prof} \paren{1 - \frac{2}{3 \dimX}}^{\prof} }
  \\
  &\leq 
  \frac{\dimX}{4} \sup_{x \in [0,1)^{\dimX}} \norm{ \nabla s(x) }_2^2 \paren{1 - \frac{1}{2 \dimX}}^{2\prof}  
  \\
  &\qquad + \croch{  
    \dimX \max_i \paren{ \frac{\partial s}{\partial x_i} (x) }^2  
    + \dimX^2 \max_{1 \leq i \neq j \leq \dimX} \absj{ \frac{\partial^2 s}{\partial x_i \partial x_j} (x) }
    %%% en utilisant ci-dessous que \paren{ \frac{16}{9} + \frac{32}{27} + \frac{32}{27} } \leq 5
    + 5 \dimX \CHdeuxa^2 } \\
  &\qquad \times \sqrt{ \paren{1 - \frac{1}{2 \dimX}}^{\prof} \paren{1 - \frac{2}{3 \dimX}}^{\prof} }
\end{align*}
since 
\begin{align*}
  \mathcal{N}_{2,\cU,x} 
  &\leq \frac{1}{2} \paren{1 - \frac{1}{2 \dimX}}^{\prof} \sum_{i=1}^{\dimX} \croch{ \paren{ \frac{\partial s}{\partial x_i} (x) }^2 x_i (1-x_i) } 
  \\
  &\qquad + \frac{\dimX}{4} \paren{1 - \frac{2}{3 \dimX}}^{\prof} \max_i \paren{ \frac{\partial s}{\partial x_i} (x) }^2  
  + \frac{\dimX^2 - \dimX}{4} \paren{1 - \frac{1}{\dimX}}^{\prof} \max_{1 \leq i \neq j \leq \dimX} \absj{ \frac{\partial^2 s}{\partial x_i \partial x_j} (x) }
  \\
  &\leq \frac{\dimX}{2}  \paren{1 - \frac{1}{2 \dimX}}^{\prof} \croch{ \max_i \paren{ \frac{\partial s}{\partial x_i} (x) }^2   
    + \dimX \max_{1 \leq i \neq j \leq \dimX} \absj{ \frac{\partial^2 s}{\partial x_i \partial x_j} (x) } }
  \enspace , 
\end{align*}
which proves Eq.~\eqref{eq.bias.BPRF.H2.Vararbre}. 
Then, integrating Eq.~\eqref{eq.bias.BPRF.H2.Vararbre} over $x \in [0,1)^{\dimX}$ yields Eq.~\eqref{eq.bias.BPRF.H2.Vararbre.integrated}.
\qed

\subsection{Proof of Corollary~\ref{cor.bias.BPRF.H3}} \label{sec.pr.multidim.BPRF.cor-H3}
Using Propositions~\ref{pro.multidim.BPRF.one-dim} and \ref{pro.multidim.BPRF.multi-dim}, we compute the quantities appearing in the result of Proposition~\ref{pro.bias.multidim.H3} with $\cU = \cUurt{\prof}$, under assumptions \eqref{hyp.s-3-fois-derivable.alt} and \eqref{hyp.unif}: 
\begin{align*}
  &\qquad \absj{ \mathcal{M}_{2,\cU,x} - \frac{1}{2} \paren{1 - \frac{1}{2 \dimX}}^{\prof} \sum_{i=1}^{\dimX} \croch{ \frac{\partial^2 s}{\partial x_i^2} (x) x_i (1-x_i) } }
  \\
  &\leq \frac{1}{6} \paren{1 - \frac{2}{3 \dimX}}^{\prof} \sum_{i=1}^{\dimX} \absj{ \frac{\partial^2 s}{\partial x_i^2} (x) \paren{ x_i^2 + (1-x_i)^2 - 4 x_i (1-x_i) }  } 
  \\
  &\qquad + \frac{1}{8} \paren{1 - \frac{1}{\dimX}}^{\prof} \sum_{1 \leq i \neq j \leq \dimX} \absj{ \frac{\partial^2 s}{\partial x_i \partial x_j} (x) (2 x_i - 1) (2 x_j - 1) }
  \\
  &\leq \frac{\dimX}{6} \paren{1 - \frac{2}{3 \dimX}}^{\prof} \max_i \absj{\frac{\partial^2 s}{\partial x_i^2} (x)} 
  + 
  \frac{\dimX^2 - \dimX}{8} \paren{1 - \frac{1}{\dimX}}^{\prof} \max_{i \neq j} \absj{\frac{\partial^2 s}{\partial x_i \partial x_j} (x) } 
  \\
  &\leq \frac{\dimX^2}{6} \paren{1 - \frac{2}{3 \dimX}}^{\prof} \max_{i,j} \absj{\frac{\partial^2 s}{\partial x_i \partial x_j} (x)} 
  \\
  \mathcal{R}_{3,\cU,x} &\leq  \CHtroisa \dimX  \sqrt{2} \paren{1 - \frac{2}{3 \dimX}}^{3 \prof / 4}
  \enspace .
\end{align*}
Then, Eq.~\eqref{eq.pro.bias.multidim.Bias-H3} yields
\begin{align*}
  \Biasinfty{\cU}(x) 
  &\leq 
  \frac{1}{4} \paren{1 - \frac{1}{2 \dimX}}^{2 \prof} \paren{  \nabla s(x) \cdot (1-2x) + \sum_{i=1}^{\dimX} \croch{ \frac{\partial^2 s}{\partial x_i^2} (x) x_i (1-x_i) } }^2 \\
  &\qquad + 
  \paren{1 - \frac{1}{2 \dimX}}^{\prof} \paren{1 - \frac{2}{3 \dimX}}^{\prof}   \frac{\dimX^2}{6}
  \\ &\qquad \qquad 
  \times \paren{  \norm{\nabla s(x)}_2 \sqrt{\dimX} + \frac{\dimX}{4} \max_i \absj{ \frac{\partial^2 s}{\partial x_i^2} (x) } }  \max_{i,j} \absj{\frac{\partial^2 s}{\partial x_i \partial x_j} (x)} 
  \\
  &\qquad + \frac{\dimX^4}{36}   \paren{1 - \frac{2}{3 \dimX}}^{2 \prof} \max_{i,j} \paren{\frac{\partial^2 s}{\partial x_i \partial x_j} (x)}^2  
  \\
  &\qquad + \CHtroisa \dimX  \sqrt{2} \paren{1 - \frac{2}{3 \dimX}}^{3 \prof / 4}  \paren{1 - \frac{1}{2 \dimX}}^{\prof}
  \paren{  \norm{\nabla s(x)}_2 \sqrt{\dimX} + \frac{\dimX}{4} \max_i \absj{ \frac{\partial^2 s}{\partial x_i^2} (x)  } }
  \\
  &\qquad + 2 \CHtroisa^2 \dimX^2   \paren{1 - \frac{2}{3 \dimX}}^{3 \prof / 2} 
  \\
  &\leq 
  \frac{1}{4} \paren{1 - \frac{1}{2 \dimX}}^{2 \prof} \paren{  \nabla s(x) \cdot (1-2x) + \sum_{i=1}^{\dimX} \croch{ \frac{\partial^2 s}{\partial x_i^2} (x) x_i (1-x_i) } }^2 \\
  &\qquad + 
  6 \dimX^4 \paren{1 - \frac{2}{3 \dimX}}^{3 \prof / 4}  \paren{1 - \frac{1}{2 \dimX}}^{\prof}  \paren{  \norm{\nabla s(x)}_2^2  +   \max_{i,j} \paren{\frac{\partial^2 s}{\partial x_i \partial x_j} (x)}^2  
    +\CHtroisa^2  }
\end{align*}
and a similar proof gives the corresponding lower bound on $\Biasinfty{\cU}(x)$, which proves Eq.~\eqref{eq.bias.BPRF.H3.Biasinfty}. 
Then, integrating it over $x \in [0,1)^{\dimX}$ proves Eq.~\eqref{eq.bias.BPRF.H3.Biasinfty.integrated}. 
\qed

\subsection{Proof of Lemma~\ref{le.BPRF}} \label{sec.pr.le.BPRF}
\subsubsection*{Proof of Eq.~\eqref{eq.le.BPRF.somme-diametres}}
Let $(\bU_{\prof})_{\prof \in \N}$ be a random sequence of partitions of $\X$ as in Section~\ref{sec.BPRF.description}. 
We prove Eq.~\eqref{eq.le.BPRF.somme-diametres}  by induction on $\prof$. 
It clearly holds for $\prof=0$. 
Then, assuming Eq.~\eqref{eq.le.BPRF.somme-diametres} holds true for some $\prof \in \N$, 
  \begin{align*}
    \E \croch{ \sum_{\lambda \in \bU_{\prof+1}} \paren{ \diam_{L^2}(\lambda) }^2 \sachant \bU_{\prof}}
    &= 
    \sum_{\lambda \in \bU_{\prof}} \E \croch{ \sum_{\mu \in \bU_{\prof+1} \, , \, \mu \subset \lambda } \paren{ \diam_{L^2}(\mu) }^2 \sachant \bU_{\prof}}
    \\
    &= \sum_{\lambda \in \bU_{\prof}} \E \croch{ \paren{ \diam_{L^2}(\lambda^-) }^2  + \paren{ \diam_{L^2}(\lambda^+) }^2 \sachant \lambda }
  \end{align*}
  where for any $\lambda \in \bU_{\prof}$, we denote by $\lambda^-$ and $\lambda^+$ the two elements of $\bU_{\prof+1}$ contained in $\lambda$, and we used that at step $\prof$, the way each $\lambda \in \bU_{\prof}$ is split only depends on $\lambda$. 
  Now, for any $\lambda = \lambda_1 \times \cdots \times \lambda_{\dimX} \in \bU_{\prof}$, $(\lambda^- , \lambda^+)$ are obtained by choosing a random direction $J \sim \mathcal{U}(\set{1, \ldots, \dimX}$ and by splitting $\lambda_J$ into $(\lambda_J^- , \lambda_J^+)$, while the $(\lambda_\prof)_{\prof \neq J}$ are kept unchanged. 
  Since 
  \begin{equation} \label{eq.pr.le.BPRF.somme-diametres} \paren{ \diam_{L^2}(\lambda) }^2 = \sum_{j=1}^{\dimX} \paren{ \diam_{L^2}(\lambda_j) }^2 \enspace , \end{equation}
  changing $\paren{ \diam_{L^2}(\lambda) }^2$ into $\paren{
    \diam_{L^2}(\lambda^-) }^2 + \paren{ \diam_{L^2}(\lambda^+) }^2$
  amounts to multiply $(\dimX-1)$ terms of the sum in
  Eq.~\eqref{eq.pr.le.BPRF.somme-diametres} by 2, while the last one
  is multiplied by $U^2 + (1-U)^2$ for some uniform random variable
  $U$.
  Since $\E\scroch{ U^2 + (1-U)^2 } = 2/3$, we get 
  \begin{align*}
    \E \croch{ \paren{ \diam_{L^2}(\lambda^-) }^2  + \paren{ \diam_{L^2}(\lambda^+) }^2 \sachant \lambda } 
    &= 
    \paren{ 2 \paren{ 1 - \frac{1}{\dimX}} + \frac{1}{\dimX} \frac{2}{3} } \paren{ \diam_{L^2}(\lambda) }^2
    \\
    &= 2 \paren{ 1 - \frac{2}{3 \dimX}} \paren{ \diam_{L^2}(\lambda) }^2
  \end{align*}
  hence 
  \[ 
  \E \croch{ \sum_{\lambda \in \bU_{\prof+1}} \paren{ \diam_{L^2}(\lambda) }^2 } 
  = 2 \paren{ 1 - \frac{2}{3 \dimX}} \E\croch{ \sum_{\lambda \in \bU_{\prof}} \paren{ \diam_{L^2}(\lambda) }^2 }
  \]
  and Eq.~\eqref{eq.le.BPRF.somme-diametres} holds for $\prof+1$, which ends the proof. 
  \qed 

\subsubsection*{Proof of Eq.~\eqref{eq.BPRF.somme-exp-pl.1}--\eqref{eq.BPRF.somme-exp-pl.2}} 
Let $\bU = \set{\lambda_{1,1}, \ldots, \lambda_{2^\prof,\prof}} \sim
  \cUurt{\prof}$ be as in the definition of the BPRF model.  Then, for
  every $j \in \set{1, \ldots, 2^\prof}$, the volume of
  $\lambda_{j,p}$ can be written as the product $Z_1 \times \cdots
  \times Z_{\prof}$, where for every $i \in \set{1, \ldots, {\prof}}$,
  $Z_i \in \set{ Z_{1,\prof} , 1-Z_{1,\prof}, \ldots, Z_{2^{\prof},\prof}, 1 - Z_{2^{\prof},\prof} }$. 
Thus, $Z_1, \ldots, Z_{\prof}$ are independent with uniform distribution on $[0,1]$ and 
\[ 
| \lambda_{j,{\prof}} | \egalloi Z_1 \times \cdots \times Z_{\prof} 
\enspace .\]
Now, let us write $V_{\prof} = Z_1 \times \cdots \times Z_{\prof}  $. 
For every $\alpha \geq 0$, since $V_{\prof} \geq 0$ a.s., 
\begin{align}
\notag 
\E\croch{ \exp(-\nobs V_{\prof})} 
&= \E\croch{ \exp(-\nobs V_{\prof}) \un_{V_{\prof} \leq \alpha} + \exp(-\nobs V_{\prof}) \un_{V_{\prof} > \alpha}}
\\
\notag 
&\leq \P(V_{\prof} \leq \alpha) + e^{-\nobs \alpha} \P(V_{\prof}>\alpha)  
\\
\label{le.BPRF.pour-minor-err-estim.eq1}
&= \P(V_{\prof} \leq \alpha) \croch{ 1 - e^{-\nobs \alpha}} + e^{-n \alpha}
\enspace . 
\end{align}
In particular, for any $\alpha,\beta >0$ such that $\P(V_{\prof} \leq \alpha) \leq \beta$, 
\begin{equation}
\label{le.BPRF.pour-minor-err-estim.eq2}
\E\croch{ \exp(-\nobs V_{\prof})}  
\leq \beta \croch{ 1 - e^{-\nobs \alpha}} + e^{-\nobs \alpha} 
= \beta + \paren{ 1 - \beta} e^{-\nobs \alpha} 
%\leq \beta + e^{-\nobs \alpha} 
\enspace . 
\end{equation}
What remains is to upper bound $\P(V_{\prof} \leq \alpha)$. 

Remark that $-\log(V_{\prof}) = -\log(Z_1) - \cdots - \log(Z_{\prof})$ is the sum of $\prof$ independent random variables with an exponential distribution of parameter~1. 
In particular, $-\log(Z_i)$ has an expectation~1 and a variance~1, so that 
\[ 
\E\croch{ -\log(V_{\prof}) } = \prof
\quad \mbox{and} \quad 
\var\paren{ -\log(V_{\prof}) } = \prof 
\enspace . 
\]
Then, by Bienaym\'e-Chebyshev's inequality, for every $t>0$, 
\[ 
\P\paren{ -\log(V_{\prof}) \geq \prof + t } 
\leq \P \paren{ \absj{ -\log(V_{\prof}) - \prof} \geq t} 
\leq \frac{\prof}{t^2}
\enspace , 
\]
hence for every $u >0$, 
\begin{equation}
\label{le.BPRF.pour-minor-err-estim.eq3}
\P\paren{ V_{\prof} \leq \exp \paren{ -\prof - \sqrt{u \prof}} } \leq \frac{1}{u}
\enspace . 
\end{equation}
Combining Eq. \eqref{le.BPRF.pour-minor-err-estim.eq3} and~\eqref{le.BPRF.pour-minor-err-estim.eq2}, we get, for every $u>0$, 
\begin{align}
\notag 
\E\croch{ \exp(-\nobs V_{\prof})}  
&\leq \frac{1}{u} + \paren{ 1 - \frac{1}{u}} \exp\paren{-\nobs \exp \paren{ -\prof - \sqrt{u \prof}}} 
%%\\
%%&= \frac{1}{u} + \paren{ 1 - \frac{1}{u}} \exp\paren{-\nobs \nfeu^{-\paren{ 1+\sqrt{u/\prof}}/\log(2)} } 
%%\notag %\label{le.BPRF.pour-minor-err-estim.eq4}
\enspace . 
\end{align}
Eq.~\eqref{eq.BPRF.somme-exp-pl.1} follows since 
\begin{align*}
\E_{\bU \sim \bU^\prof} \croch{ \sum_{\lambda \in \bU} \exp\paren{- \nobs |\lambda|} }
&= 2^\prof \E \croch{ \exp(-\nobs V_{\prof}) }
\end{align*}
and Eq.~\eqref{eq.BPRF.somme-exp-pl.2} follows from Eq.~\eqref{eq.BPRF.somme-exp-pl.1} by taking $u=5$. 
\qed

\section{Technical lemmas} \label{sec.pr.technical}

\begin{lemma}\label{le.opt-risk-bis}
  Let $a,b>0$, $c \geq 0$, $\varepsilon \in ]0,1/2[$ and $n\in\N^*$ such that $n \geq
  \max \set{ \frac{b}{a\varepsilon^5} , \paren{2^5 a/b}^{1/4} }$.
  Then, 
  \[ \inf_{1/\varepsilon \leq k \leq n, k\in \N} \set{ \frac{a}{k^4} + \frac{bk}{n} +
    c \frac{k}{n} e^{-n/k} }
  \leq 3 a^{1/5} \paren{ \frac{b}{n} }^{4/5} \paren{ 1 +
    \frac{4 c a^{1/5}}{3 n^{4/5} b^{6/5}} }
\]
\end{lemma}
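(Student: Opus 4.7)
The plan is to choose a single near-optimal integer $k^\star$ and show it already realizes the claimed upper bound on the infimum. Balancing the first two terms $a/k^4$ and $bk/n$ over real $k>0$ gives the optimal value $k_0 \egaldef (an/b)^{1/5}$, so I will set $k^\star \egaldef \lceil k_0 \rceil$ and verify that this integer lies in the admissible range $[1/\varepsilon, n]$. The hypothesis $n \geq b/(a \varepsilon^5)$ rewrites as $k_0 \geq 1/\varepsilon$, hence $k^\star \geq 1/\varepsilon$; the hypothesis $n \geq (2^5 a/b)^{1/4}$ rewrites as $k_0 \leq n/2$; combining these two hypotheses yields $n^5 \geq 32/\varepsilon^5 \geq 1024$ (using $\varepsilon \leq 1/2$), so $n \geq 4$ and $k^\star \leq k_0 + 1 \leq n/2 + 1 \leq n$.

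For the two polynomial terms, I would use the sandwich $k_0 \leq k^\star \leq k_0 + 1$. Monotonicity of $k \mapsto a/k^4$ gives $a/k^{\star 4} \leq a/k_0^4 = a^{1/5} (b/n)^{4/5}$, and $b k^\star/n \leq bk_0/n + b/n = a^{1/5}(b/n)^{4/5} + b/n$. Since $n \geq b/(a\varepsilon^5) \geq b/a$, one has $b/n \leq a$, equivalently $b/n \leq a^{1/5}(b/n)^{4/5}$, so the sum of the first two terms is at most $3 a^{1/5}(b/n)^{4/5}$, matching the leading term of the target bound.

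For the exponential term, I would set $y \egaldef n/k_0 = (bn^4/a)^{1/5}$, and observe that $1/y^2 = a^{2/5}/(n^{8/5} b^{2/5})$ is, up to the factor $4c$, precisely the residual term in the target. Since $k_0 \geq 1/\varepsilon \geq 2$, one has $k^\star \leq k_0 + 1 \leq \tfrac{3}{2} k_0$; the map $k \mapsto (k/n) e^{-n/k}$ is strictly increasing on $(0,\infty)$ (its derivative equals $e^{-n/k}(1/n + 1/k)$), so
\[
c \frac{k^\star}{n} e^{-n/k^\star} \leq \frac{3c}{2} \cdot \frac{k_0}{n} \cdot e^{-2n/(3k_0)} = \frac{3c}{2y} e^{-2y/3}.
\]
Bounding this by $4c/y^2$ reduces to $(3/2) y e^{-2y/3} \leq 4$, which follows from $\sup_{y \geq 0} y e^{-2y/3} = 3/(2e) < 8/3$. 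The main obstacle is precisely this third term: integer rounding inflates $k^\star$ relative to $k_0$ and hence shrinks $n/k^\star$ by a constant factor inside the exponential; I would absorb this loss by exploiting $k_0 \geq 2$ together with the explicit maximum of $y \mapsto y e^{-2y/3}$, rather than relying on the sharper but less convenient $xe^{-x} \leq 1/e$.
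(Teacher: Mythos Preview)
Your proof is correct and follows essentially the same strategy as the paper: pick an integer close to the real balancing point $x^* = (an/b)^{1/5}$ and bound the three terms separately. The paper chooses any integer $k^* \in [x^*, 2x^*]$ rather than the ceiling, which lets it handle the linear term as $bk^*/n \le 2bx^*/n$ without your extra step $b/n \le a^{1/5}(b/n)^{4/5}$, and it dispatches the exponential term with the single inequality $e^{-x}/x \le 1/x^2$ for $x\ge 1$ (applied to $x=n/k^*$) in place of your monotonicity-plus-maximum argument; otherwise the two arguments are the same.
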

\begin{proof}[Proof of Lemma~\ref{le.opt-risk-bis}]
Let $x^* = (an/b)^{1/5}$. 
We have $2 \leq 1/\varepsilon \leq x^* \leq n/2 $ and let $k^* \in [x^*, 2 x^*]$ some integer. 
Then, since $e^{-x}/x \leq 1/x^{2}$ for all $x\geq 1$, 
  \begin{align*}
    \inf_{1/\varepsilon \leq k \leq n, k\in \N} \set{ \frac{a}{k^4} + \frac{bk}{n} +
      c \frac{k}{n} e^{-n/k} } 
    & \leq \frac{a}{{k^*}^4} + \frac{bk^*}{n} +
    c \paren{\frac{k^*}{n}}^2 \\
    & \leq \frac{a}{{x^*}^4} + \frac{b 2 x^*}{n} +
    4 c \paren{\frac{x^*}{n}}^2 \\
    & \leq 3 a^{1/5} \paren{ \frac{b}{n} }^{4/5} + 4 c \paren{\frac{a}{b}}^{2/5} \frac{1}{n^{8/5}}
    \\
    & \leq 3 a^{1/5} \paren{ \frac{b}{n} }^{4/5} \croch{ 1 + \frac{4 c a^{1/5}}{3 b^{6/5} n^{4/5}} }
    \enspace . 
\end{align*}
\end{proof}

\begin{lemma} \label{le.opt-risk}
  Let $a,b, \alpha >0$ and $n\in\N^*$ such that $n \geq \max \set{ \frac{b}{a\alpha}, \paren{
      \frac{a\alpha 2^{\alpha+1}}{b} }^{1/\alpha}} $. 
  Then, 
  \[ \inf_{x \in (0,+\infty)} \set{ a x^{-\alpha} + \frac{bx}{n}} = L_1(\alpha)
  a^{1/(\alpha + 1)} \paren{ \frac{b}{n} }^{\alpha/(\alpha+1)} \]
  where $L_1(\alpha) = \paren{\alpha^{-\alpha/(\alpha+1)} +
    \alpha^{1/(\alpha+1)}}$ ,
  and
  \begin{eqnarray*}
    L_1(\alpha) a^{1/(\alpha + 1)} \paren{ \frac{b}{n} }^{\alpha/(\alpha+1)}
    & \leq & \inf_{1 \leq k \leq n \, , \, k \in \N} \set{ a
      k^{-\alpha} + \frac{bk}{n} } \\
    & \leq & \inf_{1 \leq k=2^{\ell} \leq n \, , \, \ell \in \N} \set{ a k^{-\alpha} + \frac{bk}{n}}
    \leq 2 L_1(\alpha) a^{1/(\alpha + 1)} \paren{ \frac{b}{n} }^{\alpha/(\alpha+1)}
    \enspace .
\end{eqnarray*}
\end{lemma}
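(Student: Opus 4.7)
The plan is to compute the continuous infimum explicitly by calculus and then transfer the bound to the integer (and dyadic-integer) case via a careful choice of $k = 2^{\ell}$, using the hypotheses on $n$ to guarantee that this $k$ lies in the required range $[1,n]$.

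First I would treat the continuous infimum. Set $f(x) \egaldef a x^{-\alpha} + bx/n$. Since $f$ is convex on $(0,+\infty)$ with $f(x) \to +\infty$ at both endpoints, it attains its minimum at the unique point $x^{*}$ satisfying $f'(x^{*})=0$, i.e. $-\alpha a (x^{*})^{-\alpha-1} + b/n = 0$, which gives
\[ x^{*} = \paren{ \frac{\alpha a n}{b}}^{1/(\alpha+1)} \qquad \mbox{and} \qquad a (x^{*})^{-\alpha} = \frac{b x^{*}}{\alpha n} \, . \]
Substituting yields $f(x^{*}) = \frac{b x^{*}}{n}\paren{ \frac{1}{\alpha} + 1}$, and inserting the value of $x^{*}$ gives exactly $L_{1}(\alpha) \, a^{1/(\alpha+1)} (b/n)^{\alpha/(\alpha+1)}$, proving the first equality.

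Next, the lower bound for the integer infimum is immediate, since $\inf_{1 \leq k \leq n, k \in \N}$ is taken over a subset of $(0,+\infty)$, hence it is $\geq f(x^{*})$. For the upper bound, I would use the two conditions on $n$ to localize $x^{*}$. The hypothesis $n \geq b/(a \alpha)$ rewrites as $\alpha a n/b \geq 1$, hence $x^{*} \geq 1$. The hypothesis $n \geq (a \alpha 2^{\alpha+1}/b)^{1/\alpha}$ rewrites as $n^{\alpha+1} \geq 2^{\alpha+1} \alpha a n / b$, hence $x^{*} \leq n/2$. Now choose $\ell \egaldef \lceil \log_{2} x^{*} \rceil \in \N$, so that $k \egaldef 2^{\ell}$ satisfies $x^{*} \leq k < 2 x^{*} \leq n$; in particular $1 \leq k \leq n$ and $k$ is admissible in the dyadic infimum.

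For this $k$, since $a x^{-\alpha}$ is decreasing and $bx/n$ is increasing,
\[ a k^{-\alpha} + \frac{bk}{n} \leq a (x^{*})^{-\alpha} + \frac{2 b x^{*}}{n} = \frac{b x^{*}}{n}\paren{ \frac{1}{\alpha} + 2} \, , \]
and comparing with $f(x^{*}) = (bx^{*}/n)(1/\alpha + 1)$ gives
\[ a k^{-\alpha} + \frac{bk}{n} \leq \frac{1/\alpha + 2}{1/\alpha + 1} f(x^{*}) = \frac{1 + 2\alpha}{1+\alpha} f(x^{*}) \leq 2 f(x^{*}) \, , \]
which yields the desired dyadic upper bound. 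The only nontrivial point is checking $x^{*} \leq n/2$ (so that $k \leq n$), which is precisely where the second assumption on $n$ is used; the rest is routine calculus.
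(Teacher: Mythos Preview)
Your proof is correct and follows essentially the same approach as the paper: compute the continuous minimizer $x^{*}$ by calculus, use the two hypotheses on $n$ to ensure $1 \leq x^{*} \leq n/2$, pick a dyadic integer $k$ in $[x^{*},2x^{*}]$, and bound $f(k)$ by $2f(x^{*})$. Your version is slightly more explicit in that you extract the intermediate constant $(1+2\alpha)/(1+\alpha)$ before bounding it by $2$, whereas the paper simply writes $\sup_{\lambda\in[1,2]} f(\lambda x^{*}) \leq 2 f(x^{*})$; but this is the same argument.
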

\begin{proof}[Proof of Lemma~\ref{le.opt-risk}]
Let $f:(0,+\infty) \to \R$ be defined by $f(x) = a x^{-\alpha} +
  \frac{bx}{n}$ for every $x>0$. 
The function $f$ is convex, differentiable on $(0,+\infty)$ and
$f'(x) = - a \alpha x^{-(\alpha +1)} + \frac{b}{n}$ for every $x>0$. 
%% and $f'':x \mapsto a \alpha (\alpha + 1) x^{-(\alpha+2)}$. 
So, the infimum of
  $f$ on $(0,+\infty)$ is reached for $x^* = \paren{ \frac{a \alpha
      n}{b} }^{1/(\alpha + 1)}$ and 
the value of $f(x^*)$ follows from straightforward computations. 
The condition on $n$ ensures that $1 \leq x^* \leq n/2$, so, 
\[ 
\inf_{x \in (0,+\infty)} \set{ a x^{-\alpha} + \frac{bx}{n}}
\leq \inf_{1 \leq k \leq n \, , \, k \in \N} \set{ a k^{-\alpha} +
    \frac{bk}{n}}
  \leq \inf_{1 \leq k=2^{\ell} \leq n \, , \, \ell \in \N} \set{ a k^{-\alpha} + \frac{bk}{n}} 
  \enspace . \]
Finally,
  \[ \inf_{1 \leq k=2^{\ell} \leq n \, , \, \ell \in \N} \set{ a k^{-\alpha} + \frac{bk}{n}}
  \leq f \paren{ 2^{\ell^*} } \]
  where $\ell^* \in \N$ is such that $x^* \leq 2^{\ell^*} \leq 2 x^*$; such an $\ell^*$ exists
  since $x^* \geq 1$ and $2x^* \leq n$. 
Hence,
  \[ \inf_{1 \leq k=2^{\ell} \leq n \, , \, \ell \in \N} \set{ a k^{-\alpha} + \frac{bk}{n}}  
  \leq \sup_{\lambda \in [1, 2]} f(\lambda x^*)
  \leq 2 f(x^*) \]
  and the last upper-bound follows.

\end{proof}

\begin{lem}
  \label{le.suites-geom}
  Let $k \geq 1$ be an integer and $\alpha,\beta_1, \ldots, \beta_k, \gamma_1, \ldots, \gamma_k \in \R$ be such that 
  \[ \forall i \in \set{1, \ldots, k} \, , \quad \gamma_i \neq \alpha \enspace . \]
  Then, the sequence $(u_n)_{n \geq 0}$ defined by $u_0 \in \R$ and 
  \[ \forall n \in \N \, , \quad u_{n+1} = \alpha u_n + \sum_{i=1}^k \beta_i \gamma_i^n \]
  satisfies 
  \[ \forall n \in \N \, , \quad u_n = \alpha^n \paren{ u_0 - \sum_{i=1}^k \frac{\beta_i}{\gamma_i - \alpha} } + \sum_{i=1}^k \croch{ \frac{\beta_i}{\gamma_i - \alpha} \gamma_i^n } \enspace . \]
\end{lem}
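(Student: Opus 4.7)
The plan is to proceed by induction on $n$, after first guessing the closed form by the standard linear-recurrence decomposition into homogeneous and particular solutions. Define
\[ v_n \egaldef \alpha^n \paren{ u_0 - \sum_{i=1}^k \frac{\beta_i}{\gamma_i - \alpha} } + \sum_{i=1}^k \frac{\beta_i}{\gamma_i - \alpha} \gamma_i^n \]
for $n\in\N$. The goal is to show $u_n = v_n$ for all $n \in \N$, which is the asserted formula. Note that $v_n$ is well-defined precisely because $\gamma_i \neq \alpha$ for every $i$, so the denominators do not vanish; this is the only place the hypothesis is used.

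For the base case $n=0$, a direct inspection gives $v_0 = u_0 - \sum_{i=1}^k \frac{\beta_i}{\gamma_i-\alpha} + \sum_{i=1}^k \frac{\beta_i}{\gamma_i-\alpha} = u_0$, matching the initial condition. For the inductive step, assume $u_n = v_n$ for some $n \in \N$. Then I would compute
\[ \alpha v_n + \sum_{i=1}^k \beta_i \gamma_i^n
= \alpha^{n+1}\paren{u_0 - \sum_{i=1}^k \frac{\beta_i}{\gamma_i-\alpha}} + \sum_{i=1}^k \croch{\frac{\alpha \beta_i}{\gamma_i-\alpha} + \beta_i}\gamma_i^n \]
and observe that $\frac{\alpha \beta_i}{\gamma_i - \alpha} + \beta_i = \frac{\alpha \beta_i + \beta_i(\gamma_i - \alpha)}{\gamma_i - \alpha} = \frac{\beta_i \gamma_i}{\gamma_i - \alpha}$, so the sum equals $\sum_{i=1}^k \frac{\beta_i}{\gamma_i - \alpha}\gamma_i^{n+1}$. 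Substituting back yields exactly $v_{n+1}$. By the defining recurrence $u_{n+1} = \alpha u_n + \sum_{i=1}^k \beta_i \gamma_i^n$ and the induction hypothesis $u_n = v_n$, we conclude $u_{n+1} = v_{n+1}$, closing the induction.

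There is no real obstacle in this proof; the only subtlety is the algebraic identity $\frac{\alpha \beta_i}{\gamma_i-\alpha} + \beta_i = \frac{\beta_i\gamma_i}{\gamma_i-\alpha}$ that makes the coefficient of $\gamma_i^n$ become the coefficient of $\gamma_i^{n+1}$ after multiplication by $\gamma_i$, and this hinges on $\gamma_i \neq \alpha$. An alternative (equivalent) route would be to decompose $u_n$ by linearity into the homogeneous solution $\alpha^n u_0$ plus $k$ particular solutions of $p_{n+1}^{(i)} = \alpha p_n^{(i)} + \beta_i \gamma_i^n$ with $p_0^{(i)} = 0$, each of the form $c_i(\gamma_i^n - \alpha^n)$ with $c_i = \beta_i/(\gamma_i-\alpha)$, and then regroup; but the induction above is more direct and avoids invoking superposition explicitly.
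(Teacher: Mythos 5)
Your proof is correct and is essentially the same argument as the paper's: the paper subtracts the particular solution $\sum_i \frac{\beta_i}{\gamma_i-\alpha}\gamma_i^n$ from $u_n$ to get a purely geometric sequence, while you instead verify the full closed form directly by induction; both variants hinge on the identical algebraic identity $\frac{\alpha\beta_i}{\gamma_i-\alpha}+\beta_i=\frac{\beta_i\gamma_i}{\gamma_i-\alpha}$.
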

\begin{proof}[Proof of Lemma~\ref{le.suites-geom}]
  Let us consider the sequence defined by 
  \[ \forall n \in \N \, , \quad v_n = u_n - \sum_{i=1}^k \frac{\beta_i}{\gamma_i - \alpha} \gamma_i^n \enspace . \]
  Then, by definition of $u_n$, we have 
  \begin{align*}
    \forall n \in \N \, , \quad v_{n+1} &= \alpha u_n + \sum_{i=1}^k \beta_i \gamma_i^n - \sum_{i=1}^k \frac{\beta_i}{\gamma_i - \alpha} \gamma_i^{n+1}
    \\ & = \alpha v_n + \alpha \sum_{i=1}^k \frac{\beta_i}{\gamma_i - \alpha} \gamma_i^n + \sum_{i=1}^k \beta_i \gamma_i^n - \sum_{i=1}^k \frac{\beta_i}{\gamma_i - \alpha} \gamma_i^{n+1}
    \\ &= \alpha v_n
  \end{align*}
  so that $v_n = \alpha^n v_0$ and the result follows. 
\end{proof}

\end{document}